\documentclass[12pt]{amsart}
\usepackage{amssymb, amsmath, pinlabel, graphicx, amscd, multirow}

\setlength{\textwidth}{5.5in}
\setlength{\textheight}{8.5in}
\setlength{\oddsidemargin}{.2in}
\setlength{\evensidemargin}{.2in}
\setlength{\topmargin}{-.3in}
\setlength{\headsep}{.3in}

\newtheorem{thm}{Theorem}[section]
\newtheorem{prp}[thm]{Proposition}
\newtheorem{cor}[thm]{Corollary}
\newtheorem{lem}[thm]{Lemma}

\newtheorem*{qst}{Question}
\newtheorem{lma}[thm]{Lemma}
\theoremstyle{definition}
\newtheorem{defn}[thm]{Definition}
\theoremstyle{remark}
\newtheorem{rem}[thm]{Remark}
\newenvironment{rmk}{\begin{rem}\rm}{\end{rem}}

\newenvironment{pf}{\begin{proof}}{\end{proof}}

\numberwithin{equation}{section}

\numberwithin{equation}{section}
\newcommand{\R}{{\mathbb{R}}}
\newcommand{\C}{{\mathbb{C}}}

\newcommand{\Z}{{\mathbb{Z}}}

\newcommand{\Aa}{{\mathbf{A}}}
\newcommand{\Bb}{{\mathbf{B}}}
\newcommand{\Cc}{{\mathbf{C}}}
\newcommand{\Ee}{{\mathbf{E}}}
\newcommand{\Mm}{{\mathbf{M}}}
\newcommand{\Ll}{{\boldsymbol{\lambda}}}

\newcommand{\vA}{\bar{A}}

\newcommand{\ch}{{\mathcal{Q}}}
\newcommand{\ri}{{\mathcal{R}}}

\newcommand{\conf}{{\mathcal{C}}}

\newcommand{\A}{{\mathcal{A}}}
\newcommand{\BB}{{\mathcal{B}}}
\newcommand{\T}{{\mathcal{T}}}

\newcommand{\II}{{\mathcal{I}}}
\newcommand{\M}{{\mathcal{M}}}

\newcommand{\Ordo}{{\mathcal{O}}}

\newcommand{\sblv}{{\mathcal{H}}}

\newcommand{\bull}{\cdot} 

\newcommand{\la}{\langle}
\newcommand{\ra}{\rangle}
\newcommand{\pa}{\partial}
\newcommand{\id}{\operatorname{id}}

\newcommand{\ind}{\operatorname{index}}

\newcommand{\ix}{\operatorname{index}}
\newcommand{\krn}{{\operatorname{ker}}}
\newcommand{\cokrn}{{\operatorname{coker}}}

\newcommand{\sign}{\operatorname{sign}}

\newcommand{\con}{{\mathrm{con}}}
\newcommand{\sol}{{\mathrm{sol}}}
\newcommand{\flow}{{\mathrm{flow}}}

\newcommand{\ordo}{{\mathbf{o}}}

\newcommand{\gdim}{\operatorname{gdim}}

\newcommand{\area}{\operatorname{Area}}

\newcommand{\action}{\mathfrak{a}}

\def\dfn#1{{\em #1}}

\title{Knot Contact Homology}
\author[T.~Ekholm]{Tobias Ekholm}
 \address{Uppsala University, Box 480,
  751 06 Uppsala, Sweden}
  \email{tobias@math.uu.se}
\author[J.~Etnyre]{John Etnyre}
\address{School of Mathematics, Georgia Institute of Technology,
Atlanta, GA 30332-0160, USA}
\email{etnyre@math.gatech.edu}
\author[L.~Ng]{Lenhard Ng}
\address{Mathematics Department, Duke University, Durham, NC 27708-0320, USA}
\email{ng@math.duke.edu}
\author[M.~Sullivan]{Michael Sullivan}
\address{Department of Mathematics, University of Massachusetts,
 Amherst, MA 01003-9305, USA}
\email{sullivan@math.umass.edu}

\begin{document}

\begin{abstract}
The conormal lift of a link $K$ in $\R^3$ is a Legendrian submanifold
$\Lambda_K$ in the unit cotangent bundle $U^* \R^3$ of $\R^3$
with contact structure equal to the kernel of the Liouville form. Knot
contact homology, a topological link invariant of $K$, is defined as the
Legendrian homology of $\Lambda_K$, the homology of a differential
graded algebra generated by Reeb chords whose differential
counts holomorphic disks in the symplectization $\R \times U^*\R^3$ 
with Lagrangian boundary condition
$\R \times \Lambda_K$.

We perform an explicit and complete computation of the Legendrian
homology of $\Lambda_K$ for arbitrary links $K$ in terms of a braid
presentation of $K$, confirming a conjecture that this invariant
agrees with a previously-defined combinatorial version of knot contact
homology. The computation uses a double degeneration: the braid
degenerates toward a multiple cover of the unknot which in turn
degenerates to a point. Under the first degeneration, holomorphic
disks converge to gradient flow trees with quantum corrections. The combined degenerations give rise to a new generalization of
flow trees called multiscale flow trees. The theory of multiscale flow trees is the key tool in our
computation and is already proving to be useful for other computations as well.
\end{abstract}
\maketitle


\section{Introduction}\label{sec:intr}

\subsection{Knot contact homology}\label{ssec:kch}
Let $K\subset \R^{3}$ be a link. The {\em conormal lift} $\Lambda_K\subset U^{\ast}\R^{3}$ of $K$ in the unit cotangent bundle $ U^{\ast}\R^{3}$ of $\R^{3}$ is the sub-bundle of $ U^{\ast}\R^{3}$ over $K$ consisting of covectors which vanish on $TK$. The submanifold $\Lambda_K$ is topologically a union of $2$-tori, one for each component of $K$. The unit cotangent bundle carries a natural contact $1$-form $\alpha$: if $p\,dq$ denotes the Liouville form on $T^{\ast}\R^{3}$ then $\alpha$ is the restriction of $p\,dq$ to ${U^{\ast}\R^{3}}$. The conormal lift $\Lambda_K$ is a Legendrian submanifold with respect to the contact structure induced by $\alpha$. Furthermore, if $K_{t}$, $0\le t\le 1$, is a smooth isotopy of links then $\Lambda_{K_t}$ is a Legendrian isotopy of tori. Consequently, Legendrian isotopy invariants of $\Lambda_{K}$ give topological isotopy invariants of $K$ itself.

Contact homology, and Legendrian (contact) homology, is a rich source of deformation invariants in contact topology. Legendrian homology associates a differential graded algebra (DGA) to a Legendrian submanifold $\Lambda\subset Y$ of a contact manifold $Y$, in which the algebra is generated by the Reeb chords of $\Lambda$ and the Reeb orbits in $Y$, and the differential is defined by a count of holomorphic curves in the symplectization $\R\times Y$ with Lagrangian boundary condition $\R\times\Lambda$. The main result of this paper is a complete description of the DGA of the conormal lift $\Lambda_K$ of a link $K$ in terms of a braid presentation of $K$, see Theorem~\ref{thm:mainlink} below.

Before presenting Theorem~\ref{thm:mainlink}, we discuss some consequences of it. Theorem~\ref{thm:mainlink} confirms the conjecture that the combinatorial knot invariant defined and studied by the third author in \cite{Ng05, Ng05a, Ng08}, called ``knot contact homology'' in those works,
 indeed equals the Legendrian homology of the conormal lift. (In fact,
 the Legendrian homology described here is a nontrivial extension of
 the previously-defined version of knot contact homology; see
 Section~\ref{ssec:mainthm}.)
 We note that Legendrian homology can be expressed combinatorially in
 other circumstances, for example by Chekanov \cite{Chekanov}
 and Eliashberg \cite{Eliashberg} for $1$-dimensional Legendrian knots
 in $\R^3$ with the standard contact structure. Also, computations of
 Legendrian homology in higher dimensions have been carried out in
 some circumstances; see {\em e.g.\ }\cite{Rizell,EkholmEtnyreSullivan05a,EkholmKalman}.

The results of the present paper
 constitute one of the first complete and reasonably involved computations of Legendrian homology in higher dimensions (in the language of Lagrangian Floer theory, the computation roughly corresponds to the calculation of the differential and all higher product operations). Extensions of our techniques have already found applications, see {\em e.g.\ }~\cite{BEE2}, and we expect that further extensions may be used in other higher-dimensional situations in the future.

A more concrete consequence of Theorem~\ref{thm:mainlink} is its application to an interesting general question in symplectic topology: to what extent do symplectic- and contact-geometric objects naturally associated to objects in smooth topology remember the underlying smooth topology? More specifically, how much of the smooth topology is encoded by holomorphic-curve techniques on the symplectic/contact side? The construction that associates the (symplectic) cotangent bundle to any (smooth) manifold has been much studied recently in this regard, see {\em e.g.\ }~\cite{Abouzaid08,Abouzaid10,FukayaSeidelSmith, Nadler}. In our setting, the general question specializes to the following.

\begin{qst}
How much does the Legendrian-isotopy class of the conormal $\Lambda_K$ remember about the smooth-isotopy class of $K$? In particular, if $\Lambda_{K_1}$ and $\Lambda_{K_2}$ are Legendrian isotopic, are $K_1$ and $K_2$ necessarily smoothly isotopic?
\end{qst}

At this writing, it is possible that the answer to the second question is ``yes'' in general. One consequence of Theorem~\ref{thm:mainlink} that the answer is ``yes'' if $K_1$ is the unknot: the conormal lift detects the unknot. See Corollary~\ref{cor:unknotdetect} below.

Another geometric application of our techniques is the development of a filtered version of the Legendrian DGA associated to $\Lambda_K$ when $K$ is a link transverse to the standard contact structure $\ker(dz-y\,dx)$ in $\R^3$. This is carried out in \cite{EkholmEtnyreNgSullivan10}, which relies heavily on the computations from the present paper. There is a related combinatorial treatment in \cite{Ng10}, where it is shown that this filtered version (``transverse homology'') constitutes a very effective invariant of transverse knots. We remark that although \cite{Ng10} can be read as a standalone paper without reference to Legendrian homology or holomorphic disks, the combinatorial structure of transverse homology presented in \cite{EkholmEtnyreNgSullivan10,Ng10} was crucially motivated by the holomorphic-disk enumerations we present here.

\subsection{Main result}\label{ssec:mainthm}
We now turn to a more precise description of our main result. To this
end we first need to introduce some notation.
Let $K$ be an arbitrary $r$-component link in $\R^3$, given by the closure of a braid $B$ with $n$ strands. Then $\Lambda_K$ is a disjoint union of $r$ $2$-tori, which (after identifying the tangent bundle with the cotangent bundle and the normal bundle to $K$ with its tubular neighborhood) we may view as the boundaries of tubular neighborhoods of each component of $K$. We can then choose a set of generators $\lambda_1,\mu_1,\ldots,\lambda_r,\mu_r$ of $H_1(\Lambda_K)$, with $\lambda_i$ and $\mu_i$ corresponding to the longitude (running along the component of $K$) and meridian (running along a fiber of $\Lambda_K$ over a point in $K$) of the $i$-th torus.

We proceed to an algebraic definition of the DGA that 
computes the Legendrian homology of $\Lambda_K$.
The DGA is $(\A_n,\pa)$, where the algebra $\A_n$ is
the unital graded algebra over $\Z$ generated by the group ring $\Z[H_1(\Lambda_K)] = \Z[\lambda_1^{\pm
  1},\mu_1^{\pm 1},\dots,\lambda_r^{\pm 1},\mu_r^{\pm 1}]$
in degree $0$, along with the following generators:
\begin{alignat*}{2}
&\{a_{ij}\}_{1\le i,j\le n;\,i\ne j} & \text{ in degree }0,\\
&\{b_{ij}\}_{1\le i,j\le n;\,i\ne j} & \text{ in degree }1,\\
&\{c_{ij}\}_{1\le i,j\le n} & \text{ in degree }1,\\
&\{e_{ij}\}_{1\le i,j\le n} & \text{ in degree }2.
\end{alignat*}
Note that in $\A_n$, the generators $a_{ij},b_{ij},c_{ij},e_{ij}$ do not commute with each other or with nontrivial elements of $\Z[H_1(\Lambda_K)]$.

We next define a differential $\pa$ on $\A_n$.
Introduce variables $\widetilde{\mu}_1,\ldots,\widetilde{\mu}_n$ of degree $0$, and write $\widetilde{\A}_n^{0}$ for the free unital algebra over $\Z$  generated by the 
ring $\Z[\widetilde{\mu}_1^{\pm 1},\ldots,\widetilde{\mu}_n^{\pm 1}]$ and the $a_{ij}$. Now if $\sigma_k$ is a standard generator of the braid group $B_n$ on $n$ strands, then define the automorphism $\phi_{\sigma_k}\colon \widetilde{\A}_n^{0} \to \widetilde{\A}_n^{0}$ by
\[
\begin{aligned}
\phi_{\sigma_k}(a_{ij})&=
a_{ij}  \quad & i,j\ne k, k+1\\
\phi_{\sigma_k}(a_{k\,k+1})&=
-a_{k+1\,k} \quad & \\
\phi_{\sigma_k}(a_{k+1\,k})&=
-\widetilde{\mu}_{k}a_{k\,k+1}\widetilde{\mu}_{k+1}^{-1}\quad &\\
\phi_{\sigma_k}(a_{i\,k+1})&=
a_{ik}\quad & i\not= k, k+1\\
\phi_{\sigma_k}(a_{k+1\, i})&=
a_{ki}\quad & i\not=k, k+1\\
\phi_{\sigma_k}(a_{ik})&=
a_{i\,k+1} - a_{ik}a_{k\, k+1}\quad& i < k\\
\phi_{\sigma_k}(a_{ik})&=
a_{i\,k+1} - a_{ik}\widetilde{\mu}_{k}a_{k\, k+1}\widetilde{\mu}_{k+1}^{-1}\quad& i > k+1\\
\phi_{\sigma_k}(a_{ki})&=
a_{k+1\,i} - a_{k+1\,k}a_{ki}\quad &i \ne k, k+1\\
\phi_{\sigma_k}(\widetilde{\mu}_k^{\pm 1}) &= \widetilde{\mu}_{k+1}^{\pm 1} \\
\phi_{\sigma_k}(\widetilde{\mu}_{k+1}^{\pm 1}) &= \widetilde{\mu}_{k}^{\pm 1} \\
\phi_{\sigma_k}(\widetilde{\mu}_i^{\pm 1}) &= \widetilde{\mu}_{i}^{\pm 1} \quad& i\ne k,k+1.
\end{aligned}
\]
The map  $\phi$ induces a homomorphism from $B_n$ to the automorphism
group of $\widetilde{\A}^{0}_n$: if $B=\sigma_{k_1}^{\epsilon_1}\cdots
\sigma_{k_m}^{\epsilon_m}\in B_n$, with $\epsilon_l=\pm 1$ for
$l=1,\dots,m$, then $\phi_B=\phi_{\sigma_{k_1}}^{\epsilon_1}\circ
\cdots\circ\phi_{\sigma_{k_m}}^{\epsilon_m}$. See Proposition~\ref{prop:homom}.

Now let $\A^0_n$ denote the subalgebra of $\A_n$ of elements of degree $0$, generated by the $\mu_j$, $\lambda_j$, and $a_{ij}$.
For $1\leq j \leq n$, let $\alpha(j)\in\{1,\ldots,r\}$ be the number of the link component of $K$ corresponding to the $j$-th strand of $B$. Then $\phi_B$ descends to an automorphism of $\A^{0}_n$, which we also denote by $\phi_B$, by setting $\widetilde{\mu}_j = \mu_{\alpha(j)}$ for all $j$, and having $\phi_B$ act as the identity on $\lambda_j$ for all $j$.

For convenient notation we assemble the generators of $\A_n$ into $(n\times n)$-matrices.
Writing $\Mm_{ij}$ for the element in position $ij$ in the $(n\times n)$-matrix $\Mm$, we define the $(n\times n)$-matrices
\[
\begin{aligned}
\Aa:\quad &
\begin{cases}
\Aa_{ij}=a_{ij} &\text{ if }i<j,\\
\Aa_{ij}=a_{ij} \mu_{\alpha(j)} &\text{ if }i>j,\\
\Aa_{ii}=1+\mu_{\alpha(i)},
\end{cases} &\quad
\Bb:\quad &
\begin{cases}
\Bb_{ij}=b_{ij} &\text{ if }i<j,\\
\Bb_{ij}=b_{ij} \mu_{\alpha(j)} &\text{ if }i>j,\\
\Bb_{ii}=0,
\end{cases}\\
\Cc:\quad &
\begin{cases}
\Cc_{ij}=c_{ij},
\end{cases}&
\Ee:\quad &
\begin{cases}
\Ee_{ij}=e_{ij}.
\end{cases}
\end{aligned}
\]
To the braid $B$, we now associate two $(n\times n)$-matrices $\Phi_B^L,\Phi_B^R$ with coefficients in $\A_n^0$ as follows. Set $\widehat{B}$ to be the $(n+1)$-strand braid obtained by adding to $B$ an extra strand labeled $0$ that does not interact with the other strands. Then $\phi$ gives a map $\phi_{\widehat{B}}$ acting on the algebra generated by homology classes and $\{a_{ij}\}_{0\leq i,j\leq n;i\neq j}$, and we can define $\Phi_B^L,\Phi_B^R$ by
\[
\phi_{\widehat{B}}(a_{i0}) = \sum_{j=1}^n \left( \Phi^L_B \right)_{ij} a_{j0} \quad\text{and}\quad
\phi_{\widehat{B}}(a_{0j}) = \sum_{i=1}^n a_{0i} \left( \Phi^R_B \right)_{ij}.
\]
(Note that by the above formula for $\phi_{\sigma_k}$, any monomial
contributing to $\phi_B(a_{i0})$, respectively $\phi_B(a_{0j})$,
must begin with a generator of the form $a_{l0}$, respectively end
with a generator $a_{0l}$.
Also, since the $0$-th strand does not interact with the others,
$\tilde{\mu}_0$ does not appear anywhere in the expressions for
$\phi_{\widehat{B}}(a_{i0}),\phi_{\widehat{B}}(a_{0j})$, and so
$\Phi^L_B,\Phi^R_B$ have coefficients in $\A^0_n$.)

Finally, define a diagonal $(n\times n)$ matrix $\Ll$ as follows. Consider the strands $1,\dots,n$ of the braid. Call a strand \emph{leading} if it is the first strand in this ordering belonging to its component.
Define
\[
\Ll:\quad
\begin{cases}
\Ll_{ij}=0 &\text{ if }i\ne j,\\
\Ll_{ii}=\lambda_{\alpha(i)} &\text{ if the $i^{\rm th}$ strand is leading},\\
\Ll_{ii}= 1 &\text{otherwise}.
\end{cases}
\]

We can now state our main result.
\begin{thm}\label{thm:mainlink}
Let $K \subset \R^3$ be an oriented link given by the closure of a braid $B$ with $n$ strands.
After Legendrian isotopy, the conormal lift $\Lambda_K \subset U^{\ast}\R^3$ has Reeb chords in graded one-to-one correspondence with the generators of $\A_n$. Consequently, the Legendrian algebra of
\[
\Lambda_K\subset J^1(S^{2})\approx U^{\ast}\R^{3}
\]
is identified with $\A_n$. Under this identification, the differential of the Legendrian DGA is the map $\pa\colon \A_n\to\A_n$ determined by the following matrix equations:
\begin{align*}
\pa \mathbf{A} &= 0, \\
\pa \mathbf{B} &= -\Ll^{-1}\cdot\Aa\cdot\Ll\,\, +\,\, \Phi^L_B \cdot \Aa \cdot \Phi^R_B, \\
\pa \mathbf{C} &= \Aa\cdot\Ll\,\, +\,\, \Aa\cdot\Phi^{R}_B, \\
\pa \mathbf{E} &= \Bb\cdot(\Phi_B^{R})^{-1}\,\, +\,\, \Bb\cdot\Ll^{-1}\,\, -\,\,
\Phi^L_B\cdot\Cc\cdot\Ll^{-1}\,\, +\,\, \Ll^{-1}\cdot\Cc \cdot (\Phi^R_B)^{-1},
\end{align*}
where if $\Mm$ is a matrix, the matrix $\pa \Mm$ is defined by $(\pa \Mm)_{ij}=\pa(\Mm_{ij})$.
\end{thm}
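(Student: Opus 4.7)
The overall plan is to place $\Lambda_{K}$ in a convenient geometric model inside $J^{1}(S^{2})\approx U^{\ast}\R^{3}$ and then compute the Legendrian DGA by enumerating holomorphic disks via the two-stage degeneration suggested in the abstract. After Legendrian isotopy I would arrange for $K$ to be presented as the closure of $B$ lying on a torus around the braid axis, so that $\Lambda_{K}$ becomes a Legendrian submanifold covering the conormal of the axis roughly $n$ times, with the arrangement of sheets encoding $B$. Under the contactomorphism $U^{\ast}\R^{3}\approx J^{1}(S^{2})$ that sends this conormal to the zero section $S^{2}$, the Legendrian $\Lambda_{K}$ becomes an $n$-sheeted front over $S^{2}$, and its Reeb chords split into four families: short Morse-type chords between different sheets ($a_{ij}$ in degree $0$), chords wrapping a meridional Reeb orbit once and landing on another sheet ($b_{ij}$), self-wrapping chords ($c_{ij}$), and doubly-wrapping chords ($e_{ij}$). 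A Conley--Zehnder index computation verifies the claimed gradings and identifies the Legendrian algebra with $\A_{n}$.

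To compute $\pa$, I would apply two successive limits. First, compress the braid toward the trivial $n$-strand braid; by the flow tree theorem in an adapted form, rigid holomorphic disks converge to gradient flow trees on $S^{2}$ between the sheets of the front, with ``quantum corrections'' inserted wherever trees pass through a braid generator and can detour through the holomorphic disk at the crossing. Second, shrink the base $S^{2}$ itself; in this combined limit, disks are replaced by rigid \emph{multiscale flow trees} of the kind developed in this paper. These are combinatorial objects whose edges record how the braid monodromy $\phi_{B}$ and the longitudinal factor $\Ll$ transport sheet labels.

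With this machinery in place, the matrix equations follow by enumerating multiscale flow trees with prescribed asymptotics. No rigid tree has a single $a_{ij}$ as its unique positive puncture, giving $\pa\Aa=0$. For a $b_{ij}$ puncture the rigid trees split into two families corresponding to whether the disk boundary follows a short meridional loop (contributing $-\Ll^{-1}\cdot\Aa\cdot\Ll$, with the sign and the conjugation by $\Ll$ tracking, respectively, the Maslov shift across the meridian and the longitudinal monodromy along leading strands) or traverses the braid (contributing $\Phi^{L}_{B}\cdot\Aa\cdot\Phi^{R}_{B}$ from the two boundary arcs experiencing the braid action from opposite sides). The enumerations for $\pa\Cc$ and $\pa\Ee$ are analogous but combine the two mechanisms, yielding the remaining identities. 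The matrix formalism is natural here: the entries $\left(\Phi^{L}_{B}\right)_{ij}$ and $\left(\Phi^{R}_{B}\right)_{ij}$ count precisely the ways a flow tree starting on sheet $i$ can end on sheet $j$ after being transported through $\phi_{B}$, directly from the definitions of $\phi_{\sigma_{k}}$.

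The main technical obstacle is establishing the underlying multiscale flow tree theorem: a bijection, with signs, between rigid holomorphic disks on $\R\times\Lambda_{K}$ in the fully degenerated geometry and rigid multiscale flow trees. This requires new compactness and gluing results in a regime where two length scales collapse simultaneously, together with careful orientation analysis that tracks the Maslov contribution at each meridional wrap. Once this bijection and the accompanying transversality are in hand, invariance of the DGA under auxiliary choices follows from standard Legendrian contact homology arguments, and the explicit matrix identities reduce to combinatorial computations with the braid action $\phi_{B}$.
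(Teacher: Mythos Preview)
Your high-level outline captures the strategy correctly (double degeneration, multiscale flow trees, braid monodromy encoded in $\Phi_B^L,\Phi_B^R$), but the concrete geometric model you describe is wrong in ways that would derail the actual computation.

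\textbf{Reeb chord geometry.} In $J^{1}(S^{2})$ the Reeb vector field is $\partial_{z}$; there are no closed Reeb orbits, so there is no ``wrapping a meridional Reeb orbit'' or ``doubly-wrapping.'' The paper's model is different: one braids $K$ around the planar unknot $U$ (not an axis), so $\Lambda_{K}$ sits in a $1$-jet neighborhood $J^{1}(\Lambda)\approx J^{1}(T^{2})$ of $\Lambda=\Lambda_{U}$. The Reeb chords then split into \emph{short} chords contained in this neighborhood --- both $a_{ij}$ (saddles, degree $0$) and $b_{ij}$ (maxima, degree $1$) of local function differences on $T^{2}$ --- and \emph{long} chords $c_{ij},e_{ij}$ which are small perturbations of the two Reeb chords $c,e$ of $\Lambda$ itself in $J^{1}(S^{2})$. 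Your classification of $b_{ij},c_{ij},e_{ij}$ by wrapping number does not correspond to anything in the actual geometry.

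\textbf{The degeneration.} The two limits are not ``compress toward the trivial braid'' followed by ``shrink $S^{2}$.'' Rather, the first limit pushes $\Lambda$ toward the zero section of $J^{1}(S^{2})$ so that rigid disks on $\Lambda$ correspond to flow trees of $\Lambda\subset J^{1}(S^{2})$; the second limit pushes $\Lambda_{K}$ toward $\Lambda$ inside $J^{1}(\Lambda)$. A multiscale flow tree is then a (constrained) rigid flow tree of $\Lambda\subset J^{1}(S^{2})$ (the ``big tree'') with partial flow trees of $\Lambda_{K}\subset J^{1}(\Lambda)$ attached along its $1$-jet lift (the ``small trees''). This structure is what makes the computation tractable: $\partial\Bb$ comes entirely from flow trees of $\Lambda_{K}$ inside $J^{1}(T^{2})$ (no big tree), while $\partial\Cc$ and $\partial\Ee$ come from the six rigid/constrained rigid big trees of $\Lambda$ ($I_{N},Y_{N},I_{S},Y_{S}$, the two strips $E_{1},E_{2}$, and four constrained trees through $b_{ij}$) decorated by small trees.

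\textbf{The specific terms.} Consequently your explanations for the two terms in $\partial\Bb$ are off. The term $-\Ll^{-1}\cdot\Aa\cdot\Ll$ is not a ``meridional loop'' contribution; it is the single short flow line from $b_{ij}$ to $a_{ij}$ going in the $-\partial_{s}$ direction, with $\Ll^{\pm 1}$ recording intersections of its $1$-jet lift with the longitude cycle $\lambda'$. The term $\Phi_{B}^{L}\cdot\Aa\cdot\Phi_{B}^{R}$ comes from trees going in the $+\partial_{s}$ direction through the braiding region, built inductively from slice trees in each twist slice; this is where the definition of $\phi_{\sigma_{k}}$ is actually derived, not assumed. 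The sign is not a Maslov shift but comes from a combinatorial rule (vector splitting at $Y_{0}$-vertices) tied to orientations of capping operators.
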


Theorem \ref{thm:mainlink} has the following corollary, which establishes a conjecture of the third author \cite{Ng06,Ng08}.

\begin{cor}
For a knot $K$, the combinatorial framed knot DGA of $K$,
\label{cor:knotDGA}
as defined in \cite{Ng08}, is isomorphic to the Legendrian DGA of $\Lambda_K$.
\end{cor}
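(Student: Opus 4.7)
The plan is to deduce the corollary directly from Theorem~\ref{thm:mainlink} by specializing to the knot case $r=1$ and matching the resulting algebraic data with the combinatorial framed knot DGA of \cite{Ng08}. Since both objects are DGAs defined purely algebraically from the same braid $B$, once the generators and the braid action are identified, the proof reduces to comparing the explicit formulas for the differentials entry by entry.

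First, I would specialize the setup of Section~\ref{ssec:mainthm} to a single component: every strand satisfies $\alpha(j)=1$, so the variables $\widetilde\mu_j$ all collapse to one variable $\mu$, the coefficient ring $\Z[H_1(\Lambda_K)]$ becomes $\Z[\lambda^{\pm 1},\mu^{\pm 1}]$, and $\Ll=\operatorname{diag}(\lambda,1,\dots,1)$ after fixing the first strand as leading. Under these identifications $\Aa$ becomes the matrix of $a_{ij}$ with $1+\mu$ on the diagonal and the appropriate factors of $\mu$ below the diagonal, which is exactly the matrix of generators appearing in \cite{Ng08}. Next, a direct comparison of the explicit formulas for $\phi_{\sigma_k}$ in Section~\ref{ssec:mainthm} with the braid group action defined in \cite{Ng08} shows that they coincide; therefore the homomorphism $\phi\colon B_n\to\operatorname{Aut}(\A_n^0)$ agrees with the one in \cite{Ng08}, and the matrices $\Phi_B^L,\Phi_B^R$ obtained by linearizing $\phi_{\widehat B}$ on $a_{i0}$ and $a_{0j}$ match the matrices used to define the differential there.

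With these identifications in place, the three matrix equations $\pa\Aa=0$, $\pa\Bb=-\Ll^{-1}\Aa\Ll+\Phi_B^L\Aa\Phi_B^R$, and $\pa\Cc=\Aa\Ll+\Aa\Phi_B^R$ of Theorem~\ref{thm:mainlink} reproduce on the nose the defining formulas for the framed knot DGA in \cite{Ng08}. As remarked in Section~\ref{ssec:mainthm}, the degree-$2$ generators $e_{ij}$ with differential $\pa\Ee$ are a genuine new extension of the earlier combinatorial theory; for the corollary it therefore suffices to observe that the subalgebra of $\A_n$ generated in degrees $0$ and $1$ by $\{a_{ij},b_{ij},c_{ij}\}$ over $\Z[\lambda^{\pm 1},\mu^{\pm 1}]$ is closed under $\pa$ (evident from the formulas, which never produce an $e_{ij}$), and that the restriction of $\pa$ to this subalgebra is exactly the differential of \cite{Ng08}.

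The main obstacle is pure bookkeeping: one must align the sign, orientation, and longitude/meridian conventions of \cite{Ng08} with those fixed in the present paper, and confirm that the ordering of noncommutative factors and the left-versus-right linearization used to extract $\Phi_B^L$ and $\Phi_B^R$ from $\phi_{\widehat B}$ agree with the conventions in \cite{Ng08}. Once these conventions are reconciled, the isomorphism between the two DGAs is formal and no further geometric input is needed.
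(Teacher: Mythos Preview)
Your overall strategy---specialize Theorem~\ref{thm:mainlink} to $r=1$ and match the resulting algebraic data with \cite{Ng08}---is the right one, and the paper treats the corollary as exactly this kind of formal consequence. However, you have misread what the ``extension'' in Section~\ref{ssec:mainthm} consists of, and this causes your argument to go astray in the last paragraph.

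The degree-$2$ generators $e_{ij}$ are \emph{not} new: the framed knot DGA of \cite{Ng08} already has generators in degrees $0,1,1,2$ (corresponding to the $a_{ij},b_{ij},c_{ij},e_{ij}$ here), and the differential $\partial\Ee$ is part of its definition. So your proposal to restrict to the sub-DGA in degrees $\le 1$ would not yield the framed knot DGA of \cite{Ng08}; it would throw away part of what you are supposed to be matching. You must compare $\partial\Ee$ as well, and it does agree with the combinatorial formula in \cite{Ng08} up to the same bookkeeping issues.

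The actual extensions flagged in Section~\ref{ssec:mainthm} are (i) allowing multi-component links, which you correctly neutralize by setting $r=1$, and (ii) the fact that here the homology classes $\lambda,\mu$ do \emph{not} commute with the Reeb-chord generators, whereas in \cite{Ng08} they do. Your proposal never addresses (ii). To obtain the DGA of \cite{Ng08} on the nose you must pass to the homology-commutative quotient (see Section~\ref{sssec:Legalg}); the isomorphism in the corollary is understood after that specialization, together with the sign reconciliation noted in the footnote. Once you correct these two points---keep the $e_{ij}$ and compare $\partial\Ee$, and impose homology-commutativity---your outline is essentially complete and matches the paper's intended (unwritten) argument.
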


In \cite{Ng08}, the framed knot DGA is shown to be a knot invariant via a purely algebraic but somewhat involved argument that shows its invariance under Markov moves. We note that Theorem~\ref{thm:mainlink} provides another proof of invariance.

\begin{cor}
The DGA given in Theorem~\ref{thm:mainlink}, and consequently the framed knot DGA of \cite{Ng08}, is a link invariant: two braids whose closures are isotopic links produce the same DGA up to equivalence (stable tame isomorphism).
\end{cor}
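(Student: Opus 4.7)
The plan is to deduce this invariance directly from Theorem~\ref{thm:mainlink}, using the Legendrian-isotopy invariance of the Legendrian DGA as a black box, rather than verifying invariance under Markov moves algebraically as in \cite{Ng08}.

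First, if braids $B_1$ and $B_2$ have closures that are isotopic oriented links, then by choosing a smooth ambient isotopy $K_t$, $0 \le t \le 1$, carrying the closure of $B_1$ to the closure of $B_2$, the corresponding conormal lifts $\Lambda_{K_t} \subset U^{\ast}\R^3$ form a Legendrian isotopy of $2$-tori, as already recorded in the introduction. Composing with the preliminary Legendrian isotopies supplied by Theorem~\ref{thm:mainlink} at the two ends, we obtain a Legendrian isotopy between the two model representatives whose Legendrian DGAs are identified with the combinatorial DGAs $(\A_{n_1},\pa)$ for $B_1$ and $(\A_{n_2},\pa)$ for $B_2$.

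Second, I invoke the general fact that the Legendrian DGA, with coefficients in the group ring of $H_1(\Lambda)$, is an invariant of the Legendrian-isotopy class of $\Lambda$ up to stable tame isomorphism. This is the standard invariance theorem of Legendrian contact homology: a generic one-parameter family of Legendrians either leaves the Reeb chord set unchanged, producing a tame isomorphism of DGAs via a continuation chain map, or passes through a birth-death of a pair of Reeb chords, producing an algebraic stabilization. In the two-dimensional/five-dimensional setting relevant here the same argument applies. Note that since the strand numbers $n_1$ and $n_2$ generically differ, stabilization (rather than just tame isomorphism) is essential, and geometrically these stabilizations encode the Reeb chord birth-deaths encountered as the conormal is deformed between its two normal forms.

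Combining the two steps, the Legendrian DGAs of the two model conormals are stable tame isomorphic, hence so are $(\A_{n_1},\pa)$ and $(\A_{n_2},\pa)$; the analogous invariance for the framed knot DGA of \cite{Ng08} then follows via Corollary~\ref{cor:knotDGA}. The only real substance here is the appeal to the Legendrian-isotopy invariance of the DGA with group-ring coefficients in this higher-dimensional setting; the remainder is a formal consequence of Theorem~\ref{thm:mainlink}, and this is what is meant by saying that the theorem provides ``another proof of invariance'' distinct from the combinatorial Markov-move verification.
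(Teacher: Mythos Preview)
Your argument is correct and follows essentially the same route as the paper: smooth isotopy of link closures induces Legendrian isotopy of conormal lifts, and then one appeals to the Legendrian-isotopy invariance of the Legendrian DGA (Theorem~\ref{thm:LHinvariance}) to conclude stable tame isomorphism. The paper's proof is a single sentence citing Theorem~\ref{thm:LHinvariance}; your version simply unpacks the mechanism in more detail.
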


\begin{proof}
Direct consequence of the fact that the Legendrian DGA is invariant under Legendrian isotopy, see Theorem~\ref{thm:LHinvariance}.
\end{proof}

Quite a bit is known about the behavior of combinatorial knot contact homology, and Corollary~\ref{cor:knotDGA} allows us to use this knowledge to deduce results about the geometry of conormal lifts. For instance, we have the following results.

\begin{cor}
A knot $K$ is isotopic to the unknot $U$ if and only if the conormal lift $\Lambda_K$ of $K$ is Legendrian isotopic to the conormal lift $\Lambda_U$ of $U$.
\label{cor:unknotdetect}
\end{cor}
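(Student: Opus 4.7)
The forward direction is essentially automatic: a smooth isotopy $K_t$ from $K_0 = U$ to $K_1 = K$ extends to an ambient isotopy of $\R^3$, whose lift to $U^{*}\R^3$ carries $\Lambda_U$ to $\Lambda_K$ through conormal tori $\Lambda_{K_t}$. Since each $\Lambda_{K_t}$ is Legendrian and the lifted flow preserves the contact structure on $U^{*}\R^3$, this is a Legendrian isotopy. So the substance of the corollary is the converse.

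For the converse, the plan is to transport the geometric hypothesis through Theorem~\ref{thm:mainlink} to a purely combinatorial statement, and then invoke a known combinatorial detection result. Explicitly, assume $\Lambda_K$ is Legendrian isotopic to $\Lambda_U$. By invariance of the Legendrian DGA under Legendrian isotopy (Theorem~\ref{thm:LHinvariance}), the Legendrian DGAs of $\Lambda_K$ and $\Lambda_U$ are stable tame isomorphic. By Corollary~\ref{cor:knotDGA}, this stable tame isomorphism descends to the combinatorial framed knot DGAs of $K$ and $U$ as defined in \cite{Ng08}. The problem therefore reduces to showing: the combinatorial framed knot DGA distinguishes the unknot from every other knot.

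This combinatorial detection statement is the content of the third author's earlier work \cite{Ng05, Ng05a, Ng08}. The key input is that an appropriate invariant extracted from the degree-zero homology of the framed knot DGA — specifically the cord algebra of $K$, or equivalently the augmentation variety together with the $\mu, \lambda$-structure — encodes peripheral information of $\pi_1(\R^3 \setminus K)$. Since the knot group together with its peripheral $\Z^2$-subgroup is a complete invariant, and in particular detects the unknot by the classical theorems of Papakyriakopoulos, comparing this data to the corresponding (trivial) cord algebra of $U$ forces $K$ to be the unknot.

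The only delicate point is to make sure the peripheral-$\pi_1$ information is genuinely preserved by stable tame isomorphism rather than merely by strict DGA isomorphism, but this is automatic once one identifies it as living in the canonically-defined degree-zero homology (with the $H_1(\Lambda_K)$-action) of the DGA. Once this combinatorial input is granted, the corollary follows by concatenating the three implications above. The main obstacle in the whole argument is not encountered here at all — it is hidden in the proof of Theorem~\ref{thm:mainlink}, which makes the translation from the analytic Legendrian DGA to the combinatorial framed knot DGA legitimate.
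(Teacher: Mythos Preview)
Your argument is correct and follows the same route as the paper: reduce via Theorem~\ref{thm:LHinvariance} and Corollary~\ref{cor:knotDGA} to the combinatorial statement that the framed knot DGA detects the unknot, then invoke \cite{Ng08}. The paper's one-line proof simply cites \cite[Proposition~5.10]{Ng08} for this last step.

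One inaccuracy worth flagging: your sketch of \emph{why} the detection result holds is not quite what \cite{Ng08} actually does. The published proof there proceeds via the two-variable augmentation polynomial and its relation to the $A$-polynomial, together with the Dunfield--Garoufalidis result \cite{DG} (which in turn rests on Kronheimer--Mrowka \cite{KronheimerMrowka}) that the $A$-polynomial detects the unknot. The peripheral-$\pi_1$/Loop Theorem mechanism you describe is closer in spirit to the alternative, unpublished argument alluded to at the end of Section~\ref{ssec:mainthm}, which uses the full noncommutative DGA rather than the homology-commutative quotient. Also, the knot group with peripheral subgroup is not a complete knot invariant in general; it does detect the unknot (via the Loop Theorem), which is all you need here, but the stronger claim should be dropped.
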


\begin{proof}
The degree $0$ homology of the framed knot DGA detects the unknot \cite[Proposition~5.10]{Ng08}.
\end{proof}

\begin{cor}
If $K_1,K_2$ are knots such that $\Lambda_{K_1}$ and $\Lambda_{K_2}$ are Legendrian isotopic, then $K_1$ and $K_2$ have the same Alexander polynomial.
\label{cor:Alexander}
\end{cor}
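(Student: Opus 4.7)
The plan is to reduce the statement to a purely combinatorial fact about the framed knot DGA already established in the third author's earlier papers. By Corollary~\ref{cor:knotDGA}, a Legendrian isotopy $\Lambda_{K_1}\simeq\Lambda_{K_2}$ produces a stable tame isomorphism between the combinatorial framed knot DGAs of $K_1$ and $K_2$. Stable tame isomorphism preserves the homology of the DGA, so it suffices to exhibit the Alexander polynomial of $K$ as an invariant of the stable-tame-isomorphism class of the framed knot DGA $(\A_n,\pa)$ associated to any braid presentation $B$ of $K$.

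To extract the Alexander polynomial I would work with the degree $0$ part $\A^0_n$ together with its abelianization. Specifically, one considers the quotient of $\A^0_n$ by the two-sided ideal generated by the entries of $\pa\Bb$ (which by Theorem~\ref{thm:mainlink} are precisely $-\Ll^{-1}\cdot\Aa\cdot\Ll+\Phi^L_B\cdot\Aa\cdot\Phi^R_B$), then abelianizes and specializes $\lambda=1$ and $\mu=t$. After this specialization the matrix $\Aa$ becomes the Burau-type matrix associated to $B$, and the defining relations become the statement that $(\Phi^L_B)^{\rm ab}\,\Aa^{\rm ab}\,(\Phi^R_B)^{\rm ab}=\Aa^{\rm ab}$. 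The key algebraic observation, carried out in \cite{Ng05,Ng08}, is that the presentation of the resulting module over $\Z[t^{\pm 1}]$ is equivalent to the standard Fox-calculus presentation of the Alexander module of $K$ obtained from the braid $B$; hence its order ideal is generated by the Alexander polynomial $\Delta_K(t)$.

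Concretely, the steps I would carry out are: (i) invoke Corollary~\ref{cor:knotDGA} to translate the Legendrian hypothesis into an isomorphism of framed knot DGAs up to stable tame isomorphism; (ii) pass to degree $0$ homology, which is a stable-tame-isomorphism invariant and hence a Legendrian-isotopy invariant; (iii) abelianize this ring and set $\lambda=1$, $\mu=t$ to obtain a well-defined quotient ring $R_K$; (iv) identify, following the computation in \cite[\S4--5]{Ng08}, the associated $\Z[t^{\pm 1}]$-module with the Alexander module of $K$; (v) recover $\Delta_K(t)$ as the generator of its $0$-th Fitting ideal, which is therefore a Legendrian-isotopy invariant of $\Lambda_K$.

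The bulk of the work — and the only potentially nontrivial step — is step (iv), the identification of the abelianized degree $0$ relations with the Burau/Fox-calculus presentation of the Alexander module. This is not something I would reprove here, since it is precisely the content of the corresponding results in \cite{Ng05,Ng08}; the point of the present corollary is that Theorem~\ref{thm:mainlink} allows those purely combinatorial computations to be pulled back to the Legendrian DGA. Once (iv) is quoted, the remaining steps are formal, and the conclusion $\Delta_{K_1}=\Delta_{K_2}$ is immediate.
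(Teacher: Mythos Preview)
Your high-level strategy matches the paper's: invoke Corollary~\ref{cor:knotDGA} to reduce to the combinatorial framed knot DGA, then cite the third author's earlier work showing the Alexander polynomial is recoverable from this DGA. The paper's proof is a one-line citation of \cite[Corollary~4.5]{Ng08}.

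However, you and the paper invoke different constructions from \cite{Ng08}, and yours has a gap. The paper cites the degree~$1$ \emph{linearized} homology of the framed knot DGA with respect to a canonical augmentation; that is the object \cite[Corollary~4.5]{Ng08} identifies with the Alexander invariant. You instead propose to go through the abelianized degree~$0$ homology after specializing $\lambda=1$, $\mu=t$. The problem is your step~(iv): the abelianized degree~$0$ homology is a commutative \emph{ring} over $\Z[t^{\pm 1}]$ (generated by the $a_{ij}$ with polynomial relations), not a finitely presented $\Z[t^{\pm 1}]$-module, so speaking of ``the resulting module'' and its Fitting ideal is not well-defined as stated. To make sense of this you would have to pass to the linear part in the $a_{ij}$'s --- which is exactly the linearization with respect to the augmentation $a_{ij}\mapsto 0$ --- and then look at degree~$1$, bringing you back to the paper's route. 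The results in \cite[\S4--5]{Ng08} that actually identify the Alexander module do so via linearized homology, not via the degree~$0$ ring. So the citation you want is \cite[Corollary~4.5]{Ng08}, and the object to name is degree~$1$ linearized homology, not degree~$0$ homology.
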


\begin{proof}
The degree $1$ linearized homology of the framed knot DGA with respect to a certain canonical augmentation encodes the Alexander polynomial \cite[Corollary~4.5]{Ng08}.
\end{proof}

We close this subsection by discussing a subtlety hidden in the statement of Corollary~\ref{cor:knotDGA}.
The DGA $(\A_n,\pa)$ defined in Theorem~\ref{thm:mainlink} is actually an \textit{extension} of the combinatorial knot DGA introduced by the third author in \cite{Ng05,Ng05a,Ng08}, in two significant ways.\footnote{There are also some inconsequential sign differences between our formulation and the knot DGA of \cite{Ng05,Ng05a,Ng08}; see \cite{Ng10} for a proof that the different sign conventions yield isomorphic DGAs.} First, the original combinatorial knot DGA assumed that $K$ is a one-component knot, whereas our $(\A_n,\pa)$ works for general links, associating separate homology variables $\lambda_j,\mu_j$ to each component.
Second, in the original combinatorial knot DGA as presented in
\cite{Ng08}, and in the filtered version for transverse knots presented in
\cite{EkholmEtnyreNgSullivan10}, homology variables
commute with the generators $a_{ij}$, etc., while they do not commute
here. We may thus think of the original knot DGA as a
``homology-commutative'' quotient of our ``full'' DGA; see Section~\ref{sssec:Legalg} for further discussion.

Although we do not pursue this point here, it seems quite possible that the full DGA introduced here constitutes a stronger link invariant, or otherwise encodes more information, than the homology-commutative knot DGA from \cite{Ng08}. For example, the proof in \cite{Ng08} that the framed knot DGA detects the unknot uses work of Dunfield and Garoufalidis \cite{DG},  building on some deep gauge-theoretic results of Kronheimer and Mrowka \cite{KronheimerMrowka}. However, if we use the full DGA rather than the homology-commutative quotient, then there is an alternate proof that knot contact homology detects the unknot, in unpublished work of the first and third authors along with Cieliebak and Latschev, which uses nothing more than the Loop Theorem.

\subsection{Strategy and outline of paper}\label{ssec:outline}
We conclude this introduction with a description of the strategy of
our proof of Theorem~\ref{thm:mainlink}. A loose sketch of this
approach has previously been summarized in \cite{EkholmEtnyre05}.

The unit cotangent bundle $U^{\ast}\R^{3}$ with contact form the
restriction of the Liouville form $p\,dq$ is contactomorphic to the
1-jet space $J^1(S^2)\approx T^{\ast}S^{2}\times\R$ of $S^2$, with contact form $dz-p\,dq$ where $z$ is the coordinate along the $\R$-factor. To find the rigid holomorphic disks that contribute to Legendrian homology for a Legendrian submanifold of any $1$-jet space, one can use gradient flow trees \cite{Ekholm07}. In our case, rather than directly examining the gradient flow trees for the conormal lift $\Lambda_K \subset J^1(S^2)$ of a link $K$, we break the computation down into three steps.
First, we compute the differential for the conormal lift
$\Lambda=\Lambda_U$ of the unknot $U$ which we represent as a planar
circle. This is done by calculating gradient flow trees for a
particular small perturbation of $\Lambda$.

Second, given an arbitrary link $K$, let $B$ be a braid whose closure is $K$. We can view the closure of $B$ as lying in the solid torus given as a small tubular neighborhood of $U$, and we can thus realize $K$ as a braid that is $C^1$-close to $U$. Then $\Lambda_K$ lies in a small neighborhood of $\Lambda$, and by the Legendrian neighborhood theorem we can choose this neighborhood to be contactomorphic to the $1$-jet space $J^1(\Lambda) = J^1(T^2)$:
\[
\Lambda_K \subset J^1(\Lambda) = J^1(T^2) \subset J^1(S^2).
\]
We can use gradient flow trees to find the rigid holomorphic disks in $J^1(T^2)$ with boundary on $\Lambda_K$. This turns out to be easier than directly calculating the analogous disks in $J^1(S^2)$ because $\Lambda_K$ is everywhere transverse to the fibers of $J^1(T^2)$ ({\em i.e.\ }~its caustic is empty). We can assemble the result, which is computed in terms of the braid $B$, as the Legendrian DGA of $\Lambda_K \subset J^1(T^2)$, which is a subalgebra of the Legendrian DGA of $\Lambda_K \subset J^1(S^2)$.

Finally,  we prove that there is a one-to-one correspondence between
rigid holomorphic disks in $J^1(S^2)$ with boundary on $\Lambda_K$,
and certain objects that we call \textit{rigid multiscale flow trees}
determined by $\Lambda$ and $\Lambda_K$, which arise as follows. As we
let $K$ approach $U$, $\Lambda_K$ approaches $\Lambda$ and each
holomorphic disk with boundary on $\Lambda_K$ approaches a holomorphic
disk with boundary on $\Lambda$ with partial flow trees of
$\Lambda_K\subset J^{1}(\Lambda)$ attached along its boundary. Here
the flow trees correspond to the thin parts of the holomorphic disks
before the limit; in these parts, the energy approaches zero. In a multiscale flow tree we substitute the holomorphic disk with boundary on $\Lambda$ in this limit by  the corresponding flow tree computed in the first step, and obtain a flow tree of $\Lambda\subset J^{1}(S^{2})$ with portions of flow trees of $\Lambda_K\subset J^1(\Lambda)$ attached along its boundary.

Here is the plan for the rest of the paper. In Section~\ref{sec:background}, we present background material, including the definitions of the conormal construction, Legendrian homology, and gradient flow trees. In Section~\ref{sec:diffthrflowtree}, we use gradient flow trees to accomplish the first two steps in the three-step process outlined above: calculating holomorphic disks for $\Lambda \subset J^1(S^2)$ and $\Lambda_K \subset J^1(T^2)$. We extend these calculations to multiscale flow trees in Section~\ref{sec:combdiff} to count holomorphic disks for $\Lambda_K \subset J^1(S^2)$, completing the proof of Theorem~\ref{thm:mainlink}.

The computations in Section~\ref{sec:combdiff} rely on some technical results about multiscale flow trees whose proofs are deferred to the final two sections of the paper. In Section~\ref{sec:diskandgentree}, we establish Theorem~\ref{thm:diskandgentree}, which gives a one-to-one correspondence between holomorphic disks and multiscale flow trees. In Section~\ref{sec:orientations}, we prove Theorems~\ref{thm:signunknottree} and \ref{thm:combsign}, which deal with combinatorial signs arising from choices of orientations of the relevant moduli spaces of flow trees and multiscale flow trees.

\subsection*{Acknowledgments}
TE was partially supported by the G\"oran Gustafsson Foundation for Research in Natural Sciences and Medicine.
JBE was partially supported by NSF grant DMS-0804820.
LLN was partially supported by
NSF grant DMS-0706777 and NSF CAREER grant DMS-0846346.
MGS was partially supported by NSF grants DMS-0707091 and
DMS-1007260. The authors would also like to thank MSRI for hosting them in spring 2010 during part of this collaboration. Finally, we thank Cecilia Karlsson for useful remarks on coherent orientations.

\section{Conormal Lifts, Legendrian Homology, and Flow Trees}\label{sec:background}
In this section we review  background material. We begin by discussing
the conormal lift construction for links in $\R^{3}$ in
Section~\ref{ssec:conormal}, and place it in the context of $1$-jet
spaces of surfaces in Section~\ref{ssec:leginjet}.  In
Section~\ref{ssec:conhom}, we review
the definition of Legendrian homology. For our purposes, the
holomorphic disks counted in Legendrian homology are replaced by flow
trees, which we discuss in Section~\ref{ssec:gft}. Vector splitting along
flow trees, which is needed later when assigning signs to rigid flow trees,
is discussed in Section~\ref{ssec:vectorsplit}.
We end with a compilation in
Section~\ref{ssec:alg} of algebraic results
about the map $\phi_B$ and the matrices $\Phi^L_B$ and $\Phi^L_R$ that
are used in the proof of our main result, Theorem~\ref{thm:mainlink}.

\subsection{The conormal construction}\label{ssec:conormal}
Fixing the standard flat metric on $\R^3$ we consider the unit cotangent bundle $U^*\R^3\approx\R^3\times S^2$ of $\R^3$. The Liouville form on the cotangent bundle $T^*\R^3$ is
\[
\theta=\sum_{i=1}^3 p_i\,dq_i,
\]
where $q=(q_1,q_2,q_3)$ are coordinates on $\R^3$ and $p=(p_1,p_2,p_3)$ are coordinates in the
fibers of $T^*\R^3$. The restriction $\theta|_{U^{\ast}\R^{3}}$ is a
contact $1$-form. We denote its associated contact structure $\xi=\krn\,
\theta$.

The standard contact $1$-form on the $1$-jet space $J^{1}(S^{2})=T^{\ast}S^{2}\times\R$ of $S^{2}$ is given by
\[
\alpha=dz-\theta,
\]
where $z$ is a coordinate in the $\R$-factor and where $\theta$ is the Liouville form on $T^{\ast}S^{2}$. Using the standard inner product $\la\cdot\,,\cdot\ra$ on $\R^{3}$ to identify vectors and covectors we may write $T^{\ast}S^{2}$ as follows:
\[
T^{\ast}S^{2}=\left\{(x,y)\in\R^{3}\times \R^{3}\colon |x|=1\,,\,\la x,y\ra=0\right\}.
\]
We define the map $\phi\colon U^{\ast}\R^{3}\to J^{1}(S^{2})=T^{\ast}S^{2}\times\R$ by
\begin{equation}\label{eq:maincontacto}
\phi(q,p)=\left(p\,\,,\,\,q-\la q,p\ra p\,\,,\,\, \la q,p\ra\right)
\end{equation}
and notice that $\phi$ is a diffeomorphism such that $\phi^{\ast}\alpha=\theta$. Thus $U^{\ast}\R^{3}$ and $J^{1}(S^{2})$ are contactomorphic.

Let $K$ be a knot or link in $\R^3$. We associate to $K$ its conormal
lift
\[
\Lambda_K=\left\{u\in U^*\R^3|_K\colon u(v)=0 \text{ for all } v\in TK\right\},
\]
which topologically is a union of tori, one for each component of $K$.
By definition $\theta|_{\Lambda_K}=0$, {\em i.e.}, $\Lambda_{K}$ is Legendrian. Furthermore,
smooth isotopies of $K$ induce Legendrian isotopies of $\Lambda_K$. In particular the Legendrian
isotopy class of $\Lambda_K$ (and consequently any invariant thereof) is an invariant of the isotopy class of $K$. For more on this construction see \cite{EkholmEtnyre05}.

\subsection{Legendrian submanifolds in the $1$-jet space of a surface}\label{ssec:leginjet}
Let $S$ be a surface and
consider a Legendrian submanifold $\Lambda\subset
J^{1}(S)=T^{\ast}S\times\R$. After a small perturbation of $\Lambda$
we can assume that $\Lambda$ is in general position with respect to
the {\em Lagrangian projection} $\Pi\colon J^1(S)\to T^*S$, the {\em
  front projection} $\Pi_F\colon J^1(S)\to S\times\R$, and the {\em
  base projection} $\pi\colon J^1(S)\to S$.

General position for $\Pi$ means that the self intersections
of the Lagrangian immersion $\Pi(L)$ consists of transverse double points. General position for $\Pi_F$ means that singularities of $\Pi_{F}|_{\Lambda}$ are of two types: {\em cusp edges} and {\em swallow tails}. For a more precise description we first introduce some notation: the {\em caustic} $\Sigma\subset \Lambda$ is the critical set of $\Pi_F|_\Lambda$. General position for $\Pi_F$ first implies that $\Sigma$ is a closed $1$-submanifold along which the rank of
$\Pi_F|_{\Lambda}$ equals $1$. The kernel field $\ker(d\Pi_F|_{\Lambda})$ is then a line field $l$ along
$\Sigma$ and general position for $\Pi_F$ implies that
$l$ has only transverse tangencies with $T\Sigma$ ({\em i.e.\ }~all tangencies have order one). This gives a stratification of the caustic:
the $1$-dimensional top stratum, called the {\em cusp edge}, consisting of points where $l$ is transverse to $\Sigma$, and the $0$-dimensional complement, called the set of {\em swallow tail points}, where $l$ is tangent to $T\Sigma$ of order $1$.
Finally, general position for $\pi$ means that the image of the caustic
under $\pi$ is stratified:
$\pi(\Sigma)=\Sigma_1\supset(\Sigma_2\cup\Sigma_2^{\rm sw})$, where
$\Sigma_2$ are transverse self-intersections of $\pi(\Sigma)$ and
$\Sigma_{2}^{\rm sw}$ is the image under $\pi$ of the swallow tail
points around which $\pi(\Sigma)$ has the form of a semi-cubical cusp.

For $\Lambda$ in general position we obtain the following local descriptions:
\begin{itemize}
\item If $p\in \pi(\Lambda)-\Sigma_1$ then $p$ has a neighborhood
  $U\subset S$ such that $\pi^{-1}(U)\cap \Lambda\subset J^{1}(S)$ is
  the
union of a finite number of $1$-jet graphs of functions $f_1,\ldots, f_n$ on $U$. We call such
functions \dfn{local defining functions of $\Lambda$}.
\begin{figure}[htb]
\centering
\includegraphics{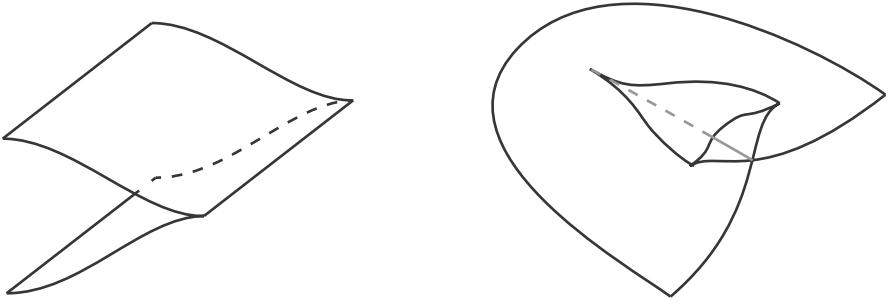}
\caption{A local model for the cusp edge is shown on the left and a local model for the swallow tail singularity is shown on the right.}
\label{fig:locmod}
\end{figure}
\item  If $p\in\Sigma_1-(\Sigma_2\cup\Sigma_{2}^{\rm sw})$ then there is a
neighborhood $U\subset S$ of $p$ which is diffeomorphic to the open unit disk, $U=U_+\cup U_-$,
where $U_+$ (respectively $U_-$) corresponds to the upper (respectively lower) half disk and
where $\pi(\Sigma)\cap U$ corresponds to $U_+\cap U_-$, with the
following properties. The intersection $\pi^{-1}(U)\cap \Lambda\subset
J^{1}(S)$ consists of $n\ge 0$ smooth sheets given by the $1$-jet
graphs of local defining functions and one cusped sheet given by the
$1$-jet graph of two functions $h_0,h_1\colon U_+\to \R$. We can
choose coordinates $(x_1,x_2)$ near $\pa U_+$ so that
$h_0$ and $h_1$ have the normal form
\begin{align*}
h_0(x_1, x_2)&= \frac 13 (2x_1)^{\frac 32} + \beta x_1 + \alpha x_2\\
h_1(x_1,x_2) &= -\frac 13 (2x_1)^{\frac 32} + \beta x_1 + \alpha x_2
\end{align*}
for some constants $\alpha$ and $\beta$, see \cite[Equation
(2--1)]{Ekholm07}. In particular $dh_0$ and $dh_1$ agree near the
boundary, see the left diagram in Figure~\ref{fig:locmod}. We also call $h_0$ and $h_1$ local defining functions (for a cusped sheet).
\item If $p\in\Sigma_2^{\rm sw}$ then there is a neighborhood $U$ in
  which $n\ge 0$ sheets are smooth and one sheet is a standard swallow
  tail sheet, see \cite[Remark 2.5]{Ekholm07} and the right diagram in
  Figure~\ref{fig:locmod}.
\item If $p\in\pi(\Sigma_{2})$ there is a neighborhood $U$ diffeomorphic to the unit disk over which the Legendrian consists of $n\ge 0$ smooth sheets and two cusped sheets, one defined over the upper half-plane and one over the right half-plane.
\end{itemize}
The following simple observation concerning double points of $\Pi$ will be important in the definition of the Legendrian homology algebra.
\begin{lem}\label{lem:critpts}
A point $s\in \Pi(\Lambda)$ is a double point if and only if $p=\pi(\Pi^{-1}(s))$ is a critical point of the difference of two local defining functions for $\Lambda$ at $p$.
\end{lem}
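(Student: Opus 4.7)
The plan is to work in the local coordinates recorded in the bulleted list just above the lemma and reduce the statement to a direct computation. The key observation is that the Lagrangian projection of a $1$-jet graph is particularly transparent: if $f$ is a local defining function for $\Lambda$ on $U\subset S$, then $\Pi$ sends the graph $\{(x,df(x),f(x))\colon x\in U\}$ to the Lagrangian immersion $x\mapsto (x,df(x))$ in $T^{\ast}S$. Consequently, for any sheet of $\Lambda$ defined by a function $f$, the composition $\pi\circ\Pi$ restricted to that sheet is just the base projection $x\mapsto x$.

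I would first treat the case in which both branches of $\Pi^{-1}(s)$ lie on smooth sheets, with local defining functions $f_1, f_2$ centered at base points $p_1, p_2$. The identity $(p_1, df_1(p_1)) = (p_2, df_2(p_2))$ in $T^{\ast}S$ forces $p_1 = p_2 =: p$ from the first coordinates and $df_1(p) = df_2(p)$ from the second, which is exactly the equation $d(f_1-f_2)(p)=0$; conversely, any such critical point $p$ produces a double point of $\Pi$. In this case the equivalence is a tautology in coordinates, and the unique common base point $p = \pi(\Pi^{-1}(s))$ is well defined as required by the statement.

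The one point that needs care, and which I expect to be the main obstacle, is handling a double point where at least one branch of $\Pi^{-1}(s)$ lies on the caustic. Here I would invoke the normal forms for cusp edges, swallow tails, and transverse crossings of $\pi(\Sigma)$ described immediately before the lemma: a cusped sheet is presented as the pair of $1$-jet graphs of the defining functions $h_0, h_1$ over the half-disk $U_+$, and swallow tails and crossings of the caustic admit analogous explicit models. In each case the Lagrangian projection restricted to the relevant piece of $\Lambda$ is still given by $x\mapsto (x, dh_i(x))$ over the appropriate half-disk or wedge, so the same calculation applies verbatim. The only subtlety is that such a defining function is defined only on a half-disk, but the critical-point condition $d(f_1-f_2)(p)=0$ is pointwise, so the statement of the lemma holds once we allow local defining functions to include the half-disk sheets $h_0, h_1$ coming from cusped strata.
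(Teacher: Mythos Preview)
Your argument is correct and is exactly the natural computation: on a sheet given by the $1$-jet graph of $f$, the Lagrangian projection is $x\mapsto(x,df(x))$, so two sheets meet in $T^{\ast}S$ over $p$ precisely when $df_1(p)=df_2(p)$. The paper does not supply a proof at all---it introduces the lemma as a ``simple observation'' and moves on---so your write-up is already more detailed than the source; in particular, your discussion of the cusp case is a nice bit of care that the paper leaves entirely implicit.
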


\subsection{Legendrian homology in the $1$-jet space of a surface}\label{ssec:conhom}
In this subsection we briefly review Legendrian (contact) homology in
$J^{1}(S)$, where $S$ is an orientable surface. In fact, for our
applications it is sufficient to consider $S\approx S^{2}$ and
$S\approx T^{2}$. We refer the reader to
\cite{EkholmEtnyreSullivan05b} for details on the material presented
here.

\subsubsection{Geometric preliminaries}\label{sssec:geomprelim}
As in Section \ref{ssec:conormal}, we consider $J^{1}(S)=T^{\ast} S\times\R$, with contact form $\alpha=dz-\theta$, where $z$ is a coordinate in the $\R$-factor and $\theta$ is the Liouville form on $T^{\ast}S$.  
The Reeb vector field $R_{\alpha}$ of $\alpha$ is given by
$R_{\alpha}=\pa_z$ and thus Reeb flow lines are simply $\{p\}\times\R$
for any $p\in T^{\ast}S$. In particular, if $\Lambda\subset J^{1}(S)$
is a Legendrian submanifold then Reeb chords of $\Lambda$ (Reeb flow
segments which begin and end on $\Lambda$) correspond to pairs of
distinct points $y_1,y_2\in \Lambda$ such that $\Pi(y_1)=\Pi(y_2)$, where $\Pi\colon J^1(S)\to T^{\ast}S$ is the Lagrangian projection. As
noted in Lemma~\ref{lem:critpts}, for such Legendrian submanifolds
there is thus a bijective correspondence between Reeb chords of
$\Lambda\subset T^{\ast}S\times\R$ and double points of $\Pi(\Lambda)$
and in this case we will sometimes use the phrase ``Reeb chord'' and
``double point'' interchangeably. As mentioned in Section~\ref{ssec:leginjet}, after small perturbation
of $\Lambda$, $\Pi|_{\Lambda}$ is a self-transverse Lagrangian
immersion with a finite number of double points.

Consider an almost complex structure on $T^{\ast} S$ which is compatible with $d\theta$, by which we can view $T(T^{\ast}S)$ as a complex vector bundle. Since $T(T^{\ast}S)|_{S}$ is the complexification of $T^{\ast}S$ we find that $c_1(T(T^{\ast}S))=0$ and hence there is a complex trivialization of
$T(T^{\ast}S)$. The orientation of $S$ induces a trivialization of the
real determinant line bundle (second exterior power) $\Lambda^{2}T^{\ast}M$ which in turn gives a
trivialization, called the \emph{orientation trivialization}, of the complex determinant line bundle
$\Lambda^{2}T(T^{\ast}S)$ (second exterior power over $\C$). The complex trivialization of
$T(T^{\ast}S)$ is determined uniquely up to homotopy by the condition
that it induces a complex trivialization of
$\Lambda^{2}T(T^{\ast}S)$ which is homotopic to the orientation trivialization. We will work throughout with complex trivializations of $T(T^{\ast}S)$ that satisfy this condition.

Let $\Lambda\subset T^{\ast}S\times\R$ be a Legendrian submanifold and
let $\gamma$ be a loop in $\Lambda$. Then the tangent planes of
$\Pi(\Lambda)$ along $\Pi(\gamma)$ constitute a loop of Lagrangian
subspaces. Using the trivialization of $T(T^{\ast}S)$ we get a loop of
Lagrangian subspaces in $\C^{2}$; associating to each loop $\gamma$
the Maslov index of this loop of Lagrangian subspaces gives a
cohomology class $\mu\in H^{1}(\Lambda;\Z)$ called the {\em Maslov
  class} of $\Lambda$.   We assume throughout that the Maslov class of
$\Lambda$ vanishes so that the Maslov index of any such loop equals
$0$. (Note that this implies that $\Lambda$ is orientable.) In order
to orient the moduli spaces of holomorphic disks with boundary on
$\Lambda$ we will further assume that $\Lambda$ is spin and equipped
with a spin structure. See \cite{EkholmEtnyreSullivan05c} for
more details. For future use, we note that the Maslov index of the
loop of Lagrangian tangent spaces as described can be computed by
first making the loop generic with respect to fibers of $T^{\ast}S$
and then counting (with signs) the number of instances where the
tangent space has a $1$-dimensional intersection with the tangent
space of some fiber. In terms of the front projection, once
$\gamma$ is generic (in particular, transverse to the cusp edges and
disjoint from the swallow tail points), one counts the number of
times the curve transversely intersects cusp edges going down (that is
with the $\R$ coordinate of $S\times \R$ decreasing)  minus the number
of times it transversely intersects cusp edges going up.

\subsubsection{The Legendrian algebra}\label{sssec:Legalg}

In the remainder of the present subsection, Section~\ref{ssec:conhom}, we
describe the differential graded
algebra $(LA(\Lambda),\pa)$, which we call the \textit{Legendrian
  DGA}, associated to a
Legendrian submanifold $\Lambda \subset J^1(S)$, whose homology is the
\textit{Legendrian homology} of $\Lambda$. This description is divided
into three parts: the algebra, the grading, and the differential.

The algebra $LA(\Lambda)$ is simple to describe; in particular, the Legendrian algebra is simpler in our setting than for general contact manifolds since $J^1(S)$ has no closed Reeb orbits. Assume that $\Lambda$
is in general position so that $\Pi|_{\Lambda}$ is a Lagrangian
immersion with transverse self-intersections. Let $\ch$ denote the
set of Reeb chords of $\Lambda$. Then $LA(\Lambda)$ is the
noncommutative unital algebra over $\Z$ generated by:
\begin{itemize}
\item
elements of $\ch$ (Reeb chords) and
\item
$\Z[H_1(\Lambda)]$ (homology classes).
\end{itemize}
Thus a typical generator of $LA(\Lambda)$ viewed as a $\Z$-module is a monomial of the form
\[
\gamma_0 q_1 \gamma_1 q_2 \cdots q_m \gamma_m
\]
where $q_j \in \ch$ and $\gamma_j \in H_1(\Lambda)$, and
multiplication of generators is the obvious multiplication (with $H_1(\Lambda)$ viewed as a multiplicative group). Note that
homology classes do not commute with Reeb chords; $LA(\Lambda)$ is
more precisely defined as the tensor algebra over $\Z$ generated by
elements of $\ch$ and elements of $H_1(\Lambda)$, modulo the relations
given by the relations in $H_1(\Lambda)$.

To simplify notation, for the remainder of the paper we will assume that
$\Lambda$ is a disjoint union of oriented $2$-tori
$\Lambda_1,\ldots,\Lambda_r$. We will further assume that each
component $\Lambda_j$ is equipped with a fixed symplectic basis
$(\lambda_j,\mu_j)$ of $H_1(\Lambda_j)$ (for conormal lifts, these
correspond to the
longitude and meridian of the link component in $\R^3$). Then
$\Z[H_1(\Lambda)] = \Z[\lambda_1^{\pm 1},\mu_1^{\pm 1},\ldots,
\lambda_r^{\pm 1},\mu_r^{\pm 1}]$.

\begin{rmk}
In the subject of Legendrian homology, it is often customary to
quotient by commutators between Reeb chords and homology classes, to
obtain the \textit{homology-commutative} algebra. This quotient can
also be described as the tensor algebra over $\Z[H_1(\Lambda)]$ freely
generated by elements of $\ch$. The homology-commutative algebra is
the version of the Legendrian algebra considered in many sources, in
particular the combinatorial formulation of knot contact homology in
\cite{Ng08} and the transverse version in \cite{EkholmEtnyreNgSullivan10,Ng10}.

From the geometric viewpoint of the present paper, there is no reason
to pass to the homology-commutative quotient, and we will adhere to
the rule that homology classes do not commutate with Reeb chords in
the Legendrian algebra. There are indications that the fully
noncommutative Legendrian DGA may be a stronger invariant than the
homology-commutative quotient; see the last paragraph of Section~\ref{ssec:mainthm}.
\end{rmk}

\begin{rmk}
When $\Lambda$ has more than one component, there is an algebra
related to the Legendrian algebra $LA(\Lambda)$, called the
\textit{composable algebra}, which is sometimes a more useful object
to consider than $LA(\Lambda)$. See for instance \cite{BEE,BEE2}. We do not
need the composable algebra in this paper, but we briefly describe its
definition here and note that 
one can certainly modify our
definition of knot contact homology to the composable setting; that is,
the Legendrian DGA for knot contact homology descends to a differential on the composable algebra.

Suppose $\Lambda$ has $r$ components $\Lambda_1,\ldots,\Lambda_r$. Let $\ri$ denote the ring
\[
\ri=\bigoplus_{j=1}^{r} \Z[H_1(\Lambda_j)]
\]
with multiplication given as follows: if $\gamma_1 \in
\Z[H_1(\Lambda_{j_1})]$ and $\gamma_2 \in \Z[H_1(\Lambda_{j_2})]$,
then $\gamma_1 \cdot \gamma_2$ is $0$ if $j_1 \neq j_2$, or
$\gamma_1\gamma_2 \in \Z[H_1(\Lambda_{j_1})] \subset \ri$ if $j_1=j_2$.
 (Note that
$\ri$ is nearly but not quite a quotient of $\Z[H_1(\Lambda)]$: the
identity in $\Z[H_1(\Lambda_j)]$ is an idempotent in $\ri$ distinct from $1$.) Let
$\ch_{ij}$ denote the set of Reeb chords beginning on $\Lambda_i$ and
ending on $\Lambda_j$. A
\textit{composable monomial} in $LA(\Lambda)$ is a monomial of the
form
\[
\gamma_0 q_1 \gamma_1 q_2 \cdots q_l \gamma_l
\]
for some $l\geq 0$, such that there exist
$i_0,\ldots,i_l\in\{1,\ldots,m\}$ for which $\gamma_{i_j} \in
H_1(\Lambda_{i_j})$, viewed as an element of $\ri$, and $q_j \in \ch_{i_{j-1}i_j}$ for all $j$.

The \textit{composable algebra} is the $\Z$-module
generated by composable monomials, with multiplication given in the
obvious way; note that the product of two composable monomials is either $0$ or another composable monomial. This algebra, which is naturally the path algebra of a quiver with vertices given by components of $\Lambda$ and edges given by Reeb chords, is almost but not quite a quotient of
$LA(\Lambda)$: if we replace $\Z[H_1(\Lambda)]$ by $\ri$ in the definition of $LA(\Lambda)$, then the composable algebra is the quotient setting non-composable monomials to $0$.
\end{rmk}

\subsubsection{Grading in the Legendrian algebra}
In order to define the grading on $LA(\Lambda)$ we fix a point $p_j\in\Lambda_j$ on each component $\Lambda_j$ of $\Lambda,$ and for each Reeb chord endpoint in $\Lambda_j$ we choose an {\em endpoint path} connecting the endpoint to $p_j$. Furthermore, for $j=1,\ldots,r$, we choose paths $\gamma_{1j}$ in $T^{\ast}S$ connecting $\Pi(p_1)$ to $\Pi(p_j)$ and symplectic trivializations of $\gamma_{1j}^{\ast}T(T^{\ast}S)$ in which the tangent space $\Pi(T_{p_1}\Lambda)$ corresponds to the tangent space $\Pi(T_{p_j}\Lambda)$; for $j=1$, $\gamma_{11}$ is the trivial path. For any $i,j \in \{1,\ldots,r\}$, we can then define $\gamma_{ij}$ to be the path $-\gamma_{1i} \cup \gamma_{1j}$ joining $\Pi(p_i)$ to $\Pi(p_j)$, and $\gamma_{ij}^{\ast}T(T^{\ast}S)$ inherits a symplectic trivialization from the trivializations for $\gamma_{1i}$ and $\gamma_{1j}$.

The grading $|\cdot|$ in $LA(\Lambda)$ is now the following. First, homology variables have degree $0$:
\[
|\lambda_j|=|\mu_j|=0\quad\text{ for }j=1,\dots,r.
\]
(Recall that the Maslov class $\mu$ of $\Lambda$ is assumed to vanish; in general a homology variable $\tau\in H_1(\Lambda)$ would be graded by $-\mu(\tau)$.)
Second, if $q$ is a Reeb chord we define the grading by associating a path of Lagrangian subspaces to $q$. We need to consider two cases according to whether the endpoints of the chord lie on the same component of $\Lambda$ or not.
In the case of equal components, both equal to $\Lambda_j$, consider the path of tangent planes along the endpoint path from the final point of $q$ to $p_j$ followed by the reverse endpoint path from $p_j$ to the initial point of $q$. The endpoint of this path is the tangent space of $\Pi(\Lambda)$ at initial point of $q$. We close this path of Lagrangian subspaces to a loop $\widehat\gamma_q$ by a ``positive rotation'' along the complex angle between the endpoints of the path (see \cite{EkholmEtnyreSullivan05a} for details). In the case of different components, we associate a loop of Lagrangian subspaces $\widehat \gamma_{q}$ to $q$ in the same way except that we insert the path of Lagrangian subspaces induced by the trivialization along the chosen path, $\gamma_{ij},$ connecting the components of the endpoints in order to connect the two paths from Reeb chord endpoints to chosen points. The grading of $q$ is then
\[
|q|=\mu(\widehat\gamma_q)-1,
\]
where $\mu$ denotes the Maslov index.

\begin{rmk}\label{rmk:gradingmixed}
Note that the grading of a pure Reeb chord (a chord
whose start and endpoint lie on the same component of $\Lambda$) is well-defined ({\em i.e.\ }~independent of choice of paths to the base point and symplectic trivializations) because the Maslov class vanishes. The grading of mixed chords however depends on the choice of trivializations along the paths $\gamma_{1j}$ connecting base points. Changing the trivializations changes the gradings as follows: for some fixed $(n_1,\ldots,n_r) \in \Z^r$, for all $i,j$, $|q|$ is replaced by $|q|+n_i-n_j$ for all Reeb chords $q$ beginning on component $i$ and ending on component $j$. If we choose orientations on each component of $\Lambda$, and stipulate that the base points $p_j$ are chosen such that $\pi\colon \Lambda \to S$ is an orientation-preserving local diffeomorphism at each $p_j$ (and that the symplectic trivializations on $\gamma_{1j}$ preserve orientation), then the mod $2$ grading of the mixed chords is well-defined, independent of the choice of base points.
\end{rmk}

For computational purposes we mention that the grading can be computed
in terms of the front projection $\Pi_F\colon J^1(S)\to S\times\R$, compare \cite{EkholmEtnyreSullivan05b}.  By Lemma~\ref{lem:critpts}, a Reeb chord $q$
corresponds to a critical point $x$ of the difference of two local defining functions $f_1$ and
$f_2$ for $\Lambda\subset J^1(S)$. 
We make the following assumptions, which hold generically: $\Lambda$ is in general position with respect to the front projection; the critical point is
non-degenerate with index denoted by $\ix_x(f_1-f_2)$, where $f_1$ defines the upper local sheet (the sheet with the larger $z$-coordinate) of $\Lambda$; and the base points $p_j$ do not lie on cusp edges. 
\begin{lem}\label{lem:indexcritpt}
There is a choice of trivializations for which the following grading formula holds for all Reeb chords.
Let $q$ be a Reeb chord with final point (respectively initial point)
in component $\Lambda_j$ (respectively $\Lambda_i).$
Let $\gamma$ be the union of the endpoint path from
the chord's final point to the base point $p_j,$ and the reverse  endpoint path from base point $p_i$ to the chord's initial point. Assume $\gamma$ is in general position with respect to the stratified caustic $\Sigma\subset\Lambda$. Then
\begin{equation}\label{eq:frontnu}
    |q| = D(\gamma) - U (\gamma) + \ix_{x}(f_1-f_2)-1,
\end{equation}
where $D(\gamma)$ (respectively $U(\gamma)$) is the number of cusps that $\gamma$ traverses in the
downward (respectively upward) $z$-direction.
\end{lem}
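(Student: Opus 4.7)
The plan is to compute the Maslov index $\mu(\widehat{\gamma}_q)$ of the closed loop of Lagrangian subspaces defining $|q|$ and then subtract $1$. The loop decomposes naturally into three pieces, which I will analyze separately and sum by additivity of the Maslov index: the tangent planes to $\Pi(\Lambda)$ along the endpoint path $\gamma$; the Lagrangian transport along the path $\gamma_{ij}$ connecting the base points (present only for mixed chords); and the closing positive complex-angle rotation from the upper tangent plane $T_{+}$ to the lower tangent plane $T_{-}$ at the double point $\Pi(q)$.

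First, I would choose the symplectic trivializations along each $\gamma_{1j}$ to match, at both endpoints, the canonical front splitting $T(T^{\ast}S)=(\text{horizontal})\oplus(\text{vertical fiber})$, identified with $\C^{2}$ so that the vertical fiber is the imaginary subspace. This is possible because the base points $p_j$ are chosen off the caustic and with $\pi$ an orientation-preserving local diffeomorphism there. With this choice, the Lagrangian transport along $\gamma_{ij}$ keeps $\Pi(T_{p_{j}}\Lambda)$ constantly equal to the horizontal $\R^{2}$ and hence contributes zero to $\mu$. This is precisely the ``choice of trivializations'' referred to in the lemma statement, and it is what allows the formula to depend only on $\gamma$.

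Next, I would compute the contribution from $\gamma$. In the complement of the caustic, the tangent plane of $\Pi(\Lambda)$ is a graph of a symmetric matrix over the horizontal and hence transverse to the vertical Lagrangian, so only cusp crossings contribute. Using the local normal form $h_{0},h_{1}$ from Section~\ref{ssec:leginjet}, a direct coordinate computation shows that at a downward cusp traversal the tangent plane sweeps transversely through the vertical Lagrangian in the positive direction (contributing $+1$), while at an upward traversal it sweeps in the negative direction (contributing $-1$); summing yields $D(\gamma)-U(\gamma)$ for this portion of the loop. For the closing rotation, I would write $T_{\pm}$ as graphs of the Hessians $H_{1,2}$ of the two local defining functions at the critical point $x$, diagonalize $H:=H_{1}-H_{2}$, and observe that the positive rotation from $T_{+}$ to $T_{-}$ splits into independent rotations in the eigendirections; each negative eigendirection of $H$ contributes $+1$ and each positive eigendirection contributes $0$, giving a total Maslov index equal to $\ix_{x}(f_{1}-f_{2})$. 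Adding the three contributions and subtracting $1$ yields the claimed formula.

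The main obstacle will be the sign bookkeeping throughout: verifying that downward cusps give $+1$ (rather than $-1$) and that the closing rotation's Maslov index is the Morse index (rather than the coindex) of $f_{1}-f_{2}$. Neither is deep, but both require a careful unwinding of the local normal forms in adapted coordinates, combined with the check that the complex trivialization of $T(T^{\ast}S)$ has been chosen so that its induced trivialization of the determinant line is homotopic to the orientation trivialization coming from the orientation of $S$, as specified in Section~\ref{sssec:geomprelim}.
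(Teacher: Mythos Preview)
The paper does not actually prove this lemma: it is stated immediately after the sentence ``compare \cite{EkholmEtnyreSullivan05b}'' and no proof is given, so there is nothing in the paper to compare against. Your outline is the standard argument and is essentially correct; the three-piece decomposition (endpoint path, connecting path $\gamma_{ij}$, closing rotation) and the identification of the cusp contribution with $D(\gamma)-U(\gamma)$ match exactly what the paper sketches at the end of Section~\ref{sssec:geomprelim} for the Maslov index of a loop.

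One small point to tighten: you describe the closing rotation as going ``from the upper tangent plane $T_{+}$ to the lower tangent plane $T_{-}$,'' but by the paper's definition the path of tangent planes runs from the final (upper) endpoint to the initial (lower) endpoint, so the closing rotation goes from $T_{-}$ back to $T_{+}$. This does not change the outcome---the positive-rotation convention still produces the Morse index of $f_{1}-f_{2}$---but when you do the eigendirection-by-eigendirection computation you should make sure the direction of rotation is consistent with that convention, or the sign bookkeeping you flag as the ``main obstacle'' will indeed bite.
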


\subsubsection{Differential in the Legendrian algebra}
In general, the Legendrian algebra differential of a Legendrian submanifold $\Lambda$ in a contact manifold $Y$ is defined using moduli spaces of holomorphic curves in the symplectization $\R\times Y$ of $Y$ with Lagrangian boundary condition $\R\times\Lambda$. In our case, $Y =J^1(S),$ one can instead use holomorphic disks in $T^{\ast}S$ with boundary on $\Pi(\Lambda)$. We give a brief description. See \cite{EkholmEtnyreSullivan07} for details and \cite{EkholmEtnyreSullivan05b} for the relation to curves in the symplectization.

The differential $\pa\colon LA(\Lambda)\to LA(\Lambda)$ is defined on generators and then extended by linearity over $\Z$ and the signed Leibniz rule,
\[
\partial (vw)= (\partial v)w\,+\,(-1)^{|v|}v(\pa w).
\]
We set $\pa\lambda_j=\pa\mu_j=0$ for $j=1,\dots,m$. It thus remains to
define the differential on Reeb chords.

To do this, we begin by fixing an almost complex structure $J$ on $T(T^{\ast}S)$ that is compatible with $d \theta$. Let $q_0,q_1,\dots,q_k$ be Reeb chord generators of $LA(\Lambda)$. Let $D_{k+1}$ be the unit disk in $\C$ with $k+1$ boundary punctures $z_0,\ldots,z_k$ listed in counterclockwise order. We
consider maps
\[
u:(D_{k+1},\partial D_{k+1}-\{z_0,\dots,z_k\})\to (T^{\ast}S, \Pi(\Lambda))
\]
such that $u|_{\pa D_{k+1}-\{z_0,\dots,z_m\}}$ lifts to a continuous map $\widetilde u$ into $\Lambda\subset T^{\ast}S$. Call a puncture $z$ mapping to the double point $q$
\dfn{positive} (respectively \dfn{negative}) if the lift of the arc just clockwise of $z$ in $\partial
D_{k+1}$ is a path in $\Lambda$ approaching the upper Reeb chord endpoint $q^+$ (respectively the lower endpoint $q^-$) and the arc just counterclockwise of $z$
lifts to a path approaching $q^-$ (respectively $q^+$). For a $(k+1)$-tuple of homology classes $\vA=(A_0,\dots,A_k)$, $A_j\in H_1(\Lambda;\Z)$, $j=0,\dots,k$, we let
\[
\M_{\vA}(q_0;q_1,\dots,q_k)
\]
denote the moduli space of $J$-holomorphic maps
\[
u\colon (D_{k+1},\pa D_{k+1}-\{z_0,\dots,z_k\})\to(T^{\ast}S,\Pi(\Lambda))
\]
with the following properties: $u|_{\pa D_{k+1}-\{z_0,\dots,z_k\}}$ lifts to a continuous map $\widetilde u$ into $\Lambda$, $z_0$ is a positive puncture mapping to $q_0$, $z_j$ is a negative puncture mapping to $q_j$, $j=1,\dots,k$, and when $\widetilde u|_{(z_j,z_{j+1})}$ is completed to a loop using the endpoint paths then it represents the homology class $A_j$. Here $(z_j,z_{j+1})$ denotes the boundary interval in $\pa D_{k+1}$ between $z_j$ and $z_{j+1}$ and we use the convention $z_{k+1}=z_0$.

For a generic $\Lambda$ and $J$ the following holds, see \cite{EkholmEtnyreSabloff09,EkholmEtnyreSullivan05a} for details:
\begin{itemize}
\item $\M_{\vA}(q_0; q_1,\ldots, q_k)$ is a manifold of dimension (recall that the Maslov class of $\Lambda$ is assumed to vanish)
\[
\dim\left(\M_{\vA}(q_0; q_1,\ldots, q_k)\right)=
|q_0|-\sum_{i=1}^k |q_i| -1
\]
which is transversely cut out by the $\overline{\partial}_J$-operator. Furthermore it admits a compactification as a manifold with boundary with corners in which the boundary consists of broken disks. Consequently, if the dimension equals 0 then the manifold is compact.
\item The moduli spaces $\M_{\vA}(q_0;q_1,\ldots,q_k)$ determined by $\Lambda$ and $J$ can be ``coherently'' oriented, see Section~\ref{sec:orientations}. (Note that the assumption that the components of $\Lambda$ are tori and thus admit spin structures is used here. The moduli space orientations depend on the choice of spin structure on $\Lambda$.)
\end{itemize}

The differential of a Reeb chord generator $q_0$ is then defined as follows:
\[
\pa q_0=
\sum_{{\text{\tiny $\begin{matrix}
&\vA=(A_0,\dots,A_k),\\
&\sum_{j=1}^k|q_j|=|q_0|-1\\
\end{matrix}$}}
}
\left|\M_{\vA}(q_0;q_1\ldots q_k)\right|\, A_0q_1A_1q_2A_2\ldots A_{k-1}q_kA_k,
\]
where $|\M|$ denotes the algebraic number of points in the oriented compact $0$-manifold $\M$.

\subsubsection{Invariance of Legendrian homology}
The main properties of $LA(\Lambda)$ are summarized in the following
theorem.
\begin{thm}[Ekholm, Etnyre and Sullivan 2007, \cite{EkholmEtnyreSullivan07}]
The map $\pa\colon LA(\Lambda)\to LA(\Lambda)$ is a differential, that is $\pa^{2}=0$. The stable tame isomorphism class of $LA(\Lambda)$ is an invariant of $\Lambda$ up to Legendrian isotopy; in particular, its homology $L{H}(\Lambda)$ is a Legendrian isotopy invariant.
(For the notion of stable tame isomorphism in this setting, see \cite{EkholmEtnyreSullivan05b}.)
\label{thm:LHinvariance}
\end{thm}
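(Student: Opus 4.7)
The plan is to prove both parts by standard moduli-space arguments familiar from Symplectic Field Theory and Legendrian contact homology, adapted to the setting of $J^{1}(S)$.

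For the statement $\partial^{2}=0$, the strategy is to study the boundary of oriented, compactified $1$-dimensional moduli spaces $\overline{\M}_{\vA}(q_{0};q_{1},\ldots,q_{k})$ with $|q_{0}|-\sum|q_{j}|=2$. First I would invoke the SFT/Gromov compactness theorem for punctured holomorphic disks in $T^{\ast}S$ with boundary on $\Pi(\Lambda)$, together with the absence of closed Reeb orbits in $J^{1}(S)$ and a monotonicity/energy argument ruling out bubbling of interior disks and spheres (using that $\Pi(\Lambda)$ is exact Lagrangian and $T^{\ast}S$ is exact symplectic). Thus the only codimension-$1$ degeneration is splitting at a Reeb chord, producing a two-level broken disk. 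By gluing (an implicit function theorem for punctured holomorphic disks at a common Reeb chord asymptote), every such broken configuration is the boundary of a unique oriented arc in a $1$-dimensional moduli space. Combined with the coherent orientation scheme (set up in Section~\ref{sec:orientations}), the signed count of broken disks equals zero, which upon expansion is precisely the coefficient of each monomial in $\partial^{2}q_{0}$; hence $\partial^{2}=0$.

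For invariance, I would use a bifurcation analysis of a generic $1$-parameter Legendrian isotopy $\Lambda_{t}$, $t\in[0,1]$. A generic path is transverse to the walls of the space of Legendrian embeddings not in general position, so the Lagrangian projections $\Pi(\Lambda_{t})$ remain in general position except at finitely many moments $t_{1}<\cdots<t_{N}$ where one of a short list of standard degenerations occurs: (i) a handle slide, i.e.\ a $1$-parameter family of rigid holomorphic disks with a boundary branch sweeping across a Reeb chord; (ii) a birth/death (tangency) moment where a pair of Reeb chords of adjacent index is created or annihilated. Between consecutive $t_{j}$ the chord set and gradings are constant, and a parametric transversality argument identifies the DGA up to canonical tame isomorphism. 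At a handle-slide moment, one shows that the DGA changes by an elementary automorphism of the algebra that conjugates a generator by an element built from the family's rigid disks; this yields a tame isomorphism. At a birth/death moment, one produces a stabilization by analyzing the newly born pair of chords as a ``trivial'' DGA summand $(q,Q)$ with $\partial Q=q+\text{higher}$, and then a tame isomorphism absorbs the higher-order terms. Concatenating over all bifurcations gives a stable tame isomorphism from $LA(\Lambda_{0})$ to $LA(\Lambda_{1})$, which induces an isomorphism on Legendrian homology.

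The main analytic obstacle is the gluing/transversality package: one needs (a) achievability of transversality by perturbing $J$ or $\Lambda$ within its Legendrian isotopy class, (b) a clean statement of asymptotic behavior at Reeb chord punctures so that gluing is a local diffeomorphism from a neighborhood of a broken disk to the $1$-dimensional moduli space, and (c) coherence of orientations under both gluing and the handle-slide/birth-death local models, so that the signs in $\partial^{2}=0$ and in the tame isomorphisms match. The first two are established in \cite{EkholmEtnyreSullivan07} for $1$-jet spaces (exploiting that $T^{\ast}S$ admits a translation-invariant almost complex structure in the $\R$ direction when lifted to the symplectization), and the third is handled by the orientation conventions we set up in Section~\ref{sec:orientations}. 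The genuine combinatorial work is the enumeration of bifurcation types and the verification that each gives rise to a stable tame isomorphism; this was carried out in full in \cite{EkholmEtnyreSullivan07}, and I would simply cite that result, since the Legendrian submanifolds $\Lambda_{K}\subset J^{1}(S^{2})$ considered here satisfy all the hypotheses (closed, orientable, spin, vanishing Maslov class).
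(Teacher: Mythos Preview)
Your outline is correct and matches the standard approach from \cite{EkholmEtnyreSullivan07}, which is exactly what the paper does: it simply cites that reference, adding only the remark that the proof there---stated for the homology-commutative quotient---extends verbatim to the fully noncommutative algebra $LA(\Lambda)$ used here. You might add one sentence addressing this last point, since the noncommutativity of homology classes with Reeb chords is emphasized throughout the paper and is not covered by the literal statement in \cite{EkholmEtnyreSullivan07}; but as you implicitly recognize by citing that reference for the ``genuine combinatorial work,'' nothing in the compactness, gluing, bifurcation, or orientation arguments is sensitive to whether the coefficients commute with the generators.
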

This result is stated and proven in \cite{EkholmEtnyreSullivan07} for the
homology-commutative quotient, but the proof there extends verbatim to the full
noncommutative algebra.

\subsection{Flow trees}\label{ssec:gft}
Consider a Legendrian submanifold $\Lambda\subset
J^{1}(S)=T^{\ast}S\times\R$ as in Section \ref{ssec:leginjet}. There is a Morse-theoretic description of the differential in the Legendrian DGA of $\Lambda$ via flow trees, as developed in \cite{Ekholm07}, which we will describe in this subsection (in less generality than \cite{Ekholm07}). The motivation is as follows.
For
$\sigma>0$, the map
\[
\phi_\sigma\colon T^{\ast}S\times \R\to T^{\ast}S\times\R:(q, p, z) \mapsto (q,\sigma p, \sigma z),
\]
satisfies $\phi_{\sigma}^{\ast}(dz -\theta)=\sigma(dz-\theta)$. Hence $\Lambda_\sigma=\phi_\sigma(\Lambda)$ is a Legendrian submanifold that is Legendrian isotopic to $\Lambda$. For $\sigma>0$ small enough there are regular almost complex structures for which there is a one-to-one correspondence between rigid holomorphic disks with boundary on $\Lambda_\sigma$ with one positive puncture and rigid flow trees determined by $\Lambda$, see \cite{Ekholm07}.

We now define flow trees.
Fix a metric $g$ on $S$. Then two local defining functions $f_0$ and $f_1$ for $\Lambda$ defined on the same open set in $S$ define a local vector field on $S$:
\[
-\nabla(f_1-f_0),
\]
where $\nabla$ denotes the $g$-gradient. The \dfn{$1$-jet lift} of a path $\gamma:(-\epsilon, \epsilon)\to S$ is a pair of continuous paths
$\gamma_i:(-\epsilon,\epsilon)\to \Lambda$, $i=0,1$ with the following properties:  $\pi\circ\gamma_i=\gamma$ and either $\gamma_0(t)\ne\gamma_1(t)$ or $\gamma_0(t)=\gamma_1(t)$ is a point in $\Sigma$. A path $\gamma\colon(a,b)\to S$ is called a \dfn{flow line of $\Lambda$} if it has a $1$-jet lift $\gamma_i, i=0,1$, such that for each
$t\in (a,b)$ there are local defining functions $f_0,f_1$ defined near $\gamma(t)$ such that $\gamma_i$ lies in the sheet determined by $f_i$, $i=0,1$, and
\[
\dot\gamma(t)=-\nabla(f_1-f_0)(\gamma(t)).
\]
See Figure~\ref{fig:flowline}.
If $\gamma$ is a flow line with $1$-jet lift $\gamma_0,\gamma_1$ we define its \dfn{cotangent lift} as $\Pi\circ\gamma_0,\Pi\circ\gamma_1$.

\begin{figure}[htb]
\labellist
\small\hair 2pt
\pinlabel $S\times \R$ [Br] at 328 136
\pinlabel $\Gamma_{f_1}$ [Br] at  265 169
\pinlabel $\Gamma_{f_0}$ [Br] at 265 118
\pinlabel $\gamma$ [Br] at 155 25
\pinlabel $\gamma_0$ [Br] at 155 99
\pinlabel $\gamma_1$ [Br] at 158 150
\pinlabel $s$ [Br] at 122 26
\pinlabel $S$ [Br] at 257 39
\pinlabel $s_0$ [Br] at 120 96
\pinlabel $s_1$ [Br] at 137 183
\endlabellist
\centering
\includegraphics{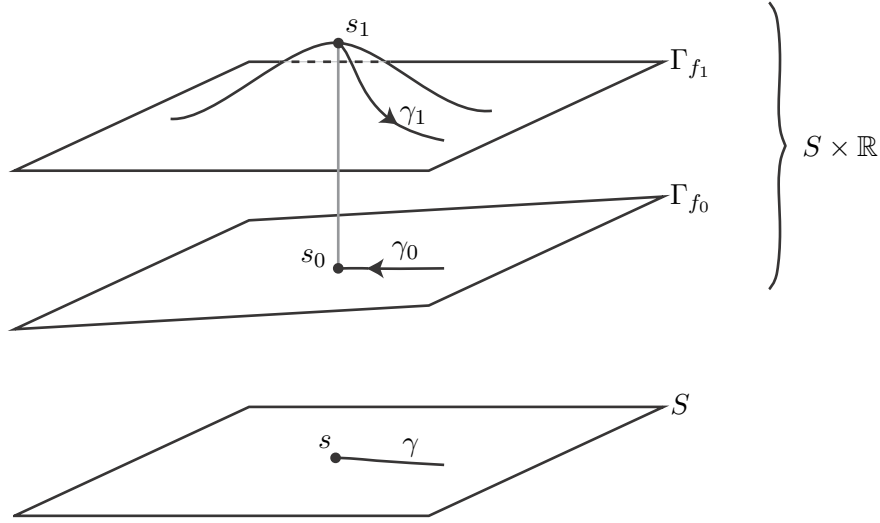}
\caption{Graphs $\Gamma_{f_i}$ of the local defining functions $f_i$ for two sheets of $\Pi_F(\Lambda)$. A flow line $\gamma$ and critical point $s$ in $S$ and their lifts to $\Pi_F(\Lambda)$. Notice $s$ is a positive puncture.}
\label{fig:flowline}
\end{figure}

Let $\gamma$ be a flow line of $\Lambda$ with $1$-jet lift $\gamma_0,\gamma_1$ in sheets with local defining functions $f_0,f_1$. Then the \dfn{flow orientation} of $\gamma_0$ (respectively $\gamma_1$) is given by the lift of the vector
\[
-\nabla(f_0-f_1)\quad\text{(respectively $-\nabla(f_1-f_0)$)}.
\]
If $\gamma\colon (-\infty,b)\to S$ (respectively $\gamma\colon (a,\infty)\to S$) is a flow line as above such
that
\[
\lim_{t\to-\infty}\gamma(t)=s\in S\quad(\text{respectively }\lim_{t\to\infty}\gamma(t)=s)
\]
is a critical point of $f_1-f_0$, then we say $s$ is a \dfn{puncture} of the flow
line $\gamma$. Let $s_i$ be the point in the sheet of $f_i$ with $\pi(s_i)=s$, $i=0,1$. Choose notation so that $f_1(s_1)>f_0(s_0)$; then $s$ is a
\[
\begin{cases}
\text{\dfn{positive puncture}} &\text{if the flow orientation of $\gamma_0$ points toward $s_0$}\\
&\text{and that of $\gamma_1$ points away from $s_1$},\\
\text{\dfn{negative puncture}} &\text{if the flow orientation of $\gamma_1$ points toward $s_1$}\\
&\text{and that of $\gamma_0$ points away from $s_0$}.
\end{cases}
\]
If $s$ is a puncture of a flow tree then the {\em chord at $s$} is the vertical line segment oriented in the direction of increasing $z$ that connects $s_0$ and $s_1$.

A {\em flow tree} of $\Lambda\subset J^{1}(S)$ is a map into $S$ with domain a finite tree $\Gamma$ with extra structure consisting of a cyclic ordering of the edges at each vertex and with the following three properties.
\begin{enumerate}
\item The restriction of the map to each edge is a flow line of $\Lambda$.
\item If $v$ is a $k$-valent vertex with cyclically ordered 
edges $e_1,\dots, e_k$ and $\bar e_j^0,\bar e_j^1$ is the cotangent lift of $e_j$, $1\le j\le k$, then there exists a pairing of lift components such that for every $1\le j\le k$ (with $k+1=1$)
\[
\bar e_j^1(v)=\bar e_{j+1}^0(v)= p\in\Pi(\Lambda)\subset T^\ast S,
\]
and such that the flow orientation of $\bar e_j^1$ at $p$ is directed toward $p$ if and only if the flow orientation of $\bar e_{j+1}^0$ at $p$ is directed away from $p$. Thus the cotangent lifts of the edges of $\Gamma$ then fit together as an oriented curve $\bar\Gamma$ in $\Pi(\Lambda)$.
\item This curve $\bar\Gamma$ is closed.
\end{enumerate}

We first notice that vertices may contain punctures. We will be
interested in flow trees with only one positive puncture. Such flow
trees can have only one puncture above each vertex, see \cite[Section
2]{Ekholm07}. Thus for such flow trees $\Gamma$ we divide the vertices
into three sets: the set of positive punctures $P(\Gamma)$, the set of negative punctures $N(\Gamma)$ and the set of other vertices $R(\Gamma)$. Recall at a (non-degenerate) puncture $v$ the corresponding difference between the local defining functions has a non-degenerate critical point. Denote its index by $I(v)$.

If $\Gamma$ is a flow tree as above then its {\em formal dimension} (see \cite[Definition 3.4]{Ekholm07}), which measures the dimension of the space of flow trees with $1$-jet lift near the $1$-jet lift of $\Gamma$, is
\begin{equation}\label{eq:dimoftrees}
\text{dim}(\Gamma)=\left(\sum_{v\in P(\Gamma)} (I(v)-1)-\sum_{v\in N(\Gamma)} (I(v)-1) +\sum_{v\in R(\Gamma)} \mu(v)\right) -1
\end{equation}
where $\mu(v)$ is the {\em Maslov content} of $v$ and is defined as
follows. For a vertex $v\in R(\Gamma)$ let $x\in \pi^{-1}(v)$ be a
cusp point that lies in the 1-jet lift of $\Gamma$ (if such a point
exists). If $\gamma_0$ and $\gamma_1$ are two 1-jet lifts of an edge
of $\Gamma$ adjacent to $v$ that contain $x$ and for which the flow
orientation of $\gamma_0$ is pointed towards $x$ and that of
$\gamma_1$ is pointed away from $x$, then we set
$\widetilde\mu(x)=+1$, respectively $-1$, if $\gamma_0$ is on the
upper, respectively lower, local sheet of $\Lambda$ near $x$ and
$\gamma_1$ is on the lower, respectively upper, local sheet. Otherwise
define $\widetilde\mu(x)=0$. We can now define $\mu(v)=\sum
\widetilde\mu(x)$ where the sum is taken over all $x\in \pi^{-1}(v)$
that are cusp points in the 1-jet lift of $\Gamma$.

There is also a notion of \emph{geometric dimension} of $\Gamma$, $\gdim(\Gamma)$, see \cite[Definition 3.5]{Ekholm07}, which measures the dimension of the space of flow trees near $\Gamma$ that have the exact same geometric properties as $\Gamma$. In \cite[Lemma 3.7]{Ekholm07} it is shown that $\gdim(\Gamma)\le \dim(\Gamma)$ for any flow tree $\Gamma$, and a characterization of the vertices of trees for which equality holds is given. In combination with transversality arguments for Morse flows, this leads to the following result, see \cite[Lemma 3.7]{Ekholm07} and \cite[Proposition 3.14]{Ekholm07}.

\begin{thm}[Ekholm 2007, \cite{Ekholm07}]\label{thm:gentree}
Given a number $n>0$ then after a small perturbation of $\Lambda$ and the metric $g$ on $S$ we may assume that for any flow tree $\Gamma$ having one positive puncture
and $\dim(\Gamma)\leq n$, the space of flow trees with the same geometric properties in a neighborhood of $\Gamma$ is a transversely cut out manifold of dimension $\gdim(\Gamma)$. In particular, trees $\Gamma$ with $\dim(\Gamma)=0$ form a transversely cut out $0$-manifold.  Furthermore, such rigid trees satisfy $\gdim(\Gamma)=\dim(\Gamma)=0$ and have vertices only of the following types, see Figure~\ref{fig:gentree}.
\begin{figure}[htb]
\labellist
\small\hair 2pt
\pinlabel $S$ [Bl] at 316 24
\pinlabel $S$ [Bl] at 138 24
\pinlabel $S$ [Bl] at 338 204
\pinlabel $S$ [Bl] at 138 187
\pinlabel $S$ [Bl] at 138 375
\pinlabel $S$ [Bl] at 338 375
\pinlabel $S\times\R$ [Bl] at 360 100
\pinlabel $S\times\R$ [Bl] at 360 275
\pinlabel $S\times\R$ [Bl] at 360 450
\endlabellist
\centering
\includegraphics{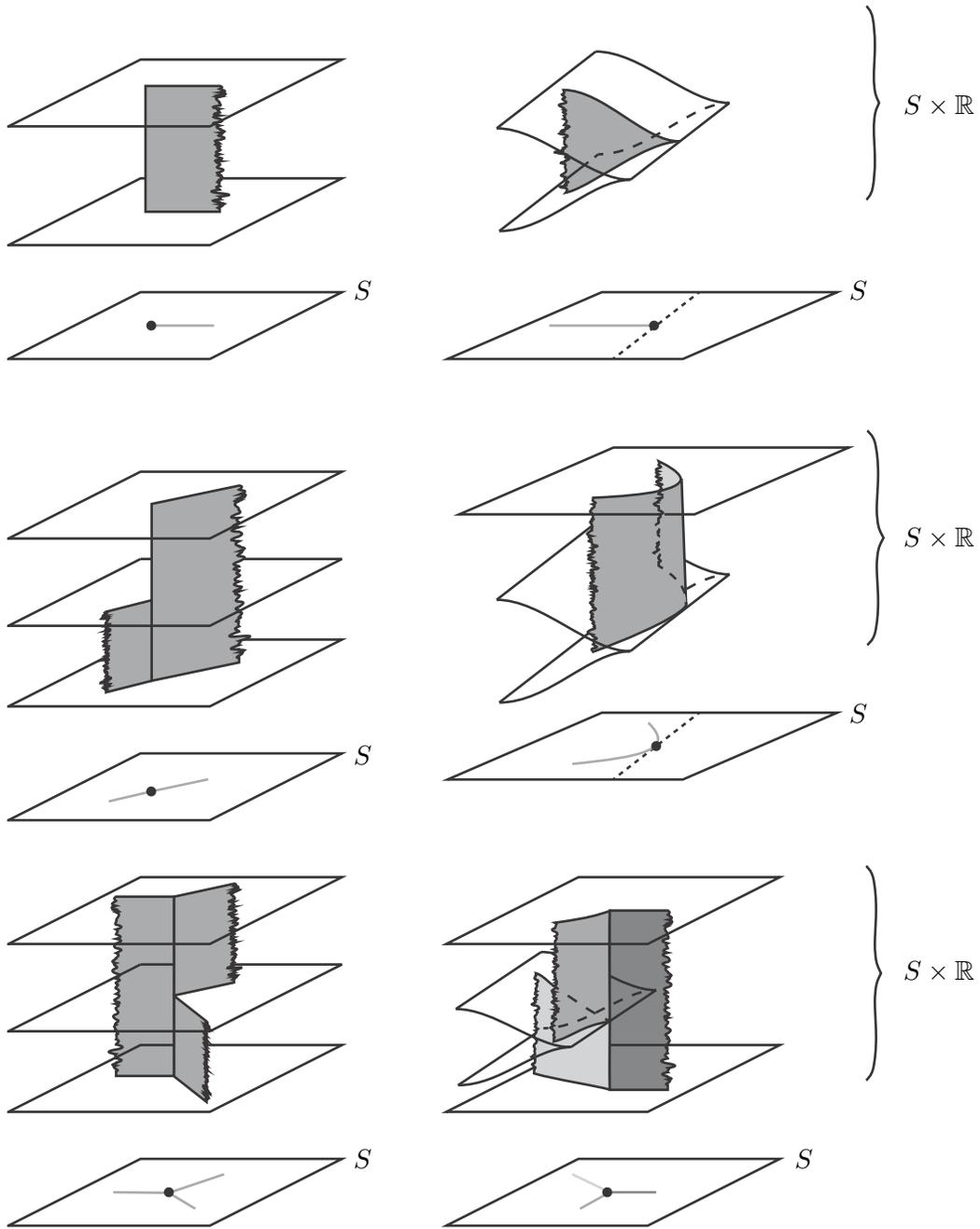}
\caption{Lifts of neighborhoods, in $\Gamma$, of vertices into the
  front space $S\times \R$. Dashed lines in $S$ are cusp edges.}
\label{fig:gentree}
\end{figure}
\begin{enumerate}
\item Valency one vertices that
\begin{enumerate}
\item are positive punctures with Morse index $\ne 0$,
\item are negative punctures with Morse index $\ne 2$, or
\item lift to a cusp edge and have Maslov content $+1$, called a cusp end.
\end{enumerate}
\item Valency two vertices that
\begin{enumerate}
\item are positive punctures with Morse index $0$,
\item are negative punctures with Morse index $2$, or
\item have order one tangencies with a cusp edge and Maslov content $-1$, called a switch.
\end{enumerate}
\item Valency three vertices that
\begin{enumerate}
\item are disjoint from the projection $\pi(\Sigma)$ of the singular locus to $S$, called a $Y_0$ vertex,  or
\item lie on the image of the cusp locus and have Maslov content $-1$, called a $Y_1$ vertex.
\end{enumerate}
\end{enumerate}
\end{thm}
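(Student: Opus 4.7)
The plan is to deduce the theorem from the formal dimension formula \eqref{eq:dimoftrees} combined with the inequality $\gdim(\Gamma)\le\dim(\Gamma)$ of \cite[Lemma 3.7]{Ekholm07}, by a parametric transversality argument together with a vertex-by-vertex case analysis. Since the formal dimension is a sum of local contributions coming from punctures and interior vertices, and since the excess $\dim-\gdim$ is also locally defined and non-negative, the strategy is to identify those vertex types for which the local excess is zero and all other local contributions to $\dim$ are also minimal, and then show that rigid trees must be assembled entirely from such vertices.

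First I would set up transversality. Fix a bound $n$ and enumerate the finitely many combinatorial types of trees with $\dim(\Gamma)\le n$, each specifying a rooted tree, a cyclic ordering at vertices, a choice of sheet pair along each edge, and a decoration of each vertex as a puncture, cusp end, switch, or $Y_0$/$Y_1$ vertex. For a fixed combinatorial type, the moduli space is cut out by Morse-theoretic conditions: each interior edge is a finite-time gradient trajectory of $-\nabla(f_i-f_j)$ between specified endpoints, each puncture lies in the intersection of stable/unstable manifolds of a non-degenerate critical point of the relevant difference, and at each internal vertex an evaluation map must hit the diagonal in the fiber product. A standard Sard--Smale argument, applied simultaneously to perturbations of $\Lambda$ and of the metric $g$, makes all such stable/unstable manifolds and evaluation maps mutually transverse for each of the finitely many combinatorial types. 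This gives the first conclusion: the space of flow trees with the same geometric type as a given $\Gamma$ with $\dim(\Gamma)\le n$ is a transversely cut out manifold of dimension $\gdim(\Gamma)$.

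Second, I would prove the vertex classification by tabulating, for each possible local vertex model, the formal dimension contribution determined by \eqref{eq:dimoftrees} (using the Morse indices $I(v)$ at punctures and the Maslov content $\mu(v)$ at non-puncture vertices) together with the local geometric dimension. At a generic $k$-valent vertex away from $\pi(\Sigma)$, only $k\le 3$ is possible transversely, with $k=3$ being a $Y_0$; at a vertex on a cusp edge one must account for the contribution $\mu(v)=-1$ for a $Y_1$ and $\mu(v)=-1$ for a switch, while a cusp end contributes $+1$; punctures contribute $I(v)-1$ or $-(I(v)-1)$ as in the formula. For each combinatorial vertex type one checks that the excess $\dim-\gdim$ is zero precisely for the seven types in the statement, and strictly positive for all others (for example, valency $\ge 4$, Morse index $0$ negative punctures, or vertices at swallow tails or self-intersections of $\pi(\Sigma)$). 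Since the total formal dimension equals the sum of local excesses plus $\gdim(\Gamma)$, and every local excess is non-negative, a tree with $\dim(\Gamma)=0$ is forced to have all excesses zero and $\gdim(\Gamma)=0$; hence only the listed vertex types appear.

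The main obstacle is the analysis at the singular strata of the caustic, namely cusp edges, swallow tails, and points of $\pi(\Sigma_2)$. There the local defining functions are not both smooth, the gradient $-\nabla(f_1-f_0)$ degenerates or reverses when crossing a cusp, and the very definition of a flow line requires the extended notion involving $1$-jet lifts and cotangent lifts. I would handle this by working with the normal forms for $h_0,h_1$ near a cusp edge and the standard swallow-tail sheet, computing the local dimension counts in these models to confirm that cusp ends, switches, and $Y_1$ vertices give zero local excess and that any vertex at a swallow tail or at a double point of $\pi(\Sigma)$ forces positive excess. Once these model computations are combined with the generic Morse-theoretic transversality for flows between smooth sheets, both assertions of the theorem follow.
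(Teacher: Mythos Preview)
Your overall strategy---transversality plus a local excess computation at vertices---is exactly the route taken in \cite{Ekholm07}, and the paper itself does not give an independent proof but simply cites \cite[Lemma~3.7]{Ekholm07} for the inequality $\gdim\le\dim$ together with the vertex characterization, and \cite[Proposition~3.14]{Ekholm07} for the transversality statement. So your plan is aligned with the intended argument.

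There is, however, a genuine gap in your transversality step. You assert that one can ``enumerate the finitely many combinatorial types of trees with $\dim(\Gamma)\le n$'' and then apply Sard--Smale to each. This finiteness is not justified, and in fact a bound on the formal dimension does not by itself bound the number of vertices: cusp ends contribute $+1$ to the Maslov content while switches and $Y_1$-vertices contribute $-1$, so arbitrarily long chains of these can in principle occur at fixed $\dim$. The correct argument, as carried out in \cite[Proposition~3.14]{Ekholm07}, does not enumerate tree types at all. Instead one imposes transversality on the \emph{building blocks}: the Morse--Smale condition for each of the finitely many function differences $f_i-f_j$, transversality of (un)stable manifolds with the (finitely many) cusp edges, genericity of tangencies of flow lines with the caustic, and so on. These are finitely many conditions on the metric and on $\Lambda$, and once they hold, every flow tree assembled from these pieces is automatically transversely cut out. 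This is the standard mechanism in Morse theory for trees and is what you should invoke.

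A smaller point: your list of excluded examples is slightly off. Negative punctures of Morse index $0$ \emph{are} permitted as $1$-valent vertices in the theorem (case 1(b) allows any Morse index $\ne 2$); what is excluded is, for instance, a $1$-valent negative puncture of index $2$ or a $2$-valent negative puncture of index $\ne 2$. When you carry out the vertex-by-vertex excess tabulation you should check the valency and the index together, not the index alone.
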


Notice that we may endow a flow tree $\Gamma$ that has exactly one positive puncture with a {\em flow orientation}: any edge $e$ is oriented by the negative gradient of the positive difference of its defining functions.

When working with flow trees it will also be useful to consider the
symplectic area of a flow tree $\Gamma$. Given a flow tree $\Gamma$,
let $\widehat\Gamma$ denote its 1-jet lift (which, in previous
notation, projects to $\overline{\Gamma} \subset \Pi(\Lambda)$), and
define the {\em symplectic area} of $\Gamma$ to be
\[
A(\Gamma)=-\int_{\widehat \Gamma} p\, dq=-\int_{\widehat\Gamma} dz.
\]
The name comes from the connection between flow trees and holomorphic curves. The important features of the symplectic area are summarized in the following lemma. Before stating it we introduce some notation. A puncture $a$ of a flow tree lifts to a double point in the Lagrangian projection $\Pi(\Lambda)$ and hence corresponds to a Reeb chord. Thus in the 1-jet lift of $\Gamma$ there will be two points that project to $a$. We denote them $a^+$ and $a^-$ where $a^+$ has the larger $z$-coordinate.
\begin{lem}[Ekholm 2007, \cite{Ekholm07}]\label{lem:action}
For any flow tree $\Gamma$ the symplectic action is positive: $A(\Gamma)>0$. The symplectic action can be computed by the formula
\[
A(\Gamma)=\sum_{p} \left(z(p^+)-z(p^-)\right) - \sum_q \left( z(q^+)-z(q^-)\right),
\]
where the first sum is over positive punctures of $\Gamma$, the second sum is over negative punctures of $\Gamma$, and $z(a)$ denotes the $z$-coordinate of the point $a$.
\end{lem}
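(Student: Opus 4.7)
The plan is to localize the computation of $A(\Gamma)$ to each edge of $\Gamma$ and then use the tree structure to reduce the global integral to contributions at the punctures. Fix an edge $e$ of $\Gamma$ with base flow line $\gamma$ running from $v_-$ to $v_+$ and local defining functions $f_0, f_1$ (with $f_1 > f_0$) for the two sheets of $\Lambda$ involved. Since $z|_{\Lambda}$ restricted to the $f_i$-sheet equals $f_i\circ\pi$, and the flow orientations on the two lifts $\hat e_0, \hat e_1$ project oppositely on the base (the $f_1$-lift in the direction of $\gamma$, the $f_0$-lift against it by definition of flow orientation), a direct computation gives
\begin{equation*}
\int_{\hat e_0 \cup \hat e_1} dz \;=\; \bigl(f_1(v_+) - f_1(v_-)\bigr) + \bigl(f_0(v_-) - f_0(v_+)\bigr) \;=\; -\bigl[(f_1 - f_0)(v_-) - (f_1 - f_0)(v_+)\bigr].
\end{equation*}
Because $\dot\gamma = -\nabla(f_1 - f_0)$, the function $f_1 - f_0$ strictly decreases along $\gamma$, so the bracketed quantity is strictly positive. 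Thus $-\int_{\hat e_0\cup\hat e_1}dz > 0$, and summing over all edges of $\Gamma$ gives $A(\Gamma) > 0$.

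For the explicit formula I would use that $dz$ is exact, so $\int_{\widehat{\Gamma}} dz$ equals the sum of $z$-values at the ending endpoints of the oriented $1$-chain $\widehat\Gamma$ minus the sum at the starting endpoints. At each non-puncture vertex $v$ of $\Gamma$, the cyclic pairing condition $\bar e_j^1(v) = \bar e_{j+1}^0(v)$, together with the compatibility of flow orientations (inward for $\bar e_j^1$ iff outward for $\bar e_{j+1}^0$), ensures that incoming and outgoing lifts pair up and all would-be endpoint contributions at $v$ cancel. This applies uniformly to all allowed interior vertex types ($Y_0$ and $Y_1$ vertices, switches, and cusp ends); at $Y_1$ vertices, switches, and cusp ends the cusp geometry simply provides the continuous gluing of the two local sheets along the cusp edge, which is exactly the matching required to pair the lift-endpoints.

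Finally I read off the contribution of each remaining endpoint, which lives above a puncture. At a positive puncture $p$, the flow line emanates from $p$ as $t\to-\infty$, so the flow orientation of $\gamma_1$ (on the upper sheet, with $z$-endpoint $p^+$) points away from $p^+$ and that of $\gamma_0$ (with $z$-endpoint $p^-$) points toward $p^-$; thus $p^+$ is a starting endpoint and $p^-$ is an ending endpoint of $\widehat\Gamma$, contributing $z(p^-) - z(p^+)$ to $\int_{\widehat\Gamma}dz$. At a negative puncture $q$ the roles of upper and lower reverse, yielding a contribution of $z(q^+) - z(q^-)$. Negating $\int_{\widehat\Gamma} dz$ then produces the stated formula. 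The main obstacle, modest in nature, is keeping track of flow orientations relative to the $\pm$ endpoint labels at both interior vertices and punctures; once the cyclic matching is unpacked, everything reduces to the telescoping of an exact-form line integral over a tree-like $1$-chain.
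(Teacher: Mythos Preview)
The paper does not give a proof of this lemma; it is cited from \cite{Ekholm07}. Your direct argument is the standard one and is correct: the positivity follows edge by edge from strict monotonicity of the local function difference along each gradient flow line, and the formula is Stokes' theorem for the exact form $dz$ on the oriented $1$-chain $\widehat\Gamma\subset\Lambda$, whose only boundary points lie over the punctures (at all other vertices, including cusp ends where the two sheets meet along the cusp edge so the two lift endpoints coincide in $\Lambda$, the matching condition glues the lifts continuously). One terminological quibble: cusp ends are $1$-valent rather than ``interior'' vertices, but your treatment of them is right.
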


For our applications two further types of flow trees will be
needed. First, a {\em partial flow tree} is a flow tree $\Gamma$ for
which we drop the condition that the cotangent lift $\bar\Gamma$ is
closed and allow $\Gamma$ to have $1$-valent vertices $v$ such that
$\bar\Gamma$ intersects the fiber over $v$ in two points. We call such
vertices {\em special punctures}. In the dimension formula,
Equation~\eqref{eq:dimoftrees}, $I(v)=3$ for a positive special
puncture. Theorem~\ref{thm:gentree} holds as well for partial flow
trees with at least one special puncture, see \cite{Ekholm07}.
Second, in Section~\ref{ssec:basicft} we consider {\em constrained flow} trees: if $p_1,\ldots, p_r$ are points in $\Lambda$ then a
flow tree constrained by $p_1,\ldots, p_r$ is a flow tree $\Gamma$ with $1$-jet lift which passes through the points $p_1,\ldots, p_r$.

\subsection{Vector splitting along flow trees}\label{ssec:vectorsplit}
In this subsection we describe a combinatorial algorithm for transporting normal vectors in a flow tree to all its vertices which will combinatorially determine the sign of a flow tree, see Section \ref{ssec:thecount}. Specifically we will be concerned only
with flow trees that do not involve cusp edges and only have punctures
at critical points of index 1 and 2.

Suppose $\Lambda$ is a Legendrian submanifold in $J^1(S)$ that does not have cusp edges and only has Reeb chords corresponding to critical points of index 1 and 2.
Let $\Gamma$ be a partial flow tree with positive special puncture determined by $\Lambda \subset J^{1}(S)$.

Consider the local situation at a $Y_0$-vertex of $\Gamma$ at $t\in
S$. In the flow orientation of $\Gamma$ one edge adjacent to $t$ is
pointing toward it (we call this edge {\em incoming}) and the other
two edges are pointing away from it (we call them {\em
  outgoing}). Furthermore, the natural orientation of the $1$-jet lift
of the tree induces a cyclic order on the three edges adjacent to $t$
and thus an order of the two outgoing edges. (Specifically, the edge
$e_1$ will have the same upper sheet as the incoming edge while the
edge $e_2$ will have the same lower sheet.) If $v_0$ denotes the
negative gradient of the incoming edge at $t$ and $v_1, v_2$ the
negative gradients of the two outgoing edges, all pointing according
to the flow orientation of $\Gamma$, then $v_1$ and $v_2$ are linearly
independent and the following balance equation holds:
$v_0=v_1+v_2$. (This follows from the fact that the difference between
the local defining
functions along the incoming edge is the sum of the function differences
along the outgoing edges.)

We next define vector splitting. Let $p$ denote the special positive puncture of $\Gamma$, let $t_1,\dots,t_{k-1}$ denote its trivalent vertices, and let $q_1,\dots,q_k$ denote its negative punctures. Vector splitting along $\Gamma$ is a function
\[
N_{p}\Gamma\,\,\,\to\,\,\, \Pi_{j=1}^{k-1} F_{t_j} S\,\,\times\,\, \Pi_{i=1}^{k} N_{q_j}\Gamma,
\]
where $N_x\Gamma$ denotes the normal bundle of the edge of $\Gamma$
containing the point $x\in\Gamma$ (which is assumed not to be a
trivalent vertex) and where $FS$ denotes the frame bundle of
$S$. 
It is defined as follows.

Translating $n\in N_x\Gamma$ as a normal vector along the edge in the direction of the flow orientation of the tree, it eventually arrives at a trivalent vertex $t$. At this vertex, the translated vector vector $n(t)$ is perpendicular to the incoming edge at $t$ and determines
two unique vectors $w_{1}(t)$ and $w_{2}(t)$ in $T_tS$ perpendicular to the first and second outgoing edges at $t$, respectively, by requiring that  $n(t)=w_1(t)+w_2(t)$, see Figure \ref{fig:vectorsplit}. The frame at $t$ is $(w_1(t),w_2(t))$.
\begin{figure}[htb]
\labellist
\small\hair 2pt
\pinlabel $n$ at 68 85
\pinlabel $1$ at 181 110
\pinlabel $2$ at 181 -4
\pinlabel $w_1$ at 140 110
\pinlabel $w_2$ at 169 49
\endlabellist
\centering
\includegraphics{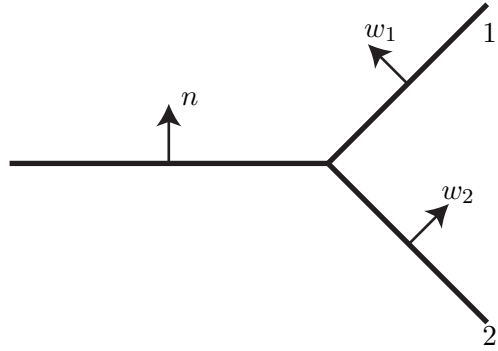}
\caption{Vector splitting at a trivalent $Y_0$-vertex.}
\label{fig:vectorsplit}
\end{figure}
Regarding $w_1(t)$ and $w_2(t)$ as normal vectors of their respective edges, the above construction can be applied with $n$ replaced by $w_j(t)$, $j=1,2$, and $\Gamma$ replaced by the partial tree $\Gamma_j$ which is obtained by cutting $\Gamma$ in the $j^{\rm th}$ outgoing edge at $t$ and taking the component which does not contain $t$. When a subtree $\Gamma_j$ contains no trivalent vertex we translate $w_j(t)$ along its respective edge to the negative puncture at the vertex. Continuing in this way until the cut-off partial trees do not have any trivalent vertices, we get
\begin{itemize}
\item two vectors $w_1(t)$ and $w_2(t)$
perpendicular to the first and second outgoing edge at $t$, for each trivalent vertex $t$, and
\item a vector $w(q)$ perpendicular to the edge ending at $q$, for each negative puncture $q$.
\end{itemize}
The {\em vector splitting of $n$ along $\Gamma$} is
\[
n\mapsto \bigl((w_1(t_1),w_2(t_1)),\dots,(w_1(t_{k-1}),w_2(t_{k-1}));w(q_1),\dots,w(q_k)\bigr).
\]

\subsection{Algebraic results}\label{ssec:alg}

In this subsection, we collect for convenience
some algebraic results about the map
$\phi$ and the matrices $\Phi^L$ and $\Phi^R$, which were defined in
Section~\ref{ssec:mainthm} and play an essential role in the combinatorial
formulation of the Legendrian algebra for knot contact homology,
Theorem~\ref{thm:mainlink}. These results, which will occasionally be needed
in the remainder of the paper, have essentially been established in
previous work of the third author
\cite{Ng05,Ng05a,Ng08,Ng10}. However, we repeat the
caveat from Section~\ref{ssec:mainthm} that in our present context, homology
classes ($\mu$ and $\lambda$) do not commute with Reeb chords
($a_{ij}$), while the previous papers deal with the
homology-commutative quotient
(see Section~\ref{sssec:Legalg}). Nevertheless, all existing proofs
extend in an obvious way to the present setting.

\begin{prp}
The map $\phi : \widetilde{\A}_n^0 \to \widetilde{\A}_n^0$, defined in
Section~\ref{ssec:mainthm} for braid generators $\sigma_k$ of the braid $B_n$,
respects the relations in the $B_n$ and thus extends to a homomorphism
from $B_n$ to $\operatorname{Aut}\widetilde{\A}_n^0$.
\label{prop:homom}
\end{prp}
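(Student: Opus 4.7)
The plan is to verify directly that each $\phi_{\sigma_k}$ is an automorphism of $\widetilde{\A}_n^0$ and that the assignments $\sigma_k \mapsto \phi_{\sigma_k}$ satisfy the defining relations of $B_n$, namely the far-commutation relations $\sigma_k \sigma_l = \sigma_l \sigma_k$ for $|k-l|\ge 2$ and the braid relations $\sigma_k \sigma_{k+1}\sigma_k = \sigma_{k+1}\sigma_k\sigma_{k+1}$. Since $\phi_{\sigma_k}$ is specified on the generators $a_{ij}$ and $\widetilde{\mu}_i^{\pm 1}$ of the free unital algebra and then extended multiplicatively over $\Z$, it suffices to check these two families of relations on generators.

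First I would verify that $\phi_{\sigma_k}$ is invertible. Writing down a candidate $\phi_{\sigma_k^{-1}}$ by solving the defining formulas for $a_{ij}$ (for instance $a_{ik} = \phi_{\sigma_k}^{-1}(a_{i\,k+1}) - a_{i\,k+1}a_{k+1\,k}$ for $i<k$, and analogously in the other cases, together with swapping $\widetilde{\mu}_k \leftrightarrow \widetilde{\mu}_{k+1}$), one checks $\phi_{\sigma_k}\circ\phi_{\sigma_k^{-1}} = \id = \phi_{\sigma_k^{-1}}\circ\phi_{\sigma_k}$ on each generator by a routine substitution; this is the same bookkeeping as in \cite{Ng05,Ng05a,Ng08}, and the noncommutativity of $\widetilde{\mu}_i$ with the $a_{ij}$ changes nothing because both maps act by conjugation/multiplication on $\widetilde{\mu}_i$ only in the homogeneous positions indicated by the formulas.

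Next, the far-commutation relation $\phi_{\sigma_k}\phi_{\sigma_l} = \phi_{\sigma_l}\phi_{\sigma_k}$ for $|k-l|\ge 2$ is essentially immediate from inspection of the defining formulas: the set of generators $\{a_{ij}, \widetilde{\mu}_m : m \in \{k,k+1\} \text{ or } \{i,j\}\cap\{k,k+1\}\ne\emptyset\}$ moved by $\phi_{\sigma_k}$ is disjoint (up to trivial action) from the corresponding set for $\phi_{\sigma_l}$, so both compositions act identically on each generator after a short case split.

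The main work, and the main obstacle, is the braid relation $\phi_{\sigma_k}\phi_{\sigma_{k+1}}\phi_{\sigma_k} = \phi_{\sigma_{k+1}}\phi_{\sigma_k}\phi_{\sigma_{k+1}}$ on generators. The $\widetilde{\mu}$-part reduces to the permutation identity $(k\ k{+}1)(k{+}1\ k{+}2)(k\ k{+}1) = (k{+}1\ k{+}2)(k\ k{+}1)(k{+}1\ k{+}2)$ in the symmetric group and is trivial. For the $a_{ij}$ generators, I would split into cases according to how $\{i,j\}$ meets $\{k,k+1,k+2\}$: both indices outside (acts as identity on both sides), exactly one index in (several subcases depending on which side and which index), and both indices in the block (the essential cases, $a_{k\,k+1}$, $a_{k+1\,k+2}$, $a_{k\,k+2}$ and their transposes). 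In each subcase one expands both triple compositions, tracking the $\widetilde{\mu}$-conjugations carefully since $\widetilde{\mu}_i$ does not commute with the $a_{ij}$; the desired equality becomes a word identity in the free algebra that is verified by direct substitution. The critical verifications are the ones involving $a_{k\,k+2}$ (and $a_{k+2\,k}$), where both sides produce nontrivial quadratic expressions in the $a_{ij}$, conjugated by $\widetilde{\mu}$'s. This is exactly the computation carried out in the homology-commutative setting in \cite[Lemma 2.14]{Ng05} and \cite{Ng08}; as noted in the excerpt, the noncommutativity of $\widetilde{\mu}$ with $a_{ij}$ does not affect the verification since the $\widetilde{\mu}$'s that appear in the formulas always sit on the precise sides prescribed by the defining rules, so the proof extends verbatim. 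Once all cases of the braid relation are checked on the algebra generators, extending multiplicatively gives the desired homomorphism $B_n \to \operatorname{Aut}\widetilde{\A}_n^0$.
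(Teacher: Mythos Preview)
Your proposal is correct and takes essentially the same approach as the paper, which simply writes ``Direct computation, {\em cf.\ }\cite[Proposition 2.5]{Ng05}.'' You have spelled out in more detail how that computation is organized (invertibility, far-commutation, and the braid relation case-split on generators), but the method is identical.
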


\begin{proof}
Direct computation, {\em cf.\ }\cite[Proposition 2.5]{Ng05}.
\end{proof}

\begin{prp}
Let $B\in B_n$, and let $\phi_B(\Aa)$ be the $(n\times n)$-matrix
defined by $\left(\phi_B(\Aa)\right)_{ij} = \phi_B(\Aa_{ij})$. Then we
have the matrix identity
\label{prop:PhiLPhiR}
\[
\phi_B(\Aa) = \Phi^L_B \cdot \Aa \cdot \Phi^R_B.
\]
\end{prp}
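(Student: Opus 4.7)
The plan is to induct on the length of $B$ as a word in the Artin generators $\sigma_k^{\pm 1}$ of $B_n$. For the base case $B=\sigma_k^{\pm 1}$, since the $0$-th strand of $\widehat{\sigma_k^{\pm 1}}$ does not interact with the others, one first reads off $\Phi^L_{\sigma_k^{\pm 1}}$ and $\Phi^R_{\sigma_k^{\pm 1}}$ explicitly by applying the defining formulas for $\phi_{\sigma_k^{\pm 1}}$ to the generators $a_{i0}$ and $a_{0j}$; both matrices agree with the identity outside of the $2\times 2$ block in rows and columns $k,k+1$. The identity $\phi_{\sigma_k^{\pm 1}}(\Aa)=\Phi^L_{\sigma_k^{\pm 1}}\cdot\Aa\cdot\Phi^R_{\sigma_k^{\pm 1}}$ is then verified by an entry-by-entry comparison using the definition of $\Aa_{ij}$.

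For the inductive step, write $B=B'\sigma_k^{\pm 1}$. The key technical fact is the multiplicativity
\[
\Phi^L_B = \phi_{B'}\bigl(\Phi^L_{\sigma_k^{\pm 1}}\bigr)\cdot\Phi^L_{B'},\qquad \Phi^R_B=\Phi^R_{B'}\cdot\phi_{B'}\bigl(\Phi^R_{\sigma_k^{\pm 1}}\bigr).
\]
These follow from Proposition~\ref{prop:homom} by applying $\phi_{B'}$ to the defining formulas $\phi_{\sigma_k^{\pm 1}}(a_{i0})=\sum_j(\Phi^L_{\sigma_k^{\pm 1}})_{ij}\,a_{j0}$ and $\phi_{\sigma_k^{\pm 1}}(a_{0j})=\sum_i a_{0i}(\Phi^R_{\sigma_k^{\pm 1}})_{ij}$, using that the entries of $\Phi^L_{\sigma_k^{\pm 1}}$ and $\Phi^R_{\sigma_k^{\pm 1}}$ contain no generator of the form $a_{l0}$ or $a_{0l}$. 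Combining the base case, the multiplicativity, and the inductive hypothesis $\phi_{B'}(\Aa)=\Phi^L_{B'}\Aa\,\Phi^R_{B'}$ then gives
\[
\phi_B(\Aa)=\phi_{B'}\bigl(\Phi^L_{\sigma_k^{\pm 1}}\Aa\,\Phi^R_{\sigma_k^{\pm 1}}\bigr)=\phi_{B'}\bigl(\Phi^L_{\sigma_k^{\pm 1}}\bigr)\cdot\Phi^L_{B'}\cdot\Aa\cdot\Phi^R_{B'}\cdot\phi_{B'}\bigl(\Phi^R_{\sigma_k^{\pm 1}}\bigr)=\Phi^L_B\cdot\Aa\cdot\Phi^R_B,
\]
where the first equality uses that $\phi_{B'}$ is an algebra automorphism.

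The hard part will be the entry-by-entry verification in the base case. Although each individual comparison is elementary, one must manage the noncommutation of $\widetilde{\mu}_j$ with the $a_{ij}$, together with the asymmetric appearance of $\mu$-factors in the definition of $\Aa$ (namely $\Aa_{ij}=a_{ij}$ for $i<j$, $\Aa_{ij}=a_{ij}\widetilde{\mu}_j$ for $i>j$, and $\Aa_{ii}=1+\widetilde{\mu}_i$), and the fact that $\phi_{\sigma_k}$ swaps $\widetilde{\mu}_k\leftrightarrow\widetilde{\mu}_{k+1}$. Since this identity has been established in \cite{Ng05,Ng08} in the homology-commutative quotient, and no step of that verification relies on commuting $\widetilde{\mu}$'s past Reeb-chord generators, the argument extends verbatim to the present noncommutative setting.
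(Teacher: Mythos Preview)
Your proposal is correct and follows essentially the same approach as the paper: induction on the length of the braid word, with the base case a direct verification for $\sigma_k^{\pm 1}$ and the inductive step deferring to the cited references \cite{Ng05,Ng10}, noting that the argument there (in the homology-commutative setting) carries over verbatim. You have spelled out the multiplicativity formulas $\Phi^L_B=\phi_{B'}(\Phi^L_{\sigma_k^{\pm 1}})\cdot\Phi^L_{B'}$ and $\Phi^R_B=\Phi^R_{B'}\cdot\phi_{B'}(\Phi^R_{\sigma_k^{\pm 1}})$ more explicitly than the paper does, but this is exactly the content of the inductive step in the references.
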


\begin{proof}
Induction on the length of the braid word representing $B$,
{\em cf.\ }\cite[Proposition 4.7]{Ng05} and \cite[Lemma 2.8]{Ng10}. The latter
reference proves the result stated here ($\hat{\Aa}$ and $\check{\Aa}$
there correspond to $\Aa$ here once we set $U=V=1$), but in the
homology-commutative
quotient, for the case of a single-component knot, and with slightly
different sign conventions. Nevertheless, the inductive proof given there works
here as well; we omit the details.
\end{proof}

We remark that Proposition~\ref{prop:PhiLPhiR} can be given a more natural,
geometric proof via the language of ``cords'' \cite{Ng05a}. This
approach also provides an explanation for the precise placement of the
homology classes $\tilde{\mu}$ in the definition of $\phi_{\sigma_k}$
from the Introduction. We refer the interested reader to \cite[Section
3.2]{Ng08}, which treats the homology-commutative single-component
case, and leave the straightforward extension to the general case to
the reader.

One consequence of Proposition~\ref{prop:PhiLPhiR} is that the
differential for knot contact homology presented in
Theorem~\ref{thm:mainlink} is well-defined. More precisely, the
differential for the $b_{ij}$ generators is given in matrix form by
\[
\pa \mathbf{B} = -\Ll^{-1}\cdot\Aa\cdot\Ll\,\, +\,\, \Phi^L_B \cdot
\Aa \cdot \Phi^R_B.
\]
However, since $\mathbf{B}$ has $0$'s along the main diagonal, it is
necessary that the right hand side has $0$'s along the diagonal as
well. This is indeed the case: the $(i,i)$ entry of
$\Ll^{-1}\cdot\Aa\cdot\Ll$ is $1+\mu_{\alpha(i)}$, while the $(i,i)$
entry of $\Phi^L_B \cdot \Aa \cdot \Phi^R_B$ is
$\phi_B(1+\mu_{\alpha(i)}) = 1+\mu_{\alpha(i)}$.

\section{The Differential and Flow Trees}\label{sec:diffthrflowtree}

In Section~\ref{sec:combdiff}, we will prove
Theorem~\ref{thm:mainlink} by using multiscale flow trees to compute the differential of $\Lambda_K$ in $J^1(S^2)=U^\ast \R^3$. These multiscale flow trees combine two types of flow trees, which are the focus of this section. Specifically we will see that if $\Lambda$ is the conormal lift of the unknot $U$ then by thinking of $K$ as a braid about $U$ we can isotop $\Lambda_K$ into an arbitrarily small neighborhood of $\Lambda =\Lambda_U$, which can be identified with a neighborhood of the zero section in $J^1(\Lambda)$. Thus we may think of $\Lambda_K$ as a subset of $J^1(\Lambda)$.

To use multiscale flow trees to compute the differential of $\Lambda_K$ in $J^1(S^2)$ we will combine flow trees of $\Lambda\subset J^1(S^2)$ and $\Lambda_K \subset J^1(\Lambda)$.
The content of this section is a computation of these flow trees; in Section~\ref{sec:combdiff}, we then combine the flow trees to complete the computation of the Legendrian DGA for $\Lambda_K \subset J^1(S^2)$.

Here is a more detailed summary of this section. In Section~\ref{ssec:flowtreesonU}, we discuss the Legendrian torus $\Lambda$ and describe a generic front projection for it. In Section~\ref{ssec:flowtreesforLambda}, we compute the rigid flow trees for $\Lambda$ as well as the 1-parameter families of flow trees. In Section~\ref{ssec:conormalliftofK}, we give an explicit identification of a neighborhood of the zero section in $J^1(\Lambda)$ with a neighborhood of $\Lambda$ in $J^1(S^2)$, use this identification to explicitly describe $\Lambda_K$ in $J^1(\Lambda)$, and find all the Reeb chords of $\Lambda_K \subset J^1(\Lambda)$. Section~\ref{ssec:thecount} computes the rigid flow trees of $\Lambda_K$ in $J^1(\Lambda)$ modulo some technical considerations concerning ``twist regions'' that are handled in Section~\ref{ssec:twistslice}. We comment that Section~\ref{ssec:thecount} produces an invariant of braids (cf.\ \cite{Ng05}) using only existing flow tree technology and not multiscale flow trees.

\subsection{A generic front for $\Lambda$}\label{ssec:flowtreesonU}
Let $U$ be the round unknot given by the unit circle in the $xy$-plane
in $\R^3$. We give a description of the conormal lift
$\Lambda=\Lambda_U$ in $J^1(S^2)=U^*\R^3$ by describing its front
projection in $S^2\times\R$. In the figures below, we draw
$S^2\times\R$ as $\R^3-\{0\}$ and identify the zero section $S^2\times\{0\}$ with the
unit sphere. If $C$ is the circle in the conormal bundle of $U$ lying over a point $x\in U$ then its image in $S^2\times\{0\}\subset S^2\times \R$ is a great circle running through the north and south poles of $S^2$. See Figure~\ref{fig:unknotbasepicture}.
By the contactomorphism $\phi$ between $U^\ast\R^3$ and $J^1(S^2)$ from Section~\ref{ssec:conormal},
the image of $C$ in $S^2\times \R$ is the graph of $\langle x, y\rangle$ where
$y\in C$. This is shown in the leftmost picture in Figure~\ref{fig:unknotfront}.
\begin{figure}[htb]
\labellist
\small\hair 2pt
\pinlabel $U$ at 92 -46
\pinlabel $p$ at 74 -63
\endlabellist
\centering
\includegraphics{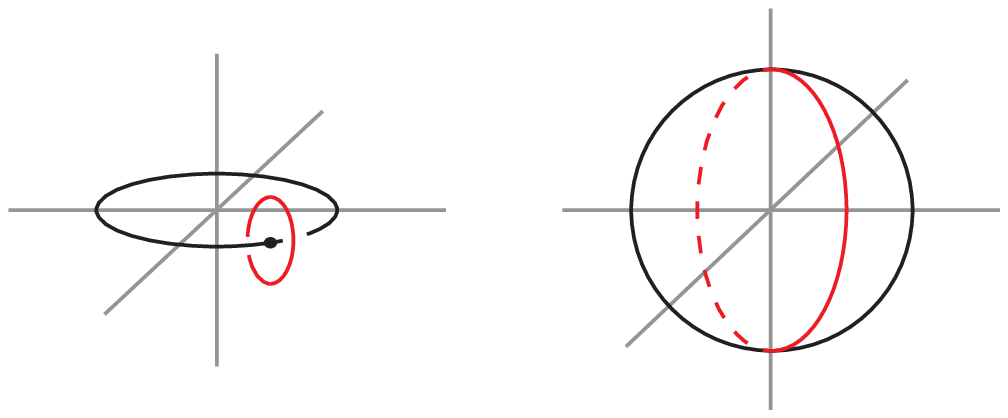}
\caption{On the left is the unknot $U$ in the $xy$-plane with a point $p$ on $U$ labeled and its unit (co-)normal bundle shown. On the right is the tangent space $\R^3=T_p\R^3$ at $p$ with the unit sphere indicated along with the image of the unit (co-)normal bundle to $U$ at $p$.}
\label{fig:unknotbasepicture}
\end{figure}
\begin{figure}[htb]
\labellist
\small\hair 2pt
\pinlabel $c$ at 304 -50
\pinlabel $e$ at 380 -50
\endlabellist
\centering
\includegraphics{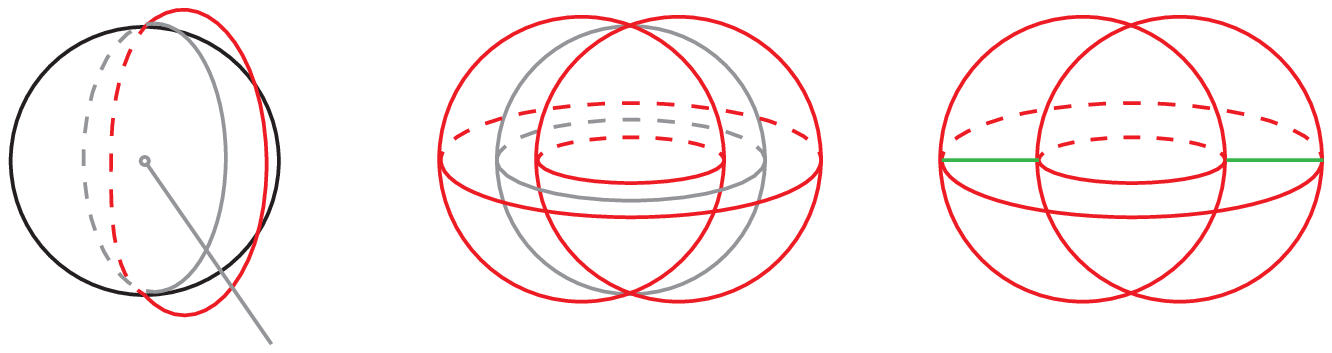}
\caption{The front of the unknot. On the left is the image of the circle shown in Figure~\ref{fig:unknotbasepicture} in the front projection in $J^0(S^2)=S^2\times \R=\R^3-\{0\}$. The center image shows the entire front projection (with the unit sphere $S^2\times\{0\}$ shown in light grey) and the image on the right shows the two Reeb chords after perturbation.}
\label{fig:unknotfront}
\end{figure}
By symmetry we get the entire front projection by simply rotating this image of $C$
about the axis through the north and south pole as shown in the middle picture of Figure~\ref{fig:unknotfront}.
Using Lemma~\ref{lem:critpts} we see that this representative of $\Lambda$ has an $S^1$'s worth of Reeb chords over the equator. We perturb $\Lambda$ using a Morse function on the equator with one maximum and one minimum, so that only two Reeb chords $c$ and $e$, as indicated in the rightmost picture in Figure~\ref{fig:unknotfront}, remain. Here $e$ and $c$ correspond to the maximum and minimum, respectively, of the perturbing function, and both correspond to transverse double points of $\Pi(\Lambda)$.

This perturbation does not suffice to make the front of $\Lambda$ generic with respect to fibers of $T^{\ast}S^{2}$: over the poles of $S^2$ we see that $\Lambda$ consists of Lagrangian cones. We first describe these in local coordinates and then show how to perturb them to become front generic. Let $x=(x_{1},x_{2})$ be local coordinates near the pole in $S^{2}$ and let $(x,y)=(x_1,y_1,x_2,y_2)$ be corresponding Darboux coordinates in $T^{\ast}S^{2}$, with symplectic form given by $dx\wedge dy=dx_1\wedge dy_1+ dx_2\wedge dy_2$.

Consider $S^{1}=\left\{\xi\in\R^{2}\colon |\xi|=1\right\}$. The {\em Lagrangian cone} is the exact Lagrangian embedding $C\colon S^{1}\times\R\to\R^{4}$ given by
\[
C(\xi,r)=(r\xi,\xi).
\]
 As mentioned above, the Lagrangian cone is not front generic: the front projection $\Pi_{F}\circ C$ is regular for $r\ne 0$ but maps all of $S^{1}\times\{0\}$ to the origin. In order to describe perturbations of $C$ we first find a cotangent neighborhood of it.
See Figure~\ref{fig:resolution}.
\begin{figure}[htb]
\centering
\includegraphics{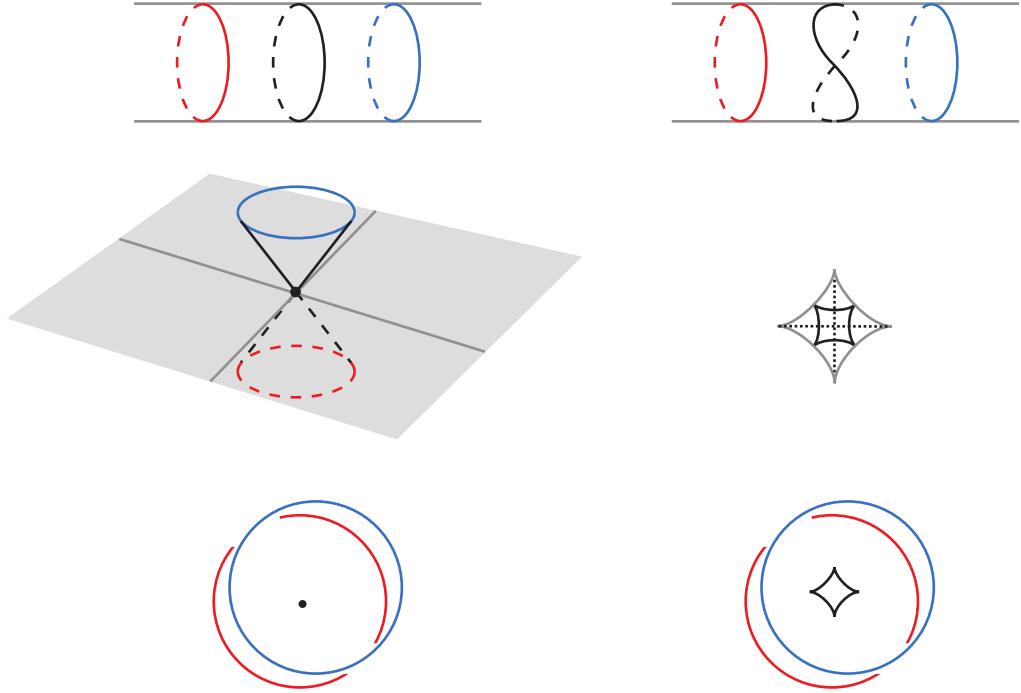}
\caption{A perturbed Lagrangian cone. Along the top of the figure is an annular neighborhood of the circle in $\Lambda = \Lambda_U$ that maps to the fiber above the north pole.
On the middle left, we see the image of this annulus near
the north pole in the front projection,
a cone whose boundary is two circles.
On the bottom left is the image of this annulus near the north pole in $S^2$ (that is, the top view of the cone where we have slightly offset the circles so that they are both visible).
On the middle right, we see the top view of the cone after it has been perturbed to have a generic front projection. More specifically, the lighter outer curve is the image of the cusp curves, the dotted lines are the image of double points in the front projection and the darkest inner curve is the image of the circle that mapped to the cone point before the perturbation.
On the bottom right, we see the image in $S^2$ of the cusp curve and
the two boundary circles on $\Lambda_U$.}
\label{fig:resolution}
\end{figure}

Consider $T^{\ast}(S^{1}\times\R)=T^{\ast}S^{1}\times T^{\ast}\R$. Let
$(\xi,\eta,r,\rho)$ be coordinates on this space where $\rho$ is dual
to $r$ and $\eta$ is dual to $\xi$. Here we think of $\eta$ as a
(co)vector perpendicular to $\xi\in S^1\subset \R^2$, that is $\eta\in \R^2$ and $\eta \bull \xi = 0.$
Consider the map
\[
\Phi\colon T^{\ast} S^{1}\times T^{\ast}\R\to T^{\ast}\R^{2}=\R^{4}
\]
given by
\[
\Phi(\xi,\eta,r,\rho)=\left(r\xi - \frac{1}{1+\rho}\eta,(1+\rho)\xi\right).
\]
Then $\Phi^{\ast} (dx\wedge dy)=d\xi\wedge d\eta+d r\wedge d\rho$. To see this we compute
\begin{align*}
dx &=(dr)\xi + rd\xi - \frac{1}{1+\rho}d\eta +\frac{d\rho}{(1+\rho)^{2}}\eta,\\
dy &= (d\rho)\xi + (1+\rho)d\xi
\end{align*}
and hence
\begin{align*}
dx\wedge dy &=(dr\wedge d\rho)(\xi\bull\xi)\\
&+((1+\rho)dr-rd\rho)\wedge (\xi\bull d\xi)
+r(1+\rho)d\xi\wedge d\xi\\
&+\frac{d\rho}{1+\rho}(\xi\bull d\eta+\eta\bull d\xi)\\
&=dr\wedge d\rho + d\xi\wedge d\eta,
\end{align*}
since $\xi\bull\xi=1$ and $\xi\bull\eta=0$.

Thus $\Phi$ is a symplectic neighborhood map extending the Lagrangian
cone. It follows that  exact Lagrangian submanifolds $C^1$-near $C$ can be described by
$\Phi(\Gamma_{df})$ where $f$ is a smooth function on $S^{1}\times\R$ and where $\Gamma_{df}$ denotes the graph of its differential.

We will consider specific functions of the form
\[
f(\xi,r)=\alpha(r)g(\xi),
\]
where $\alpha(r)$ is a cut off function equal to $0$ for $|r|>2\delta$ and equal to $1$ for $|r|<\delta$ for some small $\delta>0$. Write $C_f=\Phi(\Gamma_{df})$.

We first give a description of the caustic of $C_f$. Note that $\Pi_{F}\circ C$ is an immersion for $|r|>\delta$. The same therefore holds true for $C_f$ provided $f$ is small enough. In order to describe the caustic we thus focus on the region where $|r|<\delta$ and hence $\alpha(r)=1$. Here
\[
C_{f}(\xi,r)=\left(r\xi+dg,\xi\right)=\left(C_{f}^{1}(\xi,r),C_{f}^{2}(\xi,r)\right).
\]

The caustic is the set where $C_{f}$ has tangent lines in common with the fiber. Consequently a point $p\in S^{1}\times\R$ belongs to the fiber provided the differential of the first component $C_{f}^{1}$ of the map $C_{f}$ has rank $<2$.  Write $\xi=(\cos \theta,\sin \theta)\in S^{1}$ and take $g(\xi)=g_1(\xi)=\epsilon\cos 2\theta$. The caustic is then the image of the locus $r=4\epsilon\cos 2\theta$ under the first component of the map
\[
\Phi(\theta,r)=\bigl(r(\cos\theta, \sin\theta)
-2\epsilon\sin\theta(-\sin\theta, \cos\theta), (\cos\theta,\sin\theta)\bigr),
\]
see Figure \ref{fig:resolution}.

\begin{lma}\label{lem:gradingunknot}
The Maslov class of $\Lambda$ vanishes and consequently the grading of any Reeb chord of $\Lambda$ is independent of choice of capping path. Let $e$ and $c$ denote the Reeb chords of $\Lambda$, as described above; then
\[
|e|=2\quad\text{ and }\quad|c|=1.
\]
\end{lma}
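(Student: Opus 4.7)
The plan is to apply the cusp-counting formula for gradings of Reeb chords (Lemma~\ref{lem:indexcritpt}) together with the analogous formula for the Maslov index of a loop (recorded at the end of Section~\ref{sssec:geomprelim}), using the explicit front-projection description of $\Lambda$ developed in Section~\ref{ssec:flowtreesonU}.

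To verify that the Maslov class vanishes, I will check it on two generators of $H_1(\Lambda;\Z)\cong\Z^2$. In the parametrization $\Lambda=S^1_\phi\times S^1_\theta$ arising from Section~\ref{ssec:flowtreesonU}, take $\lambda$ to be the loop $\{\theta=0\}$ and $\mu$ to be the loop $\{\phi=0\}$. The loop $\lambda$ traces out the equator of $S^2$ at the constant front height $z=1$, disjoint from the cusp edges near the poles, so its signed cusp count is zero. The loop $\mu$ is a great circle of $S^2$ through both poles at heights $z=\cos\theta$; using the local model from Section~\ref{ssec:flowtreesonU} it meets the caustic transversely near each pole, with $z$ decreasing near the north pole $N$ and increasing near the south pole $S$, and since the perturbation of the two Lagrangian cones is symmetric under the reflection $z\mapsto-z$, the down-cusp contribution near $N$ cancels the up-cusp contribution near $S$, yielding Maslov index zero.

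For the gradings of $e$ and $c$, I apply Lemma~\ref{lem:indexcritpt}. Before the Morse perturbation of the equatorial Reeb-chord circle, the two sheets of $\Lambda$ meeting over a neighborhood of the equator of $S^2$ have local defining functions $h_\pm$ with $h_+-h_-=2\sqrt{1-x_2^2}$, where $x_2$ is a coordinate transverse to the equator in $S^2$; the critical set is the Morse-Bott circle $\{x_2=0\}$ with normal Hessian $-2$, so its normal Morse-Bott index is $1$. After adding a small Morse perturbation with one maximum and one minimum on this circle, the chord $e$ (maximum) has total Morse index $1+1=2$ and the chord $c$ (minimum) has total Morse index $1+0=1$. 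As a common capping path, I take the diagonal $(\pi t,\pi t)$, $t\in[0,1]$, in $\Lambda=S^1_\phi\times S^1_\theta$, going from the upper endpoint ($z=1$) through the apex circle over $N$ ($z=0$ at $t=\tfrac{1}{2}$) to the lower endpoint ($z=-1$); its front height $\cos(\pi t)$ decreases monotonically and, using the explicit caustic $r=4\epsilon\cos(2\xi)$ from Section~\ref{ssec:flowtreesonU}, the path meets the caustic transversely exactly once and in the downward direction, so $D-U=1$. Lemma~\ref{lem:indexcritpt} then gives $|e|=2+1-1=2$ and $|c|=1+1-1=1$.

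The main obstacle is the cusp bookkeeping in the local model near each pole: both the single-crossing count for the capping path and the symmetric cancellation for the meridional loop must be verified by an explicit intersection calculation between these paths and the caustic $r=4\epsilon\cos(2\xi)$. These calculations are elementary once the correspondence between the global $(\phi,\theta)$ coordinates on $\Lambda$ and the local $(\xi,r)$ coordinates at each pole is fixed, but some care is needed to ensure consistency of orientations and signs across the two local models.
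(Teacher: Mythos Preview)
Your proposal is correct and follows essentially the same approach as the paper's own proof: both check the Maslov class on a longitudinal and a meridional generator via cusp counting, and both compute $|e|$ and $|c|$ from Lemma~\ref{lem:indexcritpt} using a capping path that crosses the caustic near the north pole exactly once (as a down-cusp) together with Morse indices $2$ and $1$. Your version simply fills in more of the bookkeeping---the Morse--Bott computation of the indices, the explicit diagonal capping path, and the intersection with the caustic $r=4\epsilon\cos(2\xi)$---where the paper states these facts without the supporting calculations.
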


\begin{pf}
To see that the Maslov class vanishes we need to check that the Maslov index of any generator of $H_1(\Lambda)$ vanishes. We compute the Maslov index of a curve as described at the end of Section~\ref{sssec:geomprelim}. Take one generator as a curve in $\Lambda$ over the equator; since this curve does not intersect any cusp edge its Maslov index vanishes. Take the other generator as a curve perpendicular to the equator going to the poles and then back; since such a curve has two cusp edge intersections, one up-cusp and one down-cusp, its Maslov index vanishes as well.

Finally choose the capping path of $e$ and $c$ which goes up to the north pole and then back. This capping path has one down-cusp and the Morse indices of $e$ and $c$ are $2$ and $1$, respectively. The index assertions now follow from Lemma~\ref{lem:indexcritpt}.
\end{pf}

\subsection{Flow trees of $\Lambda$}\label{ssec:flowtreesforLambda}
We next determine all flow trees of $\Lambda$.
\begin{lma}\label{lma:treesofU}
There are exactly six rigid flow trees of $\Lambda$: four with positive puncture at $c$: $I_N$, $Y_N$, $I_S$, and $Y_S$, and two with positive puncture at $e$ and negative puncture at $c$: $E_1$ and $E_2$. Furthermore, if $p$ is a point of $\Lambda$ lying over a point where the front of $\Lambda$ has $2$ sheets then there are exactly two constrained rigid flow trees with positive puncture at $e$ with 1-jet lift passing through $p$.
\end{lma}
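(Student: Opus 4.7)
The plan is to invoke Theorem~\ref{thm:gentree} on allowed vertex types in rigid flow trees, combine it with the dimension formula \eqref{eq:dimoftrees}, and enumerate all combinatorial tree structures yielding $\dim(\Gamma)=0$, verifying geometric realizability against the explicit front of $\Lambda$ from Section~\ref{ssec:flowtreesonU}. Since $\Lambda$ has only the two Reeb chords $c$ and $e$, whose associated critical points of $f_{1}-f_{0}$ have Morse indices $1$ and $2$ respectively (see the proof of Lemma~\ref{lem:gradingunknot}), every rigid tree has its positive puncture at $c$ or at $e$.

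For trees with positive puncture at $c$, this puncture contributes $I(c)-1=0$ to the bracket in \eqref{eq:dimoftrees}, so the remaining vertices must sum to $+1$. Cusp ends contribute $+1$, switches and $Y_{1}$ vertices contribute $-1$, and $Y_{0}$ vertices and negative punctures at $c$ contribute $0$, so the minimal rigid combinatorial types are (i) a single cusp end and (ii) two cusp ends together with one $Y_{1}$ vertex. For (i), the unstable direction at $c$ is perpendicular to the equator since the perturbation isolating the Reeb chords runs along the equator; the two resulting gradient flow lines out of $c$ head toward the two cusp circles near the poles, giving the edge-trees $I_{N}$ and $I_{S}$. For (ii), the initial flow from $c$ reaches one of the cusp circles, branches at a $Y_{1}$ vertex on the cusp edge, and returns to the same cusp circle as two cusp ends; by the near-rotational symmetry broken only mildly by the equatorial perturbation, there is one such tree for each pole, giving $Y_{N}$ and $Y_{S}$. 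All other combinatorial possibilities (with more cusp ends balanced by additional switches, $Y_{0}$ or $Y_{1}$ vertices) will be ruled out by the geometric balance conditions on the front or shown to degenerate to the four trees already listed.

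For trees with positive puncture at $e$, the puncture contributes $I(e)-1=+1$, and the remaining vertices must sum to $0$. The minimal rigid configuration is a single negative puncture at $c$, i.e., a gradient flow line of $f_{1}-f_{0}$ from $e$ down to $c$. Since $e$ and $c$ are both on the equator and arise respectively as the maximum and minimum of the equatorial Morse function used in the perturbation, there are exactly two equatorial gradient flow lines from $e$ to $c$, one along each of the two semicircles joining them; these give $E_{1}$ and $E_{2}$.

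For constrained rigid trees with positive puncture at $e$ whose $1$-jet lift passes through a point $p$ lying over a $2$-sheeted region of the front, the point constraint has codimension $2$ in $\Lambda$, so rigidity amounts to an unconstrained tree dimension of $2$. With positive puncture contribution $+1$, the remaining vertices must contribute $+2$, and the simplest realization is a $Y$-shaped tree with one $Y_{0}$ vertex and two cusp ends. The resulting moduli space is two-dimensional: one parameter for the direction of the initial flow from $e$ (whose unstable manifold has dimension $2$) and one parameter for the position of the $Y_{0}$ vertex along this flow. Forcing the $1$-jet lift through $p$ imposes the two necessary conditions, and a direct geometric analysis using the near north-south symmetry of the front produces exactly two solutions, corresponding to the two choices of pole toward which the $Y$ branch opens. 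The hardest part will be the geometric verification step throughout: ruling out the further combinatorial configurations allowed by \eqref{eq:dimoftrees} but obstructed by the specific geometry of $\Lambda$'s front, and confirming the exact count of two in the constrained case, which requires a careful analysis of the gradient dynamics on the perturbed Lagrangian cones and of the balance conditions at $Y_{0}$ and $Y_{1}$ vertices.
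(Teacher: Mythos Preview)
Your overall strategy matches the paper's: enumerate vertex types via Theorem~\ref{thm:gentree} and the dimension formula, then check realizability. However, there is a genuine error in your constrained case, and the gaps you acknowledge in the $c$-case are more substantial than you indicate.

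\textbf{The error.} A single point constraint on the $1$-jet lift reduces the formal dimension by $1$, not $2$ (see Section~\ref{ssec:basicft}: $\dim(\Gamma')=\dim(\Gamma)-r$ for $r$ constraining points). So constrained rigidity requires unconstrained dimension $1$, not $2$. Your proposed $Y_0$-tree with two cusp ends has unconstrained dimension $2$ and is therefore not constrained rigid. Moreover, a $Y_0$ vertex needs three sheets, which only exist in the small perturbed regions near the poles, so your geometric picture (a $Y_0$ somewhere along a flow line from $e$) is not realizable over the $2$-sheeted region. The correct constrained rigid trees are the $1$-parameter families $\widetilde I_{N/S}$, $\widetilde Y_{N/S}$ (see Remark~\ref{rmk:1dfam}): these have the same shape as $I_{N/S}$, $Y_{N/S}$ but with positive puncture at $e$, and each has unconstrained dimension $1$. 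The paper's proof simply notes that the analysis of these trees is identical to the $c$-tree analysis, with $e$ (or rather the special puncture at $p$) playing the role of $c$.

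\textbf{The gaps.} For the $c$-trees, you need more than dimension bookkeeping. First, use action positivity (Lemma~\ref{lem:action}) to exclude negative punctures altogether: any negative puncture would be at $c$ or $e$, both with action $\geq \action(c)$, contradicting $A(\Gamma)>0$. Second, ruling out switches is not a matter of ``geometric balance conditions'': the paper argues that after perturbing the Lagrangian cone, tangencies of relevant gradient fields with the caustic occur only near swallowtails, and then uses action to show the tree from $c$ cannot reach those neighborhoods. Third, ruling out $n\geq 2$ (more $Y_1$'s and cusp ends) uses that any edge joining the two polar cusp regions must cross the equator, where $\Lambda$ has only two sheets and the only flow lines are the two equatorial $E_i$; no such edge exists in a tree from $c$. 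Without these three arguments your enumeration of ``minimal'' types does not exclude the others.
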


Before proving Lemma~\ref{lma:treesofU}, we make a couple of remarks.

\begin{rmk}
In fact one can show the following: There are exactly four $1$-parameter families of trees with positive puncture at $e$: $\widetilde I_N$, $\widetilde Y_N$, $\widetilde I_S$, and $\widetilde Y_S$. The boundaries of these $1$-parameter families are as follows:
\begin{align*}
\pa\widetilde I_N = (E_1\,\#\, I_N) \cup (E_2\,\#\, I_N),&\quad
\pa\widetilde Y_N = (E_1\,\#\, Y_N) \cup (E_2\,\#\, Y_N),\\
\pa\widetilde I_S = (E_1\,\#\, I_S) \cup (E_2\,\#\, I_S),&\quad
\pa\widetilde Y_S = (E_1\,\#\, Y_S) \cup (E_2\,\#\, Y_S).
\end{align*}
Here $E_1\,\#\, I_N$ denotes the broken tree obtained by adjoining $I_N$ to $E_1$, etc. See Figures~\ref{fig:rigidtreeunknot} and~\ref{fig:onedimtreeunknot}. Furthermore, the  $1$-jet lifts of the flow trees in each of these families sweep the part of the torus lying over the corresponding hemisphere ($N$ or $S$) once.

The formal proof of this result about $1$-parameter families would require a more thorough study of flow trees in particular including a description of all vertices of flow trees that appear in generic 1-parameter family. This is fairly straightforward, see \cite[Section 7]{Ekholm07}. For the purposes of this paper it suffices to work with constrained rigid trees rather than 1-parameter families so details about 1-parameter families of flow trees will be omitted.
\end{rmk}
\begin{figure}[htb]
\labellist
\small\hair 2pt
\pinlabel $I_N$ at 42 32
\pinlabel $c$ at 9 12
\pinlabel $c$ at 120 12
\pinlabel $Y_N$ at 153 32
\pinlabel $c$ at 231 12
\pinlabel $e$ at 317 89
\pinlabel $E_0$ at 307 32
\pinlabel $E_1$ at 253 83
\endlabellist
\centering
\includegraphics{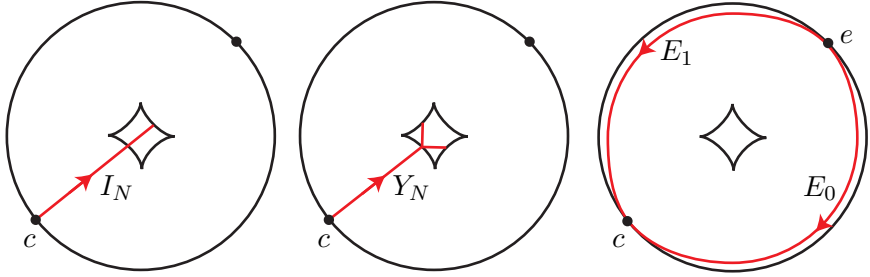}
\caption{Rigid flow trees for $\Lambda$ on the northern hemisphere of $S^2$.}
\label{fig:rigidtreeunknot}
\end{figure}
\begin{rmk}\label{rmk:1dfam}
We will not need a precise expression of the one-dimensional families $\widetilde I_N$, $\widetilde Y_N$, $\widetilde I_S$, and $\widetilde Y_S$ in our computations, but we do need a rough understanding of them. To see the family of disks, start with the symmetric picture of $\Lambda$ coming from the conormal lift of $U$. Now make a small perturbation of the north and south poles as shown in Figure~\ref{fig:resolution}. Then we see an $I$ and $Y$ flow tree from each point on the equator into the northern hemisphere and another into the southern hemisphere. Now perturbing slightly so the equator is no longer a circle of critical points but contains only the critical points $c$ and $e$ and two flow lines between them, we will see that each of the $I$ and $Y$ disks will become part of one of the one-dimensional families of disks $\widetilde I_N$, $\widetilde Y_N$, $\widetilde I_S$, and $\widetilde Y_S$. See Figure~\ref{fig:onedimtreeunknot}.

It is also useful to see these trees as arising from the Bott-degenerate conormal lift of the round unknot. Here there are four holomorphic disks emanating from each Reeb chord. The corresponding trees are just flow lines from the equator to the pole. The 1-jet lift of such a flow line can then be completed by one of the two half circles of the circle in $\Lambda$ which is the preimage of the pole. The Bott-degenerate $1$-parameter family then consists of a flow segment starting at the maximum in the Bott-family and ending at some point where a disk emanating at that point (corresponding to a flow line from that point to a pole) is attached.
\end{rmk}
\begin{figure}[htb]
\labellist
\small\hair 2pt
\pinlabel $e$ at 86 190
\pinlabel $e$ at 203 190
\pinlabel $e$ at 320 190
\pinlabel $e$ at 134 82
\pinlabel $e$ at  250 82
\endlabellist
\centering
\includegraphics{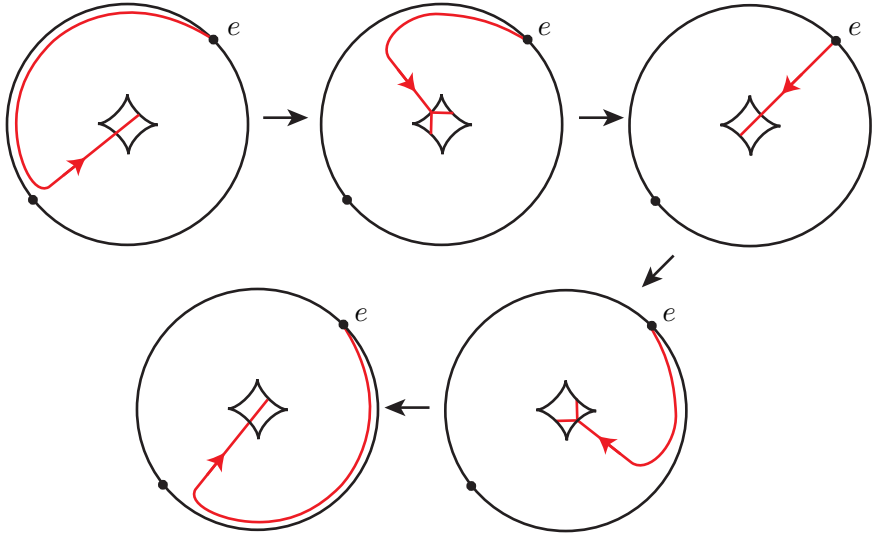}
\caption{One of the $1$-dimensional flow tree families for $\Lambda$ lying over the northern hemisphere of $S^2$.}
\label{fig:onedimtreeunknot}
\end{figure}
\begin{pf}[Proof of Lemma~\ref{lma:treesofU}]
We are only considering flow trees with exactly one positive puncture. First consider a rigid flow tree $\Gamma$ with a positive puncture at $c$. Using Lemma~\ref{lem:action} we see that there can be no negative punctures since the symplectic area must be positive. Thus all vertices of $\Gamma$  are $Y_0$ or $Y_1$ vertices, switches, or cusp ends.

We can rule out switches as follows.
Since the front is a small perturbation of a Lagrangian cone, it is possible to arrange the following: along a cusp edge in the image $\pi(\Sigma)$ of the caustic, the gradient vector fields for the function difference between a sheet meeting the cusp edge and a sheet not meeting the cusp edge are transverse to the caustic except at the swallow tail points, where they are tangent to the caustic (or zero). This is a non-generic situation, generically tangencies occur only at smooth points of the caustic, and after small perturbation tangency points lie close to the swallowtails. Switches of flow trees lie at tangency points in the caustic and thus a switch could only occur near a swallow tail point. Now, using again that the front is a small perturbation of a Lagrangian cone,
we can also arrange that near a swallow tail, the function difference between any one of the three sheets involved in the swallow tail and the fourth sheet is larger than any function difference in the small region bounded by $\pi(\Sigma)$ away from swallow tails. Then since the unique flow line that leaves $c$ hits $\pi(\Sigma)$ away from the swallow tails, by positivity of symplectic area (in this case, the fact that the function difference decreases along a flow line), there is a neighborhood of the swallow tail points which the the flow tree cannot reach and in particular it cannot have any switches.

Thus the vertices of $\Gamma$ are all $Y_0$ or $Y_1$ vertices or cusp ends.  By the dimension formula, Equation~\ref{eq:dimoftrees}, we see that for the flow tree to be rigid there is some $n\geq 0$ such that there are $n$ type $Y_1$ vertices and $n+1$ cusp ends (each with $\mu=+1$). To have a $Y_1$ vertex a flow line must intersect a cusp edge so that when traveling along the flow the number of sheets used describing $\Lambda$ increases (as one passes the cusp). Around the north and south pole of $S^2$ the cusp edges are arranged so that only flow lines traveling towards the poles could possibly have a $Y_1$ vertex. Thus for $n>1$ one of the edges in the flow tree will be a flow line that travels from near the north pole to near the south pole. Since this clearly does not exist, as there are two flow lines connecting $e$ to $c$ along the equator and $\Lambda$ has only two sheets along the equator, we must have $n=0$ or $1$.

There are only two flow lines leaving $c$ (that is two flow lines that could have $c$ as a positive puncture), one heading towards the north pole and one heading towards the south pole. When $n=0$ we clearly get $I_N$ and $I_S$ from these flow lines when they do not split at a $Y_1$ vertex and when $n=1$ we get $Y_N$ and $Y_S$ when they do split at a $Y_1$ vertex.

Notice for future reference that the above analysis shows that there are no $1$-parameter families of flow trees with $c$ as a positive puncture.

Now consider a flow tree $\Gamma$ with $e$ as a positive puncture. Noting that the Reeb chord above $c$ is only slightly shorter than the chord above $e$ we see, using Lemma~\ref{lem:action}, that $\Gamma$ can have either no negative punctures or just one negative puncture at $c$. If $\dim(\Gamma)=0$, then by the dimension formula there must be a negative puncture at $c$. Then, since $\Lambda$ is defined by only two functions away from neighborhoods of the poles, the only vertices of $\Gamma$ are the punctures $e$ and $c$. Thus $\Gamma$ is simply a flow line from $e$ to $c$ and there are precisely two: $E_1$ and $E_2$.

The argument for constrained rigid trees with positive puncture at $e$ follows from the argument used for rigid flow trees with positive puncture at $c$ above.
\end{pf}

\subsection{Conormal lifts of general links}\label{ssec:conormalliftofK}
In order to describe the conormal lift of a general link $K\subset \R^{3}$ we first represent it as the closure of a braid around the unknot $U$. More precisely, $K$ lies in a small neighborhood $N=S^1\times D^2\subset \R^{3}$ of $U$ and is transverse to the fiber disks $\{\theta\}\times D^2$ for all $\theta\in S^1$. Note that $B$ is a braid on $n$ strands if and only if $K$ intersects any fiber disk $n$ times. We write $B$ for the closed braid corresponding to $K$ considered as lying in $S^{1}\times D^{2}$.

We represent a closed braid $B$ on $n$ strands as the graph $\Gamma_{f_B}$ of a
multi-section $f_B\colon S^1\to D^2$, where $f_B(s)$ consists of $n$
distinct points in $D^2$ varying smoothly with $s$, so that
$\Gamma_{f_{B}}$ is a smooth submanifold. Representing $S^1$ as
$[0,2\pi]$ with endpoints identified, we can express $f_B$ as a
collection of $n$ functions $\{f_1(s),\ldots, f_n(s)\}$ where
$f_i\colon[0,2\pi] \to D^2$ are smooth functions, $i=1,\dots,n$. (The
sets $\{f_1(0),\dots,f_n(0)\}$ and $\{f_1(2\pi),\dots,f_n(2\pi)\}$ are
equal but it does not necessarily hold that $f_i(0)=f_i(2\pi)$.) Note
that the distance between $\Lambda$ and $\Lambda_K$ is controlled by
the $C^{1}$-distance from $f_B$ to the trivial multi-section which
consists of $n$ points at the origin. In particular if $N$ is a fixed
neighborhood of $\Lambda$ then $\Lambda_K\subset N$ provided that $f_B$
is sufficiently $C^1$-small.

\subsubsection{A $1$-jet neighborhood of $\Lambda$}
 In order to describe the multiscale flow trees on $\Lambda$ determined by $\Lambda_K$ we need to identify some neighborhood of $\Lambda$ with $J^{1}(\Lambda)\approx T^{\ast}T^{2}\times\R$. Thinking of $T^2$ as $S^1\times S^1$
we use two different versions of the cotangent bundle of $S^{1}$ to describe $T^{\ast}T^2$. First, we think of $S^{1}$ as $[0,2\pi]$ with endpoints identified, we let $s$ be a coordinate on $[0,2\pi]$ and $\sigma\in\R$ be a fiber coordinate in $T^{\ast}S^{1}=S^{1}\times\R$. We write $T^{\ast} S^{1}_{\lambda}$ for the cotangent bundle with these coordinates:
\[
T^{\ast}S^{1}_{\lambda}=\left\{(s,\sigma)\colon s\in[0,2\pi]\,,\,\sigma\in\R\right\}.
\]
We denote the second version $T^{\ast}S^{1}_{\mu}$ and define it as
\[
T^{\ast} S^{1}_{\mu}=\left\{(\xi,\eta)\in\R^{2}\times\R^{2}\colon |\xi|=1,\,\, \xi\bull\eta=0\right\}.
\]

Let $r(s)=(\cos s, \sin s, 0)$, $0\le s\le2\pi$ denote a unit vector in the $x_1x_2$-plane at an angle $s$ from the $x_1$-axis and  $\theta(s)=(-\sin s, \cos s, 0)$ the vector $r(s)$ rotated $\pi/2$ counter-clockwise in the $x_1x_2$-plane (we can think of it as the standard angular vector at $r(s)$ translated back to the origin).
Let $\zeta$ be the coordinate on $\R$. Consider the map
\[
\Phi\colon T^{\ast}S^{1}_{\lambda}\times T^{\ast} S^{1}_{\mu}\times\R\to U^{\ast}\R^{3}=\R^{3}\times S^{2},
\]
where $\Phi=(\Phi_1,\Phi_2)$ is defined by
\begin{align}\label{eq:almost1jetcoord}
\Phi_1(s,\sigma,\xi,\eta,\zeta)&=
r(s)+\frac{1}{\sqrt{1-\sigma^{2}}}(\eta_1r(s)+\eta_2(0,0,1))\\\notag
&+\zeta\left(-\sigma{\theta(s)}+\sqrt{1-\sigma^{2}}(\xi_1r(s)+\xi_2(0,0,1))\right),\\\notag
\Phi_2(s,\sigma,\xi,\eta,\zeta)&=
-\sigma{\theta(s)}+\sqrt{1-\sigma^{2}}(\xi_1r(s)+\xi_2(0,0,1)).
\end{align}
Then $\Phi|_{S^{1}_{\lambda}\times 0 \times S^{1}_{\mu} \times 0\times 0}$ is a parametrization of $\Lambda$ and its restriction to a small neighborhood of the $0$-section is an embedding. Furthermore, since $\xi\bull\eta=0$, we know that $\eta\bull d\xi=-\xi\bull d\eta$. Using this and the fact that $r(s)\bull \theta(s)=0$ and $r'(s)=\theta(s)$ we can compute
\begin{align*}
\Phi^{\ast}(p\, dq)&=d\zeta-\sigma\,ds-\xi\bull d\eta -\frac{1}{\sqrt{1-\sigma^{2}}}\sigma\eta_1\,ds.
\end{align*}

We introduce the following notation:
\begin{align}\label{eq:alphabeta}
\beta_0&=\sigma\,ds+\xi\bull d\eta,\\
\alpha &=\frac{1}{\sqrt{1-\sigma^{2}}}\sigma\eta_1\,ds,\\
\beta_t&=\beta_0-t\alpha,\quad 0\le t\le 1.
\end{align}
Note that $d\beta_t$ is symplectic in a neighborhood of the $0$-section, for all $t$.
Using Moser's trick we define a time-dependent vector field $X_t$ by
\begin{equation}\label{eq:Moser}
-\alpha=d\beta_{t}(X_t,\cdot)
\end{equation}
and find that if $\psi_{t}$ denotes the time $t$ flow of $X_t$ then
\[
\psi_{t}^{\ast}d\beta_t=d\beta_0.
\]
In particular,
\[
d(\psi_{t}^{\ast}\beta_t-\beta_0)=0.
\]
Equation \eqref{eq:Moser} and the definition of $\alpha$ imply
that $X_t=0$ along the $0$-section and thus $\psi_{t}^{\ast}\beta_t-\beta_0=0$ along the $0$-section. By the homotopy invariance of de Rham cohomology, the closed form $\psi_{t}^{\ast}\beta_t-\beta_0$ is exact. Let the function $h$ be such that $\beta_0=\psi_{1}^{\ast}\beta_1+dh$ and such that $h=0$ on the $0$-section.

We can bound the growth of $\alpha,X_t,h$ in terms of distance $r := (|\eta|^2+|\sigma|^2)^{1/2}$ from the origin.
From the explicit expression for $\alpha$ in Equation~\eqref{eq:alphabeta}, we have $\alpha = \Ordo(r^2)$ and thus from Equation~\eqref{eq:Moser},
\begin{equation}\label{eq:vfestimate}
|X_t|=\Ordo(|\eta|^2+|\sigma|^2) \quad\text{and}\quad
|dX_t|=\Ordo((|\eta|^2+|\sigma|^2)^{1/2}).
\end{equation}
Then from the definition of $h$, $|dh| = \Ordo(r^2)$ and so
\begin{equation}\label{eq:hestimate}
|d^{(k)}(h)| = \Ordo((|\eta|^2+|\sigma|^2)^{(3-k)/2}),\quad k=0,1,2.
\end{equation}
We will use these estimates in the proof of Lemma~\ref{lma:braidfront} below.

Finally, if $\Psi$ is the diffeomorphism of $T^{\ast}S^{1}_{\lambda}\times T^{\ast}S^{1}_{\mu}\times\R$ given by
\[
\Psi((s,\sigma,\xi,\eta),\zeta)=(\psi_1(s,\sigma,\xi,\eta),\zeta+h(s,\sigma,\xi,\eta)),
\]
then
\[
\Psi^{\ast}\Phi^{\ast}(p\,dq)=d\zeta-\sigma\,ds-\xi\bull d\eta.
\]

The map $\Theta=\Phi\circ\Psi$ is the $1$-jet neighborhood map we will use. It is an embedding from a neighborhood of the $0$-section in $J^{1}(T^{2})=T^{\ast}T^{2}\times\R$ to a neighborhood of $\Lambda\subset U^{\ast}\R^{3}=J^{1}(S^{2})$ such that $\Theta^{\ast}(p\,dq)=d\zeta-\theta$, where $\zeta$ is a coordinate in the $\R$-factor and where $\theta$ is the Liouville form on $T^{\ast}T^{2}$.

\begin{rmk}\label{rem:shortchords}
Notice that the contactomorphism $\Theta$ sends the Reeb flow of $J^1(T^2)$ to the Reeb flow of $J^1(S^2)$. (Here, as throughout the rest of the paper, we are identifying $J^1(S^2)$ with $U^{\ast}\R^{3}$ using the contactomorphism in Equation~\eqref{eq:maincontacto}.) Thus any Reeb chord of $\Lambda_K$ in $J^1(T^2)$ corresponds to a Reeb chord of $\Lambda_K$ in $J^1(S^2)$ and any Reeb chord of $\Lambda_K$ in $J^1(S^2)$ that lies entirely in $N$ corresponds to a Reeb chord of $\Lambda_K$ in $J^1(T^2)$.
\end{rmk}

\subsubsection{Conormal lifts of closed braids as multi-sections}
Consider $T^{2}=S^{1}_{\lambda}\times S^{1}_{\mu}$ as above. A
multi-section of $J^{0}(T^{2})$ is a smooth map $F\colon T^{2}\to
J^{0}(T^{2})$ such that $\pi\circ F$ is an immersion ({\em i.e.}, a covering
map). In particular, a multi-section can be thought of as the graph of
a multi-function $F\colon T^2\to \R$. (Context will designate whether
$F$ refers to a (multi-)function or its graph, a
(multi-)section.)   The $1$-jet extension of a generic multi-section is
a Legendrian submanifold. We denote it $\Gamma_{j^{1}(F)}$. Let
$K\subset\R^{3}$ be a link and let $B$ be a closed braid representing
$K$ with corresponding multi-section $f_B=\{f_1,\dots,f_n\}$,
$f_j\colon[0,2\pi]\to D^{2}$, as described at the beginning of
Section~\ref{ssec:conormalliftofK}.
\begin{lma}\label{lma:braidfront}
The conormal lift $\Lambda_K$ is Legendrian isotopic to the Legendrian submanifold $\Theta(\Gamma_{j^{1}(F_B)})$ where $F_{B}$ is the multi-section given by the functions $F_{j}\colon[0,2\pi]\times S^{1}_{\mu}\to\R$, $j=1,\dots,n$, where
\[
F_j(s,\xi) = f_j(s) \bull \xi.
\]
Here we think of $S^{1}_{\lambda}$ as $[0,2\pi]$ with endpoints identified and we identify $S^{1}_\mu$ with the unit circle in the plane of the disk where $f_j\colon [0,2\pi]\to D^{2}$ takes values.
\end{lma}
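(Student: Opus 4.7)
The plan is to parametrize $\Lambda_K$ explicitly in the $\Theta$-coordinates on $J^1(T^2)$ and show that, after scaling the braid, it becomes the $1$-jet graph of a multi-section agreeing with $\epsilon F_B$ to leading order in $\epsilon$; a straight-line interpolation between multi-sections then gives the desired Legendrian isotopy.

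First, I would reduce to a small scale. For $\epsilon>0$, let $K_\epsilon$ be the closure of the braid with multi-section $\epsilon f_B$. The smooth family $\{t f_B\}_{t\in[\epsilon,1]}$ gives a smooth link isotopy $K_\epsilon\leadsto K$ and hence a Legendrian isotopy $\Lambda_{K_\epsilon}\simeq\Lambda_K$. Similarly, $\{t F_B\}_{t\in[\epsilon,1]}$ is a smooth homotopy of multi-sections whose $1$-jet graphs give a Legendrian isotopy $\Gamma_{j^1(\epsilon F_B)}\simeq\Gamma_{j^1(F_B)}$ in $J^1(T^2)$; pushing forward by the contactomorphism $\Theta$ gives a Legendrian isotopy $\Theta(\Gamma_{j^1(\epsilon F_B)})\simeq\Theta(\Gamma_{j^1(F_B)})$. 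It therefore suffices to establish the statement for $K_\epsilon$ in place of $K$ with $\epsilon>0$ arbitrarily small.

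Next, I would compute $\Phi^{-1}(\Lambda_{K_\epsilon})$ to first order in $\epsilon$. Parametrize $\Lambda_{K_\epsilon}$ by $(s_0,\xi)\in T^2$, assigning to $(s_0,\xi)$ the base point $\gamma_j(s_0)=r(s_0)+\epsilon[(f_j)_1(s_0)r(s_0)+(f_j)_2(s_0)e_3]$ and the unique unit conormal $v$ to $K_\epsilon$ at $\gamma_j(s_0)$ whose projection onto $\operatorname{span}(r(s_0),e_3)$ is a positive multiple of $\xi_1r(s_0)+\xi_2e_3$. Writing the $\Phi^{-1}$-image as $(s_0+\epsilon s_1+\ldots,\,\epsilon\sigma_1+\ldots,\,\xi,\,\epsilon\eta_1+\ldots,\,\epsilon\zeta_1+\ldots)$ and expanding the four equations $\Phi_1=\gamma_j(s_0)$, $\Phi_2=v$, $v\bull\gamma_j'(s_0)=0$, and $\eta\bull\xi=0$ to first order in $\epsilon$ forces $s_1=0$ and
\[
\sigma_1=\xi\bull f_j'(s_0),\qquad \zeta_1=f_j(s_0)\bull\xi,\qquad \eta_1=f_j(s_0)-(f_j(s_0)\bull\xi)\xi.
\]
These equal the values of $\pa_s F_j$, $F_j$, and $\pa_\xi F_j$ (the latter projected onto $T_\xi^{\ast}S^1_\mu=\xi^\perp$) for $F_j(s,\xi)=f_j(s)\bull\xi$, so $\Phi^{-1}(\Lambda_{K_\epsilon})$ is the $1$-jet graph of a multi-section $\widetilde F^\epsilon$ on $T^2$ with $\widetilde F^\epsilon-\epsilon F_B=\Ordo(\epsilon^2)$ in $C^2$.

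Finally, I would absorb the Moser correction and interpolate. By the estimates~\eqref{eq:vfestimate} and~\eqref{eq:hestimate}, on the region $|\sigma|,|\eta|=\Ordo(\epsilon)$ the diffeomorphism $\Psi$ is $\Ordo(\epsilon^2)$-close to the identity in $C^2$, so $\Theta^{-1}(\Lambda_{K_\epsilon})=\Psi^{-1}\Phi^{-1}(\Lambda_{K_\epsilon})$ is still the $1$-jet graph of a multi-section $G^\epsilon$ with $G^\epsilon-\epsilon F_B=\Ordo(\epsilon^2)$ in $C^2$. For $\epsilon$ small enough, the straight-line family $t\mapsto(1-t)\epsilon F_B+tG^\epsilon$ consists of multi-sections for every $t\in[0,1]$ (the multi-section property is $C^1$-open), and taking $1$-jet graphs produces a Legendrian isotopy in $J^1(T^2)$ from $\Gamma_{j^1(\epsilon F_B)}$ to $\Theta^{-1}(\Lambda_{K_\epsilon})$. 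Pushing this forward by $\Theta$ and concatenating with the scaling isotopies from the first step yields the claimed Legendrian isotopy in $U^{\ast}\R^3$. The main obstacle is the first-order expansion above: one must simultaneously handle the four equations $\Phi_1=\gamma_j(s_0)$, $\Phi_2=v$, $v\bull\gamma_j'(s_0)=0$, and $\eta\bull\xi=0$, and it is precisely the fiber constraint $\eta\bull\xi=0$ that singles out $\zeta_1=f_j(s_0)\bull\xi$ and supplies the geometric origin of the formula $F_j(s,\xi)=f_j(s)\bull\xi$ asserted in the lemma.
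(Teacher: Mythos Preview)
Your proposal is correct and follows essentially the same strategy as the paper: compute $\Phi^{-1}(\Lambda_K)$ for a small braid, absorb the Moser correction $\Psi$ using the estimates \eqref{eq:vfestimate} and \eqref{eq:hestimate}, and finish with a straight-line interpolation of multi-sections. One small wording issue: $\Phi^{-1}(\Lambda_{K_\epsilon})$ is not literally a $1$-jet graph (since $\Phi$ is not a contactomorphism for the standard form $d\zeta-\beta_0$), only $\Ordo(\epsilon^2)$-close to one; but your argument is unaffected because $\Theta^{-1}(\Lambda_{K_\epsilon})$ \emph{is} Legendrian for the standard form and hence genuinely a $1$-jet graph $\Gamma_{j^1(G^\epsilon)}$, and your first-order computation together with the $\Psi$-estimates still yields $G^\epsilon-\epsilon F_B=\Ordo(\epsilon^2)$.
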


\begin{proof}
Let $N\subset U^{\ast}\R^{3}$ denote a $\delta$-neighborhood of $\Lambda$ in which $\Theta$  and $\Phi$ give local coordinates. Take the $C^{2}$-norm of $f_B$ sufficiently small so that $\Lambda_{K}$ will be in $N$. We first show that $\Phi^{-1}(\Lambda_K)$ is given by the 1-jet lift of $F_B$. To see this notice that the image of the braid $B$ is given by the image of the maps
\[
(f_j(s))_x r(s) + (f_j(s))_y (0,0,1)
\]
where $(f_j)_x$ and $(f_j)_y$ are the $x$ and $y$-coordinates of $f_j$, and $r(s)$ is as in Equation~\eqref{eq:almost1jetcoord}. Thus the normal component in $J^1(S^2)=T^\ast S^2\times \R$ ({\em i.e.}, the $\R$-component) of the braid at the point $(f_j(s))_x r(s) + (f_j(s))_y (0,0,1))$ is given by
\[
\left((f_j(s))_x r(s) + (f_j(s))_y (0,0,1)\right) \bull (\xi_1r(s)+\xi_2(0,0,1))=f_j(s)\bull \xi.
\]
According the the definition of $\Phi$ we see that the $\R$-factor of $T^\ast T^2\times\R=J^1(T^2)$ maps to the $\R$-factor in $T^\ast S^2\times \R$ by $\zeta\mapsto \zeta \sqrt{1-\sigma^2}$. Thus the multi-section of $J^0(T^2)$ corresponding to $\Phi^{-1}(\Lambda_K)$ is given by $\frac 1{\sqrt{1-\sigma^2}} f_j(s)\bull \xi$, but in $J^0(T^2)$ the $\sigma$-coordinate is always equal to zero. Thus the multi-function $F_B$ does indeed describe $\Phi^{-1}(\Lambda_K)$ as claimed.

Now $\Theta^{-1}(\Lambda_{K})$ is the $1$-jet graph $\Gamma_{j^{1}(G_{B})}$ of some multi-section $G_B$. In general, Legendrian submanifolds of $J^1(T^2)$ will be given by cusped multi-sections, but since each point in $\Theta^{-1}(\Lambda_{K})$ has a neighborhood in $\Theta^{-1}(\Lambda_{K})$ that can be made $C^1$ close to the zero section in $J^1(T^2)$, we see that $\Theta^{-1}(\Lambda_{K})$ has empty caustic and hence is the 1-jet extension of a multi-section.

From the above discussion we see that $\Gamma_{j^1(G_B)}$ is the same as $\Psi^{-1}(\Gamma_{j^1(F_B)})$. The estimates~\eqref{eq:vfestimate} and~\eqref{eq:hestimate} then imply that the $C^1$-distance between $F_B$ and $G_B$ is $\Ordo(\delta^{2})$. Consequently, for $\delta>0$ sufficiently small, $\Gamma_{j^{1}(G_B)}=\Lambda_K$ and $\Gamma_{j^{1}(F_B)}$ are Legendrian isotopic.
\end{proof}

\subsubsection{Reeb chords and grading}
Let $K\subset\R^{3}$ be a link. We assume that $K$ is braided around the unknot as a braid on $n$ strands and we represent $\Lambda_{K}$ as $\Gamma_{j^{1}(F_B)} \subset J^1(T^2)$ as in Lemma \ref{lma:braidfront}. Then the Reeb chords of $\Lambda_{K}$ in $J^1(S^2)$ are of two types: {\em short chords}, which are entirely contained in the neighborhood $N$ of $\Lambda,$ and {\em long chords}, which are not. According to Remark~\ref{rem:shortchords} we see that the short chords of $\Lambda_K$ correspond to chords of $\Lambda_K$ in $J^1(T^2)$. As with $\Lambda$, one can use the techniques of Section~\ref{sssec:geomprelim} to conclude that the Maslov class of $\Lambda_{K}$ vanishes, and thus the grading of any Reeb chord of $\Lambda_{K}$ with both endpoints on the same component is independent of capping path. For Reeb chords with endpoints on distinct components the grading depends on the chosen transport along the paths connecting the components, {\emph cf.\ } Remark~\ref{rmk:gradingmixed}. Here we use the following choice throughout. Fix a point $x\in \Lambda$ and let the base points of the components of $\Lambda_K$ all lie in the intersection  $J^{1}_{x}(\Lambda)\cap \Lambda_K$ of $\Lambda_K$ and the fiber of $J^{1}(\Lambda)$ at $x$. Take the connecting paths as straight lines in $T^{\ast}_{\Pi^{\Lambda}(x)}\Lambda$ and use parallel translation in the flat metric followed by a rotation along the complex angle or the complementary complex angle, whichever is less than $\frac{\pi}{2}$, as transport. Note that as $K$ gets closer to $U$ the angle of this rotation approaches $0$.
\begin{lma}\label{lma:braidnicechords}
Up to smooth isotopy, we can choose the link $K \subset \R^3$
so that $\Lambda_K$ has exactly $2n(n-1)$ short Reeb chords:
\begin{align*}
\{a_{ij}\}_{1\le i,j\le n,\,\,i\ne j}, &\quad|a_{ij}|=0,\\
\{b_{ij}\}_{1\le i,j\le n,\,\,i\ne j}, &\quad|b_{ij}|=1,
\end{align*}
and exactly $2n^{2}$ long Reeb chords:
\begin{align*}
\{c_{ij}\}_{1\le i,j\le n}, &\quad|c_{ij}|=1,\\
\{e_{ij}\}_{1\le i,j\le n}, &\quad|e_{ij}|=2.
\end{align*}
Here $c_{ij}$ (respectively $e_{ij}$) lie in small neighborhoods of the Reeb chords $c$ (respectively $e$) of $\Lambda$ for all $i,j$. Furthermore all Reeb chords can be taken to correspond to transverse intersection points in $T^{\ast} S^{2}$.
\end{lma}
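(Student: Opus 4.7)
The plan is to separate the Reeb chords of $\Lambda_K$ into \emph{short} chords (lying entirely in the neighborhood $N$ of $\Lambda$) and \emph{long} chords (the rest), and to compute each family directly. By Remark~\ref{rem:shortchords} and Lemma~\ref{lma:braidfront}, short chords of $\Lambda_K\subset J^1(S^2)$ correspond bijectively to Reeb chords of the $1$-jet lift $\Gamma_{j^1(F_B)}\subset J^1(T^2)$ of the multi-section $F_B=\{F_1,\dots,F_n\}$; long chords will be produced by perturbing the two Reeb chords $c,e$ of $\Lambda$ from Section~\ref{ssec:flowtreesonU}.

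For short chords, Lemma~\ref{lem:critpts} identifies Reeb chords of $\Gamma_{j^1(F_B)}$ between sheets $i\ne j$ with critical points of $F_i-F_j = v_{ij}(s)\bull\xi$, where $v_{ij}(s):=f_i(s)-f_j(s)\in\R^2$. A direct calculation shows that the $\xi$-partial vanishes precisely when $\xi=\pm v_{ij}(s)/|v_{ij}(s)|$ (assuming $v_{ij}(s)\ne 0$), and combining this with the $s$-equation $v_{ij}'(s)\bull\xi=0$ forces $s$ to be a critical point of $|v_{ij}|^2$. The main technical obstacle is to choose a representative of the braid class so that for every pair of strands $\{i,j\}$ in the closed braid, the function $|v_{ij}|^2$ (defined on the natural circle obtained by following the pair through the braid closure) is Morse with exactly one maximum $s_+$ and one minimum $s_-$; I would arrange this by placing the strands close to the vertices of a regular $n$-gon and then realizing $B$ via a small generic isotopy, under which the Morse condition on all pairs simultaneously is generic. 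A Hessian computation then yields four nondegenerate critical points of $F_i-F_j$ per unordered pair $\{i,j\}$: a max (index $2$) at $(s_+,v_{ij}(s_+)/|v_{ij}(s_+)|)$, a min (index $0$) at $(s_+,-v_{ij}(s_+)/|v_{ij}(s_+)|)$, and two saddles (index $1$) at $(s_-,\pm v_{ij}(s_-)/|v_{ij}(s_-)|)$. Since $\Gamma_{j^1(F_B)}\subset J^1(T^2)$ has empty caustic, capping paths can be chosen so that $D(\gamma)-U(\gamma)=0$ in Lemma~\ref{lem:indexcritpt}; converting Morse indices into gradings, each saddle becomes an $a_{ij}$ or $a_{ji}$ of grading $0$ and each max-type chord becomes a $b_{ij}$ or $b_{ji}$ of grading $1$ (using the convention that the upper sheet at $b_{ij}$ and $a_{ij}$ carries strand label $i$). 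Summing over the $\binom{n}{2}$ unordered pairs produces exactly $2n(n-1)$ short chords.

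For long chords, each of the Reeb chords $c, e$ of $\Lambda$ corresponds to a transverse self-intersection of $\Pi(\Lambda)$ in $T^*S^2$. Since $\Pi(\Lambda_K)$ is a $C^1$-small multi-sheet perturbation of $\Pi(\Lambda)$ with $n$ sheets clustered around each sheet of $\Pi(\Lambda)$, each such double point persists as $n\cdot n=n^2$ transverse double points of $\Pi(\Lambda_K)$, producing the $n^2$ chords $c_{ij}$ (resp. $e_{ij}$); a compactness argument rules out any further long chords for $F_B$ sufficiently $C^1$-small (any chord of $\Lambda_K$ not close to $c$ or $e$ must degenerate to a short chord as $\Lambda_K\to\Lambda$). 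For the grading, I would choose capping paths for each $c_{ij}$ (resp. $e_{ij}$) that closely track the capping paths used for $c$ (resp. $e$) in Lemma~\ref{lem:gradingunknot}, completed by short arcs in $\Lambda_K$ near the endpoints; the cusp count $D-U$ and the Morse index of the local sheet-defining function difference are both preserved under this small perturbation, so Lemma~\ref{lem:indexcritpt} gives $|c_{ij}|=|c|=1$ and $|e_{ij}|=|e|=2$. Transversality of all self-intersections in $T^*S^2$ is automatic from the generic choice of braid representative.
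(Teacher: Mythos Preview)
Your approach is essentially the paper's: separate into short and long chords, analyze short chords via critical points of the height-function differences $F_i-F_j$ on $T^2$, and obtain long chords from the $n$-fold layering over each of the two Reeb chords $c,e$ of $\Lambda$. Two points deserve attention.

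First, your mechanism for arranging that each $|v_{ij}|^2$ has exactly one maximum and one minimum does not work as stated. Genericity only guarantees that these functions are Morse, not that they have the minimal number of critical points; and a ``small generic isotopy'' starting from strands at the vertices of a regular $n$-gon cannot realize a nontrivial braid $B$ in the first place, since braiding requires strands to wind around one another by a finite amount. The paper instead arranges this by an explicit construction (detailed later in Sections~\ref{sssec:flowtreesKinLambda} and~\ref{ssec:twistslice}): the braid twists are confined to a short $s$-interval, outside of which the strands lie along a curve scaled by a function $\psi(s)$ with a single maximum and minimum; inside the twist region the overall radial motion is made to dominate so that no new critical points of $|f_i-f_j|$ are introduced. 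You have correctly identified this as ``the main technical obstacle,'' but genericity alone does not resolve it.

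Second, your Morse-index bookkeeping needs one more sentence. You list indices $2,0,1,1$ for $F_i-F_j$ at the four critical points over the unordered pair $\{i,j\}$, but then assert the resulting chords have gradings $1,1,0,0$, with only the ``max-type'' chord explicitly converted. The missing observation is that Lemma~\ref{lem:indexcritpt} computes the grading from the index of the \emph{positive} function difference (upper sheet minus lower): at the minimum of $F_i-F_j$ one has $F_i<F_j$, so the relevant index is that of $F_j-F_i$, namely $2$, giving grading $1$. This is exactly why the paper phrases it as ``$b_{ij}$ correspond to maxima of positive function differences.'' With that clarification your count and gradings match the paper's.
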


\begin{rmk}\label{rmk:Reebchordtypes}
We will use the following notation for the Reeb chords of $\Lambda_K$. We say that the short Reeb chords are of type $\mathbf{S}$ and the long of type $\mathbf{L}$. We sometimes specify further and say that a short (long) Reeb chord of grading $j$ is of type $\mathbf{S}_j$ ($\mathbf{L}_j$).
\end{rmk}

\begin{pf}[Proof of Lemma~\ref{lma:braidnicechords}]
The statement in the Lemma for long chords is immediate from the fact that $\Lambda_K$ is the $1$-jet graph of a multi-section with $n$ sheets over $\Lambda$ which has two Reeb chords: $e$ with $|e|=2$ and $c$ with $|c|=1$. To prove the statement on short chords we note that we may choose the multi-section $f_{B}$ so that for any $i\ne j$, $|f_i-f_j|$ has a maximum at $2\pi-\delta\in[0,2\pi]$, a minimum at $2\pi-2\delta$, and no other critical points. Parameterizing $S^{1}_{\mu}$ by $\xi=(\cos t,\sin t)$, $t\in[0,2\pi]$, we find that the difference between two local functions of $\Lambda_{K}$ is
\[
F_{ij}(s,t)=(f_i(s)-f_{j}(s))\bull (\cos t,\sin t).
\]
Now $dF_{ij}=0$ if and only if
\begin{align}
&(f_{i}'(s)-f_{j}'(s))\bull (\cos t,\sin t)=0 \text{ and}\\\label{eq:braidcrit2}
&(f_{i}(s)-f_{j}(s))\bull (-\sin t,\cos t)=0,
\end{align}
which in turn happen if and only if $s$ is critical for $|f_i(s)-f_j(s)|$ and $t$ takes one of the two values, say $t_0$ and $t_0+\pi$, for which Equation~\eqref{eq:braidcrit2} holds. We take $a_{ij}$ to be the chord corresponding to the minimal distance between strands and $b_{ij}$ to the maximal distance.

In order to compute the gradings of $a_{ij}$ and $b_{ij}$ we note that the front of $\Lambda_{K}$ in $J^0(\Lambda)=\Lambda\times\R$ has no singularities and that the chords $a_{ij}$ (respectively $b_{ij}$) correspond to saddle points (respectively maxima) of positive function differences of local defining functions. The grading statement then follows from Equation~\eqref{eq:frontnu}.
\end{pf}

\subsection{Counting flow trees of $\Lambda_K$ in $J^1(\Lambda)$}\label{ssec:thecount}
In this subsection we determine all the flow trees for $\Lambda_K \subset J^1(\Lambda)$. Our enumeration relies on a particular and fairly technical choice of position for $\Lambda_K$ over the regions where the braid twists, whose details we defer to Section~\ref{ssec:twistslice}. For our current computational purposes, we only need a few qualitative features of these twist regions, as described in Sections~\ref{sssec:setup} through \ref{sssec:endpointpath} below, which will serve to motivate the more technical parts of our discussion of twist regions in Section~\ref{ssec:twistslice}.

Given these qualitative features, we perform the actual combinatorial computation of flow trees in Sections~\ref{sssec:partial} through~\ref{sssec:countflowtree} (after first presenting a scheme for calculating signs for flow trees in Section~\ref{sssec:signrules}), culminating in Lemma~\ref{lma:dB}, which presents a purely algebraic formula for the Legendrian DGA of $\Lambda_K \subset J^1(\Lambda)$. This comprises an important subalgebra of the Legendrian DGA of $\Lambda_K \subset J^1(S^2)$, the rest of which is computed in Section~\ref{sec:combdiff}.

\subsubsection{Basic setup}\label{sssec:setup}
Recall that $K$ is the closure of a braid in $S^{1}\times D^2$ given by a collection $\{f_1,\dots,f_n\}$ of functions $f_j\colon [0,2\pi]\to D^{2}$, $j=1,\dots,n$. We use Lemma \ref{lma:braidfront} to represent $\Lambda_{K}\subset J^{1}(\Lambda)$ as the $1$-jet graph of the functions $F_j\colon[0,2\pi]^{2}\to\R$ given by
\[
F_j(s,t)=f_j(s)\cdot(\cos t,\sin t).
\]
Here $(s,t)\in [0,2\pi]^{2}$ are coordinates on $\Lambda$, with $s$ corresponding to the parameter along the unknot (represented by the unit circle in the $x_1x_2$-plane) and $t$ to the parameter along a unit circle in the normal fiber of the unknot with $0$ corresponding to the positive outward normal of the unit circle in the $x_1x_2$-plane. Furthermore, recall from Lemma \ref{lma:braidnicechords} that Reeb chords of $\Lambda_K\subset J^{1}(\Lambda)$ correspond to points $(s,t)\in\Lambda$ where $s$ is a critical point of $|f_i(s)-f_j(s)|$ and where $t$ is such that the vector $(\cos t,\sin t)$ is parallel to the vector $f_i(s)-f_j(s)$.

Let $[s_0,s_1]\subset[0,2\pi]$. We refer to the part of $\Lambda_K$ lying over an interval $[s_0,s_1]$,
\[
\Lambda_K\cap J^{1}([s_0,s_1]\times S^{1})\subset J^{1}(\Lambda),
\]
as the \emph{$[s_0,s_1]$-slice} of $\Lambda_{K}$. We will represent the braid as follows: the actual twists will take place in an $[s_0^{\mathrm{br}},s_1^{\mathrm{br}}]$-slice, where $[s_0^{\mathrm{br}},s_1^{\mathrm{br}}]\subset(0,\frac{\pi}{2})$. Inside the $[s_0^{\mathrm{br}},s_1^{\mathrm{br}}]$-slice the braid is given by sub-slices where it twists so that two strands are interchanged, separated by slices where the braid is trivial. Outside the $[s_0^{\mathrm{br}},s_1^{\mathrm{br}}]$-slice the braid is trivial.

We will choose perturbations so that the following holds: Reeb chord endpoints of the Reeb chords $e$ and $c$ of $\Lambda$ have the following coordinates:
\[
e^{+}=(0,0),\quad e^{-}=(\pi,\pi),\quad c^{+}=(\pi,0),\quad\text{and}\quad c^{-}=(0,\pi),
\]
see Lemma \ref{lem:gradingunknot}. Consequently, all Reeb chords $e_{ij}$ and $c_{ij}$, $1\le i,j\le n$, of type $\mathbf{L}$ are located near these points. The Reeb chords $a_{ij}$ and $b_{ij}$ of type $\mathbf{S}$ are located near $s=\underline{s}$ and $s=\overline{s}$ where $\frac{\pi} {2}<\underline{s}<\overline{s}<2\pi$, see the proof of Lemma \ref{lma:braidnicechords}.

\subsubsection{Reeb chords and trivial slices}\label{sssec:flowtreesKinLambda}
Consider an interval $[s_0,s_1]$ where the braid is trivial. We will describe a model for the $[s_0,s_1]$-slice of the conormal lift of a trivial braid which we will use in two ways: to control Reeb chords and to define a normal form of a slice of the trivial braid in which there are no Reeb chords. We first describe a somewhat degenerate $[s_0,s_1]$-slice of $\Lambda_K$: we represent the trivial braid by
\[
f_j(s)=(0,j\,\psi(s)),\quad j=1,\dots,n,
\]
where $\psi(s)$ is a positive function that has a non-degenerate local minimum at $\underline{s}$ and a non-degenerate local maximum at $\overline{s}$. Here $s_0<\underline{s}<\overline{s}<s_1$ and $\psi$ has no other critical points.

We call the $1$-jet graph of the function
\[
F_j(s,t)=f_j(s)\bull(\cos t,\sin t)
\]
the $j^{\rm th}$ sheet of $\Lambda_K$ and denote it by $S_j$, $j=1,\dots,n$.
Writing
\begin{align*}
F_{ij}(s,t)&=F_i(s,t)-F_j(s,t)=(f_i(s)-f_j(s))\bull(\cos t,\sin t)\\
&=(i-j)\psi(s)\sin t,
\end{align*}
Reeb chords correspond to critical points of $F_{ij}$ and are all located in the fibers over the following points in $\Lambda$:
\[
\left(\underline{s},\frac{\pi}{2}\right),\,\left(\underline{s},\frac{3\pi}{2}\right),\,\left(\overline{s},\frac{\pi}{2}\right),\,\text{and}\,\left(\overline{s},\frac{3\pi}{2}\right).
\]
The Reeb chords lying in the fibers over the first (respectively second) two points correspond to saddle points (respectively extrema) of the functions $F_{ij}.$ We denote them by $a_{ij}$ (respectively $b_{ij}$). Here the labeling is such that $a_{ij}$ and $b_{ij}$ begin on $S_j$ and end on $S_i$. Thus, if $i>j$ then the $t$-coordinate of $a_{ij}$ and $b_{ij}$ equals $\frac{\pi}{2}$ whereas if $i<j$ it equals $\frac{3\pi}{2}$.

The flow trees that we study all have their negative punctures at
$a_{ij}$ and thus we must understand the unstable manifold $W^{\rm
  u}(a_{ij})$ of $a_{ij}$ as a critical point of $F_{ij}$. (Recall that we are using the positive gradient flow when discussing stable and unstable manifolds.)
It is straightforward to check that
\[
W^{\rm u}(a_{ij})=
\begin{cases}
\left\{(s,t)\colon t=\frac{\pi}{2}\right\}\quad &\text{if }i>j\\
\left\{(s,t)\colon t=\frac{3\pi}{2}\right\}\quad &\text{if }i<j.\\
\end{cases}
\]

Note that the $[s_0,s_1]$-slice of $\Lambda_{K}$ as defined above is degenerate: Reeb chords are not disjoint, the unstable manifolds $W^{\rm u}(a_{ij})$ are not mutually transverse, and their stable counterparts are not mutually transverse either. The following lemma describes the $[s_0,s_1]$-slice of $\Lambda_{K}$ after a small perturbation which makes it generic. In particular, the perturbation is so small that there is a natural one-to-one correspondence between Reeb chords before and after perturbation and we will keep the notation $a_{ij}$ and $b_{ij}$ from above. There are of course many perturbations which makes $\Lambda_K$ generic. The particular choice studied here is designed to make counting flow trees as simple as possible.
\begin{lma}\label{lma:braidReeb+triv}
For $\epsilon_0>0$ arbitrarily small, there exist a Legendrian isotopy of $\Lambda_{K}$ and a collection of functions $\epsilon_{ij},\epsilon'_{ij},\epsilon''_{ij} :\thinspace [s_0,s_1] \to (0,\epsilon_0)$ for each $n\geq i>j\geq 1$, so that the following conditions hold:
\begin{itemize}
\item
$\epsilon_{ij}$ are lexicographically ordered in $(i,j)$:
for any $s,s'$, $\epsilon_{ij}(s) > \epsilon_{i'j'}(s')$ if $i>i'$, or if $i=i'$ and $j>j'$;
\item
similarly, $\epsilon'_{ij}$ and $\epsilon''_{ij}$ are lexicographically ordered in $(i,j)$;
\item
the unstable manifolds $W^{\rm u}(a_{ij})$ are curves of the following form:
\begin{itemize}
\item if $i>j$ then $W^{\rm u}(a_{ij})=\{(s,t)\colon t=\frac{\pi}{2}-\epsilon_0+\epsilon_{ij}(s)\}$ and
\item if $i<j$ then $W^{\rm u}(a_{ij})=\{(s,t)\colon t=\frac{3\pi}{2}+\epsilon'_{ji}(s)\}$;
\end{itemize}
\item
the Reeb chords corresponding to critical points in these unstable manifolds satisfy the following:
\begin{itemize}
\item if $i>j$ then the $s$-coordinate of $a_{ij}$ (respectively $b_{ij}$) equals $\underline{s}+\epsilon''_{ij}(\underline{s})$
(respectively $\overline{s}+\epsilon_{ij}''(\overline{s})$).
\item if $i<j$ then the $s$-coordinate of $a_{ij}$ (respectively $b_{ij}$) equals $\underline{s}+\epsilon''_{ji}(\underline{s})$
(respectively $\overline{s}+\epsilon_{ji}''(\overline{s})$).
\end{itemize}
\end{itemize}
\end{lma}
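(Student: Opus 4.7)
The plan is to work at each of the four degenerate Reeb-chord clusters separately; I describe only the cluster of saddles $\{a_{ij}:i>j\}$ near $(\underline s,\tfrac{\pi}{2})$, since the other three are treated by identical arguments.

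The key structural observation I rely on is that at the unperturbed critical point $(\underline s,\tfrac{\pi}{2})$, the Hessian of $F_{ij}(s,t)=(i-j)\psi(s)\sin t$ equals $(i-j)\operatorname{diag}\bigl(\psi''(\underline s),-\psi(\underline s)\bigr)$. For $i>j$ this is a nondegenerate saddle whose unstable eigendirection is $\partial_s$ and whose stable eigendirection is $\partial_t$; the unperturbed unstable manifold is the entire horizontal segment $\{t=\tfrac{\pi}{2}\}$ in $[s_0,s_1]\times S^1$, connecting this saddle to the hyperbolic maximum of $F_{ij}$ over $\overline s$. Both endpoints of this connecting orbit are hyperbolic critical points, so the orbit is structurally stable. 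Consequently, for any sufficiently $C^2$-small perturbation of $\{f_j\}$ the perturbed unstable manifold $W^{\rm u}(a_{ij})$ will have the form $\{t=\rho_{ij}(s)\}$ for a smooth function $\rho_{ij}$ on $[s_0,s_1]$ uniformly close to $\tfrac{\pi}{2}$.

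To realize the prescribed orderings I will take perturbations of the form $\widetilde f_j(s)=(\sigma_j(s),j\psi(s))$ with $\sigma_j\colon[s_0,s_1]\to\R$ small and supported near $\{\underline s,\overline s\}$. The perturbed difference is
\[
\widetilde F_{ij}(s,t)=(\sigma_i(s)-\sigma_j(s))\cos t+(i-j)\psi(s)\sin t,
\]
whose critical points satisfy $\cot t=(\sigma_i(s)-\sigma_j(s))/((i-j)\psi(s))$. Thus to leading order $\rho_{ij}(s)=\tfrac{\pi}{2}-(\sigma_i(s)-\sigma_j(s))/((i-j)\psi(s))$, and the critical $s$-values are the zeros of $\partial_s\widetilde F_{ij}$. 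The lexicographic inequalities demanded for $\{\rho_{ij}\}$ and for the critical $s$-coordinates, uniformly in $s,s'$, amount to a finite system of open conditions on the infinite-dimensional tuple $(\sigma_1,\dots,\sigma_n)$, so they are satisfied for generic choices. A concrete family $\sigma_j(s)=\sum_{\ell=1}^{N}\delta_{j,\ell}\beta_\ell(s)$, with linearly independent bumps $\beta_\ell$ near $\{\underline s,\overline s\}$ and hierarchical coefficients $|\delta_{j,1}|\gg|\delta_{j,2}|\gg\cdots$, provides explicit control.

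With such $\sigma_j$ fixed, a final compactly supported Legendrian isotopy of $\Lambda_K$ inside the slice will translate each graph $\{t=\rho_{ij}(s)\}$ uniformly in $t$, normalizing it to the exact form $\rho_{ij}(s)=\tfrac{\pi}{2}-\epsilon_0+\epsilon_{ij}(s)$ with $\epsilon_{ij}(s)\in(0,\epsilon_0)$; the critical-point $s$-coordinates then define $\epsilon''_{ij}$, and the parallel construction near $(\underline s,\tfrac{3\pi}{2})$ produces $\epsilon'_{ji}$. The main obstacle will be confirming that $W^{\rm u}(a_{ij})$ really is a graph over the whole of $[s_0,s_1]$, not merely in a neighborhood of the saddle; as indicated above, this reduces to the persistence of the hyperbolic saddle-to-maximum connecting orbit of the unperturbed $F_{ij}$ under $C^2$-small perturbations, which is standard normally hyperbolic stability.
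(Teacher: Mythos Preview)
Your argument contains two concrete gaps that the paper's construction is specifically designed to avoid.

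First, the expression $\rho_{ij}(s)=\tfrac{\pi}{2}-(\sigma_i(s)-\sigma_j(s))/((i-j)\psi(s))$ is not the unstable manifold; it is the nullcline $\{\partial_t\widetilde F_{ij}=0\}$, i.e., the locus where flow lines are instantaneously horizontal. The unstable manifold is a flow line and solves a first-order ODE in $s$ whose solution differs from the nullcline away from the saddle. In particular, since you take $\sigma_j$ supported only near $\underline s,\overline s$, your formula gives $\rho_{ij}(s)\equiv\tfrac{\pi}{2}$ for all $(i,j)$ on the rest of $[s_0,s_1]$, so the claimed lexicographic separation of the $\rho_{ij}$ fails outright there.

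Second, your perturbation moves the $s$-coordinates of the Reeb chords only at second order: in $\partial_s\widetilde F_{ij}=(\sigma_i'-\sigma_j')\cos t+(i-j)\psi'\sin t=0$ one has $\cos t=O(|\sigma|)$ at the critical $t$-value, so the equation reduces to $(i-j)\psi'(s)=O(|\sigma|^2)$ and hence $s=\underline s+O(|\sigma|^2)$. The lexicographic ordering of these second-order shifts involves the products $(\sigma_i'-\sigma_j')(\sigma_i-\sigma_j)/(i-j)^2$ at $\underline s$, and neither your genericity remark nor the hierarchical family you sketch gives any control over these.

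The paper handles both issues by using two \emph{separate} perturbations. For the $s$-ordering it sets $f_j(s)=(0,j\psi(s-\delta_j))$, which shifts the critical $s$-value at first order to $\underline s+(i\delta_i-j\delta_j)/(i-j)$, and then chooses explicit $\delta_j$ satisfying $0\le\delta_1<\delta_2<2\delta_2<\delta_3<\cdots$ so that these quantities are provably lexicographically ordered. For the $t$-ordering it adds a first component $h_t(j)$ that is \emph{constant in $t$} near $\tfrac{\pi}{2}$ and $\tfrac{3\pi}{2}$ and \emph{independent of $s$}; this makes the locus $\{\partial_tF_{ij}=0\}$ an exactly horizontal line $\{t=t_{ij}\}$, which is therefore a genuine flow line and hence equals $W^{\rm u}(a_{ij})$. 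No ODE needs to be solved, and the ordering of the $t_{ij}$ follows from an explicit choice of the values $h_{\pi/2}(j)$, $h_{3\pi/2}(j)$.

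Finally, your closing ``Legendrian isotopy translating each graph uniformly in $t$'' cannot work as stated: all the $W^{\rm u}(a_{ij})$ are determined by the single Legendrian $\Lambda_K$, so one isotopy cannot move them independently.
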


\begin{rmk}
The functions $\epsilon_{ij},\epsilon_{ij}',\epsilon_{ij}''$ are chosen so that the
unstable manifolds $W^{\rm u}(a_{ij})$, $i>j$, appear in the
lexicographical order on $\{(i,j)\}_{1\le j<i\le n}$ if read in the
increasing $t$-direction, so that the $W^{\rm u}(a_{ij})$, $i<j$,
appear in the lexicographical order of $\{(j,i)\}_{1\le i<j\le n}$ if
read in the positive $t$-direction, and so that no
$W^{\mathrm{u}}(a_{ij})$ lies in the region $\frac{\pi}{2}\le
t\le\frac{3\pi}{2}$, see Figure \ref{fig:ordering}.
\end{rmk}
\begin{figure}[htb]
\labellist
\small\hair 2pt
\pinlabel $a_{ij}$ at 168 154
\pinlabel $b_{ij}$ at 205 154
\pinlabel $a_{ij}$ at 168 24
\pinlabel $b_{ij}$ at 205 24
\pinlabel $i<j$ at 0 143
\pinlabel $i>j$ at 0 38
\pinlabel $t=\frac{3\pi}2$ at 0 128
\pinlabel $t=\frac\pi2$ at 0 51
\pinlabel $t$ at -4 85
\pinlabel $s$ at 121 -2
\pinlabel {lex.\ order} at 278 140
\pinlabel $(j,i)$ at 271 128
\pinlabel {lex.\ order} at 278 50
\pinlabel $(i,j)$ at 271 38
\endlabellist
\centering
\includegraphics{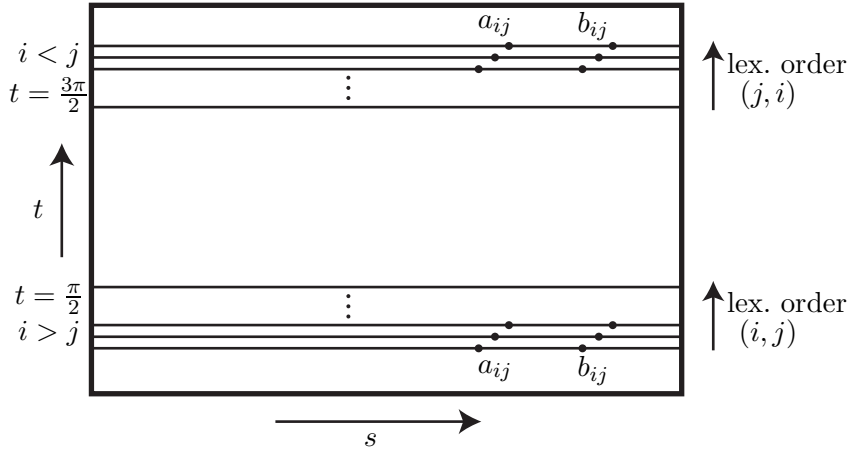}
\caption{Ordered unstable manifolds after perturbation.}
\label{fig:ordering}
\end{figure}
\begin{pf}[Proof of Lemma~\ref{lma:braidReeb+triv}]
Throughout the proof we use the notational conventions above for Reeb chords. We will consider two different perturbations of the braid representative for the trivial braid given at the beginning of this subsection and then
combine them to give the desired perturbation.

We first choose the scaling function $\psi(s)$ from the beginning of this subsection
to additionally satisfy $\psi(s)=\frac M2 (s-\underline{s})^2+c$ in a $2\epsilon_0$ neighborhood of  $\underline{s}$ and $\psi(s)=-\frac M2 (s-\overline{s})^2+c'$ in a $2\epsilon_0$ neighborhood of $\overline{s}$, where $c$ and $c'$ are constants, $M$ is some large constant and $\epsilon_0$ is some small positive constant.

Now choose constants $\delta_1,\ldots, \delta_n$ so that
\[
0\leq \delta_1< \delta_2<2\delta_2<\delta_3<3\delta_3< \ldots<\delta_n< n\delta_n<\epsilon_0.
\]
For $i>j$, define $\epsilon_{ij}'' = \frac{i\delta_i-j\delta_j}{i-j}$. The above inequalities imply that the $\epsilon_{ij}''$ are positive, less than $\epsilon_0$, and lexicographically ordered in $(i,j)$: indeed, we have
\[
\epsilon_{ij}'' \in \left( \frac{i}{i-j+1}\delta_i,\frac{i}{i-j}\delta_i \right) \subset (\delta_i,i\delta_i).
\]

Set $\psi_i(s)=\psi(s-\delta_i)$. Defining the trivial braid by the multi-function determined by $f_j(s)=(0,j\psi_j(s))$, we see that the
Reeb chord $a_{ij}$ has $s$-coordinate determined
by the solution near $\underline{s}$ to
\[
iM(s-\delta_i-\underline{s}) =i \psi_i'(s) = j\psi_j'(s)=jM(s-\delta_j-\underline{s}),
\]
which is precisely $s = \underline{s}+\epsilon''_{ij}$, and similarly for $b_{ij}$ and $\overline{s}$.
Thus the Reeb chords $a_{ij}$ are at
\[
\begin{cases}
\left(\underline{s},\frac{\pi}{2}\right)+(\epsilon''_{ij},0) &\text{if }i>j,\\
\left(\underline{s},\frac{3\pi}{2}\right)+(\epsilon''_{ji},0) &\text{if }i<j,
\end{cases}
\]
and the Reeb chords $b_{ij}$ are at
\[
\begin{cases}
\left(\overline{s},\frac{\pi}{2}\right)+(\epsilon''_{ij},0) &\text{if }i>j,\\
\left(\overline{s},\frac{3\pi}{2}\right)+(\epsilon''_{ji},0) &\text{if }i<j.
\end{cases}
\]

Now consider a different representation of the trivial braid. In particular, returning to the original representation of the trivial braid given at the beginning of this subsection, we can replace the curve $\{x=0\}$ where the functions
$f_i(s)$ take values, with a curve family $\{x=h_t(y)\}$ where
$h_t\colon\R\to\R$ is a smoothly varying family of functions such
that:
\begin{itemize}
\item
$h_t$ is constant in $t$ near $t=\frac{\pi}{2}$ and
$t=\frac{3\pi}{2}$,
\item
$h_{\pi/2}(j) =  j(\delta_n - \delta_j)$ for $j=1,\ldots,n$,
\item
$h_{3\pi/2}(j) = - j\delta_j$ for $j=1,\ldots,n$,
\end{itemize}
where the $\delta_j$ are as before. (We can assume that the $t$
dependence of $h_t$ is supported in an arbitrarily small neighborhood
of $\pi$ and $0$. While this is not necessary here, it will be
important when describing the lift of a non-trivial braid in Section~\ref{ssec:twistslice}.)
Then let
\[
F_j(s,t)=\psi(s)(h_t(j), j)\bull (\cos t,\sin t)
\]
be the function of the $j^{\rm th}$ sheet. The critical points for
$F_{ij} = F_i-F_j$ are at $s = \underline{s}$ or
$s=\overline{s}$, with $t$ given by
\[
\cot(t) = \frac{h_t(i)-h_t(j)}{i-j+(\partial h/\partial t)(i) -
(\partial h/\partial t)(j)}.
\]
Since $h_t$ is small (and can be made arbitrarily small by the appropriate choice of $\delta_i$),
this equation can only hold for $t$ near
$\pi/2$ or $3\pi/2$, whence
$\cot(t) = \frac{h_{\pi/2}(i)-h_{\pi/2}(j)}{i-j}$ or $\cot(t) =
\frac{h_{3\pi/2}(i)-h_{3\pi/2}(j)}{i-j}$, respectively.

In the former case, the expression for $h_{\pi/2}$ implies that the
Reeb chords $a_{ij},b_{ij}$ for $i>j$ have $t$-coordinate near
$\pi/2$ and are given by the solution to
$\cot(t) = 
\delta_n-\epsilon_{ij}''
>0$. If $t=t_{ij}$ is the solution to this equation, then $t_{ij} <
\pi/2$ and $t_{ij}$ is ordered in lexicographic order on $(i,j)$,
and so we can write
$t_{ij} = \pi/2-\epsilon_0+\epsilon_{ij}$ with $\epsilon_{ij}$ ordered
lexicographically in $(i,j)$. Similarly, the expression for $h_{3\pi/2}$ implies that the Reeb chords $a_{ij},b_{ij}$ for $i<j$ have $t$-coordinate near $3\pi/2$ and are given by the solution to $\cot(t) = -\epsilon_{ji}'' < 0$, which yields $t = 3\pi/2+\epsilon_{ji}'$ with $\epsilon_{ji}'$ ordered lexicographically on $(i,j)$.
Furthermore, the unstable manifolds
$W^{\rm u}(a_{ij})$ and $W^{\rm u}(a_{ji})$ are horizontal (constant in $t$)
since the $\partial_t$ component of $\nabla F_{ij}$ is zero at $t=t_{ij}$.

Combining the perturbations, we find that the location and ordering of the critical points and unstable manifolds of
\[
F_j(s,t)=\psi_j(s)(h_t(j), j)\bull (\cos t,\sin t)
\]
is as desired.
\end{pf}

\begin{rmk}\label{rmk:poscharstabmfd}
Recall that our notation for (un)stable manifolds refers to the positive gradient flow of positive function differences and note that the unstable manifolds $W^{\rm u}(a_{ij})$ can be characterized as the only flow line determined by sheets $S_i$ and $S_j$ along which the local function difference stays positive for all time under the negative gradient flow: along any other (non-constant) flow lines of the negative gradient, the local function difference eventually becomes negative.
\end{rmk}

\begin{rmk}\label{rmk:standardtrivbraid}
Consider next an $[s',s'']$-slice where the braid is trivial, {\em e.g.}, the slices mentioned above that separate the slices where twists of the braid occur. In each such slice we will take the braid to look much like in the $[s_0,s_0+\delta]$-slice of the braid in Lemma \ref{lma:braidReeb+triv}, where $\delta>0$ is small enough so that the $s$-coordinate of any Reeb chord $a_{ij}$ is larger than $s_0+\delta$. More precisely, we require that the functions $f_j(s)$, $j=1,\dots,n$ take values in a family of graphical curves $\{x=h_t(y)\}$ (with $y=j$ for $f_j$), where $h_t$ is a small function which is independent of $t$ near $\frac{\pi}{2}$ and $\frac{3\pi}{2}$.
In order to make sure that the $[s',s'']$-slice does not have any Reeb chords we let the points $f_j(s)$ move away from each other along the curves $\{x=h_t(y)\}$ as we go through $[s',s'']$ \emph{from right to left}, so that $|f_i(s)-f_j(s)|$ decreases with $s$ for all $i,j$. We call a braid with these properties a \emph{standard trivial braid} and we number the functions $f_1,\dots,f_n$ according to the order in which they appear along the curve $\{x=h_t(y)\}$ with orientation induced from the positively oriented $y$-axis.
\end{rmk}

\subsubsection{A model for $\Lambda_K$, endpoint paths, and homology indicators}\label{sssec:endpointpath}
Here we describe the qualitative features of the gradient flows
associated to $\Lambda_K$ in $J^1(\Lambda)$ that will be necessary for
our computation of the rigid flow trees. Some of these features have
been discussed in Sections~\ref{sssec:setup} through \ref{sssec:endpointpath}.

For the braid $B=\sigma_{k_1}^{\epsilon_1}\dots\sigma_{k_m}^{\epsilon_m}$, $\epsilon_l=\pm 1$, we can build a braiding slice $[s_0^{\mathrm{br}},s_1^{\mathrm{br}}]$ where $B$ lives, by starting with a twist slice corresponding to $\sigma_{i_m}^{\epsilon_m}$ and then attaching slices corresponding to the other twists $\sigma_{i_l}^{\epsilon_l}$ consecutively, working backwards in $l$. We then extend $B$ to the complement of the braiding slice by closing it with a trivial braid as in Lemma \ref{lma:braidReeb+triv}. This is the model of $\Lambda_K$ that we will use below. More precisely, we will construct the braid model to have the following properties:
\begin{itemize}
\item 
 A slice $[s',2\pi]$ of a trivial braid contains all of the Reeb chords of $\Lambda_B \subset J^1(\Lambda)$, as in Lemma~\ref{lma:braidReeb+triv}. The Reeb chords $a_{ij}$ (respectively  $b_{ij}$) of $\Lambda_K$ lie near $s=2\pi-4\delta$ (respectively $s=2\pi-2\delta$), which is in the slice which is complementary to the braiding region.
The manifolds $W^{\mathrm{u}}(a_{ij})$ in this slice lie just below $t=\frac{\pi}{2}$ and are ordered in the $t$-direction according to the lexicographically order on $\{(i,j)\}_{1\le j<i\le n}$. The manifolds $W^{\mathrm{u}}( a_{ji})$ lie just above $t=\frac{3\pi}{2}$ and are ordered in the $t$-direction according to the lexicographically order on $\{(i,j)\}_{1\le j<i\le n}$.
\item All twists of the braid occur in the braiding region $2\delta <s<4\delta$, where $\Lambda_K$ looks like twist slices separated by standard trivial braid slices.
\item There are points $s_1=2\delta<s_2<\cdots<s_{2m}=4\delta$ so that the $[s_{2l-1}, s_{2l}]$-slice contains the twist region for $\sigma_{i_l}^{\epsilon_l}$. For a slice corresponding to the braid crossing $\sigma_{i_l}^{\epsilon_l}=\sigma_k^{\pm 1}$, the unstable manifolds for $a_{k\, k+1}$ and $a_{k+1\, k}$ are shown in Figure~\ref{fig:twistslices}.  The unstable manifolds for the other $a_{ij}$ are as for the trivial braid.
\begin{figure}[htb]
\vspace{12pt}
\labellist
\small\hair 2pt
\pinlabel {\large $\sigma_k$} [Bl] at 66 164
\pinlabel {\large $\sigma_k^{-1}$} [Bl] at 241 164
\pinlabel $t=2\pi$ [Br] at 0 156
\pinlabel $t=\frac{3\pi}{2}$ [Br] at 0 121
\pinlabel $t=\frac{\pi}{2}$ [Br] at 0 54
\pinlabel $t=0$ [Br] at 0 18
\pinlabel $W^u(a_{k+1,k})$ [Bl] at 112 62
\pinlabel $W^u(a_{k,k+1})$ [Bl] at 112 130
\pinlabel $W^u(a_{k+1,k})$ [Bl] at 298 62
\pinlabel $W^u(a_{k,l+1})$ [Bl] at 298 130
\pinlabel $s$ [Bl] at 53 -5
\pinlabel $s$ [Bl] at 244 -5
\endlabellist
\centering
\includegraphics{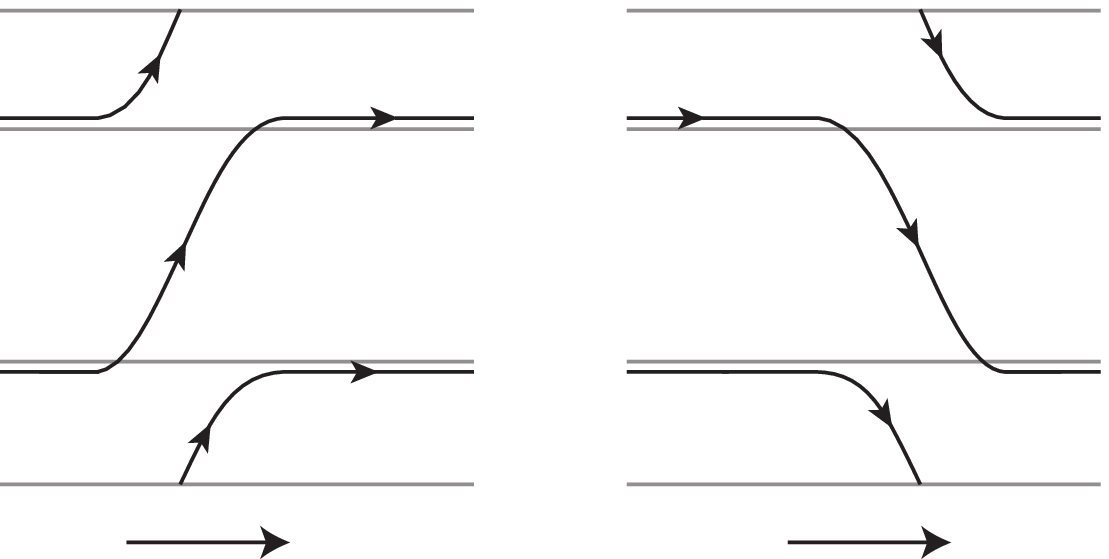}
\caption{Flow lines in positive and negative twist slices.}
\label{fig:twistslices}
\end{figure}
\item
For each $i \neq j$ and $l\in\{1,\ldots,m\}$, there is an interval neighborhood $J_{ij}^{2l} \subset \{s=s_{2l}\}$ of $W^\mathrm{u}(a_{ij})\cap\{s=s_{2l}\}$ such that for fixed $l$ the intervals $J_{ij}^{2l}$ are disjoint, and if we consider the set of all  negative gradient flow lines of $F_{ij}$ in the $[s_{2l-1},s_{2l}]$ slice that start on $J^{2l}_{ij}$ for all $i,j$, then any pair of distinct flow lines from this set are transverse. See Figure~\ref{fig:intervals}.
\begin{figure}[htb]
\labellist
\small\hair 2pt
\pinlabel {$J^{2l-2}_{ij}$'s} [Br] at -2 114
\pinlabel {$J^{2l-2}_{ij}$'s} [Br] at -2 19
\pinlabel $s_{2l-2}$ [Bl] at -1 -6
\pinlabel $s_{2l-1}$ [Br] at 163 -6
\pinlabel $s_{2l-1}$ [Bl] at 187 -6
\pinlabel $s_{2l}$ [Bl] at 324 -6
\pinlabel {$J^{2l}_{ij}$'s} [Bl] at 334 114
\pinlabel {$J^{2l}_{ij}$'s} [Bl] at  334 19
\pinlabel {$J^{2l-1}_{3\pi/2}$} [Bl] at 141 114
\pinlabel {$J^{2l-1}_{\pi/2}$} [Bl] at 141 19
\endlabellist
\centering
\includegraphics{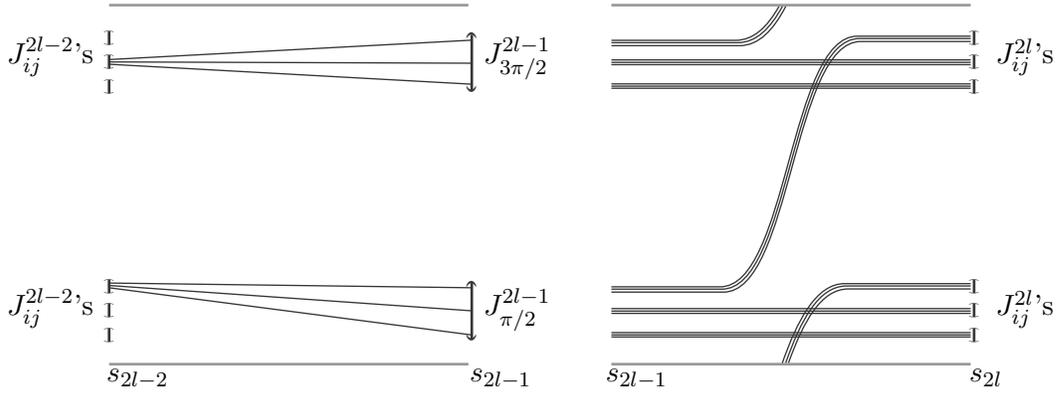}
\caption{The intervals $J_{ij}^{2l}$, $J_{\pi/2}^{2l-1}$, and $J_{3\pi/2}^{2l-1}$, and some representative gradient flow lines.}
\label{fig:intervals}
\end{figure}
\item
On each $\{s=s_{2l-1}\}$ there are two intervals $J^{2l-1}_{\pi/2}$ and $J^{2l-1}_{3\pi/2}$ such that
\[
J^{2l-1}_{\pi/2} \supset \cup_{i>j} (W^\mathrm{u}(a_{ij})\cap \{s=s_{2l-1}\})~~~~~\text{and}~~~~~
J^{2l-1}_{3\pi/2} \supset \cup_{i<j} (W^\mathrm{u}(a_{ij})\cap \{s=s_{2l-1}\}).
\]
Moreover, any negative gradient flow line of $F_{ij}$ ($i>j$) that intersects $J^{2l-1}_{\pi/2}$ also intersects $J^{2l-2}_{ij}$, and similarly for $J^{2l-1}_{3\pi/2}$ for $i<j$.
\end{itemize}

Assume that $\Lambda_K$ has $r$ components $\Lambda=\Lambda_{K;1}\cup\dots\cup\Lambda_{K;r}$.
We will keep track of homology classes of cycles in $H_1(\Lambda_K)$ by counting intersections with certain fixed cycles. On each component $\Lambda_{K;j}$, fix the curve $\mu_j'$ which is the preimage under the base projection map $\pi\colon\Lambda_{K;j}\to\Lambda$ of the curve
$t=\frac{\pi}{2}-\epsilon_1$ for $\epsilon_1$ positive and extremely small (for now, the line
$t=\frac{\pi}{2}$ will suffice; in Section~\ref{sec:combdiff}, we will
need $\mu_j'$ to lie below $t=\frac{\pi}{2}$ but above the unstable
manifolds of the $a_{ij}$ for all $i>j$). Also, fix the curve $\lambda_j'$ which is the preimage of the curve $s=2\pi-3\delta$ (a vertical curve between the $a_{ij}$'s and the $b_{ij}$'s) in the leading sheet of $\Lambda_{K;j}$, where we recall from the introduction that ``leading'' refers to the first of the $n$ sheets of $\Lambda_K$ that belongs to component $\Lambda_{K;j}$. Intersections of a cycle in $H_1(\Lambda_K)$ with $\mu_j'$ and $\lambda_j'$ then count the multiplicity of the $j$-th meridian and longitude class in the cycle.
See Figure \ref{fig:setup}.
\begin{figure}[htb]
\labellist
\small\hair 2pt
\pinlabel $2\pi$ [Br] at  -3 140
\pinlabel $\frac{3\pi}{2}$ [Br] at -3 110
\pinlabel $\pi$ [Br] at -3 70
\pinlabel $\frac{\pi}{2}$ [Br] at -3 35
\pinlabel $0$ [Br] at -3 -2
\pinlabel $0$ [Bl] at -2 -10
\pinlabel $2\pi$ [Bl] at 222 -10
\pinlabel $a_{ij}$ [Bl] at 155 122
\pinlabel $a_{ij}$ [Bl] at 155 19
\pinlabel $b_{ij}$ [Bl] at 195 122
\pinlabel $b_{ij}$ [Bl] at 195 19
\pinlabel {$\leftarrow$ braiding region} [Bl] at 40 94
\pinlabel $\lambda'$ [Bl] at 162 132
\pinlabel $\mu'$ [Bl] at 104 22
\pinlabel {$s$} [Bl] at 104 -10
\pinlabel $t$ [Br] at -3 90
\endlabellist
\centering
\includegraphics{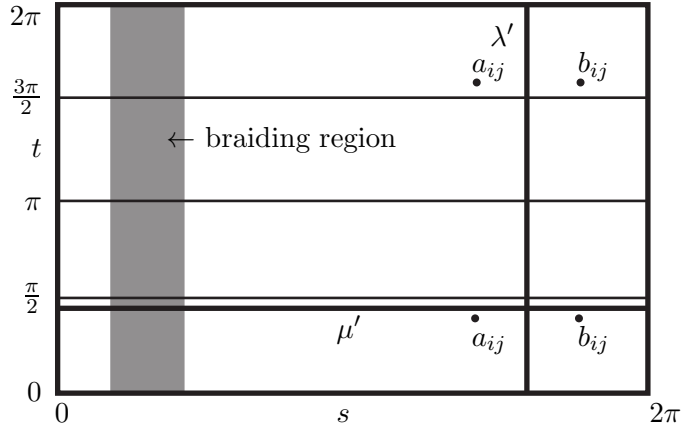}
\caption{Schematic picture of $\Lambda_K$, as projected to the torus $\Lambda \approx T^2$.}
\label{fig:setup}
\end{figure}

We choose a base point in each component over $(s,t)=\left(\frac{\pi}{2},\frac{9\pi}{8}\right)$ and endpoint paths for each Reeb chord endpoint which are disjoint from $\lambda_j'$ and $\mu_j'$. (Since the complement of $\lambda_j'\cup \mu_j'$ in $\Lambda_{K;j}$ is a disk such paths exist.)

\subsubsection{Orientation choices and sign rules for $\Lambda_K\subset J^{1}(\Lambda)$}\label{sssec:signrules}
Before we proceed with the computation of flow trees for $\Lambda_K \subset J^1(\Lambda)$, we discuss the general method we use to assign signs to flow trees. These signs come from a fairly elaborate orientation scheme which depend on certain initial choices,
some of which are of global nature which we call {\em basic
  orientation choices}, and others which are local, more specifically,
the choice of orientations of determinant lines of a capping operator
associated to each Reeb chord.  Here we will simply state a
combinatorial rule that comes from one particular set of choices. The
derivation of the combinatorial rule and the effect of orientation
choices is discussed in detail in Section~\ref{sec:orientations}. (For
later computations, we will also need signs for multiscale flow trees;
this is discussed in Section~\ref{ssec:signs}.)

We will discuss signs of rigid flow trees with one positive puncture or partial flow trees of dimension 1 with a special positive puncture of $\Lambda_K\subset J^{1}(\Lambda)$. Cutting a rigid flow tree close to its positive puncture we obtain a partial flow tree of dimension $1$ with special positive puncture so it suffices to consider this case.

We first discuss how orientation choices for the ``capping operators'' corresponding to the Reeb chords $a_{ij}$ and $b_{ij}$ of $\Lambda_K\subset J^{1}(\Lambda)$ are encoded geometrically:
\begin{itemize}
\item Consider a Reeb chord $a_{ij}$, which is of type $\mathbf{S}_0$ (with notation from Remark~\ref{rmk:Reebchordtypes}). Let $W^{\mathrm{u}}(a_{ij})$
  denote the unstable manifold of the positive local function
  difference defining $a_{ij}$.
Fix a vector $v^{\krn}(a_{ij})$ perpendicular to $W^{\mathrm{u}}(a_{ij})$, see Figure \ref{fig:orientationdata}.   (This choice corresponds to the choice of an orientation of the capping operator of $a_{ij}$.)
\item Consider a Reeb chord $b_{ij}$, which is of type $\mathbf{S}_1$ and note that $b_{ij}\in W^{\mathrm{u}}(a_{ij})$. Fix vectors $v^\krn(b_{ij})$ parallel to $W^{\mathrm{u}}(a_{ij})$ and $v^\cokrn(b_{ij})$ perpendicular to $W^{\mathrm{u}}(a_{ij})$, see Figure \ref{fig:orientationdata}. (This choice corresponds to the choice of an orientation of the capping operator of $b_{ij}$.)
\item If $t$ is a trivalent vertex of a partial flow tree $\Gamma$ then we let $v^\con(t)$ be a vector tangent to the incoming edge at $t$ and pointing into this edge, see Figure \ref{fig:vcon}.
(This is a reflection of a chosen orientation on the space of conformal structures on the disk with boundary punctures.)
\end{itemize}

\begin{figure}[htb]
\labellist
\small\hair 2pt
\pinlabel $W^{\mathrm{u}}(a_{ij})$ at 0 13
\pinlabel $a_{ij}$ at 51 -5
\pinlabel $b_{ij}$ at 167 -5
\pinlabel $v^\krn(a_{ij})$ at 75 37
\pinlabel $v^\cokrn(b_{ij})$ at 191 37
\pinlabel $v^\krn(b_{ij})$ at 204 -5
\endlabellist
\centering
\includegraphics{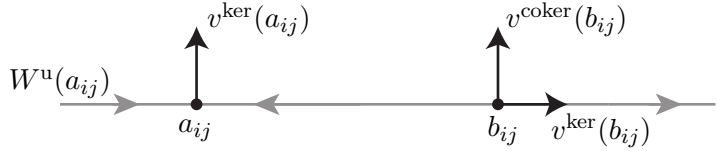}
\caption{Orientation data at $b_{ij}$ and $a_{ij}$. (The unstable manifold $W^{\mathrm{u}}(a_{ij})$ is in grey.)}
\label{fig:orientationdata}
\end{figure}

\begin{figure}[htb]
\labellist
\small\hair 2pt
\pinlabel $t$ at 120 50
\pinlabel $v^\con(t)$ at 98 72
\endlabellist
\centering
\includegraphics{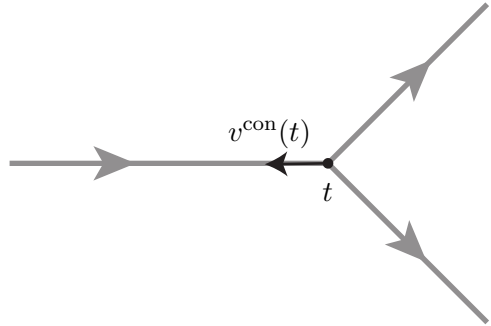}
\caption{Orientation data at a trivalent $Y_0$-vertex. (Gradient flow tree is in grey.)}
\label{fig:vcon}
\end{figure}

We next define two functions which are central to our definition of signs of rigid multiscale flow trees.
Let $\la\,,\ra$ denote a Riemannian metric on $S$ (which we will take to be the flat metric on the torus) and let
\[
\sign\colon\R-\{0\}\to\R
\]
be the function which maps negative numbers to $-1$ and positive
numbers to $1$. First consider a flow tree $\Gamma$ of $\Lambda\subset J^{1}(S)$. Let $b_{ij}$ be its positive puncture,  $v^\flow(\Gamma)$ denote the vector field of the flow orientation of $\Gamma$ and define
\[
\sigma_{\rm pos}(\Gamma)=\sign\left(\la v^\flow(\Gamma),v^\krn(b_{ij})\ra\right).
\]

Next consider a partial flow tree $\Gamma$ of $\Lambda\subset J^{1}(S)$ with positive special puncture $p$, trivalent vertices $t_1,\dots,t_{k-1}$, and negative punctures $q_1,\dots,q_k$ and let $n$ be a normal vector of $\Gamma$ at $p$. Denote the result of vector splitting of $n$ along $\Gamma$ by
\[
\bigl((w_1(t_1),w_2(t_1)),\dots,(w_1(t_{k-1}),w_2(t_{k-1}));w(q_1),\dots,w(q_k)\bigr)
\]
and define
\begin{alignat}{2}\label{eq:trivialsign}
\sigma_{n,\Gamma}(t)&=\sign\Bigl(\bigl\la w_2(t)-w_1(t),v^\con(t)\bigr\ra\Bigr),\quad
& t\in\{t_1,\dots,t_{k-1}\}\\
\sigma_{n,\Gamma}(q)&=\sign\Bigl(\bigl\la w(q_j),v^\krn(q_j)\bigr\ra\Bigr),\quad
& q\in\{q_1,\dots,q_{k}\}.
\end{alignat}
Finally we define
\[
\sigma(n,\Gamma)=\Pi_{j=1}^{k}\sigma_{n,\Gamma}(q_j)\,\,\Pi_{j=1}^{k-1}\sigma_{n,\Gamma}(t_j).
\]

We can now assign orientations to trees according to the following theorem.
\begin{thm}\label{thm:combsign-fortrees}
There exists a choice of basic orientations and of orientations of capping operators for all Reeb chords of type $\mathbf{L}$ such that for a rigid flow tree $\Gamma$ of $\Lambda$ in $J^1(S)$ with positive puncture at $b_{ij}$ of type $\mathbf{S}_1$, the sign of $\Gamma$ is
\[
\sigma_{\rm pos}(\Gamma)\,\sigma(v^\cokrn(b_{ij}),\Gamma).
\]
\end{thm}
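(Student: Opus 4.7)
The plan is to deduce the stated combinatorial rule from the general theory of coherent orientations on moduli spaces of holomorphic disks, transported to the flow-tree side via the disk-tree correspondence \cite{Ekholm07}. Since a rigid flow tree $\Gamma$ of $\Lambda$ with a single positive puncture corresponds to a rigid holomorphic disk whose moduli space orientation factors as a product of local contributions coming from capping operators at each puncture and from a conformal-gluing parameter at each trivalent vertex, it suffices to make two choices and check that the product of the resulting local signs equals the claimed formula. First, I would fix the basic orientation choices (orientations of $\Lambda$, of the bases at the base points, and of the reference paths connecting base points) once and for all, just as prescribed by the general framework. Second, I would fix orientations of the determinant lines of the capping operators at the type-$\mathbf{S}$ chords so that, under the standard Morse-theoretic identification of $\ker$ and $\cokrn$ of these operators with tangent/normal directions at the critical point, the chosen determinant orientation corresponds precisely to the geometric vectors $v^{\krn}(a_{ij})$, $v^{\krn}(b_{ij})$ and $v^{\cokrn}(b_{ij})$. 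For the type-$\mathbf{L}$ chords, the orientation choice is the free parameter in the statement and will be fixed at the end so as to absorb any residual ambiguity.

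Next, I would read off the contribution of each type of local piece of $\Gamma$. At the positive puncture $b_{ij}$, the one-dimensional cokernel direction of the capping operator is $v^{\cokrn}(b_{ij})$ and its kernel is spanned by $v^{\krn}(b_{ij})$; the rigidity of $\Gamma$ means the flow direction along the outgoing edge of $\Gamma$ from $b_{ij}$ pairs with $v^{\krn}(b_{ij})$, producing the factor $\sigma_{\rm pos}(\Gamma)$. At each negative puncture $q_j$ (a critical point of index $1$), the arriving normal vector $w(q_j)$ pairs against $v^{\krn}(q_j)$, giving exactly the factor $\sigma_{n,\Gamma}(q_j)$. At each $Y_0$-vertex $t$, the gluing-parameter direction is controlled by $v^{\con}(t)$, and a direct local calculation of the orientation induced from the pair $(w_1(t), w_2(t))$ transverse to the outgoing edges produces the factor $\sigma_{n,\Gamma}(t) = \sign\bigl(\langle w_2(t)-w_1(t), v^{\con}(t)\rangle\bigr)$; this is the key model computation, done in a fixed local trivialization in which the two outgoing edges are linearly independent and the incoming normal equals $w_1(t)+w_2(t)$.

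Finally, I would invoke the standard gluing theorem for coherent orientations to conclude that the global orientation of the rigid moduli space $\M(\Gamma)$ is the product of these local factors, modulo a global sign depending only on the basic orientation choices and on the orientations of the capping operators of type-$\mathbf{L}$ chords entering $\Gamma$. Since the positive puncture of $\Gamma$ is of type $\mathbf{S}_1$ and the negative punctures (being forced by the action bound, Lemma~\ref{lem:action}, and the structure of $\Lambda$) are of type $\mathbf{S}$, no type-$\mathbf{L}$ capping operator enters the product; hence the residual ambiguity can be absorbed uniformly by adjusting, once and for all, the orientations of the capping operators of the type-$\mathbf{L}$ chords, yielding the claimed formula. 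The main obstacle is the local model computation at a $Y_0$-vertex, identifying the gluing-parameter orientation of the associated holomorphic disk with the geometric expression $\sign\bigl(\langle w_2(t)-w_1(t), v^{\con}(t)\rangle\bigr)$; this is precisely the kind of computation carried out in detail in Section~\ref{sec:orientations}, and it is the step that justifies the particular combinatorial form of $\sigma_{n,\Gamma}(t)$.
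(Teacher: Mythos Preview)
Your overall strategy matches the paper's: the statement is the $\mathrm{MT}_\varnothing$ case of Theorem~\ref{thm:combsign}, and the paper proves it as Lemma~\ref{l:signbraidtree} by computing the gluing sequence for the linearized operator of the holomorphic disk near $\Gamma$, after deforming the near-constant boundary condition to an honest constant $\R^2$ condition with appropriate weights. The conceptual picture you sketch---local contributions at the positive puncture, at each negative puncture, and at each $Y_0$-vertex---is exactly what eventually emerges.

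There is, however, a logical slip in your last paragraph. You correctly observe that a rigid flow tree with positive puncture of type $\mathbf{S}_1$ has only type-$\mathbf{S}$ punctures, so no type-$\mathbf{L}$ capping operator enters. But then you say the residual ambiguity ``can be absorbed uniformly by adjusting, once and for all, the orientations of the capping operators of the type-$\mathbf{L}$ chords.'' This is backwards: if those operators do not enter, changing their orientations cannot affect the sign of $\Gamma$ at all. The genuine free parameter here is the \emph{basic complex orientation} $o_\C$, and the paper uses exactly this: in the $m=2$ case of Lemma~\ref{l:signbraidtree} one chooses $o_\C$ so that $v^{\krn}(b)\wedge v^{\cokrn}(b)$ represents the positive canonical orientation of $\det(\bar\pa_{b0})$ precisely when $(v^{\cokrn}(b),v^{\krn}(b))$ is a positively oriented basis of $T\Lambda$. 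The type-$\mathbf{L}$ capping orientations are mentioned in the statement only because the same choices must later be compatible with Theorems~\ref{thm:signunknottree} and~\ref{thm:combsign}; they play no role in the present argument.

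A second point: your appeal to a ``standard gluing theorem for coherent orientations'' to get the sign as a straight product of local factors is too quick. The paper does not obtain the factorization this way. Instead it stabilizes the capped operator $\bar\pa_{\widehat A}$ by the space $V_\con$ of conformal variations, builds an explicit basis $\kappa_0^1,\kappa_0^2,\kappa_1,\dots$ of $\krn(\bar\pa_{\widehat A,\con})$ out of cut-off constant sections (``edge solutions'') and conformal variations, and then computes the determinant of the resulting isomorphism to $\krn(\bar\pa_-)$ as a product $s_1 s_2 s_3$. The $Y_0$-vertex signs $\sigma_{n,\Gamma}(t_i)$ appear not from an isolated local model but from the coefficients in front of the $v_i^{\con}$ in the $\kappa_i$, and the negative-puncture signs appear from rewriting the wedge $\xi_0^1\wedge\xi_0^2\wedge\xi_1\wedge\cdots$ in terms of the $w_s$ and pairing with $v^{\krn}(a_s)$. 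The global-to-local factorization is the \emph{output} of this explicit linear-algebra computation, not an input.
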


\begin{pf}
Theorem~\ref{thm:combsign-fortrees} is a special case of
Theorem~\ref{thm:combsign}, which is proved in Section~\ref{ssec:multisigns}.
\end{pf}

\subsubsection{Partial flow trees in twist slices}\label{sssec:partial}
We now begin our enumeration of flow trees for $\Lambda_K \subset J^1(\Lambda)$. Consider a braid whose closure is $K$, and assume that the braid is in the form given in Section~\ref{sssec:endpointpath}. Then flow trees for $\Lambda_K$ decompose nicely into pieces in each twist region $[s_{2l-2},s_{2l}]$ which we call \textit{partial flow trees}.
Using the notation from Section~\ref{sssec:endpointpath}, we focus on one of these twist regions, an interval $[s_{2l-2}, s_{2l}]\times [0,2\pi]$ (for fixed $l\in\{1,\ldots,m\}$) containing the $l$-th twist $\sigma_k^{\pm 1}$ from the braid.
We first define a special type of partial flow tree in such a slice which we call a \emph{slice tree}.

Let $n\ge i>j\ge 1$. Fix symbols $\bar a_{ij}$ and $\bar a_{ji}$ for negative punctures and $\bar b_{ij}$ and $\bar b_{ji}$ for positive punctures; at the moment, these are just symbols and do not correspond to actual punctures or special punctures. Now for each $i>j$ choose one point $\bar a'_{ij}\in \{s_{2l}\}\times J^{2l}_{ij}$ and think of it as a special puncture connecting the sheet $S_j$ to $S_i$, where the sheets are numbered by the order of the braid strands at $s=s_{2l}$. Similarly choose $\bar a'_{ji}\in \{s_{2l}\}\times J^{2l}_{ji}$ and think of it as a special puncture connecting the sheet $S_i$ to $S_j$. Once this choice is made we will frequently conflate the variables $\bar a_{ij}$ and $\bar a'_{ij}$. We will only use the primes when we need to refer to specific special punctures.

Given these choices any flow tree lying entirely in the $[s_{2l-2},s_{2l}]$-slice that has a positive special puncture connecting sheets $S_j$ to $S_i$ at any point in $J^{2l-2}_{ij}$ and negative punctures at the $\bar a'_{pq}$ will be called a \textit{slice tree with positive special puncture at $\bar b_{ij}$} (think of $\bar b_{ij}$ as the intersection of the flow tree with $J_{ij}^{2l-2}$). We let $\T^{\pm}(\bar{b}_{ij})$ denote the set of slice trees in the $[s_0,s_1]$-slice with positive special puncture at $\bar b_{ij}$.

\begin{rmk}
Note that the formal dimension of slice trees $\T^{\pm}(\bar{b}_{ij})$ is $0$ (recall we have fixed the locations of all the $\bar{a}_{pq}$). Note also that the set $\T^{\pm}(\bar{b}_{ij})$ is independent of the specific choice of the $\bar a'_{pq}$, {\em i.e.}, a different choice of the $\bar a'_{pq}$ induces a one-to-one correspondence of $\T^{\pm}(\bar{b}_{ij})$. This follows from the choice of the intervals $J^{2l}_{ij}$, $J^{2l-1}_{\pi/2}$, $J^{2l-1}_{3\pi/2}$ in Section~\ref{sssec:endpointpath}.
\end{rmk}

In order to keep track of orientation signs of flow trees, we will decorate the special punctures with arrows which should be thought of as normal vectors to flow lines through the special punctures. We write $\bar{a}_{kl}^{\,\uparrow}$ and $\bar{b}_{kl}^{\,\uparrow}$ for the relevant special puncture decorated with a vector $\nu$ normal to the slice tree at that point with $\la \nu, \pa_t\ra>0$; similarly, we write $\bar{a}_{kl}^{\,\downarrow}$ and $\bar{b}_{kl}^{\,\downarrow}$ for the same special puncture decorated with a normal vector $\nu'$ such that $\la \nu',\pa_t\ra<0$.

Furthermore, as the twist is part of a larger braid, each strand belongs to a link component and we need to keep track of the corresponding homology coefficients. We write $\mu_A$ (respectively  $\mu_B$) for the homology variable of the meridian of the component of $\Lambda_K$ associated to sheet $A$
(respectively $B$), which is the sheet numbered by $k$ (respectively $k+1$) at $s=s_{2l}$ and by $k+1$ (respectively $k$) at $s=s_{2l-2}.$

Let $\bar{\BB}$ denote the set of decorated special chords at $s=s_{2l-2}$:
\[
\bar{\BB}=\left\{\bar{b}_{ij}^{\,\uparrow}, \bar{b}_{ij}^{\,\downarrow} \right\}_{1\leq i,j\leq n,~i\ne j},
\]
and let $\bar{\A}$ denote the
$\Z$-algebra generated by $\mu_A^{\pm 1}$, $\mu_B^{\pm 1}$, and decorated special chords at $s=s_{2l}$:
\[
\bar{\A}=\Z\left\la \mu_A^{\pm 1},\mu_B^{\pm 1},\bar{a}_{ij}^{\,\uparrow}, \bar{a}_{\,ij}^{\downarrow}\right\ra_{1\leq i,j\leq n,~i\ne j}.
\]

Given a tree $\Gamma\in \T^{\pm}(\bar{b}_{ij})$ and a normal $\nu$ for
$\bar{b}_{ij}$, the $1$-jet lift of $\Gamma$ determines a word in
$\bar{\A}$, in a manner that we now describe. Orient the $1$-jet lift
of $\Gamma$ by the flow orientation (see Section~\ref{ssec:gft}).
This orientation induces an ordering $a_{i_1j_1},\ldots,a_{i_qj_q}$ of
the negative punctures of $\Gamma$ so that the $1$-jet lift of
$\Gamma$ consists of a union of oriented paths
$\gamma_0,\ldots,\gamma_q \subset \Lambda_K$, with the beginning point
of $\gamma_0$ and the end point of $\gamma_q$ equal to the ends of
$\bar{b}_{ij}$ (the points in $\Lambda_K$ lying over $\bar{b}_{ij}$ on
sheets $i$ and $j$), and with the beginning point of $\gamma_r$ and
the end point of $\gamma_{r-1}$ equal to the ends of
$\bar{a}_{i_rj_r}$ for $r=1,\ldots,q$.

For $0\leq r\leq q$, define
$n_A^r,n_B^r\in\Z$ to be the intersection numbers of $\gamma_r$
with the preimage of $t=\frac{\pi}{2}$ in sheets $A,B$,
respectively. (In the notation of Section~\ref{sssec:endpointpath},
these numbers count intersections with $\mu_A'$ and
$\mu_B'$.) Also, use vector splitting (see
Section~\ref{ssec:signs}) from the normal vector $\nu$ at
$\bar{b}_{ij}$ to obtain normal vectors $\nu_r$ at each $\bar{a}_{i_rj_r}$.
Finally, define the word
\[
w(\Gamma,\nu) := (\mu_A^{n_A^0}\mu_B^{n_B^0}) \bar{a}_{i_1j_1}^{\nu_1}
(\mu_A^{n_A^1}\mu_B^{n_B^1}) \bar{a}_{i_2j_2}^{\nu_2} \cdots
\bar{a}_{i_qj_q}^{\nu_q} (\mu_A^{n_A^q}\mu_B^{n_B^q}).
\]

We can now define maps $\eta_{\sigma_k^{\pm 1}}\colon\bar{\BB}\to
\bar{\A}$ as follows, with $\nu\in\{\uparrow,\downarrow\}$:
\[
\eta_{\sigma_k^{\pm 1}}(\bar{b}_{ij}^{\,\nu})=\sum_{\Gamma\in \T^{\pm}(\bar{b}_{ij})}\tau(\Gamma)q(\Gamma,\nu),
\]
where $\tau(\Gamma)=\Pi_t\sigma_{n,\Gamma}(t)\in\{1,-1\}$ with the product running over all trivalent punctures of $\Gamma$, see Equation~\eqref{eq:trivialsign}.

If $b\in \bar{\BB}$ then define $b^{\dagger}$ as the same chord but with the decorating normal reversed. Similarly, for a monomial $q\in \bar{\A}$ define $q^{\dagger}$ as the same monomial of chords but with all decorating normals reversed. Write
\[
\eta_{\sigma_k^{\pm 1}}(b) = \eta_{\sigma_k^{\pm 1}}^{\rm odd}(b)+\eta_{\sigma_k^{\pm 1}}^{\rm even}(b),
\]
where the two terms on the right hand side are summands containing all monomials with odd and even number of variables respectively.
\begin{lma}\label{lma:switchnormal}
For any $b\in \bar{\BB}$, the map $\eta_{\sigma_k^{\pm 1}}$ satisfies
\[
\eta_{\sigma_k^{\pm 1}}\left(b^{\dagger}\right)= \left(\eta_{\sigma_k^{\pm 1}}^{\rm odd}(b)\right)^{\dagger}-\left(\eta_{\sigma_k^{\pm 1}}^{\rm even}(b)\right)^{\dagger}.
\]
\end{lma}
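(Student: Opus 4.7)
The plan is to track how the two ingredients that build $\eta_{\sigma_k^{\pm 1}}(\bar b_{ij}^{\,\nu})$---the monomial $w(\Gamma,\nu)$ and the combinatorial sign $\tau(\Gamma)$---transform when the decorating normal $\nu$ at the positive special puncture is reversed, and to observe that the net effect depends only on the parity of the number of negative special punctures of $\Gamma$.

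First I would establish a combinatorial fact about slice trees. Because $\Lambda_K\subset J^1(\Lambda)$ is the $1$-jet lift of a smooth multi-section, its front has empty caustic, so a slice tree in a twist slice can contain no cusp ends, no switches, and no $Y_1$ vertices. By Theorem~\ref{thm:gentree}, every trivalent vertex of such a rigid tree is then necessarily a $Y_0$ vertex. A slice tree $\Gamma\in\T^{\pm}(\bar b_{ij})$ with $q$ negative special punctures therefore has $q+1$ univalent vertices (the special positive puncture together with the $q$ special negative punctures) and all other vertices trivalent; an Euler-characteristic count on the underlying tree yields exactly $q-1$ trivalent vertices.

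Next I would exploit the linearity of the splitting rule $n(t)=w_1(t)+w_2(t)$: replacing the starting normal $\nu$ at $\bar b_{ij}$ by $-\nu$ simultaneously negates every split vector $w_1(t),w_2(t)$ and, in particular, negates every propagated normal $\nu_r$ assigned to $\bar a_{i_rj_r}$. Two consequences follow. First, $w(\Gamma,-\nu)=w(\Gamma,\nu)^{\dagger}$, because each factor $\bar a_{i_rj_r}^{\,\nu_r}$ is replaced by $\bar a_{i_rj_r}^{-\nu_r}$ while the interspersed homology factors $\mu_A^{n_A^r}\mu_B^{n_B^r}$ depend only on the $1$-jet lift $\widehat\Gamma$ and are therefore unchanged. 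Second, at each trivalent vertex $t$ the vector $w_2(t)-w_1(t)$ is negated, so $\sigma_{\nu,\Gamma}(t)$ changes sign; summing over all $q-1$ trivalent vertices, $\tau(\Gamma)$ is multiplied by $(-1)^{q-1}$.

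Combining the two observations gives
\[
\eta_{\sigma_k^{\pm 1}}(b^{\dagger}) \;=\; \sum_{\Gamma\in\T^{\pm}(\bar b_{ij})} (-1)^{q(\Gamma)-1}\,\tau(\Gamma)\, w(\Gamma,\nu)^{\dagger},
\]
and splitting this sum by the parity of $q(\Gamma)$ produces exactly $(\eta_{\sigma_k^{\pm 1}}^{\rm odd}(b))^{\dagger}-(\eta_{\sigma_k^{\pm 1}}^{\rm even}(b))^{\dagger}$, as claimed. The only step requiring genuine care is the combinatorial assertion that every trivalent vertex of a slice tree is a $Y_0$ vertex, so that the total sign contribution is the clean $(-1)^{q-1}$ without corrections; this rests on the absence of cusps in the multi-section $\Lambda_K\subset J^1(\Lambda)$. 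Everything else is bookkeeping once the linearity of vector splitting is in hand.
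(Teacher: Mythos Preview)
Your proof is correct and follows essentially the same route as the paper's: both arguments hinge on the linearity of vector splitting (so reversing the normal at the positive puncture reverses all propagated normals and all $w_2(t)-w_1(t)$) together with the combinatorial count that a slice tree with $q$ negative punctures has exactly $q-1$ trivalent vertices. You have simply made explicit the details---the absence of cusp-type vertices and the Euler-characteristic count---that the paper's two-sentence proof leaves to the reader.
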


\begin{pf}
Let $n$ be a normal at the positive puncture. Then the vector splittings of $-n$ and $n$ along $\Gamma$ differs by an over all sign.  Since the number of trivalent vertices in $\Gamma$ is one less than the number of negative punctures of $\Gamma$, the lemma follows.
\end{pf}

We next turn to the actual calculation of the maps $\eta_{\sigma_k}$
and $\eta_{\sigma_k^{-1}}$. By Lemma \ref{lma:switchnormal}, it is
sufficient to compute for $\bar b_{ij}^{\,\uparrow}$, $1\le i,j\le n$,
$i\neq j$.

\begin{lma}\label{lma:treetwist}
Let $\eta_{\sigma_k^{\pm1}}\colon \bar{\BB}\to\bar{\A}$ denote the maps associated to a twist representing the braid group generator $\sigma_k^{\pm 1}$ in an $[s_0,s_1]$-slice, as described above. Then
\[
\begin{aligned}
\eta_{\sigma_k}(\bar{b}_{ij}^{\,\uparrow})&=
\bar{a}_{ij}^{\,\uparrow}  \quad & i,j\not= k, k+1\\
\eta_{\sigma_k}(\bar{b}_{k\,k+1}^{\,\uparrow})&=
\bar{a}_{k+1\,k}^{\,\uparrow} \quad & \\
\eta_{\sigma_k}(\bar{b}_{k+1\,k}^{\,\uparrow})&=
\mu_A\bar{a}_{k\,k+1}^{\,\uparrow}\mu_B^{-1}\quad &\\
\eta_{\sigma_k}(\bar{b}_{i\,k+1}^{\,\uparrow})&=
\bar{a}_{ik}^{\,\uparrow}\quad & i\not= k, k+1\\
\eta_{\sigma_k}(\bar{b}_{k+1\, i}^{\,\uparrow})&=
\bar{a}_{ki}^{\,\uparrow}\quad & i\not=k, k+1\\
\eta_{\sigma_k}(\bar{b}_{ik}^{\,\uparrow})&=
\bar{a}_{i\,k+1}^{\,\uparrow} + \bar{a}_{ik}^{\,\uparrow}\bar{a}_{k\, k+1}^{\,\uparrow}\quad& i < k\\
\eta_{\sigma_k}(\bar{b}_{ik}^{\,\uparrow})&=
\bar{a}_{i\,k+1}^{\,\uparrow} +\bar{a}_{ik}^{\,\uparrow} \mu_A\bar{a}_{k\, k+1}^{\,\uparrow}\mu_B^{-1}\quad& i > k+1\\
\eta_{\sigma_k}(\bar{b}_{ki}^{\,\uparrow})&=
\bar{a}_{k+1\,i}^{\uparrow} - \bar{a}_{k+1\,k}^{\,\uparrow}\bar{a}_{ki}^{\,\uparrow}\quad &i \ne k, k+1\\
\end{aligned}
\]
and
\[
\begin{aligned}
\eta_{\sigma_k^{-1}}(\bar{b}_{ij}^{\,\uparrow})&=
\bar{a}_{ij}^{\,\uparrow}  \quad & i,j\not= k, k+1\\
\eta_{\sigma_k^{-1}}(\bar{b}_{k\,k+1}^{\,\uparrow})&=
\mu_B^{-1}\bar{a}_{k+1\,k}^{\,\uparrow}\mu_A \quad & \\
\eta_{\sigma_k^{-1}}(\bar{b}_{k+1\,k}^{\,\uparrow})&=
\bar{a}_{k\,k+1}^{\,\uparrow}\quad &\\
\eta_{\sigma_k^{-1}}(\bar{b}_{ik}^{\,\uparrow})&=
\bar{a}_{i\,k+1}^{\,\uparrow}\quad & i\not= k, k+1\\
\eta_{\sigma_k^{-1}}(\bar{b}_{ki}^{\,\uparrow})&=
\bar{a}_{k+1\,i}^{\,\uparrow}\quad & i\not=k, k+1\\
\eta_{\sigma_k^{-1}}(\bar{b}_{i\,k+1}^{\,\uparrow})&=
\bar{a}_{ik}^{\,\uparrow} - \bar{a}_{i\,k+1}^{\,\uparrow}\mu_B^{-1}\bar{a}_{k+1\, k}^{\,\uparrow}\mu_A\quad& i<k\\
\eta_{\sigma_k^{-1}}(\bar{b}_{i\,k+1}^{\,\uparrow})&=
\bar{a}_{ik}^{\,\uparrow} - \bar{a}_{i\,k+1}^{\,\uparrow}\bar{a}_{k+1\, k}^{\,\uparrow}\quad& i>k+1\\
\eta_{\sigma_k^{-1}}(\bar{b}_{k+1\,i}^{\,\uparrow})&=
\bar{a}_{ki}^{\,\uparrow} + \bar{a}_{k\,k+1}^{\,\uparrow}\bar{a}_{k+1\,i}^{\,\uparrow}\quad &i \not = k, k+1.\\
\end{aligned}
\]

\end{lma}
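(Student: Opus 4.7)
My plan is to enumerate the slice trees in the twist region directly and read off the signs and meridian decorations from their geometry, appealing to the precise model of the twist slice whose details are set up in Section~\ref{ssec:twistslice}. The argument is a case analysis on how many of $i,j$ lie in the set $\{k,k+1\}$ of strands that actually twist, since outside these two strands $\sigma_k^{\pm 1}$ acts trivially.

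First I would dispose of the case $\{i,j\}\cap\{k,k+1\}=\emptyset$. Here the unstable manifolds $W^{\mathrm{u}}(a_{ij})$ cross the twist slice as constant-$t$ lines (the outer strata in Figure~\ref{fig:twistslices}), and $\bar b_{ij}$ is joined to $\bar a_{ij}$ by a single flow line with no trivalent vertex; by construction (Section~\ref{sssec:endpointpath}) this path misses $\mu_A'$ and $\mu_B'$, so the sign is $+1$ and no meridian variable appears. The mixed cases in which exactly one of $i,j$ lies in $\{k,k+1\}$ are similar: the relevant sheet on the right side of the slice is identified by the permutation $(k\ k+1)$ with its partner on the left, and the unique surviving flow line picks out the correct $\bar a_{pq}$. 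The two-strand cases $\bar b_{k\,k+1}^{\,\uparrow}$ and $\bar b_{k+1\,k}^{\,\uparrow}$ also admit a single slice tree, but exactly one of the two flow lines is forced through the meridian curves $\mu_A'$ and $\mu_B'$ as it routes around the crossing of Figure~\ref{fig:twistslices}, producing the $\mu_A\,\cdots\,\mu_B^{-1}$ factor in $\eta_{\sigma_k}(\bar b_{k+1\,k}^{\,\uparrow})$.

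The main content is the four formulas with a binomial right-hand side, such as $\eta_{\sigma_k}(\bar b_{ik}^{\,\uparrow})=\bar a_{i\,k+1}^{\,\uparrow}+\bar a_{ik}^{\,\uparrow}\bar a_{k\,k+1}^{\,\uparrow}$ for $i<k$. For such indices the curve $W^{\mathrm{u}}(a_{ik})$ crosses either $W^{\mathrm{u}}(a_{k\,k+1})$ or $W^{\mathrm{u}}(a_{k+1\,k})$ transversely once in the twist region; one summand comes from the unbroken flow line through the twist, and the other from a slice tree that splits at a $Y_0$-vertex at this transverse intersection, with branches ending at $\bar a_{ik}$ and $\bar a_{k\,k+1}$. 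The order in which the two chord letters appear is dictated by the flow orientation of the $1$-jet lift, and when $i>k+1$ the second branch additionally crosses both meridian curves, explaining the flanking $\mu_A$ and $\mu_B^{-1}$ in the corresponding entry. An entirely parallel discussion, using the right half of Figure~\ref{fig:twistslices}, yields the formulas for $\sigma_k^{-1}$.

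The main obstacle is correctly evaluating the sign of each tree containing a trivalent vertex. For such a vertex $t$ the prescription $\sigma_{n,\Gamma}(t)=\sign(\la w_2(t)-w_1(t),v^{\con}(t)\ra)$ from Section~\ref{sssec:signrules} depends on the ordered splitting, at the outgoing edges of $t$, of the normal $\uparrow$ chosen at $\bar b_{ij}$. Tracking this through the vector-splitting rule of Section~\ref{ssec:vectorsplit} along the unique path from $\bar b_{ij}$ to $t$ and then reading off the cyclic ordering at the $Y_0$-vertex, one finds that the relative orientation of the outgoing branches gives a plus sign in the $\eta_{\sigma_k}(\bar b_{ik}^{\,\uparrow})$ formulas and a minus sign in the $\eta_{\sigma_k}(\bar b_{ki}^{\,\uparrow})$ formulas, with the mirror outcome for $\sigma_k^{-1}$. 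By Lemma~\ref{lma:switchnormal} the analogous formulas for the decoration $\downarrow$ follow automatically, completing the case analysis.
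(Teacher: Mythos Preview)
Your approach is essentially the paper's: enumerate slice trees case by case according to how $i,j$ interact with $\{k,k+1\}$, read off meridian factors from intersections with $\mu'$, and compute the sign at each $Y_0$-vertex via the vector-splitting rule. The organization differs only cosmetically (the paper first treats all linear terms, then all quadratic terms, whereas you bundle by index type), and both defer the $\sigma_k^{-1}$ case to symmetry and the $\downarrow$ case to Lemma~\ref{lma:switchnormal}.

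There is one place where you should tighten the argument. In the binomial cases you assert that exactly two slice trees contribute---the unbroken flow line and a single tree with one $Y_0$-vertex---but you do not justify that no further slice trees exist. The paper spends some effort here: it argues, puncture by puncture, that the flow line through any $\bar a'_{pq}$ with $p,q\notin\{k,k+1\}$ meets no other flow line at an admissible $Y_0$-vertex (sheets do not match), that flow lines through $\bar a'_{i\,k+1}$ and $\bar a'_{k+1\,i}$ likewise cannot merge, and hence that every slice tree has at most one trivalent vertex, located at the unique transverse crossing you describe. Without this exclusion step your enumeration is incomplete; once you add it, your proof matches the paper's.
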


\begin{pf}
We label the sheets of $\Lambda_K$ in the slice under consideration by
\[
S_1,\dots,S_{k-1},A,B,S_{k+2},\dots,S_n.
\]
Here the sheet $S_j$ corresponds to the $j^{\rm th}$ strand of the
standard trivial braid, and sheets $A$ and $B$ are defined as before.

Consider first the linear terms in the expressions for $\eta_{\sigma_k}$ and $\eta_{\sigma_k^{-1}}$. The trees that give these contributions are simply the (negative) gradient flow lines of positive function differences which end at $\bar{a}_{ij}$. They obviously exist, are unique, and transport normal vectors as claimed. This proves that the linear terms of the equations are correct except for homology coefficients. To see these, note that only the flow lines between sheets $A$ and $B$ intersect $t=\frac{\pi}{2}$. Further, a flow line between sheets $A$ and $B$ intersects this curve only if the twist is positive and it ends at $\bar a_{k\,k+1}$ or if the twist is negative and it ends at $\bar a_{k+1\,k}$. Finally, noting that the component in the upper sheet of the $1$-jet lift is oriented according to the flow orientation and that the component in the lower sheet is oriented opposite to the flow orientation, it follows that the coefficients are as stated.

We next study higher order terms arising from trees with trivalent vertices. Such a vertex arises as follows: a flow line determined by sheets $X$ and $Y$ splits into two flow lines determined by sheets $X$ and $Z$, and by $Z$ and $Y$. Furthermore, since we consider only trees with one positive puncture it is required that all flow lines correspond to positive function differences. In other words, the $z$-coordinate of the sheet $Z$ must lie between the $z$-coordinates of the sheets $X$ and $Y$ at the splitting point.

Given a slice tree $\Gamma$ suppose that $\bar a_{ij}$ is a negative special puncture of $\Gamma$ with $i,j\ne k, k+1$. Then the negative gradient flow $\gamma_{ij}$ of $F_{i j}$ ending at $\bar a'_{ij}$ in $[s_{2l-1}, s_{2l}]$ is disjoint from all the flow lines starting at all the other $\bar a'_{i'j'}$, except the flow line of $F_{k\, k+1}$ ending at $\bar a'_{k\, k+1}$ or  of $F_{k+1\, k}$ ending at $\bar a'_{k+1\, k}$. However we cannot have a $Y_0$ vertex here since $i,j\ne k, k+1$. Also, $\gamma_{ij}$ is disjoint from the other flow lines in the interval $[s_{2l-2}, s_{2l-1}]$. Thus if $\bar a'_{ij}$ is a negative special puncture of $\Gamma$ then $\Gamma$ has just one edge.

If $\bar a'_{i\, k+1}$ is a negative puncture of $\Gamma$ and $i<k$
then the negative gradient flow line of $F_{i\, k+1}$ starting at
$\bar a'_{i\, k+1}$ intersects only the flow line  of $F_{k\, k+1}$,
but since they have the same upper sheets these flow lines cannot
merge at a $Y_0$ vertex. Thus again $\Gamma$ just has one
edge. Similarly $\Gamma$ will only have one edge if $i>k+1$. The same
argument shows that a slice tree with negative puncture at $\bar
a'_{k+1\,i}$ must have only one edge.

It remains to consider slice trees with negative punctures among $\bar
a'_{k\,k+1}$, $\bar a'_{k+1\,k}$, and $\bar a'_{ik}$, $\bar a'_{ki}$
for $i\neq k,k+1$. Besides the flow lines
ending at $\bar a'_{k\,k+1}$ or $\bar a'_{k+1\,k}$, all such slice
trees must have some $\bar a'_{ik}$ or $\bar a'_{ki}$, $i\neq k,k+1$,
as a negative puncture.

\begin{figure}[htb]
\centering
\includegraphics[height=2in]{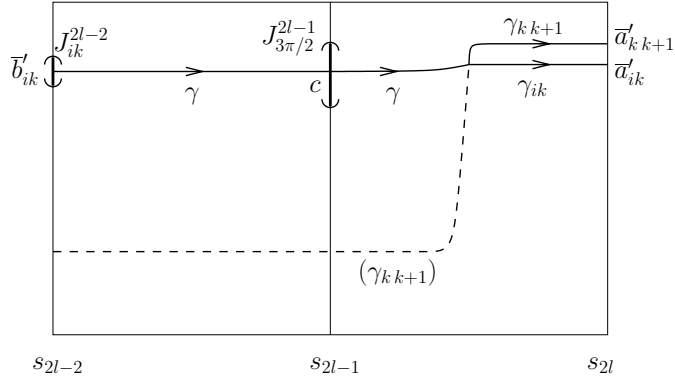}
\caption{A flow tree with one $Y_0$ vertex and negative punctures at
  $\bar a'_{i k}$ and $\bar a'_{k\,k+1}$. The case $i<k$ is shown here.}
\label{fig:braiddetail}
\end{figure}

If $\bar a'_{i k}$ is a negative puncture of $\Gamma$ then the
negative gradient flow line $\gamma_{ik}$ of $F_{i k}$ ending at $\bar
a'_{i k}$ intersects the flow line $\gamma_{k\, k+1}$ of $F_{k\, k+1}$
ending at $\bar a'_{k\, k+1}$. Thus we can have a $Y_0$ splitting of
the flow line from $F_{i\, k+1}$ into these flow lines; see
Figure~\ref{fig:braiddetail} for an illustration when $i>k+1$. Thus there
is a point $c$ in $J^{2l-1}_{\pi/2}$ or $J^{2l-1}_{3\pi/2}$ and a flow
line $\gamma$ of $F_{i\, k+1}$ starting at $c$ so that $\gamma\cup
\gamma_{k\, k+1}\cup \gamma_{ik}$ form a flow tree. Notice assuming
the twist interval is small enough it is clear that $\gamma$ does not
intersect any unstable manifolds of the $a_{ij}$ in $[s_{2l-1},
s_{2l}]$.  We can extend $\gamma$ through the interval $[s_{2l-2},
s_{2l-1}]$ and see that there is a unique point $\bar b'_{i\, k}$ in
$J^{2l-2}_{i\, k}$ that $\gamma$ runs through. (Recall our labeling
conventions from the beginning of the subsection: the sheets on the left side of the interval $[s_{2l-2}, s_{2l}]$ are labeled just as on
the right, but with the role of $k$ and $k+1$ switched.) Moreover
notice that by the lexicographical ordering on the unstable manifolds,
$\gamma$ will not intersect any of the unstable manifolds of the $\bar
a'_{i'j'}$ over the interval $[s_{2l-2}, s_{2l-1}]$.

\begin{figure}
\centering
\includegraphics[height=5in]{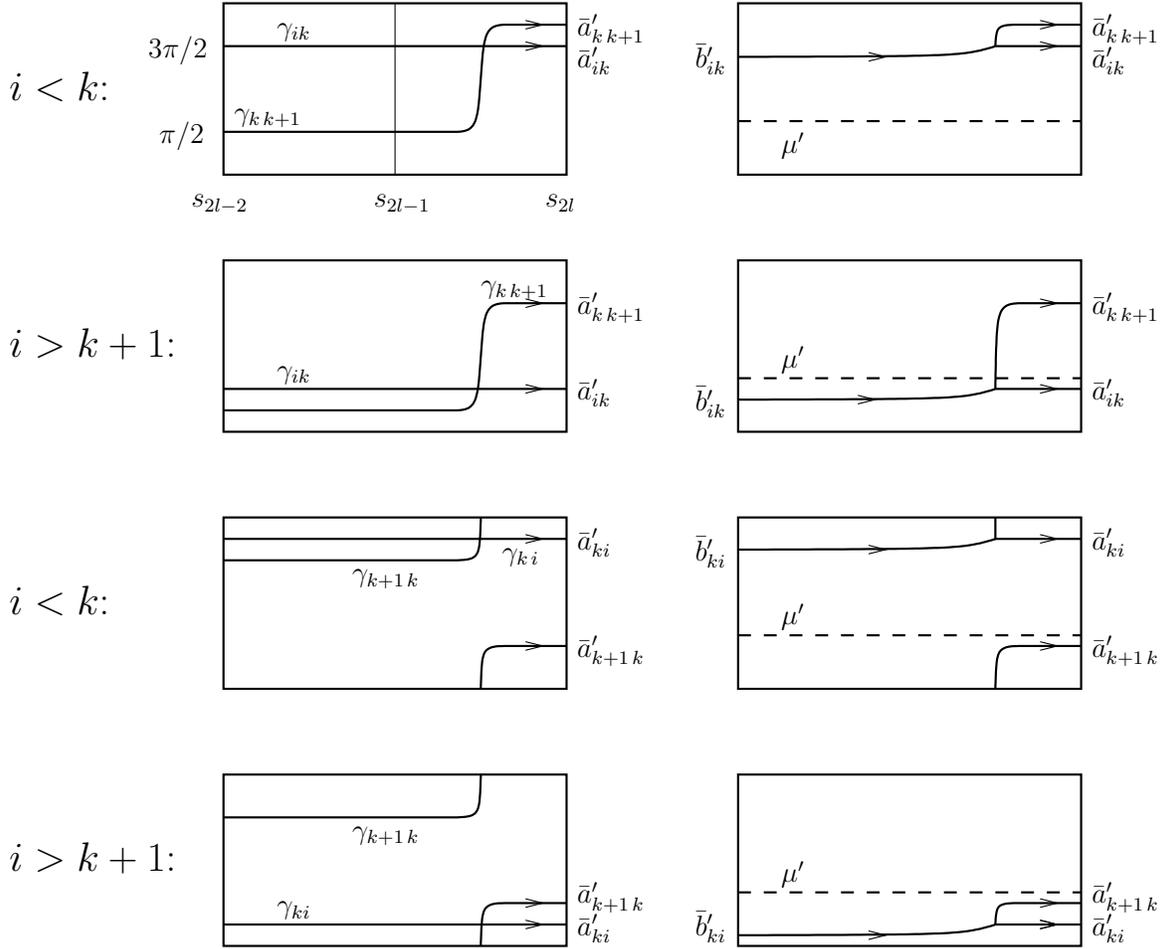}
\caption{The four slice trees that consist of more than one edge
  (right), resulting from the four configurations of intersecting flow
lines (left). The cycle $\mu'$ is also shown on the right, and
intersects only the second slice tree. Up to sign, we can read off the
words associated to these four slice trees, from top to bottom:
$\bar a'_{ik}\bar a'_{k\,k+1}$; $\bar a'_{ik} \mu_A \bar a'_{k\,k+1}
\mu_B^{-1}$; $\bar a'_{k+1\,k} \bar a'_{ki}$; $\bar a'_{k+1\,k} \bar a'_{ki}$.
}
\label{fig:braidtrees}
\end{figure}

Thus we see that if $\bar a'_{ik}$ is a negative puncture of $\Gamma$
then $\Gamma$ either has one edge (this is the trivial flow line
mentioned above) or exactly one $Y_0$ vertex. A similar argument says
the same for $\Gamma$ having a negative puncture at $\bar a'_{k
  i}$. See Figure~\ref{fig:braidtrees} for an illustration of all
slice trees with a $Y_0$ vertex and a negative puncture at $\bar
a'_{ik}$ or $\bar a'_{ki}$ for some $i\neq k,k+1$.

We conclude that all slice trees in $[s_{2l-2}, s_{2l}]$
either have one edge or, if the positive puncture is $\bar b_{ik}$ or
$\bar b_{ki}$, are of the type described above (and shown in
Figure~\ref{fig:braidtrees}) with three edges and one $Y_0$ vertex. This
shows that the quadratic terms of the equations are correct except for
homology coefficients and vector splittings. To see the homology
coefficients simply notice that the only flow trees that intersect
$t=\frac{\pi}{2}$ are the ones containing a flow line of $F_{k\, k+1}$
and having $i>k+1$. Since such a flow tree crosses $\mu'_A$ positively
between $\bar a'_{ik}$ and $\bar a'_{k\,k+1}$, and crosses $\mu'_B$
negatively after $\bar a'_{k\,k+1}$, it contributes
the word $\bar a'_{ik} \mu_A \bar a'_{k\,k+1}
\mu_B^{-1}$ to $\eta_{\sigma_k}(\bar b_{ik})$.

As for vector splittings for the $Y_0$ trees, note that the upward
normal at $\bar b_{ik}'$ or $\bar b_{ki}'$ is split into upward
normals at each of the negative punctures ($\bar a'_{ik}$ and $\bar
a'_{k\,k+1}$ or $\bar a'_{ki}$ and $a'_{k+1\,k}$). For a $Y_0$ tree
$\Gamma$, an easy application of the definitions from
Section~\ref{sssec:signrules} shows that $\tau(\Gamma)$ is $+1$ for
the top two trees in Figure~\ref{fig:braidtrees} and $-1$ for the
bottom two trees. (The difference between these pairs is the relative
placement at the trivalent vertex of the flows labeled 1 and 2 from
Figure~\ref{fig:vectorsplit}.) Thus the arrow decorations and signs
are as given in the statement of the lemma.

The case for $\sigma_k^{-1}$ can be handled similarly.
\end{pf}

\subsubsection{Decomposing flow trees}
Our expression for the differential derives from the following geometric decomposition of flow trees. Consider a braid $B=\sigma_{k_1}^{\epsilon_1}\cdots\sigma_{k_m}^{\epsilon_m}$, $\epsilon_l=\pm 1$, on $n$ strands. Let $K$ denote the closure of $B$ and consider $\Lambda_K$. We represent $\Lambda_K$ as described above as a sequence of twists slices separated by trivial braid slices in the braiding region, one small slice containing all Reeb chords, and the remaining trivial braid slice which is the complement of these two slices.  Consider an $[s',s'']$-slice as just described in which there are no Reeb chords.
\begin{lma}\label{lma:dividetree}
Any rigid flow tree or $1$-dimensional partial flow tree of $\Lambda_K$ with one positive puncture intersects the $[s',s'']$-slice in a union of slice trees (for appropriately chosen $\bar a'_{ij}$).
\end{lma}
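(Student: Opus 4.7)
The plan is to fix a flow tree (rigid or $1$-dimensional with a special positive puncture) $\Gamma$ of $\Lambda_K\subset J^1(\Lambda)$ with one positive puncture and analyze the connected components of $\Gamma\cap ([s',s'']\times S^{1})$.

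First I will classify the vertices of $\Gamma$ that lie in the interior of the slice. By Theorem~\ref{thm:gentree} these must be of $Y_0$-type: the hypothesis that the slice contains no Reeb chords rules out punctures, while the fact from Lemma~\ref{lma:braidfront} that $\Lambda_{K}$ is the $1$-jet graph of a multi-section (in particular has empty caustic) rules out switches, cusp ends and $Y_1$-vertices. After a small generic perturbation I may also arrange that $\Gamma$ meets $\{s=s'\}$ and $\{s=s''\}$ transversely at smooth points of $\Gamma$, so the intersection decomposes as a disjoint union of finite trees $\Gamma_{1},\dots,\Gamma_{N}$, each with only $Y_0$ interior vertices and with boundary points on the vertical sides of the slice.

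Next I will show that the flow orientation on $\Gamma$ (Section~\ref{ssec:gft}) equips each $\Gamma_i$ with a \emph{unique} positive special puncture. Since every $Y_0$-vertex has exactly one incoming edge, if $\Gamma_i$ had two boundary points through which the flow orientation entered the slice then upstream-tracing from each would produce two disjoint incoming paths which, in the connected tree $\Gamma_i$, would eventually have to meet at a $Y_0$-vertex, forcing it to have two incoming edges --- a contradiction. Conversely, the upstream-maximal point of $\Gamma_i$ cannot be an interior $Y_0$-vertex and cannot be the original positive puncture of $\Gamma$ (which lies outside the slice, since the slice contains no Reeb chords), so it must be a boundary point; this is the positive special puncture, and all other boundary points are negative special punctures.

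Finally, to identify each $\Gamma_i$ with a slice tree I will invoke the standard trivial braid structure of $\Lambda_K$ over the relevant portions of the slice (Remark~\ref{rmk:standardtrivbraid}) together with the interval decomposition $J_{ij}^{2l},J_{\pi/2}^{2l-1},J_{3\pi/2}^{2l-1}$ of the boundary sets described in Section~\ref{sssec:endpointpath}. Monotonicity of $|f_i-f_j|$ in the trivial regions, together with the explicit flow behavior in twist regions, forces each edge of $\Gamma_i$ to cross the slice cleanly, placing the positive special puncture of $\Gamma_i$ in some interval $J^{2l-2}_{ij}$ and each negative special puncture in some $J^{2l}_{pq}$. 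The remark following the definition of slice trees in Section~\ref{sssec:partial} states that the set $\T^{\pm}(\bar b_{ij})$ is independent (up to canonical bijection) of the specific representatives chosen for $\bar a'_{pq}$ within their prescribed intervals, so I may simply declare the $\bar a'_{pq}$ to be the negative special punctures produced by the $\Gamma_i$'s; with this choice each $\Gamma_i$ is a slice tree by definition. The main subtlety is producing one choice of $\bar a'_{pq}$ usable uniformly for every flow tree under consideration, and this is precisely what the choice-independence of $\T^{\pm}$ provides.
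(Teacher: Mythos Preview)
Your Steps 1 and 2 are fine and give a clean reason why each connected component of $\Gamma\cap([s',s'']\times S^1)$ is a subtree with a single incoming boundary point. The difficulty is Step 3, and here your argument has a real gap.

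The definition of a slice tree requires the negative special punctures to lie at chosen points $\bar a'_{pq}\in J^{2l}_{pq}$, where $J^{2l}_{pq}$ is a \emph{small} interval around $W^{\mathrm u}(a_{pq})\cap\{s=s_{2l}\}$. You assert that ``monotonicity of $|f_i-f_j|$ in the trivial regions, together with the explicit flow behavior in twist regions'' forces each exiting edge to cross $\{s=s_{2l}\}$ inside one of these intervals, but nothing local to the slice guarantees this: a generic $(-\nabla F_{pq})$--flow line can cross $\{s=s_{2l}\}$ far from $W^{\mathrm u}(a_{pq})$. What pins the crossing point down is the \emph{downstream} behavior of $\Gamma$ --- the edge must, after possibly further $Y_0$--branchings in later slices, terminate at an actual Reeb chord $a_{rs}$, and it is the concentration-interval mechanism (the last bullet in Section~\ref{sssec:endpointpath}) that propagates proximity to the unstable manifolds backward through the slices. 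Your invocation of the choice-independence of $\T^{\pm}(\bar b_{ij})$ is correct \emph{once} the punctures are known to lie in the $J^{2l}_{pq}$, but it cannot be used to place them there.

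This is exactly why the paper argues by induction rather than slice-by-slice. It starts at the slice adjacent to the Reeb chords, where one may take $\bar a'_{ij}=a_{ij}$ so the hypothesis is automatic; the slice-tree analysis of Lemma~\ref{lma:treetwist} then shows that the positive special punctures of that slice land in the intervals $J^{2l-2}_{ij}$, and these become the $\bar a'_{ij}$ for the next slice. Your direct approach can be repaired, but the repair amounts to carrying out this same induction.

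A minor secondary point: your argument that the positive puncture of $\Gamma$ lies outside the slice because ``the slice contains no Reeb chords'' covers the rigid case but not the $1$-dimensional partial case, where the positive puncture is special and need not sit at a Reeb chord.
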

\begin{pf}
Suppose $\Gamma$ is a flow tree for $\Lambda_K$.  We see that $\Gamma$ will intersect the slice immediately preceding the $a_{ij}$-chords of $\Lambda_K$ in a union of slice trees, where the $\bar a'_{ij}=a_{ij}$, by definition. Notice that the slice trees in this slice define $\bar a'_{ij}$ for the next slice. Using the new $\bar a'_{ij}$ we see again that $\Gamma$ will intersect this next slice in a union of slice trees. Continuing by induction we see that $\Gamma$ will intersect each slice in a union of slice trees determined by appropriately chosen $\bar a'_{ij}$.
\end{pf}
\begin{rmk}
We note that any partial flow tree $\Gamma'$ of $\Lambda_K$, with only
the positive puncture $p$ special, can be completed in a unique way to
a flow tree $\Gamma$ by appending a flow line connecting $p$ to a Reeb chord $b_{ij}$.
\end{rmk}

\subsubsection{Twist morphisms}
The following algebraic construction makes it possible to apply the result in Lemma~\ref{lma:treetwist} inductively. Let $K$ be a link with $r$ components and let $\Lambda_K=\Lambda_{K;1}\cup\dots\cup \Lambda_{K;r}$ be the subdivision of its conormal lift into components. Let $\mu_j,\lambda_j\in H_1(\Lambda_{K;j};\Z)$ be as described above.
As in the Introduction, consider the algebra $\A_n^0$ over $\Z$
generated by $\Z[H_1(\Lambda_K)]$ along with the Reeb chords $a_{ij}$, $1\le i,j\le n$, $i\ne j$. We will define morphisms $\phi_{\sigma_k^{\pm1}}\colon\A_n^0\to\A_n^0$ associated to braid group generators.

Consider a twist corresponding to $\sigma_k^{\pm 1}$. We use the notation $\mu_A$ and $\mu_B$ for homology variables exactly like in Lemma~\ref{lma:treetwist} and in order to connect to that result we make the following identifications:
\begin{equation}\label{eq:identifsource}
\begin{aligned}
\bar{b}_{ij}^{\,\uparrow} = +a_{ij}\quad &\text{if } i>j,\\
\bar{b}_{ij}^{\,\downarrow} =-a_{ij}\quad &\text{if } i>j,\\
\bar{b}_{ij}^{\,\uparrow} = -a_{ij}\quad &\text{if } i<j,\\
\bar{b}_{ij}^{\,\downarrow} =+a_{ij}\quad &\text{if } i<j
\end{aligned}
\end{equation}
in the source $\A_n^0$, and
\begin{equation}\label{eq:identiftarget}
\begin{aligned}
\bar{a}_{ij}^{\,\uparrow} = +a_{ij}\quad &\text{if } i>j,\\
\bar{a}_{ij}^{\,\downarrow} =-a_{ij}\quad &\text{if } i>j,\\
\bar{a}_{ij}^{\,\uparrow} = -a_{ij}\quad &\text{if } i<j,\\
\bar{a}_{ij}^{\,\downarrow} =+a_{ij}\quad &\text{if } i<j
\end{aligned}
\end{equation}
in the target $\A_n^0$. We define a homomorphism $\phi_{\sigma_k}\colon \A_n^0\to\A_n^0$ using this identification in combination with Lemmas~\ref{lma:treetwist} and~\ref{lma:switchnormal}. That is, we define it as follows on generators:
\[
\begin{aligned}
\phi_{\sigma_k}(a_{ij})&=
a_{ij}  \quad & i,j\not= k, k+1;\\
\phi_{\sigma_k}(a_{k\,k+1})&=
-a_{k+1\,k} \quad & \\
\phi_{\sigma_k}(a_{k+1\,k})&=
-\mu_Aa_{k\,k+1}\mu_B^{-1}\quad &\\
\phi_{\sigma_k}(a_{i\,k+1})&=
a_{ik}\quad & i\not= k, k+1;\\
\phi_{\sigma_k}(a_{k+1\, i})&=
a_{ki}\quad & i\not=k, k+1;\\
\phi_{\sigma_k}(a_{ik})&=
a_{i\,k+1} - {a}_{ik}a_{k\, k+1}\quad& i<k;\\
\phi_{\sigma_k}(a_{ik})&=
a_{i\,k+1} - {a}_{ik}\mu_Aa_{k\, k+1}\mu_B^{-1}\quad& i>k+1;\\
\phi_{\sigma_k}(a_{ki})&=
a_{k+1\,i} - a_{k+1\,k}a_{ki}\quad &i \not = k, k+1.\\
\end{aligned}
\]
Similarly, we define $\phi_{\sigma_k^{-1}}\colon \A_0\to\A_0$ as follows on generators:
\[
\begin{aligned}
\phi_{\sigma_k^{-1}}(a_{ij})&=
a_{ij}  \quad & i,j\not= k, k+1;\\
\phi_{\sigma_k^{-1}}(a_{k\,k+1})&=
-\mu_B^{-1}a_{k+1\,k}\mu_A \quad & \\
\phi_{\sigma_k^{-1}}(a_{k+1\,k})&=
-a_{k\,k+1}\quad &\\
\phi_{\sigma_k^{-1}}(a_{ik})&=
a_{i\,k+1}\quad & i\not= k, k+1;\\
\phi_{\sigma_k^{-1}}(a_{ki})&=
a_{k+1\,i}\quad & i\not=k, k+1;\\
\phi_{\sigma_k^{-1}}(a_{i\,k+1})&=
a_{ik} - a_{i\,k+1} \mu_B^{-1}a_{k+1\, k}\mu_A\quad& i<k;\\
\phi_{\sigma_k^{-1}}(a_{i\,k+1})&=
a_{ik} - a_{i\,k+1}a_{k+1\, k}\quad& i>k+1;\\
\phi_{\sigma_k^{-1}}(a_{k+1\,i})&=
a_{ki} - a_{k\,k+1}a_{k+1\,i}\quad &i \not = k, k+1.\\
\end{aligned}
\]
Note that $\phi_{\sigma_k^{-1}} \circ \phi_{\sigma_k} =
\phi_{\sigma_k} \circ \phi_{\sigma_k^{-1}} = id$, since $A,B$ switch
places between the $\sigma_k$ and $\sigma_k^{-1}$ twists.

\begin{rmk}
In any equation above where the sign differs from that of the corresponding equation of the formulas in Lemma~\ref{lma:treetwist} the following holds. If we use Equations~\eqref{eq:identifsource} and~\eqref{eq:identiftarget} to substitute decorated variables $\bar b_{ij}^{\,\uparrow}$ {\em etc.}, exactly one arrow of a decorated chord in the target monomial is oriented differently than all other arrows in the equation.
\end{rmk}

For the braid $B=\sigma_{k_1}^{\pm 1}\cdots\sigma_{k_m}^{\pm 1}$,
define
\[
\phi_B=\phi_{\sigma_{k_1}^{\pm 1}}\circ\dots\circ\phi_{\sigma_{k_m}^{\pm 1}}.
\]
Note that $\phi$ then gives a representation of the braid group into the group of automorphisms of $\A_n^0$.

\begin{rmk}
In order for $\phi_B$ to respect the order of composition we think of
the braid as written in the ``operator order,'' so that $B$ above
should be interpreted as: apply $\sigma_{k_m}^{\pm 1}$ first and
$\sigma_{k_1}^{\pm 1}$ last. Thinking of the braid in the opposite
order, $B\mapsto\phi_B$ would be an anti-homomorphism. See Figure
\ref{fig:dB}.
\end{rmk}

\subsubsection{Flow trees for $\Lambda_K$ in $J^1(\Lambda)$}\label{sssec:countflowtree}
Since there are no cusps of $\Lambda_K$ in $J^1(\Lambda)$, we know for grading reasons that any rigid flow tree must have its positive puncture at some Reeb chord $b_{ij}$ and its negative punctures at Reeb chords $a_{ij}$. Consequently, Theorem~\ref{thm:gentree} implies that the differential of the Legendrian algebra can be computed as
\[
\pa(b_{ij})=\sum_{\Gamma\in\T(b_{ij})}\epsilon(\Gamma)q(\Gamma),
\]
where $\T(b_{ij})$ denotes the set of all flow trees with positive
puncture at $b_{ij}$ and where, if $\Gamma$ is such a tree,
$q(\Gamma)$ denotes the monomial of its negative punctures and
$\epsilon(\Gamma)$ its sign. To compute this differential we fix
orientation choices as follows, see Figure
\ref{fig:dB}:
\begin{equation}\label{eq:ker(b)}
v^\krn(b_{ij})=\pa_s,\quad\text{for all $1\le i,j\le n$, $i\ne j$}
\end{equation}
and
\begin{equation}\label{eq:coker(b)=ker(a)}
v^\cokrn(b_{ij})=v^\krn(a_{ij})=
\begin{cases}
\pa_t &\text{if }i>j,\\
-\pa_t &\text{if }i<j.
\end{cases}
\end{equation}

\begin{figure}[htb]
\labellist
\small\hair 2pt
\pinlabel $s$ [Bl] at  308 1
\pinlabel $\sigma_{i_2}$ [Bl] at 300 167
\pinlabel $\sigma_{i_1}$ [Bl] at 210 167
\pinlabel $\phi_{\sigma_{i_2}}\circ\phi_{\sigma_{i_1}}$ [Bl] at 230 -10
\pinlabel $v^\krn(a_{ij})$ [Bl] at -37 95
\pinlabel $b^\cokrn(b_{ij})$ [Bl] at 96 95
\pinlabel $b^\krn(b_{ij})$ [Bl] at 122 62
\pinlabel $a_{ij}$ [Bl] at -1 57
\pinlabel $b_{ij}$ [Bl] at 89 57
\pinlabel $\lambda'$ [Bl] at 33 113
\endlabellist
\centering
\includegraphics{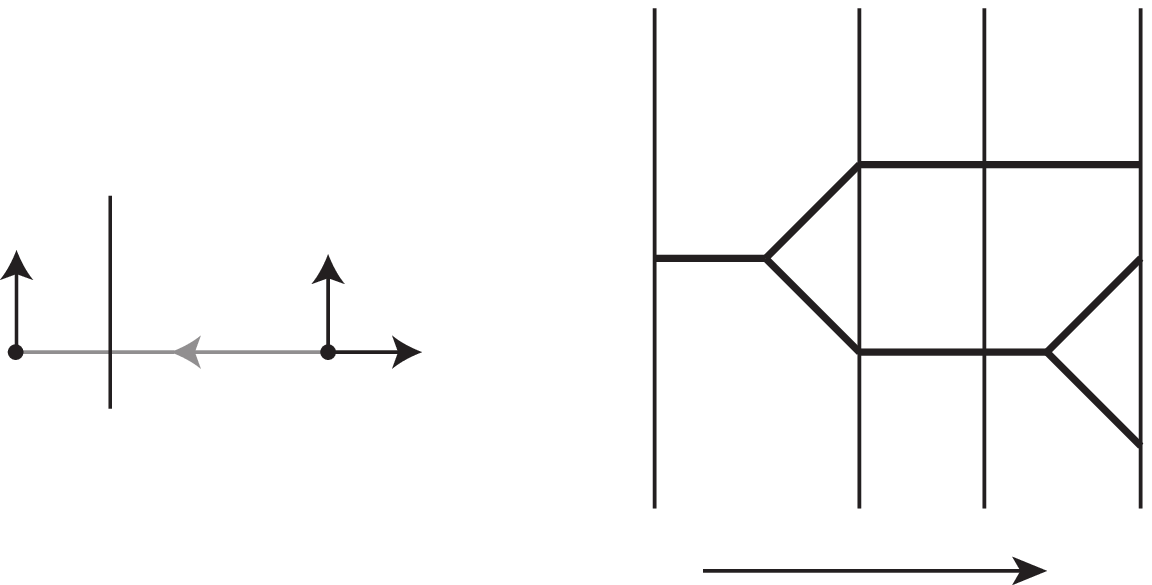}
\caption{Flow trees contributing to $\pa\Bb$.}
\label{fig:dB}
\end{figure}

The following lemma, where we use the matrix notation from Theorem \ref{thm:mainlink}, determines the differential discussed above.

\begin{lma}\label{lma:dB}
With orientation data as in Equations~\eqref{eq:ker(b)} and~\eqref{eq:coker(b)=ker(a)}, the following equation holds:
\[
\pa\Bb=-\Ll^{-1}\cdot \Aa\cdot\Ll+\phi_B(\Aa)=-\Ll^{-1}\cdot \Aa\cdot\Ll+\Phi_B^{L}\cdot\Aa\cdot\Phi_B^{R}.
\]
\end{lma}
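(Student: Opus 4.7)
By Theorem~\ref{thm:gentree}, every rigid flow tree contributing to $\pa b_{ij}$ has its positive puncture $b_{ij}$ (of Morse index $2$) as a valency-one vertex, and under the orientation convention $v^\krn(b_{ij})=\pa_s$ from Equation~\eqref{eq:ker(b)}, its emanating edge lies along $W^{\mathrm{u}}(a_{ij})$ and points in either the $-\pa_s$ or the $+\pa_s$ direction at $b_{ij}$. The plan is to split the enumeration of rigid trees by this sign, verify that the two classes correspond respectively to the matrix summands $-\Ll^{-1}\cdot\Aa\cdot\Ll$ and $\phi_B(\Aa)$, and then invoke Proposition~\ref{prop:PhiLPhiR} to rewrite the second summand.

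In the first case, the emanating edge in $-\pa_s$ flows within the Reeb-chord slice $[s',2\pi]$ from $b_{ij}$ at $s=\overline{s}$ directly to the saddle $a_{ij}$ at $s=\underline{s}$. Since Theorem~\ref{thm:gentree} forbids index-one critical points from appearing as interior vertices of rigid trees, the tree must terminate at $a_{ij}$ as its unique negative puncture and is thus the single-edge tree from $b_{ij}$ to $a_{ij}$. I would then read off the homology decoration from the two lifted arcs (on sheets $i$ and $j$): each crosses the cycle $\lambda'$ precisely when its underlying sheet is leading, and both avoid $\mu'$. Combined with the sign $-1$ computed from Theorem~\ref{thm:combsign-fortrees} (here $\sigma_{\mathrm{pos}}=-1$ while the lone vector-splitting sign at $a_{ij}$ is $+1$) and the matrix conventions $\Bb_{ij}=b_{ij}\mu_{\alpha(j)}$, $\Aa_{ij}=a_{ij}\mu_{\alpha(j)}$ for $i>j$ (together with the commutativity of $\mu_{\alpha(j)}$ and $\lambda_{\alpha(j)}$ inside a single component), this contribution assembles entry-by-entry into $-(\Ll^{-1}\cdot\Aa\cdot\Ll)_{ij}$.

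In the second case, the emanating edge in $+\pa_s$ exits the Reeb-chord slice at $s=2\pi\equiv 0$, passes through the trivial slice $[0,2\delta]$ without crossing $\lambda'$ or $\mu'$, and enters the braiding region. By Lemma~\ref{lma:dividetree}, inside each twist slice $[s_{2l-1},s_{2l}]$ the tree restricts to a slice tree. Lemma~\ref{lma:treetwist}, combined with the sign-identifications in Equations~\eqref{eq:identifsource}--\eqref{eq:identiftarget}, shows that the slice-tree count through the twist $\sigma_{k_l}^{\epsilon_l}$ is encoded by the automorphism $\phi_{\sigma_{k_l}^{\epsilon_l}}$ of $\A^0_n$. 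Composing over all twists in the order dictated by the flow orientation, the total second-case contribution to $\pa b_{ij}$ equals $\phi_B(a_{ij})$; Proposition~\ref{prop:PhiLPhiR} then rewrites $\phi_B(\Aa)=\Phi^L_B\cdot\Aa\cdot\Phi^R_B$, yielding the second summand of the claim.

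The main obstacle is the consistent bookkeeping of signs and homology coefficients across the decomposition. One must check that the normals propagated through successive slice trees by vector splitting at the $Y_0$-vertices in Lemma~\ref{lma:treetwist} mesh with the signs dictated by Theorem~\ref{thm:combsign-fortrees} after the identifications of Equations~\eqref{eq:identifsource}--\eqref{eq:identiftarget}, so that the decorated slice-tree sum genuinely assembles into $\phi_B(\Aa)$; and that the meridian crossings accumulated inside the twist slices (encoded by the placement of $\mu_A,\mu_B$ in Lemma~\ref{lma:treetwist}) together with the absence of $\mu'$-crossings in the first case reproduce exactly the placement of $\mu_{\alpha(j)}$ in $\Aa$ and the $\Ll$-conjugation in the stated formula. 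Both are finite algebraic checks that follow once the global orientation data are fixed.
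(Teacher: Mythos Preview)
Your proposal is correct and follows essentially the same route as the paper's proof: split the rigid trees at $b_{ij}$ by whether the initial edge points in the $-\pa_s$ or $+\pa_s$ direction, identify the $-\pa_s$ case with the short flow line to $a_{ij}$ contributing $-\Ll^{-1}\cdot\Aa\cdot\Ll$, identify the $+\pa_s$ case with trees passing through the braiding region via Lemma~\ref{lma:dividetree} and Lemma~\ref{lma:treetwist} to obtain $\phi_B(\Aa)$, and then invoke Proposition~\ref{prop:PhiLPhiR}. One small point: your justification that the $-\pa_s$ tree is a single edge (``index-one critical points cannot be interior vertices'') only rules out the tree continuing past $a_{ij}$; to exclude a $Y_0$-branching before reaching $a_{ij}$ you also need that the unstable manifolds in the Reeb-chord slice are disjoint (Lemma~\ref{lma:braidReeb+triv}), which both you and the paper leave implicit.
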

\begin{pf}
We will prove the left equality, as the right equality follows from
Proposition~\ref{prop:PhiLPhiR}.
The first term in the right hand side of the left equality comes from
the short flow lines connecting $b_{ij}$ to $a_{ij}$, with flow
orientation given by $-\pa_s$. These flow lines clearly exist and are
unique. The sign $\epsilon$ of one of these flow lines $\Gamma$ is
given, according to Theorem~\ref{thm:combsign-fortrees}, by
\begin{align*}
\epsilon(\Gamma)&=\sign\left(\la v^\cokrn(b_{ij}),v^\flow(\Gamma)\ra\right)\,\sign\left(\la v^\cokrn(b_{ij}),v^\krn(a_{ij})\ra\right)\\
&=(-1)\cdot 1=-1.
\end{align*}
Consider their homology coefficients. Since no endpoint path
intersects $\mu_j'$ or $\lambda_j'$ it is sufficient to consider the
intersection between $\mu_j'$ and $\lambda_j'$ and the $1$-jet lift of
the trees. Clearly, all flow lines under consideration are disjoint
from $\mu_j'$, $j=1,\dots,r$. Furthermore, the $1$-jet lift in the
upper (respectively lower) sheet of flow line passes $\lambda'$ in the
negative (respectively positive) direction. Consequently $a_{ij}$
comes with a homology coefficients if and only if either the sheet
$S_i$ or the sheet $S_j$ is the leading sheet for that link
component. If $S_i$ (respectively $S_j$) is the leading sheet, then
the intersection with $\lambda_{\gamma(i)}'$ (respectively
$\lambda_{\gamma(j)}$) in that sheet contributes the coefficient
$\lambda_{\gamma(i)}^{-1}$ on the left of $a_{ij}$ (respectively
$\lambda_{\gamma(j)}$ on the right). This corresponds to multiplication by the matrix $\Ll^{-1}$ from the right and $\Ll$ from the left as claimed. See the leftmost picture in Figure \ref{fig:dB}.

The second term in the right hand side comes from the flow trees which
end as flow lines in $W^{\mathrm{u}}(a_{ij})$ flowing in the $+\pa_s$
direction. By Lemma~\ref{lma:dividetree} we find that the intersection
of such a flow tree with any twist slice is a twist tree and thus any
flow tree starting at $b_{ij}$ and ending at the $a_{ij}$ along
unstable manifolds oriented in the $\pa_s$ direction will contribute
a term from $\phi_B(a_{ij})$. Moreover any term in $\phi_B(a_{ij})$
gives a flow tree starting at $b_{ij}$. See the rightmost
picture in Figure \ref{fig:dB}.  Note also that no such tree passes
$\lambda'$. Lemma~\ref{lma:treetwist} in combination with the
composition formula for $\phi_B$ and the sign rule in Theorem
\ref{thm:combsign-fortrees} then shows that the second term is
$\phi_B(\Aa)$ (since all the nontrivial terms in the sign rule appear
at the $Y_0$ vertices, which were accounted for in the formula for
$\phi$). 
\end{pf}

\subsection{Negative gradient flows in twist slices}\label{ssec:twistslice}
Our goal in this subsection is to construct a braid $B =
\sigma_{k_1}^{\epsilon_1}\cdots \sigma_{k_m}^{\epsilon_m}$,
$\epsilon_l = \pm 1$, in a braiding slice
$[s_0^{\mathrm{br}},s_1^{\mathrm{br}}]$, in such a way that the
properties of unstable manifolds detailed in
Section~\ref{sssec:endpointpath} are satisfied.
We perform this construction twist by twist going from right (larger
$s$) to left (smaller $s$).
To facilitate the formulas we will
change coordinates from $(s,t)$ to $(u,t)$ where
$u=s_1^\mathrm{br}-s$. So the braid region happens over $u\in [0,U]$
where $U=s_1^{\mathrm{br}}-s_0^{\mathrm{br}}$. Throughout the
computation we will also change $u$ by translations, but the key is
that $u$ is always $-s$ up to translations. In $(u,t)$ coordinates we
will build up the braid region, twist by twist, going from left
(smaller $u$) to right (larger $u$); moreover in the $u$ coordinates we consider the reversed word $\sigma_{k_m}^{\epsilon_m}\cdots \sigma_{k_1}^{\epsilon_1}$ so that when we switch back to the $s$ coordinates it is $K$ that is represented.

More specifically we will break $[0,U]$ into $3m$ subintervals $I_1,
\ldots, I_{3m}$, ordered from left to right. For $l=1,\ldots,m$, the
union of the three intervals $I_{3l-2} \cup I_{3l-1} \cup I_{3l}$ is
associated to the braid generator
$\sigma_{k_{m-l+1}}^{\epsilon_{m-l+1}}$ and will be called the {\em
  braid interval} associated to $\sigma_{k_{m-l+1}}^{\epsilon_{m-l+1}}$.

The intervals $I_{3l-2}$, $l=1,\ldots,n$, will contain trivial
braids as discussed in Remark~\ref{rmk:standardtrivbraid} and are used
to adjust the braid to prepare for a twist between two strands of the
braid. These will be called {\em preparatory intervals}. The intervals
$I_{3l-1}$, $l=1,\ldots,n$, will be the intervals over which two
strands of the braid will actually twist. These will be called {\em
  twist intervals}. The intervals $I_{3l}$, $l=1, \ldots, n$,
will contain the trivial braid and will be used to adjust the braid so that
we can more easily count the flow trees. These will be called {\em
  concentration intervals}.

For comparison with Section~\ref{sssec:endpointpath}, we set $s_{2l-2}$
and $s_{2l-1}$ to be the endpoints (in reverse order) of the
concentration intervals
$I_{3(m-l+1)}$ for $l=1,\ldots,m$, and $s_{2m}$ to be the leftmost
(in $u$) endpoint of $I_1$. Then $s_0,\ldots,s_{2m}$ are arranged in
increasing order and $B$ is trivial in each $s\in [s_{2l-2},s_{2l-1}]$
slice (which corresponds to a concentration interval) and comprises the braid generator $\sigma_{k_l}^{\epsilon_l}$ in
each $s\in [s_{2l-1},s_{2l}]$ slice (the union of a preparatory and a
twist interval).

In the rest of this subsection we will describe the braid interval corresponding to the twist $\sigma_{k_l}^{\epsilon_l}$, but before focusing on this we make an observation and some conventions. First, for convenience,
we will think of the function $f_j$ describing the braid as maps  $[0,C]\to \R^{2}$, for some arbitrarily large constant $C$,  rather than $[0,U]\to D^{2}$. Since scaling the variable $u$ and multiplying all the functions by a small constant will not affect the discussion below we will be able to return to the appropriate braid set up once we have constructed our desired functions
$[0,C]\to \R^{2}$. (This step could be avoided by
choosing appropriate scaling constants throughout the argument, but as
these constants would depend on the entire braid it is considerably
simpler to proceed as we do.) Moreover we will start by considering
the trivial braid over $\R_{\geq 0}$, that is, our functions $f_j$ will be maps
$[0,\infty)\to\R^2$. We will then alter the $f_j$ over some interval
$[0,c_1]$ which we call $I_1$, then over $[c_1,c_2]$ which we call
$I_2$, and so on. Once we finish with the interval $I_{3m}$ we let
$[0,C]$ be the union of these intervals and our braid will be
described by the functions $f_j$ restricted to this interval.

Just as in the proof of Lemma~\ref{lma:braidReeb+triv} we will
describe our Legendrian $\Lambda_K$ in two steps. We begin, using the
notation from Lemma~\ref{lma:braidReeb+triv} and its proof, by considering the
standard trivial braid $\{f_1,\ldots, f_n\}$ given by
\[
f_i(u)=\psi_i(u)(h_t(i), i),
\]
where $\psi_i(u)=u+k_i$ for some
constant $k_i>0$. Throughout our construction, as we alter the
functions $\psi_i$, we will always assume that $i\psi_i$ has slope in
the interval $[i-1/2, i+1/2)$. We will also assume the $f_i$ are
exactly equal to $(u+k_i)(h_t(i),i)$ near the endpoints of each of the braid
intervals, though the $k_i$'s will depend on the particular braid
interval. Before we perform any braiding the unstable manifolds
$W^{\rm u}(a_{ij})$ of any Reeb chord $a_{ij}$ intersects $\{u\}\times
S^1$ near $t=\frac{\pi}{2}$ if $i>j$ or $t=\frac{3\pi}{2}$ if $i<j$
and are lexicographically ordered as in
Lemma~\ref{lma:braidReeb+triv}. As we inductively build up our braid
we will assume that these unstable manifolds have this same property
at the boundary of all the $I_l$. (We will see in the construction
that we can make them as near as we like.)

We will now focus on the braid interval associated to the braid
generator $\sigma_k$; the case of $\sigma_k^{-1}$ is completely
similar and is treated at the end of this subsection. We reset our
coordinate $u$ so that the braid interval for $\sigma_k$  is $[0,U]$.
Recall the braid interval consists of three subintervals: the
preparatory interval $I_p$, the twist interval $I_t$ and the
concentration interval $I_c$. In the interval $I_p$ we will alter the
slopes of the curves $i\phi_i(u)$. In particular, we alter the slopes
of the strands over the interval so that near the upper endpoint of
the interval we have that the difference of the slopes of the
$k^\mathrm{th}$ and $(k+1)^\mathrm{st}$ strands  is constant and very
small (that is each slope is near $k+1/2$) and the slope of the
$i^\mathrm{th}$ strand is $i$. Thus the difference in the slopes of
the functions $f_i$ and $f_k$ is greater than $1$  whereas the slope
of the difference function $f_{k+1}-f_k$ is arbitrarily
small. Allowing $u$ to increase sufficiently we can assume that
$|f_j(u)-f_{k}(u)|$ and $|f_j(u)-f_{k+1}(u)|$ are sufficiently large
compared to $|f_{k+1}(u)-f_{k}(u)|$ for each $j\ne k,k+1$ so that a
certain approximation described below is valid. This completes the
description of the braid in $I_p$. Notice that none of these
alterations affect the unstable manifold of the $a_{ij}$.

(Here, as below, it might be useful to consider the situation when
$h_t(y)$ is zero or constant in $y$, and then notice that perturbing
it slightly to another function $h_t(y)$ does not affect the qualitative behavior of the flow. Here this is clear since the unstable manifolds stay far away from the regions near $t=0$ and $\pi$ where $h_t(y)$ actually depends on $t$. Also keep in mind that $h_t(y)$ can be taken to be arbitrarily small.)

Now consider the twist region $I_t$ for a braid generator $\sigma_k$ which interchanges the $k^{\rm th}$ and $(k+1)^{\rm st}$ strands by a $\pi$ rotation of the line segment between them around its midpoint, in the positive direction as $s$ increases.
This means that the strands are also interchanged by a rotation in the positive direction as $u$ increases. For the standard trivial braid under consideration we let $f_k(u)$ and $f_{k+1}(u)$ rotate at a fast rate around the midpoint between $f_k(u)$ and $f_{k+1}(u)$ while still moving slowly away from each other.

For the functions $f_k$ and $f_{k+1}$ we begin by replacing $h_t(k)$
and $h_t(k+1)$ by $0$, though we leave the other $f_i$ as they were.
Specifically before we perform the twist we can assume (after possible
translations) that there are constants $c<c'$ such that
$f_k(u)=f_k^0(u) := (0,
(k+1/2 -\epsilon)u+c)$ and $f_{k+1}(u)=f_{k+1}^0(u) := (0,
(k+1/2+\epsilon)u+c')$ for
some very small $\epsilon$. Now to perform the twist over the interval
$[u', u'']$, choose an increasing surjective function
$\beta:[u',u'']\to [0, \pi]$ that is constant near the endpoints, and let
\begin{align*}
f_k(u) &= \frac{f_k^0(u)+f_{k+1}^0(u)}{2} + \left|
  \frac{f_{k+1}^0(u)-f_k^0(u)}{2} \right| (\sin \beta(u),-\cos\beta(u))
\\
f_{k+1}(u) &= \frac{f_k^0(u)+f_{k+1}^0(u)}{2} - \left|
  \frac{f_{k+1}^0(u)-f_k^0(u)}{2} \right| (\sin
\beta(u),-\cos\beta(u)).
\end{align*}
This describes the half twist between the two strands.

Note that if the distances $|f_j(u)-f_k(u)|$ and
$|f_j(u)-f_{k+1}(u)|$, $j\ne k,k+1$ are sufficiently large compared to
$|f_k(u)-f_{k+1}(u)|$ then the gradient flows of $\pm F_{jk}$ and $\pm
F_{j\,k+1}$ can be made arbitrarily close to the corresponding flows
for the standard trivial braid, {\em i.e.}, the same braid but with non-rotating $f_k(u)$ and $f_{k+1}(u)$. We assume that in the interval $I_p$ we arranged that the other points are sufficiently far away from $f_k(u)$ and $f_{k+1}(u)$ so that these approximations of the gradient flows of $\pm F_{jk}$ and $\pm F_{j\,k+1}$ are valid.

Consider next the gradient flow of $\pm F_{k+1\,k}$ which, in contrast to the flows just discussed, changes drastically. We take the rotation to be supported in a small subinterval $[u',u'']$ of $[0,U]$. We have
\begin{align*}
F_{k+1\,k}(u,t)&=(2\epsilon u + c'-c)
\bigl(\sin(\beta(u)), \cos(\beta(u))\bigr)\bull (\cos t,\sin t)\\
&=(2\epsilon u + c'-c)\sin(t+\beta(u)).
\end{align*}
The gradient is
\begin{align*}
\nabla F_{k+1\,k}=& (2\epsilon \sin(t+\beta(u))+(2\epsilon u + c'-c)\frac{d\beta}{du}\cos(t+\beta(u)))\,\pa_u\\
&+(2\epsilon u + c'-c)\cos(t+\beta(u))\,\pa_t.
\end{align*}
In order to understand relevant negative gradient flow lines of this
vector field we first note that $F_{k+1\,k}$ is positive for
$-\beta(u)<t<\pi-\beta(u)$ and negative in the complementary
region. Moreover, in the limit where $\epsilon=0$, the gradient flow
of $F_{k+1\, k}$ is perpendicular to the level sets $\{t+\beta(u)=a\}$
and flowing towards the curve of critical points
$\{t+\beta(u)=\pi/2\}$.

Now instead let $\epsilon>0$ be small, and
choose $a_m<\pi/2<a_M$ so that $a_M-a_m$ is small. Notice $\nabla
F_{k+1\, k}$ is still transverse to $\{t+\beta(u)=a_m\}$ and
$\{t+\beta(u)=a_M\}$ and pointing into the region $R$ bounded by these
level sets and containing $\{t+\beta(u)=\pi/2\}$. Along the  curve
$I=R\cap \{u=u'\}$, the $u$-component of $\nabla F_{k+1\, k}$ is
positive and the flow is into $R$, while along the curve $I'=R\cap
\{u=u''\}$, the $u$-component of $\nabla F_{k+1\, k}$ is also positive
but now flows out of $R$. Thus since there are no critical points of
$\nabla F_{k+1\, k}$ inside $R$, we see that any flow line starting on
$I$ will exit $R$ along $I'$. It follows that the flow lines in $R$
are approximately equal to the level sets $\{t+\beta(u)=a\}$ inside
$R$.

From this we see that the unstable manifold $W^{\rm u}(a_{k+1\, k})$ exits
the twist region near $3\pi/2$ and by choosing $\epsilon$ small in the
region $I_p$ (which can be done without affecting any essential
feature mentioned above) we can arrange that the $t$-coordinate of
$W^{\rm u}(a_{k+1\, k})\cap \{u=u''\}$ is as close to $3\pi/2$ as we like.
Furthermore, we can choose small intervals $J''$ and $J'$ in the slices
$\{u=u''\}$ and $\{u=u'\}$, containing the respective intersection
points of $W^{\rm u}(a_{k+1\,k})$ with these slices, such that any
flow line of $-\nabla F_{k+1\, k}$ that starts in $J''$ leaves through
$J'$ and is transverse to the flow lines of the other $F_{ij}$'s.
We can also then choose intervals $J''_{ij}$ and $J'_{ij}$ in the
slices $\{u=u''\}$ and $\{u=u'\}$, containing the respective
intersection points of $W^{\rm u}(a_{ij})$ with these slices, so that
all of these intervals are disjoint from each other and from $J''$ and
$J'$,
and so that all flow lines of $-\nabla F_{ij}$ that start in
$J''_{ij}$ are disjoint from each other and transversely intersect the
flow lines of $-\nabla F_{k+1\,k}$ that begin in $J''$.
\begin{figure}[htb]
\labellist
\small\hair 2pt
\pinlabel $t=2\pi$ at 162 146
\pinlabel $t=\frac{3\pi}{2}$ at 162 109
\pinlabel $t=\frac{\pi}{2}$ at 162 38
\pinlabel $t=0$ at 162 2
\endlabellist
\centering
\includegraphics{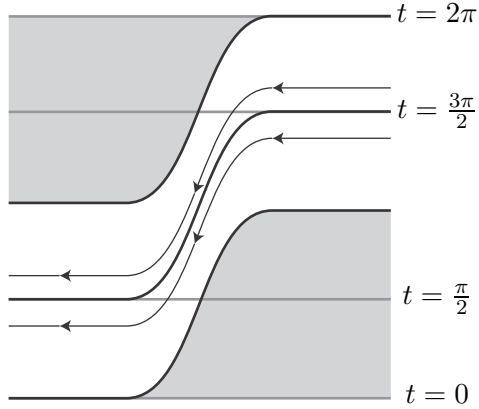}
\caption{The twist interval. The shaded regions are where $F_{k+1\,
    k}$ is negative. The dark lines are level curves of $t+\beta(u)$
  and the thinner lines are approximate flow lines of the negative
  gradient flow of $F_{k+1\, k}$. The left hand end is $u=u'$ and the
  right hand end is $u=u''$.}
\label{fig:twistinterval}
\end{figure}

The above discussion assumes that $h_t(k)=h_t(k+1)=0$, whereas in fact
$h_t(y)$ is a small function that is constant in $t$ outside of small
neighborhoods of $0$ and $\pi$ (see the proof of
Lemma~\ref{lma:braidReeb+triv}). In the argument above, the same
qualitative features of the flow hold if we change $h_t(k)$ and
$h_t(k+1)$ from $0$ to constants in $t$, since the level sets of
$F_{k+1\,k}$ only change slightly. If now we choose $h_t(y)$ to be the
function from the proof of Lemma~\ref{lma:braidReeb+triv},
as is needed to define the trivial braid, then the flow only changes
near $t=0$ and $t=\pi$.
But here by taking $\beta$ to have very large derivative over most of its support we see that the alteration to the gradient of $F_{k+1\, k}$ in the $t$-support of $h_t(y)$ can be thought of as arbitrarily small. Thus again we see that the qualitative features of the flow are unchanged.

Analogously, $W^{\rm u}(a_{k\,k+1})$ lies close to the curve $\{t=\frac{3\pi}{2}-\beta(u)\}$ and we can argue for the same intervals $J'$ and $J''$ here too. This completes the discussion of the twist interval $I_t$ for $\sigma_k$.

Now for the concentration interval $I_c$. The purpose of this interval is to concentrate gradient flow lines near the unstable manifolds. Specifically, in the first part of $I_c$ we alter our $\psi_i$ so they are the standard affine functions again. Notice that in the region where $F_{ij}$ is positive all the flow lines of $\nabla F_{ij}$ converge towards a constant $t$ line near $t=\pi/2$ if $i>j$ or $3\pi/2$ if $i<j$.
Thus choosing the interval $I_c=[a,b]$ large enough we can find intervals  $J_{\pi/2}$ and $J_{3\pi/2}$ on $\{u=a\}$ and for
 each $(i,j)$ intervals $J_{ij}$ on $\{u=b\}$ that are an arbitrarily small neighborhood of $W^u(a_{ij})\cap \{u=b\}$, so that all the unstable manifold intersect $J_{\pi/2}$ or $J_{3\pi/2}$ and any flow line of $\nabla F_{ij}$, $i>j$, that starts on $J_{\pi/2}$ intersects $J_{ij}$ and if $i<j$ then a flow line that starts in $J_{3\pi/2}$ intersects $J_{ij}$.

Similarly, we consider the inverse $\sigma_k^{-1}$ of the braid generator $\sigma_k$ which interchanges the $k^{\rm th}$ and $(k+1)^{\rm th}$ strands by a rotation of magnitude $\pi$ of the line segment between them around its midpoint in the negative direction as $s$ increases. The analysis of this situation is exactly as above, except the unstable manifolds veer up instead of down. See Figure \ref{fig:twistslices} above (where we have returned to $s$ coordinates).

\section{Combinatorial Computation of the Differential}\label{sec:combdiff}
In this section we compute the differential in the Legendrian algebra of $\Lambda_K,$ where $K\subset\R^{3}$ is a link braided around the unknot.
Our computation heavily uses the results of Section~\ref{sec:diffthrflowtree}, where we determined all flow trees of $\Lambda_K$ viewed as a Legendrian submanifold of $J^1(\Lambda)$, where $\Lambda \approx T^2$ is the conormal lift of the unknot. These flow trees give the differential for a subalgebra of the Legendrian DGA of $\Lambda_K \subset J^1(S^2)$.

In Section~\ref{ssec:multiscale}, we introduce the notion of a
multiscale flow tree of $\Lambda_K \subset J^1(S^2)$, which is
essentially a collection of partial flow trees for $\Lambda_K \subset
J^1(\Lambda)$, glued to a flow tree of $\Lambda \subset J^1(S^2)$. By
a result (Theorem~\ref{thm:diskandgentree}) whose proof is deferred to
Section~\ref{sec:diskandgentree}, there is a one-to-one correspondence
between rigid multiscale flow trees and rigid holomorphic disks with
boundary on $\Lambda_K$ and one positive puncture. This allows us to
reduce the computation of the Legendrian DGA of $\Lambda_K \subset
J^1(S^2)$ to a combinatorial enumeration of all rigid multiscale flow
trees. The enumeration is performed in
Sections~\ref{ssec:multiscaleLambda} through~\ref{ssec:MFTcount} (for
signs associated to multiscale flow trees, we use some results whose
proofs are postponed to Section~\ref{sec:orientations})
and completes the proof of the main theorem of this paper, Theorem~\ref{thm:mainlink}.

\subsection{Multiscale flow trees}\label{ssec:multiscale}
We begin by discussing multiscale trees. We first recall the basic notation that will be used in this section.
Let $U\subset\R^{3}$ denote the unknot and write $\Lambda=\Lambda_U\subset J^{1}(S^{2})$. Let $K$ be a link given by the closure of an $n$-strand braid around $U$ such that each (local) strand is $C^{2}$-close to $U$. Then
$\Lambda_{K}\subset J^{1}(\Lambda)\subset J^{1}(S^{2})$, and we have the front projection $\Pi_F^{\Lambda}\colon J^{1}(\Lambda)\to \Lambda\times\R$ and the base projection $\pi^{\Lambda}\colon J^{1}(\Lambda)\to \Lambda$. The latter induces an $n$-fold cover $\Lambda_K \to \Lambda$; if $\gamma$ is a path in $\Lambda$, then there are $n$ distinct lifts $\widetilde\gamma$ of $\gamma$ with $\pi^{\Lambda}\circ\widetilde\gamma=\gamma$, which we call {\em neighborhood lifts} of $\gamma$.

If $\Gamma$ is a flow tree of $\Lambda\subset J^{1}(S^{2})$, let $\widetilde \Gamma$ denote its $1$-jet lift.

\begin{defn}\label{def:genflowtree}
A {\em multiscale flow tree} $\Gamma_{\Delta}$ on $\Lambda$ determined by $\Lambda_K$ is a flow tree $\Gamma$ of $\Lambda\subset J^{1}(S^{2})$ and a finite set of partial flow trees $\Delta=\{\Delta_j\}_{j=1}^{m}$ of $\Lambda_{K}\subset J^{1}(\Lambda)$ each with exactly one special puncture $x_j$, $j=1,\dots,m$, such that the following holds.
\begin{itemize}
\item $x_j\in\widetilde\Gamma$, $j=1,\dots,m$;
\item for each component of $\widetilde\Gamma-\{x_1,\dots,x_m\}$ a neighborhood lift to $\Lambda_K$ in $J^1(T^2)$ is specified;
\item the union of the $1$-jet lifts of the flow trees $\Delta_j$, $j=1,\dots,m$, and the specified neighborhood lifts, together with their flow orientation, gives a collection of consistently oriented curves $\widehat\Gamma\subset\Lambda_K$;
\item the curve $\widehat\Gamma$ is such that $\Pi^{\Lambda}(\widehat\Gamma)$ is closed, where $\Pi^{\Lambda}\colon J^{1}(\Lambda)\to T^{\ast}\Lambda$ is the Lagrangian projection.
\end{itemize}
$\Gamma$ is called the {\em big tree} and $\Delta$ the \emph{small tree} part of $\Gamma_\Delta$.
\end{defn}

\begin{rmk}
As we shall see, cf.~Section \ref{ssec:multiscaleLambda}, the partial trees $\Delta_j$ of $\Gamma_{\Delta}$ are of two types: either $\Delta_j$ is constant at one critical point $b$ of Morse index $2$, with both its positive special puncture $x_j$ and its negative puncture lying at $b$, or $\Delta_j$ is non-constant with positive special puncture at $x_j$.
\end{rmk}

The punctures of a multiscale flow tree $\Gamma_{\Delta}$ are the punctures of the flow trees $\Delta_j$ ({\em not} including the special punctures) and the punctures of the tree $\Gamma$. We say that the chord at a positive (respectively negative) puncture of $\Gamma$ connects the sheets determined by the neighborhood lift of the arc oriented toward (respectively away from) the puncture, to the sheet determined by the neighborhood lift of the arc oriented away from (respectively toward) the puncture. A puncture of a multiscale flow tree is positive (respectively negative) if the corresponding puncture of the flow tree $\Gamma$ or $\Delta_j$ is positive (respectively negative). A straightforward application of Stokes' theorem shows that every multiscale flow tree has at least one positive puncture.

Define the formal dimension of a multiscale flow tree $\Gamma_{\Delta}$ as
\[
\dim(\Gamma_{\Delta})=\dim(\Gamma)+\sum_{\Delta_j\in\Delta}(\dim(\Delta_j)-1),
\]
where the (formal) dimension of a (partial) flow tree is given in Equation~(\ref{eq:dimoftrees}), see also  \cite[Definition 3.4]{Ekholm07}.

We say that a multiscale flow tree $\Gamma_{\Delta}$ is {\em rigid} if $\dim(\Gamma_{\Delta})=0$ and if it is transversely cut out by its defining equation.

As discussed above in Lemma~\ref{lma:braidnicechords}, the set of Reeb chords $\ch(\Lambda_K)$ of $\Lambda_K\subset J^{1}(S^{2})$ can be written as follows:
\[
\ch(\Lambda_K)=\ch^{\Lambda}(\Lambda_K)\cup \bigcup_{1\le i,j\le n}\ch(\Lambda)_{ij},
\]
where $\ch^{\Lambda}(\Lambda_K)$ is the set of short Reeb chords (contained in 
$J^{1}(\Lambda) \subset J^1(S^2)$) and 
$\ch(\Lambda)_{ij}$ denotes the set of long Reeb chords of $\Lambda\subset J^{1}(S^{2})$ with endpoint on the $i$-th sheet of $\Lambda_K$ and beginning point on the $j$-th sheet.

\begin{thm}\label{thm:diskandgentree}
For any $\epsilon>0$, there exists an almost complex structure $J$ on $T^{\ast}S^{2}$, regular with respect to holomorphic disks with one positive puncture of dimension $\le 1$, such that: there is a one-to-one correspondence between rigid holomorphic disks with one positive puncture and boundary on $\Lambda_K,$ and rigid multiscale flow trees on $\Lambda$ determined by $\Lambda_K$ with one positive puncture; and the $1$-jet lift of a multiscale flow tree lies in an $\epsilon$-neighborhood of the boundary lift of the corresponding holomorphic disk.
\end{thm}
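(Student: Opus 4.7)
The plan is to prove Theorem~\ref{thm:diskandgentree} via a double-scaling argument that applies Ekholm's flow tree theorem \cite{Ekholm07} at two different scales. First, introduce a family of almost complex structures $J_\sigma$ on $T^{\ast}S^2$ that, inside a Weinstein neighborhood of $\Lambda$ identified with $J^1(\Lambda)$, scales the cotangent fibers by $1/\sigma$ and coincides with a fixed $J$ outside. Equivalently, under the Liouville rescaling $\phi_\sigma$ on $J^1(\Lambda)$, this amounts to replacing $\Lambda_K$ by $\phi_\sigma(\Lambda_K)$, which is Legendrian isotopic to $\Lambda_K$ and lies in an increasingly thin tubular neighborhood of $\Lambda$ as $\sigma\to 0$. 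The main task is then to show that, for all sufficiently small $\sigma$, the rigid $J_\sigma$-holomorphic disks with boundary on $\Lambda_K$ and one positive puncture are in bijective correspondence with rigid multiscale flow trees.

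For the forward direction, take a sequence of rigid holomorphic disks $u_\nu$ with respect to $J_{\sigma_\nu}$ and $\sigma_\nu\to 0$. The symplectic action formula together with the finite list of Reeb chords of $\Lambda_K$ gives uniform area bounds. A Gromov-SFT-style compactness argument adapted to the two-scale setting (in the spirit of neck-stretching) extracts a limit consisting of: (a) a holomorphic disk in $T^{\ast}S^2$ with boundary on $\Pi(\Lambda)$, built from the macroscopic part of $u_\nu$ away from $\Lambda$, which by a second Liouville rescaling of $\Lambda$ in $J^1(S^2)$ and the flow tree theorem of \cite{Ekholm07} degenerates to a rigid flow tree $\Gamma$ of $\Lambda\subset J^1(S^2)$; and (b) several partial holomorphic disks living in $J^1(\Lambda)$, built from the microscopic part of $u_\nu$ near $\Lambda$ and from its behavior at the nodes of the compactification, which similarly degenerate to partial flow trees $\Delta_j$ of $\Lambda_K\subset J^1(\Lambda)$, each with one special positive puncture. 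Continuity of the boundary of $u_\nu$ in the limit forces the special punctures $x_j$ of $\Delta_j$ to lie on the $1$-jet lift $\widetilde{\Gamma}$, and the sheet labels induced by $u_\nu$ on the arcs of $\widetilde{\Gamma}$ provide the required neighborhood lifts that match across the $x_j$. Thus the limit is a rigid multiscale flow tree in the sense of Definition~\ref{def:genflowtree}.

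The converse direction is proved by gluing. Given a rigid multiscale flow tree $(\Gamma, \{\Delta_j\})$, the flow tree theorem of \cite{Ekholm07} produces a holomorphic disk on $\Lambda\subset J^1(S^2)$ (for a suitably rescaled version) corresponding to $\Gamma$, and partial holomorphic disks in $J^1(\Lambda)$ corresponding to each $\Delta_j$. The rigidity hypothesis $\dim(\Gamma_\Delta)=0$ together with transversality for each of the pieces yields a broken configuration whose associated linearized $\bar\partial_J$-operator is surjective with one-dimensional kernel accounting for the gluing parameter. A standard gluing argument along the matching special punctures then produces, for each small enough $\sigma$, a unique $J_\sigma$-holomorphic disk with boundary on $\Lambda_K$ in a neighborhood of the broken configuration, and every rigid disk close to the broken configuration arises this way.

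The main obstacle is the compactness-and-gluing analysis in the two-scale limit. Because $\phi_\sigma(\Lambda_K)$ projects near $\Lambda$ to a multisection of $\pi^{\Lambda}\colon J^1(\Lambda)\to\Lambda$, one must control how the boundary arcs of $u_\nu$ ``fold'' onto the $n$-fold cover $\Lambda_K\to\Lambda$ in the microscopic regions. This is done by writing $u_\nu$ as a graph over the limit disk in the Weinstein neighborhood and using elliptic estimates on the normal component, analogous to the graph-compactness step in \cite{Ekholm07}, but now carried out in the relative setting of $J^1(\Lambda)\subset J^1(S^2)$. The same estimates show that the special punctures of the small trees must lie on $\widetilde{\Gamma}$ rather than at generic points of $\Lambda_K$, establishing the rigidity condition of Definition~\ref{def:genflowtree}. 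The technical details of this step are precisely what Section~\ref{sec:diskandgentree} is devoted to.
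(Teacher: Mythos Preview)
Your outline captures the broad shape of the argument (compactness in one direction, gluing in the other), but there is a genuine structural gap in how you organize the two scales, and it is precisely the point the paper works hardest to avoid.

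You propose a simultaneous double degeneration: rescale $\Lambda_K$ toward $\Lambda$ via $\phi_\sigma$ \emph{and} then, for the macroscopic piece, apply ``a second Liouville rescaling of $\Lambda$ in $J^1(S^2)$'' so that the big disk degenerates to a flow tree of $\Lambda$. The paper does \emph{not} do this. Instead it first fixes a small but positive $\sigma_0$ so that (constrained) rigid $J_{\sigma_0}$-disks on $\Lambda=\widetilde\Lambda_{\sigma_0}$ are already $C^1$-close to flow trees, and then sends only the second parameter $\eta\to 0$. The limiting object in the compactness step is therefore a \emph{quantum flow tree}: an honest holomorphic disk on $\Lambda$ with partial flow trees of $\Lambda_K\subset J^1(\Lambda)$ attached along its boundary. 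The correspondence between quantum flow trees and multiscale flow trees is then a separate, soft step that uses only the $C^1$-closeness of the big disks to their trees. Your scheme tries to collapse both degenerations at once, which would force you to control the relative rates of the two scalings and to do compactness with a moving almost complex structure $J_\sigma$; the paper's staged approach sidesteps this entirely.

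The second missing ingredient is the modification of the almost complex structure in Section~\ref{ssec:refinedft}. For the flow tree theorem of \cite{Ekholm07} to apply to the small-scale pieces, $J$ must agree near $\Pi(\Lambda)$ with the complex structure induced by a metric on $\Lambda$; but the $J_{\sigma_0}$ for which the big disks are close to trees has no reason to be of this form. Lemma~\ref{l:goodmetric} constructs a metric $g$ on $\Lambda$ and an immersion $\phi$ so that $\phi_\ast J_g$ agrees with $J_{\sigma_0}$ to first order along the relevant locus (and exactly near distinguished points), and Lemma~\ref{l:C1conv} then shows that interpolating to this $J_\eta$ preserves the (constrained) rigid disks on $\Lambda$ up to $C^1$-small deformation. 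The paper remarks explicitly that for a general ambient $J$ this first-order matching is impossible, so this is not a formality. Your proposal's ``elliptic estimates on the normal component'' do not substitute for this: the subharmonicity of $|p|^2$ used in Lemmas~\ref{l:outest} and~\ref{l:someD}, which drives both the $\Ordo(\eta)$ derivative bound and the lower bound on the big-disk part, requires exactly that $J$ be metric-induced in a fixed neighborhood of $\Pi(\Lambda)$.

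Finally, your SFT/neck-stretching heuristic for the forward direction glosses over two specific facts the paper uses: the blow-up analysis (Lemma~\ref{l:blowup}) relies on the numerical accident $2\,\action(c)>\action(e)$ for the two Reeb chords of $\Lambda$ to rule out a second bubble, and the ``no gap'' lemma (Lemma~\ref{l:nogap}) is what forces the special punctures of the small trees to land on the boundary of the big disk rather than somewhere nearby. Both are special to this setup and would need to be reproved, not invoked, in a genuine SFT-compactness framework.
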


Theorem \ref{thm:diskandgentree} is proved in Section \ref{sec:diskandgentree}, and constitutes the basic tool in our calculation of the differential in $LA(\Lambda_K)$.

Above we described the front of $\Lambda$, the conormal lift of the unknot, and  its flow trees. With this established we will next classify possible multiscale flow trees determined by a braid closure (Section~\ref{ssec:multiscaleLambda}) and give an algorithm for the sign of such a tree (Section~\ref{ssec:signs}). In Section~\ref{ssec:MFTcount} we then turn to the actual calculation of the Legendrian DGA of $\Lambda_K$ by explicitly computing all multiscale flow trees for $\Lambda_K$ with signs.

\subsection{Classification of rigid multiscale flow trees of $\Lambda_K$}\label{ssec:multiscaleLambda}

To apply Theorem~\ref{thm:diskandgentree} to calculate the Legendrian DGA of $\Lambda_K$, we need to classify all possible rigid multiscale flow trees of $\Lambda_K \subset J^1(S^2)$. We give a rough characterization of such trees in this subsection, examine the signs associated to the trees in Section~\ref{ssec:signs}, and present the full classification in Section~\ref{ssec:MFTcount}.

Let $K$ be a link and assume that $\Lambda_{K}$ satisfies Lemma \ref{lma:braidnicechords}. Since the front of $\Lambda_{K}$ in $J^{0}(\Lambda)$ does not have any singularities and since all critical points of positive differences of local defining functions are either maxima or saddle points it follows from Section~\ref{ssec:gft} that for generic functions a rigid tree must have a
\begin{enumerate}
\item positive puncture at a Reeb chord $b_{ij}$ of type $\mathbf{S}_1$,
\item $k-1$ $Y_0$-vertices (trivalent vertices away from (non-existent) cusps of $\Lambda$), and
\item $k$ negative punctures at Reeb chords $a_{ij}$ of type $\mathbf{S}_0$.
\end{enumerate}
Likewise, a partial flow tree of dimension $1$ has the same vertices and punctures, except its positive puncture is a special puncture instead of a maximum. Also the constant partial flow tree with both special positive and negative puncture at $b_{ij}$ of type $\mathbf{S}_1$ will be of importance.

Since the $1$-jet lift of any flow tree of $\Lambda\subset J^{1}(S^{2})$ has codimension $1$ in $\Lambda$, it follows that any tree in the small tree part of any rigid multiscale flow tree is either a $1$-dimensional partial flow tree with positive puncture on the $1$-jet lift of the big tree on $\Lambda$ or it is a constant tree at some $b_{ij}$ of type $\mathbf{S}_1$. Furthermore, the flow tree on $\Lambda$ corresponding to the big tree part must either be rigid, or rigidified by a constant tree (a point condition at some $\pi(b_{ij})$). In the case where the big tree is rigidified by point conditions, its dimension must be equal to the number of point conditions {\em i.e.}, the number of constant trees in the multiscale flow tree. Combining this discussion with Lemma \ref{lma:treesofU} we find that (after small perturbation) there are the following types of rigid multiscale flow trees for $\Lambda$ determined by $\Lambda_{K}$, with notation as in Lemma \ref{lma:treesofU}:
\begin{itemize}
\item[${\rm MT}_{0}:$] A rigid flow tree $\Gamma$ of $\Lambda$ with constrained rigid partial flow trees of $\Lambda_K$ attached. Here the constraint says that the special positive puncture of each partial flow tree must lie on the $1$-jet lift of $\Gamma$.
\item[${\rm MT}_1:$] A constrained rigid flow tree $\Gamma^{\ast}$ with constrained rigid partial flow trees of $\Lambda_K$ attached. Here the constraint of $\Gamma^{\ast}$ is the requirement that the $1$-jet lift passes a point in $\Lambda$ in the fiber where a Reeb chord $b_{ij}$ lies, and the constraint of the partial flow trees is as above.
\item[${\rm MT}_\varnothing:$] A rigid flow tree of $\Lambda_{K}$.
\end{itemize}
\begin{rmk}
In our setting, we can rule out one other ostensible possibility for a rigid multiscale tree: those with big tree a constant rigid flow tree $\Gamma$ of $\Lambda$ and with small tree a constrained rigid flow tree of $\Lambda_K$. Here $\Gamma$ would correspond to a Reeb chord and the constraint would say that the special positive puncture of the partial flow tree must lie on the Reeb chord on the $1$-jet lift of $\Gamma$. If the location of the Reeb chord is generic with respect to the flow determined by $\Lambda_K$ then its endpoints does not lie on $W^{\mathrm{u}}(a_{ij})$ for any $a_{ij}$ or on $b_{ij}$ and such a configuration is rigid only if the flow line ends at minimum. As there are no positive local function differences of $\Lambda_K$ which are local minima no such trees correspond to disks with one positive puncture. (The rigid configurations wit flow line that ends at a negative local minimum correspond to disks with two positive punctures.)
\end{rmk}

\subsection{Signs of rigid multiscale flow trees}\label{ssec:signs}
In this subsection we describe a combinatorial algorithm for computing
the sign of a rigid multiscale flow tree, which determines its
contribution to the Legendrian algebra differential.
This is the analogue for multiscale flow trees of the discussion in Section~\ref{sssec:signrules}.
We will discuss the
derivation of the combinatorial rule as well as the effect of
orientation choices in detail in Section~\ref{sec:orientations}.

We will use the notation established in Section~\ref{ssec:vectorsplit} for vector splitting along flow trees and signs associated to rigid flow trees of $\Lambda_K$ as well as partial flow trees of $\Lambda_K$ of dimension $1$ with special positive puncture.

Before we can state the combinatorial rule for orienting rigid
multiscale flow trees, we need to discuss signs of rigid trees
determined by $\Lambda$; see Sections~\ref{ssec:flowtreesonU} and
\ref{ssec:flowtreesforLambda} for the notation for these rigid trees. Except for basic orientation choices the signs depend on orientations of determinants of capping operators. We call such choices {\em capping orientations}. Recall that there are two Reeb chords of $\Lambda$, $e$ and $c$, and that if $K\subset \R^{3}$ is a link represented as a closed braid on $n$ strands, then the long Reeb chords of $\Lambda_K$ are $e_{ij}$ and $c_{ij}$, $1\le i,j\le n$, where $c_{ij}$ lies very close to $c$ and $e_{ij}$ lies very close to $e$. In particular, capping orientations for $c$ and $e$ induce capping orientations for $c_{ij}$ and $e_{ij}$ respectively.
\begin{thm}\label{thm:signunknottree}
There is a basic orientation choice and choice of capping orientation for $c$ so that the sign $\epsilon(T)$, for $T$ a rigid flow tree of $\Lambda$, satisfies
\[
\epsilon(I_N)=\epsilon(Y_N)=\epsilon(I_S)=\epsilon(Y_S)=1,
\]
and
\[
\epsilon(E_1)=-\epsilon(E_2).
\]
\end{thm}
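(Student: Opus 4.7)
The strategy is to identify the degrees of freedom in the sign assignment, normalize four of the signs to $+1$ using symmetry and orientation choices, and deduce the relation $\epsilon(E_1)=-\epsilon(E_2)$ from a cancellation in a one-parameter family.

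The sign $\epsilon(T)$ of a rigid flow tree $T$ is determined by a coherent orientation of the moduli space containing $T$; this orientation depends on a collection of basic orientation choices (the spin structure on $\Lambda$, the orientation of the moduli of conformal structures on punctured disks, and the global trivialization of the complex determinant bundle on $T^{\ast}S^{2}$), together with an orientation of the capping operator at each Reeb chord. Reversing the capping orientation at $c$ reverses $\epsilon(T)$ for every rigid tree $T$ with a puncture at $c$, so it flips all six of our trees simultaneously; consequently the ratios $\epsilon(I_N):\epsilon(Y_N):\epsilon(I_S):\epsilon(Y_S)$ and $\epsilon(E_1):\epsilon(E_2)$ are invariant under changes of capping orientation at $c$, and must instead be arranged via the basic orientation data.

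I would next exploit the reflection symmetry $\rho$ of the perturbed $\Lambda\subset J^1(S^2)$ through the equatorial plane. This reflection exchanges the two hemispheres and the pairs $(I_N,I_S)$ and $(Y_N,Y_S)$, while fixing the equator pointwise and preserving the Morse data used to produce the Reeb chords $c$ and $e$. By choosing the basic orientation data to be $\rho$-invariant, the induced action on each moduli space preserves orientation, so $\epsilon(I_N)=\epsilon(I_S)$ and $\epsilon(Y_N)=\epsilon(Y_S)$. This reduces the four signs to $\epsilon(I_N)$ and $\epsilon(Y_N)$, which I want to independently normalize to $+1$ using the capping orientation at $c$ (which flips both) and a basic-orientation flip that distinguishes them. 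The latter is available because $Y_N$ contains a $Y_1$ vertex on a cusp edge whose contribution to the moduli orientation depends on the spin structure across the cusp, while $I_N$ (which has no $Y_1$ vertex) is insensitive to that choice.

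For the relation $\epsilon(E_1)=-\epsilon(E_2)$, I would use the one-parameter family $\widetilde I_N$ described after Lemma~\ref{lma:treesofU}. As an oriented compact one-manifold, $\widetilde I_N$ has boundary consisting of the two broken configurations $E_1\#I_N$ and $E_2\#I_N$, both arising from a breaking at the Reeb chord $c$. With a uniform gluing-sign convention at the shared chord $c$, the vanishing of the signed boundary count of $\widetilde I_N$ yields
\[
\epsilon(E_1)\epsilon(I_N)+\epsilon(E_2)\epsilon(I_N)=0,
\]
and since $\epsilon(I_N)=+1$ we conclude $\epsilon(E_1)=-\epsilon(E_2)$. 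The main obstacle in executing this plan is the explicit orientation computation at the $Y_1$ vertex and at the cusp ends, which is needed both to justify that the spin-structure change genuinely provides an independent bit (flipping $\epsilon(Y_N)$ without flipping $\epsilon(I_N)$) and to verify that the gluing signs at the two ends of $\widetilde I_N$ agree; these local model computations belong to the general sign analysis of Section~\ref{sec:orientations}.
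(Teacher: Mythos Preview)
Your proposal has a genuine gap in the normalization of the four $I/Y$ signs. You plan to use two independent bits: the capping orientation at $c$ (which flips all four together) and a spin-structure change (which you claim flips $\epsilon(Y_N)$ but not $\epsilon(I_N)$ because $Y_N$ has a $Y_1$ vertex). This second claim is not correct as stated. What governs the effect of a spin-structure change on the sign of a disk is the mod~$2$ homology class of its boundary loop in $\Lambda$, not the presence or absence of a $Y_1$ vertex. So to carry this out you would have to compute the classes of the $1$-jet lifts of $I_N,Y_N,I_S,Y_S$ in $H_1(\Lambda;\Z_2)$ and check that some class $\alpha\in H^1(\Lambda;\Z_2)$ pairs differently with $[I_N]$ and $[Y_N]$ \emph{while} pairing identically with the $N$ and $S$ lifts in each pair. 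You have not done this, and there is no reason to expect such an $\alpha$ exists; in fact the four boundary loops represent four distinct classes in $H_1(\Lambda;\Z_2)$, so any nontrivial $\alpha$ will also break your $N/S$ equality from the reflection step. Your reflection argument is also delicate: the equatorial reflection reverses orientation on $S^2$, and you have not checked that the induced map on moduli spaces preserves orientation with your chosen conventions.

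The paper's route avoids both issues by a direct comparison of linearized boundary conditions. Fixing the Lie group spin structure once and for all, one shows that $I_N$ and $I_S$ are related by $u\mapsto -u$ in suitable flat $\C^2$-coordinates near $\Pi(c)$; since the automorphism group of $\Delta_1$ is $2$-dimensional this map preserves the sign, so $\epsilon(I_N)=\epsilon(I_S)$ (and likewise for $Y$). Then one computes the trivialized boundary conditions of $I_N$ and $Y_N$ explicitly near the pole and exhibits a homotopy through complex-linear maps taking one to the other, forcing $\epsilon(I_N)=\epsilon(Y_N)$. Thus all four signs are automatically equal, and the single capping-orientation bit at $c$ suffices to set them to $+1$; no second independent bit is needed or available. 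For $E_1$ and $E_2$ the same $u\mapsto -u$ argument applies, but now the automorphism group of the strip is $1$-dimensional, so the map reverses sign and $\epsilon(E_1)=-\epsilon(E_2)$. Your alternative argument for the $E$-signs via the boundary of $\widetilde I_N$ would also work (it is essentially $\partial^2=0$), but the paper deliberately avoids the $1$-parameter family analysis (see the remark after Lemma~\ref{lma:treesofU}) in favor of this direct parity computation.
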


\begin{pf}
This is a consequence of Theorem \ref{l:unkotdiff} below.
\end{pf}

Furthermore, the choice of capping orientation of $e$ induces an
orientation of the $1$-dimensional moduli spaces of flow trees such
that the induced orientation at the broken disk $E_j\,\#\, T$ is
$\epsilon(E_j)\epsilon(T)$ for $T\in\{I_N,Y_N,I_S,Y_S\}$. If $\Gamma$ is a flow tree in such a $1$-dimensional moduli space we consider the orientation as a normal vector field $\nu$ along the $1$-jet lift of $\Gamma$.

In order to state the sign rule for multiscale flow trees, we first make some preliminary definitions.
\begin{itemize}
\item[${\rm MT}_{1}:$] Consider a multiscale rigid flow tree $\Theta$
  of type ${\rm MT}_{1}$ with $1$-dimensional big tree $\Gamma$ and a
  negative puncture at $b_{ij}$. Let $v^\flow(\Gamma)$ denote the
  vector field along the $1$-jet lift oriented in the positive
  direction ({\em i.e.}, the $1$-jet lift of each edge is equipped
  with the flow-orientation, defined in Section~\ref{ssec:gft}). The
  sign $\epsilon(\Theta)$ of the rigid tree constrained by $b_{ij}$ of type $\mathbf{S}_1$ is
  defined to be
\[
\epsilon(\Theta)=\sign\left(\la \nu,v^\krn(b_{ij})\ra\la v^\flow(\Gamma),v^\cokrn(b_{ij})\ra\right).
\]
If non-constant flow trees $\Gamma_1,\dots,\Gamma_k$ are attached to $\Theta$ then define $n_j$ to be the normal vector at the special puncture of $\Gamma_j$ with positive inner product with $v^\flow(\Gamma)$.
\item[${\rm MT}_{0}:$]
Consider a multiscale rigid flow tree $\Theta$ of type ${\rm MT}_{0}$ with big tree $\Gamma$.
Let $\epsilon(\Theta)$ equal the sign of $\Gamma$.
If $\Gamma$ has two punctures (positive at $e$, negative at $c$), then
we define $v^\flow(\Gamma)$ as the vector field along the boundary
pointing toward the positive puncture. If $\Gamma$ has only one
puncture (positive at $c$), let $v^\flow(\Gamma)$ point in the positive direction along the boundary. Then take $n_j$ exactly as in ${\rm MT}_1$.
\item[${\rm MT}_{\varnothing}:$]
Consider a flow tree $\Gamma$ of $\Lambda_K$ in $J^{1}(\Lambda)$. Let $n=v^\cokrn(b_{ij})$ be the normal vector of $\Gamma$ at its positive puncture.
\end{itemize}

\begin{thm}\label{thm:combsign}
There exists a choice of basic orientations and of orientations of capping operators for all long Reeb chords such that Theorem~\ref{thm:signunknottree} holds and such that the sign of a rigid multiscale flow tree is as follows.
\begin{itemize}
\item[${\rm MT}_{1}:$] Let $\Gamma$ be a multiscale rigid flow tree of type ${\rm MT}_{1}$ with constrained rigid flow tree $\Theta$ ({\em i.e.}, $\Theta$ has only one negative puncture at some $b_{ij}$) and attached flow trees $\Gamma_1,\dots,\Gamma_k$. Then the sign of $\Gamma$ is
\[
\epsilon(\Gamma)=\epsilon(\Theta)\,\Pi_{j=1}^{k}\sigma(n_j, \Gamma_j).
\]
\item[${\rm MT}_{0}:$] Let $\Gamma$ be a multiscale rigid flow tree of type ${\rm MT}_{0}$ with rigid flow tree $\Theta$ and attached flow trees $\Gamma_1,\dots,\Gamma_k$. Then the sign of $\Gamma$ is
\[
\epsilon(\Gamma)=\epsilon(\Theta)\,\Pi_{j=1}^{k}\sigma(n_j, \Gamma_j).
\]
\item[${\rm MT}_{\varnothing}:$]
Let $\Gamma$ be a flow tree of type ${\rm MT}_{\varnothing}$. Then the sign of $\Gamma$ is
\[
\sigma_{\rm pos}(\Gamma)\,\sigma(n,\Gamma).
\]
\end{itemize}
\end{thm}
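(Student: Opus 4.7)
The plan is to prove Theorem~\ref{thm:combsign} by reducing each of the three cases to the coherent orientation formalism for moduli spaces of holomorphic disks with boundary on a Legendrian, and then identifying how the abstract determinant lines factor along a multiscale flow tree. First I would fix a coherent orientation of moduli spaces $\M_{\vA}(q_0;q_1,\ldots,q_k)$ for $\Lambda_K\subset J^1(S^2)$ as in Section~\ref{ssec:conhom}, which, given the spin structure on $\Lambda_K$, depends only on the choice of orientation of the determinant line of a capping operator at each Reeb chord. I would then translate this via Theorem~\ref{thm:diskandgentree} into an orientation assigned to each rigid multiscale flow tree.

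The $\mathrm{MT}_{\varnothing}$ case is essentially Theorem~\ref{thm:combsign-fortrees}: the tree lives entirely in $J^1(\Lambda)$ and its sign is already expressed as $\sigma_{\rm pos}(\Gamma)\,\sigma(n,\Gamma)$. I would check that the vector-splitting formalism encodes precisely the determinant gluing formula at each $Y_0$ vertex, where the tangent space of the moduli space is identified (via vector splitting along the normal bundle of the lift) with a product of tangent spaces at the resulting subtrees. The sign at a trivalent vertex $t$, namely $\sign\la w_2(t)-w_1(t),v^\con(t)\ra$, measures the discrepancy between the orientation induced by the conformal parameter $v^\con(t)$ and the orientation induced by the vector-splitting identification; this is the elementary Fredholm gluing computation.

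For $\mathrm{MT}_0$ and $\mathrm{MT}_1$ the main point is a product formula: when a partial small tree $\Gamma_j$ of $\Lambda_K\subset J^1(\Lambda)$ is glued to the big tree $\Gamma$ at a special positive puncture lying on $\widetilde\Gamma$, the determinant of the linearized operator of the multiscale tree factors as a tensor product of the determinant for $\Gamma$ and the determinants for each $\Gamma_j$, up to the standard interface term at the special puncture. That interface term is exactly $\sigma(n_j,\Gamma_j)$, the vector-splitting sign obtained from the normal $n_j$ at the special puncture oriented compatibly with $v^\flow(\Gamma)$. In the $\mathrm{MT}_1$ case the additional constraint that $\widetilde\Gamma$ pass through the point above $b_{ij}$ cuts the dimension of the moduli space by one and contributes the factor $\sign\la \nu,v^\krn(b_{ij})\ra \sign\la v^\flow(\Gamma),v^\cokrn(b_{ij})\ra$, corresponding to the identification of the normal of the constrained tree with $v^\krn(b_{ij})$ and of the constrained direction in the ambient moduli space with $v^\cokrn(b_{ij})$.

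The hard part will be organizing the initial choices of basic orientations and capping orientations so that these local decompositions assemble consistently. Concretely, I would first carry out the explicit determinant computation for the six rigid trees $I_N, Y_N, I_S, Y_S, E_1, E_2$ of Lemma~\ref{lma:treesofU}, which pins down the capping orientations of $c$ and $e$ and yields Theorem~\ref{thm:signunknottree}; I would then propagate these orientations to every $c_{ij}$ and $e_{ij}$ via the identification of $\Lambda_K$ as a small perturbation of an $n$-fold cover of $\Lambda$, using a continuity-of-signs argument as $K$ is $C^{1}$-deformed toward the unknot. The delicate bookkeeping is in checking that the same choice of basic orientations (of the spaces of conformal structures, of $\Lambda_K$ as a spin manifold, and of the fixed paths $\gamma_{1j}$ used in the grading convention) produces the desired formula in all three types simultaneously; this essentially reduces to verifying invariance under the natural codimension-one degenerations in $1$-parameter families of multiscale trees, where the formulas must match at both ends of a family such as the $\widetilde I_N, \widetilde Y_N, \widetilde I_S, \widetilde Y_S$ of Remark~\ref{rmk:1dfam}.
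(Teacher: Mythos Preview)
Your outline identifies the correct conceptual framework, but it has a genuine gap and misses the actual mechanism by which the paper proves the result.

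First, a circularity: you invoke Theorem~\ref{thm:combsign-fortrees} to handle $\mathrm{MT}_{\varnothing}$, but in the paper that theorem is proved \emph{as a special case of} Theorem~\ref{thm:combsign}. You cannot cite it here; the $\mathrm{MT}_{\varnothing}$ case must be established directly.

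Second, and more substantively, your ``product formula'' assertion---that the determinant of the linearized operator factors as a tensor product of the big-tree determinant and the small-tree determinants, with $\sigma(n_j,\Gamma_j)$ appearing as an interface term---is precisely the statement that needs proof, and Fredholm gluing does not in general produce a clean tensor factorization. The paper's actual argument (Section~\ref{ssec:multisigns}) proceeds quite differently: for a rigid disk $u_\eta$ near a quantum flow tree it writes down the explicit gluing sequence \cite[Equation~(3.17)]{EkholmEtnyreSullivan05c}, deforms the boundary condition to constant $\R^2$ with small exponential weights, and then constructs an explicit basis of the kernel of a stabilized operator $\bar\partial_{\widehat A,\,\con}$ using cut-off ``edge solutions'' along each flow-tree edge together with conformal variations $v_i^{\con}$ supported near each boundary minimum. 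The combinatorial sign $\sigma(n,\Gamma)$ emerges only after computing the determinant of the resulting change-of-basis matrix, and the computation relies essentially on Lemma~\ref{l:boundminandmovpct}, which compares the two natural parametrizations of $\conf_m$ (moving punctures on the disk versus moving boundary minima in standard domains). None of this apparatus appears in your sketch.

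Finally, your proposed continuity argument---propagating capping orientations from $\Lambda$ to $\Lambda_K$ as $K$ deforms toward $U$---is not how the paper proceeds and would not by itself suffice: the paper instead fixes the capping operators at long chords to be close to those of $e$ and $c$ (Tables~\ref{tb:unknotsaddle} and~\ref{tb:unknotextr}), and then computes the sign of each $u_\eta$ directly at small $\eta$ via the gluing sequence, case by case (Lemmas~\ref{l:signbraidtree}--\ref{l:sign1dimdisk}). Consistency across the four quantum-tree types is not checked by a codimension-one degeneration argument but falls out of the explicit computations.
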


\begin{pf}
Theorem \ref{thm:combsign} is proved in Section \ref{ssec:multisigns}.
\end{pf}

\subsection{Counting multiscale flow trees} \label{ssec:MFTcount}
In this section we complete the computation of the Legendrian algebra
differential of
$\Lambda_K\subset J^{1}(S^{2})$, and thereby obtain a proof of
Theorem~\ref{thm:mainlink}, by counting all multiscale
flow
trees determined by $\Lambda_K$ and $\Lambda$.
In Section~\ref{sec:diffthrflowtree}, we counted flow
trees of $\Lambda_K\subset J^{1}(\Lambda)$.
This leads to the expression for $\pa\Bb$ in Theorem~\ref{thm:mainlink}.
In this subsection we derive the expression for $\pa\Cc$ and $\pa\Ee$
in Theorem~\ref{thm:mainlink} by counting multiscale flow trees with non-empty
big tree part. Our technique relates these multiscale trees to ordinary
flow trees of a stabilized
braid obtained by adding a trivial noninteracting strand to the given  braid.

For notation used throughout this section see Section~\ref{ssec:thecount}.

\subsubsection{Multiscale flow trees of type ${\rm MT}_{\varnothing}$}
We first consider the part of the differential in the Legendrian algebra of $\Lambda_K$ which accounts for multiscale flow trees of type ${\rm MT}_{\varnothing}$, {\em i.e.}, the parts which count only trees of the braid localized near $\Lambda$. Such a tree has its positive puncture at some Reeb chord $b_{ij}$ and its negative punctures at Reeb chords $a_{ij}$. Furthermore, a straightforward action argument shows that any multiscale flow tree with its positive puncture at a Reeb chord $b_{ij}$ must lie inside the $1$-jet neighborhood of $\Lambda$. Consequently, flow trees of type ${\rm MT}_\varnothing$ account for the boundary of the Reeb chords $b_{ij}$ of type $\mathbf{S}_1$:
\[
\pa(b_{ij})=\sum_{\Gamma\in\T(b_{ij})}\epsilon(\Gamma)q(\Gamma),
\]
where $\T(b_{ij})$ denotes the set of all flow trees with positive puncture at $b_{ij}$ and where, if $\Gamma$ is such a tree, $q(\Gamma)$ denotes the monomial of its negative punctures and $\epsilon(\Gamma)$ its sign.

In Section~\ref{sssec:countflowtree} above, orientation conventions were picked for the Reeb chords $a_{ij}$ and $b_{ij}$ and the differential was computed in Lemma~\ref{lma:dB}.

\subsubsection{Multiscale flow trees with positive puncture at $c_{ij}$ of type $\mathbf{L}_1$}\label{sssec:dC}
In this section, we compute the differentials of the $c_{ij}$ Reeb chords. This involves counting rigid multiscale flow trees of type $\rm{MT}_0$, with big-tree component given by one of the four rigid big trees with positive puncture at $c$.

We introduce the following notation for $1$-jet lifts of flow trees with one positive puncture. Each point in the $1$-jet lift which is neither a $1$-valent vertex nor a trivalent vertex belongs to either the upper or the lower sheet of its flow line. We call points on the $1$-jet lift {\em head-points} if they belong to the upper sheet and {\em tail-points} if they belong to the lower. Because of positivity of local function differences of trees with one positive puncture, points in a component of the complement of preimages of $1$-valent and trivalent vertices in the $1$-jet lifts are either all head-points or all tail-points.

There are four rigid flow trees with positive puncture at $c$, denoted
in Section~\ref{ssec:flowtreesforLambda} by $I_N,Y_N,I_S,Y_S$. When we
project the $1$-jet lifts of these trees to the torus $\Lambda$, we
obtain the four curves $\Gamma_{\alpha\beta}(c)$,
$\alpha,\beta\in\{0,1\}$, shown in Figure~\ref{fig:dC}.
We decompose $\Gamma_{\alpha\beta}(c)$ as follows:
\[
\Gamma_{\alpha\beta}(c)=\Gamma_{\alpha\beta}^{{\rm v}+}(c)\cup\Gamma_{\alpha\beta}^{{\rm h}}(c)\cup\Gamma_{\alpha\beta}^{{\rm v}-}(c);
\]
see Figure \ref{fig:dC} for $\Gamma_{00}(c)$. More precisely,
$\Gamma_{\alpha \beta}^{\mathrm{v}+}(c)$ consists of head-points,
$\Gamma_{\alpha\beta}^{\mathrm{v}-}(c)$ consists of tail-points, and
$\Gamma_{\alpha\beta}^{\mathrm{h}}(c)$ is the portion near the cusp edge. For our
purposes, we will assume that $\Gamma_{\alpha\beta}^{\mathrm{h}}(c)$ lies at
$t=\frac{\pi}{2}$ (for $\beta=0$) or
$t=\frac{3\pi}{2}$ (for $\beta=1$), as is the case in the degenerate
picture where $U$
is the unperturbed round unknot (see Section~\ref{ssec:flowtreesonU}).
\begin{figure}[htb]
\labellist
\small\hair 2pt
\pinlabel $2\pi$ [Br] at -5 181
\pinlabel $\frac{3\pi}2$ [Br] at -5 138
\pinlabel $\pi$ [Br] at -5 93
\pinlabel $\frac\pi2$ [Br] at -5 46
\pinlabel $0$ [Br] at -5 -2
\pinlabel $t$ [Br] at 5 190
\pinlabel $0$ [Bl] at -3 -12
\pinlabel $2\pi$ [Bl] at 274 -12
\pinlabel $s$ [Bl] at 282 0
\pinlabel {braiding region} [Bl] at 12 203
\pinlabel {$a_{ij}$ and $b_{ij}$} [Bl] at 205 203
\pinlabel $e^-$ [Bl] at 142 98
\pinlabel $c^-$ [Bl] at 281 98
\pinlabel $c^-$ [Bl] at 5 98
\pinlabel $c^+$ [Bl] at 142 6
\pinlabel $e^+$ [Bl] at 6 6
\pinlabel $c^+$ [Bl] at 142 173
\pinlabel $\Gamma_{01}(c)$ [Bl] at 35 146
\pinlabel $\Gamma_{00}(c)$ [Bl] at 35 34
\pinlabel $\Gamma_{11}(c)$ [Bl] at 222 146
\pinlabel $\Gamma_{10}(c)$ [Bl] at 222 34
\pinlabel $\Gamma_{00}^{{\rm v}+}(c)$ [Bl] at 105 15
\pinlabel $\Gamma_{00}^{{\rm h}}(c)$ [Bl] at 80 52
\pinlabel $\Gamma_{00}^{\mathrm{v}-}(c)$ [Bl] at 6 60
\endlabellist
\centering
\includegraphics{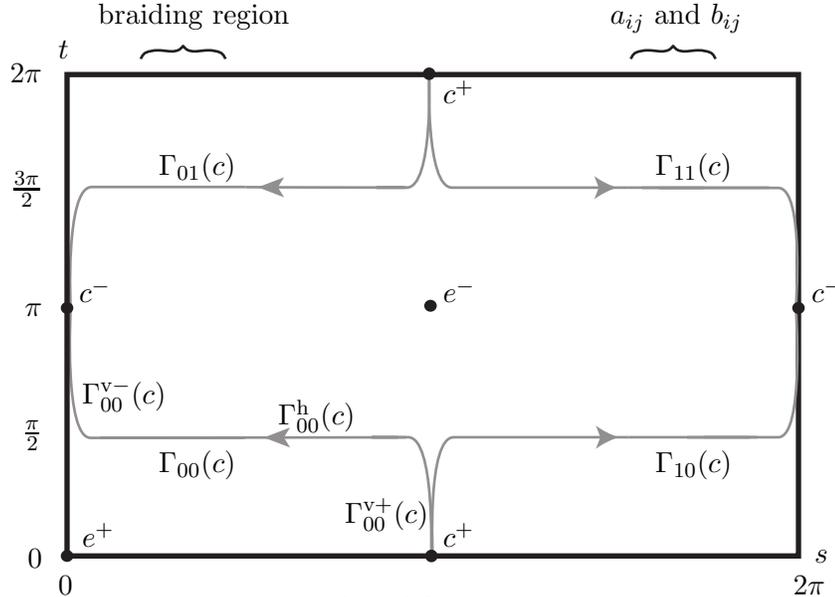}
\caption{The $1$-jet lifts of flow trees with positive puncture at $c$.}
\label{fig:dC}
\end{figure}

The following lemma determines the differential acting on Reeb chords $c_{ij}$. We use the matrix notation of Theorem \ref{thm:mainlink}.
\begin{lma}\label{lma:dC}
With capping operator of Reeb chord $c$ of $\Lambda$ as in Theorem \ref{thm:combsign} and with orientation choices as in \eqref{eq:ker(b)} and \eqref{eq:coker(b)=ker(a)}, the following equation holds:
\[
\pa \Cc = \Aa\cdot\Ll + \Aa\cdot\Phi^{R}_{B}.
\]
\end{lma}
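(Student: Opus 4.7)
The plan is to apply Theorem~\ref{thm:diskandgentree} and identify $\pa c_{ij}$ with a signed count of rigid multiscale flow trees with positive puncture at $c_{ij}$. Since $|c_{ij}|=1$ and $|a_{kl}|=0$, such trees have all negative punctures among the short chords $a_{kl}$, and Lemma~\ref{lem:action} rules out a negative puncture of the big tree at $c$; hence each tree is of type ${\rm MT}_{0}$, with big tree one of the four rigid flow trees $I_N,Y_N,I_S,Y_S$ from Lemma~\ref{lma:treesofU}, whose $1$-jet lift projects to one of the four curves $\Gamma_{\alpha\beta}(c)\subset\Lambda$ of Figure~\ref{fig:dC} with head--horizontal--tail decomposition and horizontal segment running along a cusp line $t=\tfrac{\pi}{2}$ or $t=\tfrac{3\pi}{2}$. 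My first step is to show that all attached partial flow trees sit along the horizontal segment: the head and tail lie at fixed values $s=\pi$ and $s=0$, far from both the braiding region $[2\delta,4\delta]$ and the Reeb chord region near $s=2\pi-3\delta$, so no negative gradient flow from a special puncture on the head or tail can reach an unstable manifold $W^{\mathrm{u}}(a_{kl})$, while the horizontal sits on the cusp immediately above these unstable manifolds.

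I would then partition the enumeration into two cases according to how the horizontal traverses $S^{1}_{\lambda}$. In \emph{Case A}, $\Gamma^{\mathrm{h}}$ avoids the braiding region and crosses the longitude curve $\lambda'$: the horizontal sits entirely in a trivial-braid slice, so by Lemma~\ref{lma:treetwist} applied with $\phi=\mathrm{id}$ the only rigid attached partial tree is a single straight flow line from the cusp point to $a_{ij}$, giving the matrix entry $\Aa_{ij}$, while the horizontal's intersection with $\lambda'_{\alpha(j)}$ on the leading sheet produces the factor $\Ll_{jj}$. The choice of cusp line ($t=\tfrac{\pi}{2}$ versus $t=\tfrac{3\pi}{2}$) selects between the $i>j$ and $i<j$ off-diagonal contributions, and the $\mu_{\alpha(j)}$ factor for $i>j$ arises by a cancellation: the partial tree's upper and lower $1$-jet sheets cross $\mu'_{\alpha(i)}$ and $\mu'_{\alpha(j)}$ with opposite signs, and the big tree's head crossing $\mu'_{\alpha(i)}$ cancels the upper-sheet contribution. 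Summing all configurations in Case A gives $\Aa\cdot\Ll$. In \emph{Case B}, $\Gamma^{\mathrm{h}}$ traverses the braiding region and avoids $\lambda'$; I would decompose the attached partial trees into two stages: slice trees inside the braiding region, which by Lemma~\ref{lma:treetwist} applied with the horizontal playing the role of the auxiliary strand $0$ of the stabilized braid $\widehat{B}$ are enumerated precisely by the matrix entries $(\Phi^{R}_{B})_{kj}$ from Section~\ref{ssec:mainthm} via the identity $\phi_{\widehat{B}}(a_{0j})=\sum_{k}a_{0k}(\Phi^{R}_{B})_{kj}$, and a single terminal partial tree in the trivial slice on the head side contributing $\Aa_{ik}$ for each intermediate sheet $k$. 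Summing over $k$ gives $\Aa\cdot\Phi^{R}_{B}$, and adding the two cases yields the stated formula. The diagonal entries $\Aa_{ii}=1+\mu_{\alpha(i)}$ arise from degenerate configurations with no terminal partial tree attached; the homology classes of the big tree's lift to sheet $i$ supply the $1$ and $\mu_{\alpha(i)}$ monomials.

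The main obstacle is the careful verification of signs and homology coefficients through Theorem~\ref{thm:combsign}. The orientation conventions \eqref{eq:ker(b)}--\eqref{eq:coker(b)=ker(a)} for short chords together with the capping orientations for $c$ from Theorem~\ref{thm:signunknottree} must combine to produce positive global signs on both terms $\Aa\cdot\Ll$ and $\Aa\cdot\Phi^{R}_{B}$, and the placement of $\mu_{\alpha(j)}$ relative to $a_{ij}$ in the matrix entries of $\Aa$ must match the geometric location of the $\mu'$-crossings in the partial tree's $1$-jet lift. The $n=1$ case, in which the short chords vanish, $\Phi^{R}_{B}=1$, and the four big trees contribute the four monomials of $(1+\mu)(1+\lambda)$ with sign $+1$ by Theorem~\ref{thm:signunknottree}, serves as a sanity check recovering the classical unknot DGA.
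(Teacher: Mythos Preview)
Your overall strategy—four big trees, two contributing $\Aa\cdot\Ll$ and two contributing $\Aa\cdot\Phi_B^R$, with the stabilized braid $\widehat{B}$ governing the latter—matches the paper's. But your core geometric claim, that all attached partial flow trees sit on the horizontal segment, is false, and the argument breaks down because of it.

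The unstable manifolds $W^{\mathrm{u}}(a_{kl})$ are (approximately) horizontal curves at fixed $t$-values just below $\tfrac{\pi}{2}$ (for $k>l$) or just above $\tfrac{3\pi}{2}$ (for $k<l$), and they extend across \emph{all} values of $s$; see Lemma~\ref{lma:braidReeb+triv} and Figure~\ref{fig:ordering}. The head segment $\Gamma^{\mathrm{v}+}_{\alpha\beta}(c)$, which runs vertically at $s=\pi$ from $t=0$ up to $t=\tfrac{\pi}{2}$, therefore crosses every $W^{\mathrm{u}}(a_{ij})$ with $i>j$. By contrast, the horizontal $\Gamma^{\mathrm{h}}_{\alpha\beta}(c)$ sits at $t=\tfrac{\pi}{2}$, strictly \emph{above} all of these unstable manifolds, so outside the braiding region it meets none of them and no partial tree can attach there. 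Your reasoning that $s=\pi$ is ``far from the Reeb chord region'' is irrelevant: what matters is intersection with $W^{\mathrm{u}}(a_{kl})$, not proximity to $a_{kl}$ itself.

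Consequently, for $\Gamma_{10}(c)$ and $\Gamma_{11}(c)$ the attachment producing the factor $a_{ij}$ occurs on $\Gamma^{\mathrm{v}+}$, not on $\Gamma^{\mathrm{h}}$. The reason at most one flow splits off is the lexicographic ordering of the $W^{\mathrm{u}}(a_{ij})$: once the lift switches from sheet $S_i$ to sheet $S_j$ (with $j<i$), any further split would require meeting $W^{\mathrm{u}}(a_{jk})$ with $k<j$, but those curves lie at smaller $t$ and have already been passed. Invoking Lemma~\ref{lma:treetwist} with $\phi=\mathrm{id}$ does not capture this. For $\Gamma_{00}(c)$ and $\Gamma_{01}(c)$, the $\Aa$-factor attachment again occurs on $\Gamma^{\mathrm{v}+}$; the $\Phi_B^R$-factor comes from $\Gamma^{\mathrm{h}}$ passing through the braiding region on the tail-points side, where your stabilized-braid argument is correct.

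Finally, your cancellation argument for the $\mu_{\alpha(j)}$ factor is also off: the attached flow line lies at $t=t_{ij}$ strictly below $\mu'$, so its $1$-jet lift never meets $\mu'$. The factor $\mu_{\alpha(j)}$ comes from a single positive crossing of $\mu'$ by the big tree's lift on sheet $S_j$, as the lift continues upward along $\Gamma^{\mathrm{v}+}$ from the attachment point at $t_{ij}$ through $t(\mu')$ to the horizontal.
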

\begin{pf}

We count multiscale flow trees contributing to $\pa c_{ij}$, divided into four cases based on their big tree part, which must be one of the $\Gamma_{\alpha\beta}(c)$'s. For ease of reference, all of these flow trees are pictorially represented in Table~\ref{tbl:dC}.

Before we proceed, note that the braiding region is disjoint from $\Gamma_{10}(c)$ and from $\Gamma_{11}(c)$ and intersects $\Gamma_{00}(c)$ and $\Gamma_{01}(c)$ in tail-points, since the braiding
region has small $s$-coordinate values. Also, from
Section~\ref{ssec:thecount}, the unstable manifolds $W^{\rm
  u}(a_{ij})$ are essentially horizontal and have $t$-coordinate as
follows: just less than $\frac{\pi}{2}$ for $i>j$, ordered
lexicographically by $(i,j)$; just over $\frac{3\pi}{2}$ for $i<j$,
ordered lexicographically by $(j,i)$. Finally, we recall from
Section~\ref{sssec:endpointpath} and Figure~\ref{fig:setup} that we
have cycles $\lambda'$ and
$\mu'$ in $\Lambda$ for the purposes of counting homology classes,
where $\lambda'$ is a vertical line between the $a_{ij}$ and the
$b_{ij}$, and $\mu'$ is a horizontal line just below
$t=\frac{\pi}{2}$. We can in particular choose $\mu'$ to lie above
all of the $W^{\rm u}(a_{ij})$ for $i>j$.

\begin{table}
\centerline{
\begin{tabular}{|c|c|c||c|c|c|}
\hline
Big tree & Multiscale flow tree & Term &
Big tree & Multiscale flow tree & Term  \\
\hline
\multirow{2}{*}{\raisebox{-0.3in}{$\Gamma_{10}(c)$}} & \raisebox{-0.25in}{\includegraphics[height=0.6in]{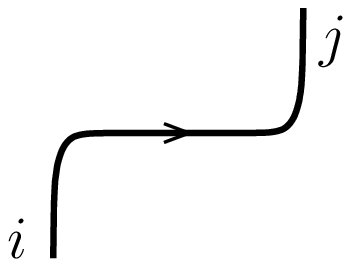}} ~
$(i=j)$&
$\mu_{\alpha(i)}$ &
\multirow{2}{*}{\raisebox{-0.3in}{$\Gamma_{00}(c)$}} & \raisebox{-0.25in}{\includegraphics[height=0.6in]{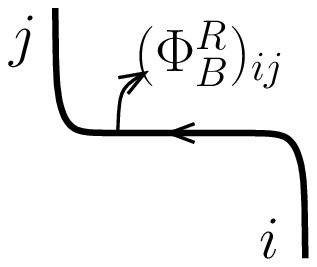}} ~
&
$\mu_{\alpha(i)} (\Phi^R_B)_{ij}$
 \\
\cline{2-3} \cline{5-6}
 & \raisebox{-0.25in}{\includegraphics[height=0.6in]{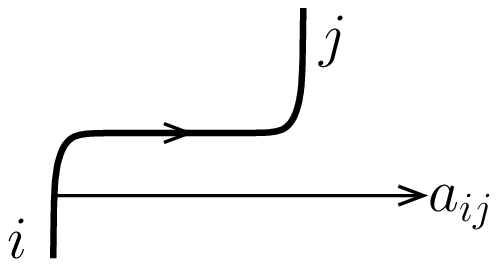}} ~
 $(i>j)$ &
$a_{ij}\mu_{\alpha(j)}$ &
 & \raisebox{-0.25in}{\includegraphics[height=0.6in]{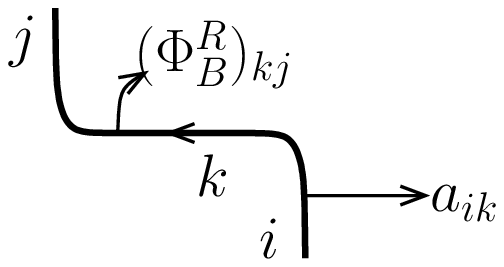}} & $\displaystyle{\sum_{k<i}a_{ik}\mu_{\alpha(k)}(\Phi^R_B)_{kj}}$ \\
\hline
\multirow{2}{*}{\raisebox{-0.3in}{$\Gamma_{11}(c)$}} & \raisebox{-0.25in}{\includegraphics[height=0.6in]{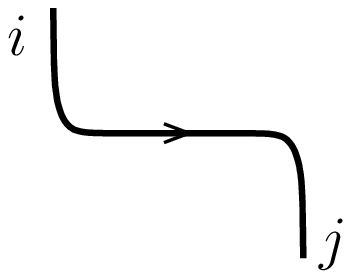}} ~
$(i=j)$ &
$1$ &
\multirow{2}{*}{\raisebox{-0.3in}{$\Gamma_{01}(c)$}} & \raisebox{-0.25in}{\includegraphics[height=0.6in]{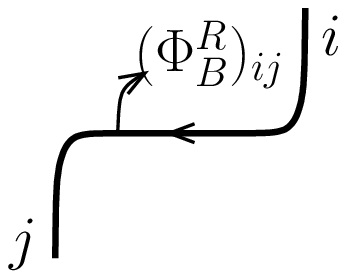}} &
$(\Phi^R_B)_{ij}$
\\
\cline{2-3} \cline{5-6}
 & \raisebox{-0.25in}{\includegraphics[height=0.6in]{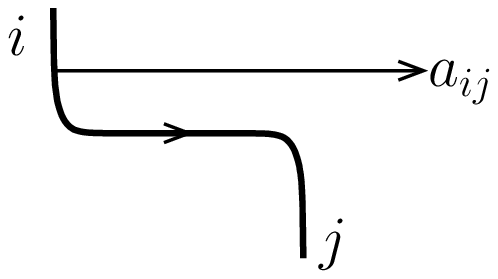}} ~ $(i<j)$ &
$a_{ij}$ &
 & \raisebox{-0.25in}{\includegraphics[height=0.6in]{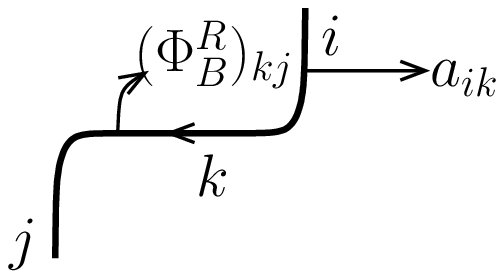}} &
$\displaystyle{\sum_{k>i} a_{ik}(\Phi^R_B)_{kj}}$  \\
\hline
\end{tabular}
\vspace{11pt}
}
\caption{Contributions to $\partial c_{ij}$. For each of the four big
  disks, schematic diagrams for corresponding multiscale flow trees
  are shown, along with the algebraic contribution to $\partial
  c_{ij}$ (with powers of the longitudinal homology classes $\lambda$
  suppressed for simplicity). In the diagrams, along the boundary of
  the big disk, the index of the sheet (one of $i,j,k$) is labeled.}
\label{tbl:dC}
\end{table}

\vspace{11pt}
\noindent \textsc{Case 1}: big tree $\Gamma_{10}(c)$.

If $i=j$, there is a ``trivial'' multiscale flow tree with boundary on sheet $S_i$ that projects to $\Gamma_{10}(c)$. To count other multiscale flow trees corresponding to $\Gamma_{10}(c)$, note that
the curve $\Gamma_{10}^{\mathrm{v}+}(c)$ intersects $W^{\mathrm{u}}(a_{ij})$ for all $i>j$, while the curves $\Gamma_{10}^{\mathrm{h}}(c)$ and $\Gamma_{10}^{\mathrm{v}-}(c)$ are disjoint from $W^{\mathrm{u}}(a_{ij})$ for all $i,j$. As we move along $\Gamma_{10}^{\mathrm{v}+}(c)$ in the sheet $S_i$,
at the intersection between $\Gamma_{10}^{\mathrm{v}+}(c)$ and $W^{\mathrm{u}}(a_{ij})$, $i>j$, a flow line to $a_{ij}$ can split off and the $1$-jet lift of $\Gamma_{10}(c)$ continues to move along sheet $S_j$. Note that no other flow line can split off after this event since, according to the orientation requirement in the definition of multiscale  flow tree, such a flow line could split off only at an intersection with $W^{\mathrm{u}}(a_{jk})$, $j>k$; however, by our choice of ordering, such an intersection precedes the intersection with $W^{\mathrm{u}}(a_{ij})$ and hence no further splitting is possible.

Thus for each $c_{ij}$ with $i\geq j$, there is a multiscale flow tree with big tree $\Gamma_{10}(c)$ and a single negative puncture at $a_{ij}$ (or no negative puncture if $i=j$). We next consider homology coefficients and signs. Note that each tree which is lifted to a leading sheet $S_{\gamma(i)}$ intersects $\lambda_{\gamma(i)}'$ once positively and that each lifted tree intersects $\mu_{\alpha(i)}'$, where  $\gamma(i)$ and $\alpha(i)$ are the indices of the components of the sheets considered.
Furthermore, by Theorem~\ref{thm:combsign}, each tree has sign $+1$: at the splitting point (for $i>j$), the tangent vector to the positively oriented $1$-jet lift is $\pa_t$ which transports to $\pa_t=v^\krn(a_{ij})$ at $a_{ij}$, $i>j$, and the big tree has sign $+1$. Finally, we conclude that the contribution to $\partial c_{ij}$ of Case~1 is:
\[
\begin{cases}
\lambda_{\gamma(i)}\mu_{\alpha(i)} &\text{ if $i=j$ and $S_i$ is leading},\\
\mu_{\alpha(i)} &\text{ if $i=j$ and $S_i$ is not leading}, \\
a_{ij}\lambda_{\gamma(j)}\mu_{\alpha(j)} &\text{ if $i>j$ and $S_j$ is leading},\\
a_{ij}\mu_{\alpha(j)} &\text{ if $i>j$ and $S_j$ is not leading}.
\end{cases}
\]

\vspace{11pt}
\noindent \textsc{Case 2:} big tree $\Gamma_{11}(c)$.

As with $\Gamma_{10}(c)$, if $i=j$ then there is a trivial multiscale tree with boundary on sheet $S_i$ that projects to $\Gamma_{10}(c)$. To count other multiscale trees, notice that $\Gamma_{11}^{\mathrm{v}+}(c)$ intersects $W^{\mathrm{u}}(a_{ji})$, for all $i>j$. As above we find that a $1$-jet lift in sheet $S_j$ can split off a flow line to $a_{jk}$ for $k>j$, then continue along $S_k$, and that no further splittings are possible. We thus find multiscale flow trees with positive puncture at $c_{ji}$ and negative puncture at $a_{ji}$ for all $i>j$. As above the (signed) coefficient equals $+1$ since the normal at the special puncture of the partial tree attached is $-\pa_t$ which agrees with $v^\krn(a_{ij})$, $i<j$. To see homology coefficients, we note that all $1$-jet lifts are disjoint from $\mu'$ and calculate as above. We conclude that the contribution to $\partial c_{ij}$ from Case~2 is: \[
\begin{cases}
\lambda_{\gamma(j)} &\text{ if $i=j$ and $S_j$ is leading},\\
1 &\text{ if $i=j$ and $S_j$ is not leading},\\
a_{ij} \lambda_{\gamma(j)} &\text{ if $i<j$ and $S_j$ is leading},\\
a_{ij} &\text{ if $i<j$ and $S_j$ is not leading}.
\end{cases}
\]

When we combine the contributions of Cases~1 and~2 to $\partial c_{ij}$, we obtain precisely the $(i,j)$ entry of the matrix
$\Aa\cdot \Ll$.

\vspace{11pt}
\noindent \textsc{Case 3}: big tree $\Gamma_{00}(c)$.

Here  we will make use of the stabilized braid $\widehat B$, which is $B$ along with one non-interacting strand labeled $0$. Any multiscale flow tree begins on sheet $S_i$ along $\Gamma_{00}^{\mathrm{v}+}(c)$.
 As above it is possible to split off either no flows and arrive on $S_i$ at the initial point of $\Gamma_{00}^{\mathrm{h}}(c)$, or one flow to $a_{ik}$ (for any $k<i$) and arrive on $S_k$ at the initial point of $\Gamma_{00}^{\mathrm{h}}(c)$. Note further that $\Gamma_{00}^{\mathrm{h}}(c)$ intersects the braiding region in tail-points since the braiding region is to the left of $s= \pi/2;$
 thus, its flow orientation is the same as the oriented lift of the flow line of $\widehat B$ in $W^{\mathrm{u}}(a_{0k})$ for any $k$. Consequently, the flow trees that can split off from $\Gamma_{00}^{\mathrm{h}}(c)$ on sheet $S_k$ as it passes the braiding region, agree with the flow trees that can split off from $W^{\mathrm{u}}(a_{0k})$. It follows that the contribution to
$\pa c_{ij}$ from Case~3, up to sign and homology coefficients, is
\[
\sum_{k<i} a_{ik} (\Phi^R_B)_{kj}.
 \]

We next consider the homology coefficients and signs of these
trees. For homology, note that $\mu'$ is a horizontal line that lies
just below $t=\frac{\pi}{2}$, where $\Gamma_{00}^{\mathrm{h}}(c)$
sits, but just above $W^{\mathrm{u}}(a_{ik})$, where a flow line to
$a_{ik}$ can split off. Thus none of the trees passes $\lambda'$; a
tree that splits off a flow to $a_{ik}$ intersects $\mu_{\alpha(k)}'$
once positively; and a tree that does not split off such a flow
intersects $\mu_{\alpha(j)}'$ once positively.

In order to compute the sign we first note that the sign contribution from the flow line splitting off to $a_{ik}$ is positive: $\pa_t$ is the vector of the boundary orientation as well as $v^\krn(a_{ik})$. Second, we consider sign contributions from trees in the braiding region. At a positive twist the induced normal is $-\pa_s$ which corresponds to a vector splitting of the normal $\pa_t$ along the incoming edge and the sign at the corresponding trivalent vertex of the tree of $\widehat B$ is $+1$. At a negative twist the induced normal is still $-\pa_s$ which now corresponds to splitting of the normal $-\pa_t$ along the incoming edge and the sign at the trivalent vertex of the tree of $\widehat B$ equals $-1$. Thus also in the case of a negative twist the total sign contribution is $(-1)^{2}=1$, which shows that all multiscale flow trees from $\Gamma_{00}(c)$ have sign $+1$.

 In sum, we find that the total contribution to
$\pa c_{ij}$ from Case~3 is
\[
\mu_{\alpha(j)}(\Phi_B^R)_{ij} + \sum_{k<i} a_{ik}\mu_{\alpha(k)}(\Phi_B^{R})_{kj}.
\]

\vspace{11pt}
\noindent \textsc{Case 4}: big tree $\Gamma_{01}(c)$.

As in Case~3, we find that either one or zero flow lines split off along $\Gamma_{01}^{\mathrm{v}+}(c)$. Again $\Gamma_{01}^{\mathrm{h}}$ intersects the braiding region in tail points and is oriented isotopic to $W^{\mathrm{u}}(a_{0i})$. An argument similar to Case~3 shows that the contribution to $\pa c_{ij}$ from Case~4 is
\[
(\Phi_B^R)_{ij} + \sum_{k>i} a_{ik}(\Phi_B^{R})_{kj},
\]
where the difference from the previous case arises because the $1$-jet lift of $\Gamma_{01}(c)$ is disjoint from $\mu_j'$.

When we combine the contributions of Cases~3 and~4 to $\partial c_{ij}$, we obtain precisely the $(i,j)$ entry of the matrix
$\Aa\cdot \Phi_B^R$. The lemma follows.
\end{pf}

\subsubsection{Multiscale flow trees of type ${\rm MT}_0$ with positive
  puncture at $e_{ij}$ of type $\mathbf{L}_2$}\label{sssec:dEMT0}

To complete the computation of the Legendrian homology of $\Lambda_K$,
we need to compute the differentials of the $e_{ij}$ Reeb
chords. These have contributions from two types of multiscale rigid
flow trees: trees of type ${\rm MT}_{0}$ and trees of type ${\rm
  MT}_1$. In this subsection, we compute the first type; the second
type is computed in the following subsection, Section~\ref{sssec:dEMT1}.

There are two rigid flow trees with positive puncture at $e$ and
negative puncture at $c$, corresponding to $E_1$ and $E_2$ in the
language of Section~\ref{ssec:flowtreesforLambda}. Denote their $1$-jet lifts by $\Gamma_{\alpha}(e;c)$, $\alpha\in\{0,1\}$.  We decompose $\Gamma_{\alpha}(c)$ as follows:
\[
\Gamma_{\alpha}(e;c)=\Gamma_{\alpha}^{\mathrm{he}}(e;c)\cup\Gamma_{\alpha}^{\mathrm{ta}}(e;c),
\]
where $\Gamma_{\alpha}^{\mathrm{he}}(e;c)$ consists of head-points and
$\Gamma_{\alpha}^{\mathrm{ta}}(e;c)$ of tail-points; see Figure \ref{fig:dE_C}.
\begin{figure}[htb]
\labellist
\small\hair 2pt
\pinlabel $2\pi$ [Br] at -5 181
\pinlabel $\pi$ [Br] at -5 93
\pinlabel $0$ [Br] at -5 -2
\pinlabel $t$ [Br] at 5 190
\pinlabel $0$ [Bl] at -3 -12
\pinlabel $2\pi$ [Bl] at 274 -12
\pinlabel $s$ [Bl] at 282 0
\pinlabel {braiding region} [Bl] at 12 203
\pinlabel {$a_{ij}$ and $b_{ij}$} [Bl] at 205 203
\pinlabel $e^-$ [Bl] at 142 99
\pinlabel $c^-$ [Bl] at 281 99
\pinlabel $c^-$ [Bl] at 5 99
\pinlabel $c^+$ [Bl] at 142 7
\pinlabel $e^+$ [Bl] at 6 7
\pinlabel $\Gamma^{\mathrm{he}}_0(e;c)$ [Bl] at 80 10
\pinlabel $\Gamma^{\mathrm{ta}}_1(e;c)$ [Bl] at 80 101
\pinlabel $\Gamma^{\mathrm{ta}}_0(e;c)$ [Bl] at 165 101
\pinlabel $\Gamma^{\mathrm{he}}_1(e;c)$ [Bl] at 165 10
\endlabellist
\centering
\includegraphics{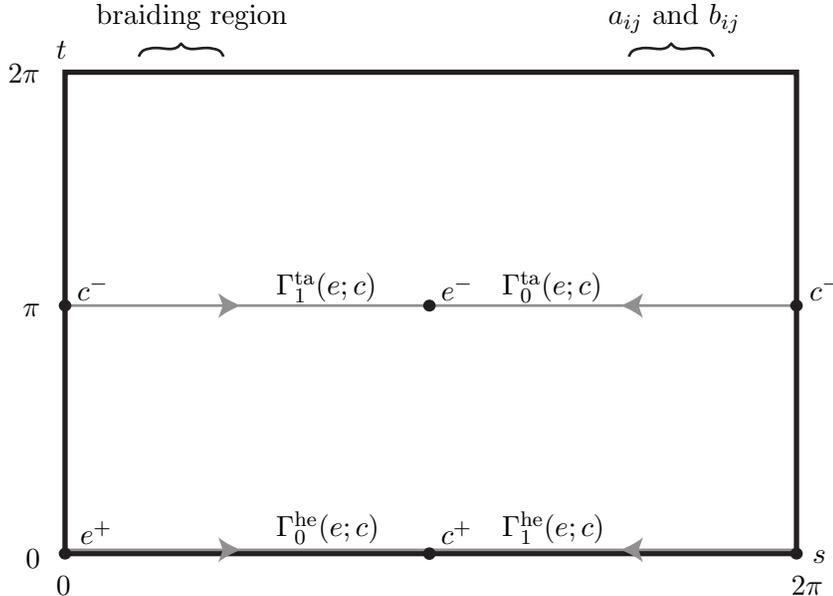}
\caption{The $1$-jet lifts of flow trees with positive puncture at $e$ and negative puncture at $c$.}
\label{fig:dE_C}
\end{figure}

Writing the contribution to $\pa e_{ij}$ from trees of type $\rm{MT}_0$ as $\pa_0 e_{ij}$ and using the matrix notation of Theorem \ref{thm:mainlink}, we have the following result.
\begin{lma}\label{lma:dE;C}
With capping operator of $c$ so that Theorem \ref{thm:combsign} holds and with orientation choices as in \eqref{eq:ker(b)} and \eqref{eq:coker(b)=ker(a)}, there is a choice of capping operator for $e$ so that the following equation holds:
\[
\pa_0 \Ee = -\Phi_B^{L}\cdot \Cc\cdot\Ll^{-1} + \Ll^{-1}\cdot\Cc\cdot(\Phi_B^{R})^{-1}.
\]
\end{lma}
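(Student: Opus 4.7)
The enumeration of rigid multiscale flow trees of type $\mathrm{MT}_0$ with positive puncture at $e_{ij}$ proceeds in close analogy with the proof of Lemma~\ref{lma:dC} for $\pa\Cc$. By the classification in Section~\ref{ssec:multiscaleLambda}, any such tree has big tree one of the two rigid flow trees of $\Lambda$ with positive puncture at $e$ and negative puncture at $c$, namely $E_1$ and $E_2$, whose $1$-jet lifts project under base projection to the two curves $\Gamma_\alpha(e;c) = \Gamma^{\mathrm{he}}_\alpha(e;c) \cup \Gamma^{\mathrm{ta}}_\alpha(e;c)$, $\alpha\in\{0,1\}$. Each big tree contributes one negative puncture at some short Reeb chord $c_{kl}$, where $k$ and $l$ record the sheets of $\Lambda_K$ at $c^+$ and $c^-$. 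Additional partial flow trees can be attached along the head and tail segments at intersections with the unstable manifolds $W^{\mathrm{u}}(a_{pq})$, yielding further negative punctures at chords $a_{pq}$.

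For each big tree, the plan is to enumerate attachments by tracing the oriented boundary and applying the same ordering arguments used in the proof of Lemma~\ref{lma:dC}: the lexicographic ordering of the $W^{\mathrm{u}}(a_{pq})$ ensures that only single-chord partial flow trees attach at each splitting. When a boundary segment crosses the braiding region, attachments are in bijection with the monomials of $\phi_{\widehat{B}}(a_{i0})$ or $\phi_{\widehat{B}}(a_{0j})$ and package into a factor of $\Phi^L_B$ or $\Phi^R_B$. For one of the big trees, say $\Gamma_1(e;c)$, the head segment traverses the braiding region in the forward direction (contributing $\Phi^L_B$) while the tail avoids it (contributing only the longitude factor $\Ll^{-1}$ from leading-sheet crossings of $\lambda'$), producing the term $\Phi^L_B\cdot\Cc\cdot\Ll^{-1}$. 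For $\Gamma_0(e;c)$, the head avoids the braiding region (contributing $\Ll^{-1}$) while the tail crosses it in the reverse direction; an inductive argument on braid length, paralleling Proposition~\ref{prop:PhiLPhiR}, shows that reverse-oriented splittings enumerate the monomials of $\phi_{\widehat{B}}^{-1}(a_{0l})$ and therefore package into the matrix $(\Phi^R_B)^{-1}$. This yields $\Ll^{-1}\cdot\Cc\cdot(\Phi^R_B)^{-1}$. The $\Ll^{-1}$ factors (as opposed to the $\Ll$ in Lemma~\ref{lma:dC}) arise because the boundary arcs here traverse the longitude cycles in the opposite direction from the corresponding arcs of $\Gamma_{10}(c)$ and $\Gamma_{11}(c)$.

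Signs are determined via Theorem~\ref{thm:combsign}. By Theorem~\ref{thm:signunknottree} the two rigid big trees $E_1$ and $E_2$ have opposite overall sign; the capping orientation of $e$ can be chosen so that $\Gamma_1(e;c)$ contributes with sign $-$ and $\Gamma_0(e;c)$ with sign $+$, matching the statement. The per-trivalent-vertex signs from attached partial flow trees at both positive and negative twists all work out to $+1$ by the same calculation as for $\Gamma_{00}(c)$ in the proof of Lemma~\ref{lma:dC} (in which the two sign factors in a negative twist multiply to $+1$). The main obstacle will be verifying that the reverse-oriented tail traversal of $\Gamma_0(e;c)$ through the braiding region enumerates exactly the monomials of $\phi_{\widehat{B}}^{-1}$ applied to $a_{0l}$, thereby producing $(\Phi^R_B)^{-1}$ rather than some other expression; this requires a careful comparison of the attached-tree orientations under reverse traversal together with a matching inductive argument on the braid word. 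Once this combinatorial identification is in place, assembling the head and tail contributions into matrix products yields precisely the two summands in the stated formula.
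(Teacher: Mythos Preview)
Your overall plan is sound and matches the paper's: split into the two big trees $\Gamma_\alpha(e;c)$, enumerate attachments along head and tail segments, package into matrix products, and fix the overall signs via $\epsilon(E_1)=-\epsilon(E_2)$ and a choice of capping operator at $e$. For the big tree whose \emph{head} passes through the braiding region and whose tail does not, your argument (direct enumeration isotoping the head to a lift of $W^{\mathrm u}(a_{i0})$, giving $\Phi^L_B\cdot\Cc\cdot\Ll^{-1}$) is exactly what the paper does. (Note that your labels $\Gamma_0,\Gamma_1$ are swapped relative to the paper's Figure~\ref{fig:dE_C}: in the paper it is $\Gamma_0^{\mathrm{he}}$ that passes through the braiding region, not $\Gamma_1^{\mathrm{he}}$. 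This is harmless.)

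Where you genuinely diverge from the paper is in handling the other big tree, whose \emph{tail} passes through the braiding region with orientation reversed relative to the forward-direction tail used in Lemma~\ref{lma:dC}. You propose a direct enumeration, asserting that reverse-oriented splittings enumerate monomials of $\phi_{\widehat B}^{-1}(a_{0l})$ and hence package into $(\Phi^R_B)^{-1}$, to be verified by an induction on braid length. The paper does \emph{not} attempt this. Instead it uses a symmetry trick: swap the roles of $e$ and $c$ in the Morse--Bott perturbation, so that the same big-tree curve now has positive puncture at $c$ and negative at $e$, with its $1$-jet lift orientation reversed. In that alternate world the tail traverses the braiding region in the \emph{forward} direction, so the computation of Lemma~\ref{lma:dC} applies verbatim and yields a contribution $\Ll\cdot\Ee\cdot\Phi^R_B$ to $\pa_0'\Cc$. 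Since the multiscale flow trees are literally the same geometric objects (only the big disk orientation has flipped), one then reads off the contribution to $\pa_0\Ee$ by inverting this linear relation, obtaining $\Ll^{-1}\cdot\Cc\cdot(\Phi^R_B)^{-1}$.

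Your direct approach is not wrong in principle---the answer is correct---but the step you flag as the ``main obstacle'' is a real one, and your sketch does not resolve it. Reversing the big-tree orientation changes which partial trees can attach at a given junction (the consistently-oriented condition in Definition~\ref{def:genflowtree} ties the incoming/outgoing $\Lambda_K$-sheets to the upper/lower sheets of the partial tree's special puncture), so it is not simply a matter of reading the same word backward. Moreover, the identity you would need is not quite ``$\phi_{\widehat B}^{-1}(a_{0l})$ has coefficient matrix $(\Phi^R_B)^{-1}$'': from $\phi_{\widehat B}\circ\phi_{\widehat{B^{-1}}}=\id$ one gets $\Phi^R_B\cdot\phi_B(\Phi^R_{B^{-1}})=I$, so $\Phi^R_{B^{-1}}=\phi_B^{-1}\bigl((\Phi^R_B)^{-1}\bigr)$, and untangling why the geometric enumeration nonetheless produces $(\Phi^R_B)^{-1}$ (with the $a_{pq}$'s being the actual Reeb chords, not their $\phi_B$-images) requires care. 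The paper's swap-and-invert argument sidesteps all of this: it reuses the forward-direction computation already done and gets the inverse matrix for free.
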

\begin{pf}
First consider the contributions from $\Gamma_{0}(e,c)$:
$\Gamma_0^{\mathrm{ta}}(e;c)$ is disjoint from the braiding region and
from all $W^{\mathrm{u}}(a_{ij})$. Consequently, no flow tree can
split off from $\Gamma_{0}^{\mathrm{ta}}(e;c)$. Also, if
$\Gamma_0^{\mathrm{ta}}(e;c)$ is lifted to $S_i$ then it intersects
$\lambda'_{\gamma(i)}$ once negatively if $S_i$ is leading and not at
all otherwise. The curve of head-points $\Gamma_0^{\mathrm{he}}(e;c)$,
oriented as in Figure~\ref{fig:dE_C}, is isotopic without crossing the
cycle $\mu'$
to the oriented lift of the flow line $W^{\mathrm{u}}(a_{i0})$ of
the stabilized braid $\widehat B$ which lies in the sheet $S_i$. As in
the proof of Lemma~\ref{lma:dC} we conclude that the trees that split
off along $\Gamma$ correspond to the trees which split off from
$W^{\mathrm{u}}(a_{0i})$. As with $\Gamma_{00}(c)$ (Case 3) in the
proof of Lemma~\ref{lma:dC}, we find that the sign contribution from
the split-off trees agrees with the sign of $\phi_B$. Thus the
contribution from $\Gamma_{0}(e,c)$ to $\pa_0 \Ee$ is
\[
\epsilon(\Gamma_0(e;c))\Phi_B^{L}\cdot\Cc\cdot\Ll^{-1}.
\]
Choose the capping operator of $e$ so that
$\epsilon(\Gamma_0(e;c))=-1$, and note that by
Theorem~\ref{thm:signunknottree} this implies
$\epsilon(\Gamma_1(e;c))=+1$.

Next consider the contributions from $\Gamma_{1}(e,c)$: $\Gamma_1^{\mathrm{he}}(e;c)$ is disjoint from the braiding region and from all $W^{\mathrm{u}}(a_{ij})$. Consequently, no flow tree can split off from $\Gamma_{1}^{\mathrm{he}}(e;c)$. Also, if $\Gamma_1^{\mathrm{he}}(e;c)$ is lifted to $S_i$ then it intersects $\lambda'_{\gamma(i)}$ once with negative intersection number if $S_i$ is leading and not at all otherwise. The tail-points curve $\Gamma_1^{\mathrm{ta}}(e;c)$ is  isotopic to the lift of the flow line $W^{\mathrm{u}}(a_{i0})$ of the stabilized braid $\widehat B$ with orientation reversed. Switching the roles of $e$ and $c$ in the Morse-Bott perturbation we would get the following contribution to $\pa'_0 \Cc$ from this disk:
\[
 \Ll\cdot\Ee\cdot\Phi_{B}^{R},
\]
where $\pa'_0$ denotes the analogue of $\pa_0$ with the alternative Morse-Bott perturbation, by a repetition of the argument above for $\Gamma_0(e;c)$.

Observing that the multiscale flow trees we are interested in are
exactly the same as those for the alternative Morse-Bott perturbation
except for the big disk having the opposite orientation, we can view
the equation above as a linear system of equations with coefficients
in $\A_n^0$ and invert it to get the contribution to $\pa_0\Ee$. Thus
we find that the contribution from $\Gamma_{1}(e,c)$ is
\[
\epsilon(\Gamma_0(e;c))\Ll^{-1}\cdot
\Cc\cdot(\Phi_{B}^{R})^{-1}=\Ll^{-1}\cdot \Cc\cdot(\Phi_B^{R})^{-1}
\]
and the lemma holds.
\end{pf}

\subsubsection{Multiscale flow trees of type ${\rm MT}_1$ with positive
  puncture at $e_{ij}$}\label{sssec:dEMT1}
Finally, we enumerate multiscale flow trees of type ${\rm MT}_1$.
Recalling Lemma~\ref{lma:treesofU} and Remark~\ref{rmk:1dfam}, we see
there are four constrained rigid flow trees with positive puncture at
$e$ and no negative punctures, that are constrained to pass through
some $b_{ij}.$ We denote their $1$-jet lifts $\Gamma_{\alpha
  \beta}(e)$, $\alpha,\beta\in\{0,1\}$, see Figures
\ref{fig:dE_Bfirst} and \ref{fig:dE_Bsecond}. We note that each
constrained tree is a deformation of a broken tree, with
$\Gamma_{\alpha\beta}(c)$ as defined in Section~\ref{sssec:dC} and
$\Gamma_\alpha(e;c)$ as defined in Section~\ref{sssec:dEMT0}:
\[
\begin{aligned}
\Gamma_{00}(e) &\simeq \Gamma_{1}(e;c)\,\#\,\Gamma_{00}(c),\\
\Gamma_{10}(e) &\simeq \Gamma_{1}(e;c)\,\#\,\Gamma_{10}(c),\\
\Gamma_{01}(e) &\simeq \Gamma_{1}(e;c)\,\#\,\Gamma_{01}(c),\\
\Gamma_{11}(e) &\simeq \Gamma_{1}(e;c)\,\#\,\Gamma_{11}(c).
\end{aligned}
\]
\begin{figure}[htb]
\labellist
\small\hair 2pt
\pinlabel $e^+$ [Bl] at 274 -7
\pinlabel {braiding region} [Bl] at 12 203
\pinlabel $e^-$ [Bl] at 142 99
\pinlabel $b_{ij}$ [Bl] at 230 26
\pinlabel $b_{ij}$ [Bl] at 230 158
\pinlabel $\Gamma_{00}(e)$ [Bl] at 165 43
\pinlabel $\Gamma_{01}(e)$ [Bl] at 165 138
\pinlabel $e^+$ [Bl] at 274 188
\endlabellist
\centering
\includegraphics{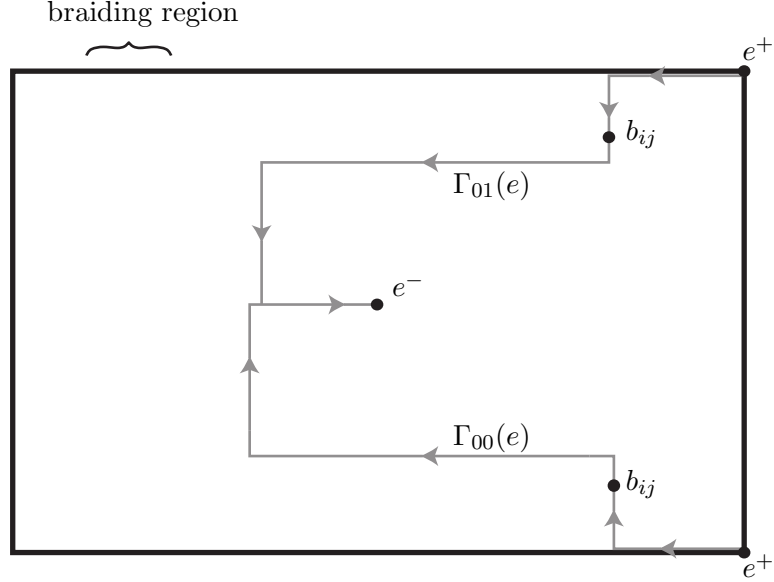}
\caption{$1$-jet lifts of constrained flow trees with positive puncture at $e$ which are disjoint from the braiding region.}
\label{fig:dE_Bfirst}
\end{figure}
\begin{figure}[htb]
\labellist
\small\hair 2pt
\pinlabel $e^+$ [Bl] at 274 -7
\pinlabel {braiding region} [Bl] at 12 203
\pinlabel $e^-$ [Bl] at 142 99
\pinlabel $b_{ij}$ [Bl] at 230 26
\pinlabel $b_{ij}$ [Bl] at 230 158
\pinlabel $\Gamma_{10}(e)$ [Bl] at 20 43
\pinlabel $\Gamma_{11}(e)$ [Bl] at 20 138
\pinlabel $e^+$ [Bl] at 274 188
\endlabellist
\centering
\includegraphics{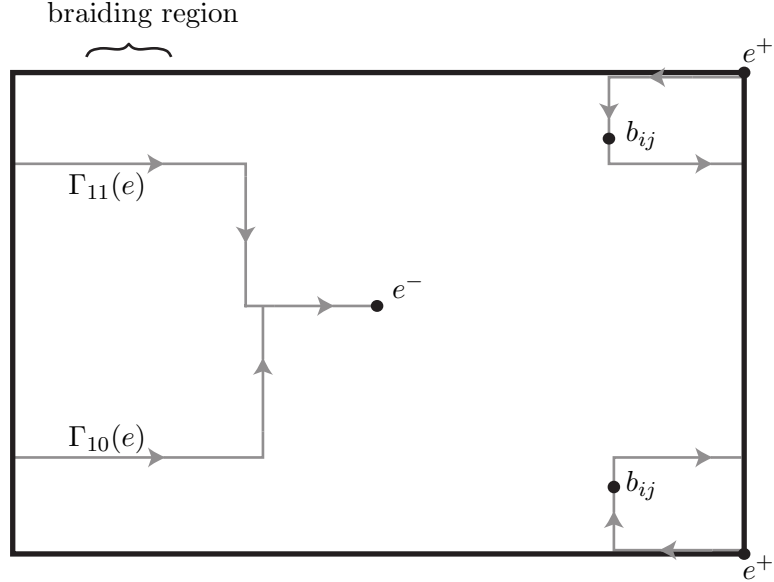}
\caption{$1$-jet lifts of constrained flow trees with positive
  puncture at $e$ which intersect the braiding region. As with
  $\Gamma_{\alpha\beta}(c)$, the horizontal
  segments in the middle of $\Gamma_{10}(e)$ and $\Gamma_{11}(e)$ are
  at $t=\frac{\pi}{2}$ and $t=\frac{3\pi}{2}$, respectively.}
\label{fig:dE_Bsecond}
\end{figure}

Writing the contribution to $\pa e_{ij}$ from trees of type $\rm{MT}_1$ as $\pa_1 e_{ij}$ and using the matrix notation of Theorem \ref{thm:mainlink}, we have the following result.

\begin{lma}\label{lma:dE;B}
With capping operators and orientation data as in Lemma~\ref{lma:dE;C}, the following equation holds:
\[
\pa_1 \Ee = \Bb\cdot(\Phi_B^{R})^{-1} + \Bb\cdot \Ll^{-1}.
\]
\end{lma}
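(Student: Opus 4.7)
The plan is to enumerate, case by case, the multiscale flow trees of type $\mathrm{MT}_1$ with positive puncture at some $e_{ij}$, following the scheme used in the proofs of Lemmas~\ref{lma:dC} and~\ref{lma:dE;C}. Each such tree has big tree equal to one of the four constrained rigid trees $\Gamma_{\alpha\beta}(e)$, $\alpha,\beta\in\{0,1\}$, constrained by a chord $b_{ij}$ (which appears as a negative puncture of the multiscale tree), together with possibly some partial flow trees of $\Lambda_K\subset J^1(\Lambda)$ attached at splitting points along the big tree.

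First I would handle $\Gamma_{00}(e)$ and $\Gamma_{01}(e)$, which by Figure~\ref{fig:dE_Bfirst} are disjoint from the braiding region. Hence no partial flow tree can split off, and the only multiscale tree is the bare constrained big tree itself, contributing $b_{ij}$ up to sign and homology coefficients. As in Cases~1 and~2 of the proof of Lemma~\ref{lma:dC}, the $1$-jet lifts are disjoint from $\mu'$ and cross the cycle $\lambda'$ exactly once in whichever of the leading sheets appears, but now with negative intersection number (since the flow orientation along the $e$-to-$c$ portion runs opposite to the $c$-to-$e$ portion used there). This produces a right multiplication by $\Ll^{-1}$. The sign calculation uses Theorem~\ref{thm:combsign} together with the orientation choices~\eqref{eq:ker(b)} and~\eqref{eq:coker(b)=ker(a)} and the fact (fixed in the proof of Lemma~\ref{lma:dE;C}) that $\epsilon(\Gamma_1(e;c))=+1$; combined with the deformation $\Gamma_{00}(e)\simeq \Gamma_1(e;c)\,\#\,\Gamma_{00}(c)$ and similarly for $\Gamma_{01}(e)$, this yields total contribution $\Bb\cdot \Ll^{-1}$.

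Next I would handle $\Gamma_{10}(e)$ and $\Gamma_{11}(e)$, whose $1$-jet lifts traverse the braiding region. As in Cases~3 and~4 of the proof of Lemma~\ref{lma:dC}, partial flow trees can split off only along the horizontal portion in the braiding region, and these split-off configurations are in one-to-one correspondence with flow trees of the stabilized braid $\widehat B$. The key point is that the flow orientation along the horizontal portion now runs opposite to the corresponding orientation in the $c$-case, because the big tree travels from $e$ (positive puncture) through $b_{ij}$ (negative puncture) rather than from $c$ alone. Thus, exactly as in the inversion argument in the proof of Lemma~\ref{lma:dE;C} (switching the role of the Morse--Bott perturbation and inverting the resulting linear system with coefficients in $\A_n^0$), the contributions are encoded by $(\Phi_B^R)^{-1}$ in place of $\Phi_B^R$. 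Homology coefficient tracking is essentially identical to the $\Gamma_{00}(c)$/$\Gamma_{01}(c)$ computation and produces no extra factors beyond what is already built into $\mathbf{B}$. Summing gives the contribution $\Bb\cdot(\Phi_B^R)^{-1}$.

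The main obstacle will be the careful bookkeeping of signs and homology coefficients in the braiding-region case, since the reversed flow orientation means we cannot directly quote the sign analysis of Lemma~\ref{lma:dC} but must combine the sign of $\Gamma_1(e;c)$ with the vector-splitting signs at the trivalent vertices in $\widehat B$ and verify that the overall contribution is consistent with the inversion trick of Lemma~\ref{lma:dE;C}. Once this sign consistency and the inversion step are established, summing the contributions from the four constrained rigid trees gives $\pa_1 \Ee = \Bb\cdot(\Phi_B^R)^{-1} + \Bb\cdot \Ll^{-1}$, completing the proof.
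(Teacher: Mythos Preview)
Your overall strategy matches the paper's proof, but there are two bookkeeping errors in the treatment of $\Gamma_{00}(e)$ and $\Gamma_{01}(e)$ that need correction.

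First, your reason for why no partial flow tree splits off is wrong. Being disjoint from the braiding region does not prevent splitting: the unstable manifolds $W^{\mathrm{u}}(a_{ik})$ are (nearly) horizontal lines extending across all values of $s$, and the $1$-jet lifts of $\Gamma_{00}(e)$ and $\Gamma_{01}(e)$ do cross them near $b_{ij}$. The actual reason no splitting occurs is the lexicographic ordering of the $W^{\mathrm{u}}(a_{ik})$ from Lemma~\ref{lma:braidReeb+triv}: once the lift is on sheet $j$ after the constraint at $b_{ij}$ (say with $i>j$), the relevant manifolds $W^{\mathrm{u}}(a_{jk})$ for $k<j$ lie strictly below the level of $b_{ij}$ in the $t$-direction, so the upward-moving lift never meets them. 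This is exactly the argument the paper invokes by reference to the proof of Lemma~\ref{lma:dC}.

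Second, your claim that these $1$-jet lifts are disjoint from $\mu'$ is false for $\Gamma_{00}(e)$ (the $i>j$ case). After the constraint at $b_{ij}$, which sits just below $t=\tfrac{\pi}{2}$, the lift on sheet $j$ runs upward to $e^{-}$ at $t=\pi$ and crosses $\mu'_{\alpha(j)}$ once positively, contributing a factor $\mu_{\alpha(j)}$. This is precisely what produces the entry $\Bb_{ij}=b_{ij}\mu_{\alpha(j)}$ for $i>j$; without it your computation would yield the bare matrix of $b_{ij}$'s rather than $\Bb$. (Your analogy ``as in Cases~1 and~2 of Lemma~\ref{lma:dC}'' is apt, but note that in Case~1 there the lift \emph{does} intersect $\mu'$.)

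For $\Gamma_{10}(e)$ and $\Gamma_{11}(e)$, your reduction to the $\Gamma_1^{\mathrm{ta}}(e;c)$ computation and the inversion trick from Lemma~\ref{lma:dE;C} is exactly what the paper does; the paper's key observation is that the horizontal portion through the braiding region lies at $t=\tfrac{\pi}{2}$ or $t=\tfrac{3\pi}{2}$ and is therefore isotopic to $\Gamma_1^{\mathrm{ta}}(e;c)$ in the complement of all $W^{\mathrm{u}}(a_{ij})$ and of $\mu'$.
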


\begin{pf}
Consider first the contributions of the big trees $\Gamma_{00}(e)$ and
$\Gamma_{01}(c)$. Note that
these tree do not intersect the braiding region and that because of the
lexicographic order, exactly as in the proof of Lemma \ref{lma:dC}, a
disk which is constrained at $b_{ij}$ cannot split off flow lines to
any $a_{ik}$. Thus there is exactly one multiscale flow tree
contributing to $\pa_1 e_{ij}$ that arises from
$\Gamma_{00}(e)$ or $\Gamma_{01}(e)$ if $i\neq j$ (from
$\Gamma_{00}(e)$ if $i>j$ or $\Gamma_{01}(e)$ if $i<j$), and it
begins at $e^+$ on sheet $i$, jumps at $b_{ij}$ to
sheet $j$, and remains there until $e^-$. (If $i=j$, then there is no
such multiscale tree.)
Up to sign and homology
classes, we conclude that $\Gamma_{00}(e)$ and
$\Gamma_{01}(c)$ combined contribute the term $b_{ij}$ (or $0$ if $i=j$)
to $\pa e_{ij}$.

As for
homology classes, the $1$-jet lift of this
multiscale tree is disjoint from all $\lambda'$ cycles, with the
exception of $\lambda'_{\alpha(j)}$ (one negative intersection) if $j$
is leading; furthermore, it
is disjoint from all $\mu'$ cycles, with the exception of
$\mu'_{\alpha(j)}$ (one positive intersection) if $i>j$.
Next we determine the signs. The sign of the endpoints of the moduli
spaces of $\Gamma_{00}(e)$ and $\Gamma_{01}(e)$ listed above are both
$+1$ by our choice of capping operators. Thus the orienting vector
field $\nu$ of the moduli space at $b_{ij}$ satisfies $\nu=\pa_s$ in
both cases. Furthermore, $v^\krn(\Gamma_{00}(e))$ equals $\pa_t$ at
$b_{ij}$, $i>j$, and $v^\krn(\Gamma_{01}(e))$ equals $-\pa_t$ at
$b_{ij}$, $i<j$. This shows that the signs are positive.
Collecting homology classes and signs, we conclude that the big trees
$\Gamma_{00}(e)$ and $\Gamma_{01}(e)$ contribute $\Bb\cdot \Ll^{-1}$
to $\pa_1\Ee$ (recall that the $(i,j)$ entry of $\Bb$ is $b_{ij}\mu_j$
if $i>j$, $0$ if $i=j$, and $b_{ij}$ if $i<j$).

Next consider the contributions to $\pa e_{ij}$ from big trees
$\Gamma_{10}(e)$ and $\Gamma_{11}(e)$. The $1$-jet lifts of the
multiscale flow trees
corresponding to these big trees begin at $e^+$ on sheet $i$, switch
to sheet $k$ for some $k<i$ (for $\Gamma_{10}(e)$) or $k>i$ (for
$\Gamma_{11}(e)$) at the constraint $b_{ik}$, and then pass through
the braiding region and end at $e^-$ on sheet $j$. The (horizontal)
portion passing
through the braiding region is located at $t=\frac{\pi}{2}$ or
$t=\frac{3\pi}{2}$ and is thus isotopic to
$\Gamma_1^{\mathrm{ta}}(e;c)$ (which is at $t=\pi$) in the complement
of all the unstable
manifolds $W^{\mathrm{u}}(a_{ij})$ (which are below $t=\frac{\pi}{2}$
or above $t=\frac{3\pi}{2}$) and the cycles $\mu'$ (which is below
$t=\frac{\pi}{2}$).
Thus repeating the computation in Lemma \ref{lma:dE;C}, we find that
$\Gamma_{10}(e)$ and $\Gamma_{11}(e)$ combined give a contribution to
$\pa_1 \Ee$ of
\[
\Bb\cdot(\Phi_B^{R})^{-1}.
\]
Here we find that the sign of this term is $+1$ after observing that
the sign of the underlying restricted rigid disk is again $+1$ by
repeating the calculation above. The lemma follows.
\end{pf}

\subsubsection{Proof of Theorem \ref{thm:mainlink}}\label{sssec:pfmainlink}
By Theorem~\ref{thm:diskandgentree}, the differential for the
Legendrian DGA $LA(\Lambda_K)$ can be computed in terms of multiscale flow trees determined by $\Lambda_K$ and $\Lambda$.
The contribution from multiscale flow trees of types ${\rm MT}_{\varnothing}$, ${\rm MT}_0$, and ${\rm MT}_1$ are calculated in Lemma \ref{lma:dB}, \ref{lma:dC}, \ref{lma:dE;C}, and~\ref{lma:dE;B}.\qed

\section{Multiscale Flow Trees and Holomorphic Disks}\label{sec:diskandgentree}
The main purpose of this section is to establish Theorem
\ref{thm:diskandgentree}. The proof has several steps. In Section
\ref{ssec:basicft} we establish the connection between holomorphic
disks and flow trees for $\Lambda_U$ following \cite{Ekholm07}. In
Section \ref{ssec:refinedft} we discuss the slightly stronger disk/flow
tree correspondence needed here. Finally, in Sections
\ref{ssec:disktoqtree} and \ref{ssec:qtreetodisk} we establish the correspondence between holomorphic disks of $\Lambda_K$ in $J^1(S^2)$ and multiscale flow trees, {\em i.e.}, holomorphic disks with boundary on $\Lambda_U$ with flow trees of $\Lambda_K\subset J^{1}(\Lambda_U)$ attached along its boundary.

\subsection{Basic results on constrained flow trees and disks}\label{ssec:basicft}
In this subsection we give a slight modification of results from \cite{Ekholm07} in the case of a Legendrian surface $\Lambda$ in the $1$-jet space of a surface $S$, $\Lambda\subset J^{1}(S)$. We will apply these results in two cases relevant to this paper; namely when $\Lambda=\Lambda_U$ and $S=S^{2}$, and  $\Lambda=\Lambda_K$ and $S=\Lambda_U$.

For the notion of a flow tree of $\Lambda$ we refer to Section~\ref{ssec:gft} and for a more thorough account \cite[Section 2.2]{Ekholm07}. As explained in \cite[Section 3.1]{Ekholm07}, associated to each flow tree $\Gamma$ is its formal dimension $\dim(\Gamma)$, see Equation~\eqref{eq:dimoftrees},  which is the dimension of the manifold of nearby flow trees for sufficiently generic $\Lambda$. The main result of \cite{Ekholm07} is concerned with the relation between rigid holomorphic disks and rigid flow trees. Here we will need a slight generalization. To this end we introduce \emph{constrained} rigid holomorphic disks and \emph{constrained} flow trees.
If $p_1,\ldots, p_r$ are distinct points in $\Lambda$ then a holomorphic disk \emph{constrained by $p_1,\dots,p_r$} is a holomorphic disk with $r$ extra boundary punctures at which the evaluation map hits $p_1,\dots,p_r$. Similarly, a  flow tree of $\Lambda$ \emph{constrained by $p_1,\dots,p_r$} is a flow tree with $1$-jet lift with $r$ extra marked points where the evaluation map hits $p_1,\dots,p_r$. If $\Gamma'$ is a flow tree (or holomorphic disk) constrained by
$p_1,\dots,p_r\in\Lambda$ and if $\Gamma$ is that flow tree (or holomorphic disk) with the constraining conditions forgotten
then the formal dimension of the constrained flow tree (or holomorphic disk) $\Gamma'$ satisfies
\[
\dim(\Gamma')=\dim(\Gamma)-r,
\]
where $\dim(\Gamma)$ is the formal dimension of $\Gamma$. We say that a flow tree (or holomorphic disk) is {\em
constrained rigid} if its formal dimension equals $0$ and it is transversely cut out by its defining
equations.

For $0<\sigma \leq 1$ consider the map
\[
s_\sigma\colon J^1(S)\to J^1(S),\quad s_\sigma(q,p,z)=(q,\sigma p,\sigma z)
\]
where $q\in S$, $p\in T^*_qS$ and $z\in \R$ and write
\[\Lambda_\sigma=s_\sigma(\Lambda).\]
Notice that since $s_\sigma$ preserves the contact structure,
$\Lambda_\sigma$ is still Legendrian and clearly Legendrian isotopic to $\Lambda$. In order to state the correspondence theorem relating constrained rigid disks and trees we recall the following notation: $D_m$ denotes the unit disk with $m$ boundary punctures and $\Pi\colon J^{1}(S)\to T^{\ast}S$ denotes the Lagrangian projection.

\begin{thm}\label{t:basicdisktree}
Given $\Lambda\subset J^1(S)$ as above there is a small perturbation of $\Lambda$ so that for a generic metric
$g$ on $S$ there exist $\sigma_0>0$, almost complex structures $J_\sigma$, $0<\sigma<\sigma_0$, and perturbations $\widetilde\Lambda_\sigma$ of $\Lambda_\sigma$ with the following properties.
\begin{itemize}
\item
The Legendrian submanifold $\widetilde\Lambda_\sigma$ is obtained from $\Lambda_\sigma$ by a $C^0$-deformation supported near the cusp edges of $\Lambda_\sigma.$
\item
The constrained rigid flow trees defined by $\widetilde\Lambda_\sigma$ have well-defined limits as $\sigma\to 0.$
\item
The (constrained)
rigid $J_\sigma$-holomorphic disk with boundary on $\widetilde\Lambda_\sigma$ with one positive puncture are in
one-to-one correspondence with the (constrained) rigid flow trees of $\widetilde\Lambda_\sigma$ with one positive puncture.
In particular, the following holds for all sufficiently small $\sigma>0$: if
$u_\sigma\colon D_m\to T^\ast S$ is a (constrained) rigid $J_\sigma$-holomorphic disk then
there exists a (constrained) rigid flow tree $\Gamma$ of $\widetilde\Lambda_\sigma$ such that
$u_\sigma(\pa D_m)$ lies in an $\Ordo(\sigma\log(\sigma^{-1}))$ neighborhood of the Lagrangian lift $\bar\Gamma$ of $\Gamma$.
Moreover, outside $\Ordo(\sigma\log(\sigma^{-1}))$-neighborhoods of the $Y_0$-, $Y_1$-vertices and
switches of $\Gamma,$ the curve $u_\sigma(\pa D_m)$ lies at $C^1$-distance
$\Ordo(\sigma\log(\sigma^{-1}))$ from $\bar\Gamma$.
\end{itemize}
\end{thm}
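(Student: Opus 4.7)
The unconstrained version of this statement is the main theorem of \cite{Ekholm07}. The plan is to recall the structure of that proof and then extend it to the constrained setting by working with higher-dimensional moduli spaces.

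First, for the unconstrained case, I would follow \cite{Ekholm07} to set up the perturbation $\widetilde\Lambda_\sigma$ and the almost complex structure $J_\sigma$. The $C^0$-perturbation near the cusp edges is what turns the scaled front into an ``admissible'' front in the sense of \cite[Sections 4--5]{Ekholm07}; away from the caustic $\Sigma$, the scaling $s_\sigma$ alone already produces a Lagrangian in a Weinstein neighborhood of the zero section, whose local defining functions are $\sigma$-scaled copies of those of $\Lambda$, and whose associated Morse-theoretic gradient flow equations become the singular limit of the Cauchy--Riemann equation. Two steps then assemble the correspondence: (i) a compactness/convergence argument showing that any sequence of rigid $J_\sigma$-holomorphic disks with boundary on $\widetilde\Lambda_\sigma$ subconverges in the Hausdorff sense to (the 1-jet lift of) a flow tree, using monotonicity for holomorphic curves and area bounds to localize the disks in thin strip-like regions along the edges with concentration of energy at the vertices; (ii) a gluing argument that, for each local vertex model ($Y_0$-, $Y_1$-vertices, switches, cusp ends, punctures) and strip model along an edge, produces a family of true $J_\sigma$-holomorphic pieces that can be glued to yield a rigid disk near any given rigid flow tree. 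The quantitative proximity $\Ordo(\sigma\log\sigma^{-1})$ comes from the explicit strip/vertex solutions: it is $\Ordo(\sigma)$ along edges and degrades to $\Ordo(\sigma\log\sigma^{-1})$ near the vertices because of logarithmic weights in the Fredholm theory used for gluing.

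For the constrained case, the key observation is that a constrained rigid object (tree or disk) of codimension $r$ sits inside an $r$-dimensional family of unconstrained objects. I would therefore extend Ekholm's correspondence from rigid ($0$-dimensional) to $r$-dimensional moduli spaces. Transversality and compactification of $r$-dimensional families of flow trees with one positive puncture are handled in \cite[Section 3]{Ekholm07}, and the same gluing/compactness analysis produces a diffeomorphism
\[
\Psi_\sigma\colon \mathcal{M}^{\mathrm{tree}}_r(\widetilde\Lambda_\sigma) \longrightarrow \mathcal{M}^{\mathrm{disk}}_r(\widetilde\Lambda_\sigma,J_\sigma)
\]
between the corresponding $r$-dimensional moduli spaces of trees and disks, together with a continuous assignment of $r$ boundary marked points on each disk, such that the evaluation maps
\[
\mathrm{ev}^{\mathrm{tree}},\,\mathrm{ev}^{\mathrm{disk}}\colon \mathcal{M}_r \longrightarrow \Lambda^r
\]
satisfy $|\mathrm{ev}^{\mathrm{disk}}\circ\Psi_\sigma - \mathrm{ev}^{\mathrm{tree}}| = \Ordo(\sigma\log\sigma^{-1})$ in the $C^0$-topology. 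Choosing the constraint points $p_1,\ldots,p_r$ generically, the constrained rigid trees are precisely the transverse preimages $(\mathrm{ev}^{\mathrm{tree}})^{-1}(p_1,\ldots,p_r)$, and likewise for disks; after perturbing the constraint points to avoid small neighborhoods of projections of vertices, the $C^1$-proximity statement on edges implies that the two preimages coincide, giving the desired bijection.

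The main obstacle I expect is the extension of transversality and regularity of moduli spaces in dimension $r$ while maintaining the quantitative proximity needed to identify the preimages of the constraint points. Within thin strip-like regions along edges one has $C^1$-control, which suffices for the implicit function theorem to match the two evaluation maps. Near the vertices, however, only $C^0$-control of size $\Ordo(\sigma\log\sigma^{-1})$ is available, so generic placement of the constraint points away from vertex projections is essential; this is achieved by a standard Sard-type argument applied to the smooth vertex locus in $\Lambda$ (a finite union of arcs and points), which has measure zero. Once the constraint points avoid this set uniformly in $\sigma$, the correspondence $\Psi_\sigma$ restricts to a bijection on the constrained rigid moduli spaces, and the quantitative proximity claims follow from the unconstrained version.
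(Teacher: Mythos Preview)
Your approach is plausible but differs substantively from the paper's, and it carries more overhead than necessary.

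The paper does not pass through $r$-dimensional unconstrained moduli spaces and evaluation maps. Instead it modifies the Fredholm/gluing setup of \cite{Ekholm07} \emph{directly} for the constrained problem. Concretely: along each constrained rigid tree one performs the same metric/Legendrian preparations as along ordinary rigid trees (flatness, affine sheets, edge points), arranging that no constraining point is an edge point. For convergence, Theorem~1.2 of \cite{Ekholm07} already shows that rigid disks converge to flow trees; since passing through a constraint point is a closed condition, constrained disks converge to constrained trees. For existence/uniqueness (Theorem~1.3 of \cite{Ekholm07}), one adds the preimage of each constraint point $m$ as a \emph{marked point} in the approximate-solution domain $\Delta_{p,m}^{(0,0)}(\Gamma,\sigma)$ and sets the corresponding shift space $V_\sol(m)=0$ rather than $V_\sol(m)\approx\R$. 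This kills exactly one dimension per constraint at the level of the linearized operator, so the same Newton--Picard argument runs verbatim and produces a unique constrained rigid disk near each constrained rigid tree.

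By contrast, your route requires a diffeomorphism $\Psi_\sigma$ between $r$-dimensional tree and disk moduli spaces. That statement is not in \cite{Ekholm07}: Theorems~1.2--1.3 there are about \emph{rigid} objects, and upgrading them to a parametrized correspondence on positive-dimensional moduli is genuine extra work (parametrized gluing, uniform surjectivity of the linearization over the family). Moreover, to match preimages of the constraint under $\mathrm{ev}^{\mathrm{tree}}$ and $\mathrm{ev}^{\mathrm{disk}}$ you need $C^1$-closeness of these maps \emph{as functions on the moduli space}, i.e.\ control on how the marked boundary point moves under variations in moduli, not just the $C^1$-closeness of a single boundary curve that the theorem states. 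You gesture at this, but it is a separate estimate from anything in \cite{Ekholm07}. The paper's direct approach sidesteps both issues: by building the constraint into the domain and the function space from the start, the constrained problem has the same index-zero structure as the unconstrained one, and no higher-dimensional correspondence or evaluation-map transversality argument is needed.
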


\begin{pf}
The proof is an adaption of results from \cite{Ekholm07}, where Theorems 1.2 and 1.3 gives a version of Theorem \ref{t:basicdisktree} for unconstrained rigid disks and trees. (See the proof of Theorem 1.3 and Lemma 5.13 in \cite{Ekholm07} for the $\Ordo(\sigma\log(\sigma^{-1}))$-estimate). We briefly recall the construction in order to adapt it to the constrained rigid case.

The first step is to fix a Riemannian metric on $S$ such that there are only a finite number of (constrained) flow trees of formal dimension $0$ determined by $\Lambda$ and such that all such flow trees are transversely cut out. A straightforward modification of the unconstrained case, \cite[Proposition 3.14]{Ekholm07}, shows that  the set of such metrics is open and dense.

The second step is to change the metric to $g$, to introduce almost complex structures $J_\sigma$, and to isotope the Legendrian $\Lambda_\sigma$ to a new Legendrian $\widetilde \Lambda_\sigma$. The main features of these objects are the following. The rigid flow trees determined by $g$ and $\widetilde\Lambda_\sigma$ are in 1-1 correspondence with the rigid flow trees of $\Lambda$ and corresponding trees lie very close to each other, see \cite[Lemma 4.4]{Ekholm07}. The submanifold $\widetilde\Lambda_\sigma$ is rounded near its cusps and changed accordingly near its swallowtails. The metric $g$ is flat in a neighborhood of any rigid flow tree and $\Pi(\widetilde\Lambda_\sigma)$ is affine in this neighborhood outside a finite number of regions of diameter $\Ordo(\sigma)$ where it is curved in only one direction, see \cite[Subsection 4.2]{Ekholm07} and Remark \ref{rmk:Lneartrees} below for details. The almost complex structure $J_\sigma$ agrees with the almost complex structure $J_g$ induced by the metric in a neighborhood of all rigid flow trees and swallowtails and outside a neighborhood of the caustic (the locus of fiber tangencies) of $\Pi(\widetilde\Lambda_\sigma)$. Near the points in caustic outside a neighborhood of the swallowtails, $J_\sigma$ is constructed so that both $J_\sigma$ itself and the Lagrangian boundary condition given by $\Pi(\Lambda)$ splits as products with one direction along the cusp edge and one perpendicular to it, see \cite[Section 4.2.3]{Ekholm07}.

In fact, $\Pi(\widetilde \Lambda_\sigma)=\Phi_\sigma^{-1}(\widehat L_\sigma)$ where $\widehat
L_\sigma$ is a totally real immersed submanifold and where $\Phi_\sigma$ is a diffeomorphism with $d_{C^{0}}(\Phi_\sigma,\id)=\Ordo(\sigma)$ and with $d_{C^{1}}(\Phi_{\sigma},\id)$ arbitrarily small (but finite), which is supported in a small neighborhood of the caustic of $\Pi(\widetilde\Lambda_{\sigma})$ and which is equal to the identity in a neighborhood of all rigid flow trees of $\widetilde\Lambda_\sigma$.  We use the almost complex structure $J_\sigma=d\Phi_\sigma\circ J\circ
d\Phi_\sigma^{-1}$ on $T^\ast S$, where $J$ is the almost complex structure induced by a metric
on $S$. Then $J_\sigma$-holomorphic disks with boundary on $\widetilde \Lambda_\sigma$ correspond to
$J$-holomorphic disks with boundary on $\widehat L_\sigma$.

We modify the deformations of the metric and of the Legendrian  discussed above, in order
to deal also with constrained rigid trees. In the presence of point conditions, repeat the construction in
\cite[Subsection 4.2]{Ekholm07}, deforming the metric to $g$ and constructing $\widehat L_\sigma$ in the exact same way as
along rigid flow trees.  Furthermore, this should be
done in such a way that no constraining point is an edge point, see \cite[Subsection 4.2.H]{Ekholm07}. Then take
$J_\sigma=d\Phi_\sigma\circ J\circ d\Phi_\sigma^{-1}$, and let $\widetilde\Lambda_\sigma$ be such that $\Phi_\sigma(\Pi\widetilde \Lambda_\sigma)=\widehat L_\sigma$, where $J$ is the almost complex structure on $T^\ast S$ induced by the
special metric. We use the notation from \cite[Remark 3.8]{Ekholm07}  for vertices of (constrained) rigid flow
trees.

After the modifications of the constructions in \cite[Section 4]{Ekholm07} (first and second steps) described above, the
theorem follows from \cite[Theorem~1.2, Theorem~1.3]{Ekholm07} with the following additions.
Theorem~1.2 shows that any rigid holomorphic disk with one positive puncture converges to a flow
tree. Since the condition that the boundary of a disk passes through a constraining point is
closed, it follows that constrained disks converge to constrained trees. In Theorem~1.3, rigid
$J_\sigma$-holomorphic disks with boundary on $\widehat L_\sigma$ near any rigid flow tree are constructed and
proved to be unique. The corresponding construction and uniqueness proof in the case of constrained
rigid flow trees is completely analogous after the following alteration. If $\Gamma$ is a
constrained rigid flow tree with constraining point $m$ then take the preimage of $m$ (which is not
an edge point) to be a marked point in the domain $\Delta_{p,m}^{(0,0)}(\Gamma,\sigma)$ of the approximately holomorphic disks, see \cite[Subsection 6.2.A]{Ekholm07}, and let $V_\sol(m)=0$ instead of $V_\sol(m)\approx\R$, see \cite[Subsection~6.3.B, Definition~6.15]{Ekholm07}, .
\end{pf}

\begin{rmk}\label{rmk:Lneartrees}
Below we will use the following special features of the metric $g$ and the Legendrian submanifold $\widetilde\Lambda_\sigma$ in Theorem \ref{t:basicdisktree}. (For the construction of the metric and $\widetilde\Lambda_\sigma$ with these properties we refer to \cite[Section 4]{Ekholm07}.)
\begin{itemize}
\item[$(1)$] There exists a neighborhood $X\subset S$ which contain all (constrained) rigid flow trees in which the metric $g$ on $S$ is flat. We write $X_{\widetilde\Lambda_\sigma}\subset\Pi_{F}^{-1}(X) \cap \widetilde\Lambda_\sigma$.
\item[$(2)$] The image of any flow segment in a (constrained) rigid flow tree is a geodesic of the metric $g$.
\item[$(3)$] The two sheets of the Lagrangian projection $\Pi(\widetilde\Lambda_\sigma)$ near each double point consists of two transverse affine Lagrangian subspaces.
The Lagrangian projection $\Pi(\widetilde\Lambda_\sigma)$ is parallel to the $0$-section ({\em i.e.\ }~the graph of a polynomial function in flat local coordinates of degree at most $1$) in a neighborhoods of the following points: trivalent vertices of any (constrained) rigid flow tree,
and intersection points of any (constrained) rigid flow tree.
\item[$(4)$] In Fermi coordinates along any edge the Lagrangian is parallel to the $0$-section in the coordinate perpendicular to the edge ({\em i.e.\ }~the Lagrangian is the differential of the graph of a function of the form $f(x_1)+cx_2$), where $x_1$ is the coordinate along the edge, $x_2$ perpendicular to it, and $c$ a constant.
\item[$(5)$]
Outside $\Ordo(\sigma)$-neighborhoods of a finite number of {\em edge points} in each (constrained) rigid flow tree the Lagrangian projection $\Pi(\widetilde\Lambda_\sigma)$ is affine ({\em i.e.}, the function $f$ in $(4)$ has the form $f(x_1)=a_2x_1^{2}+a_1x_1+a_0$.) For our study of multiscale flow trees below we will also assume that the edge point regions are disjoint from the junction points.
\item[$(6)$] We will also assume that the following extra condition is met: at a finite number of fixed \emph{extra} points the Lagrangian projection has the form mentioned in $(3)$.
\end{itemize}
\end{rmk}

\subsection{Refined results on constrained flow trees and disks}\label{ssec:refinedft}
Recall that Theorem \ref{thm:diskandgentree} relates holomorphic disks to multiscale flow trees. This relation is the result of a double degeneration: first the conormal lift of the unknot is pushed to the $0$-section in $J^{1}(S^{2})$ and then the conormal lift of a more general closed braid is pushed toward the almost degenerate conormal lift of the unknot. To deal with this we will stop the first degeneration close to the limit where actual holomorphic disks on the almost degenerate conormal lift of the unknot are close to flow trees. Then we degenerate the conormal lift of a general braid toward the almost degenerate conormal lift of the unknot and show that holomorphic disks near the limit admit a description in terms of quantum flow trees, {\em i.e.}, holomorphic disks with flow trees attached along their boundaries. Since both quantum flow trees and multiscale trees are defined as intersection loci of evaluation maps of holomorphic disks and flow trees, respectively, we need to show that the disks and the flow trees are arbitrarily $C^{1}$-close in order to get the desired relation between quantum flow trees and multiscale flow trees. However the relation between quantum trees and holomorphic disks holds only for almost complex structures with special properties near $\Pi(\Lambda_U)$. In this section we show that there exists almost complex structures with these special properties for which the holomorphic disks are still close to flow trees.

\subsubsection{Definition of quantum flow trees}
As already mentioned quantum flow trees will be central to establishing the relation between multiscale flow trees and holomorphic disks. We define them as follows.
Consider $\Lambda_K\subset J^{1}(\Lambda_U)$ as above. A \emph{quantum flow} tree $\Xi$ of $\Lambda_K$ is a holomorphic disk $u\colon D_m\to T^{\ast}S^{2}$ with boundary on $\Pi(\Lambda_U)$ and a collection $\Gamma$ of partial flow trees $\Gamma=\{\Gamma_1,\dots,\Gamma_m\}$ with one special positive puncture on the lift $\widetilde u\colon\pa D_m\to \Lambda_U$ of the boundary of $u.$ Note that $\Gamma_j$ could be a constant flow tree at a Reeb chord of $\Lambda_K$. Then the positive special punctures subdivide the boundary of $D_m$ into arcs and we require that there is a lift of these arcs to $\Lambda_K$ which together with the $1$-jet lifts of the trees in $\Gamma$ from closed curve when projected to $T^{\ast} S^{2}$. As for multiscale flow trees we call the points where flow trees are attached to $u$ \emph{junction points}.

\subsubsection{Modifying the almost complex structure---metric around flow trees}\label{sssec:modifacs}
Consider the degeneration $\sigma\to 0$ in Theorem \ref{t:basicdisktree}. Fix a small $\sigma = \sigma_0 >0$ so that the boundaries of all (constrained) rigid holomorphic
disks are close to the cotangent lifts of their corresponding (constrained) rigid flow trees. More
precisely, we take $\sigma_0$ so that the boundaries of all (constrained) rigid
$J_{\sigma_0}$-holomorphic disks lie well inside the finite neighborhood $X$ of the tree where the
metric is flat and where $\widetilde \Lambda_{\sigma_0}$ is as described in Remark \ref{rmk:Lneartrees}. For simpler notation we write
\[ \Lambda=\widetilde \Lambda_{\sigma_0} = \Lambda_U
\]
where $U$ is the unknot. We continue to use the subscript $\sigma_0$ in $J_{\sigma_0}$ from Theorem~\ref{t:basicdisktree}
since we will modify the almost complex structure some more.

Consider an arbitrary closed braid $K\subset \R^{3}$ lying in a tubular neighborhood of $U.$
If $K$ is sufficiently close to $U$ then $\Lambda_K$ lies in a tubular neighborhood of $\Lambda=\Lambda_U$ which is symplectomorphic to $J^{1}(\Lambda)$. Furthermore, the front projection $\Pi_F^{\Lambda}\colon\Lambda_K\to\Lambda$ is an immersion. Since these properties are preserved under the global scaling by $\sigma$ we consider $\Lambda_K\subset N\subset J^{1}(\Lambda)$, where the neighborhood $N$ of the $0$-section in $J^{1}(\Lambda)$ is identified with a neighborhood of $\Lambda$ in $J^{1}(S^{2})$.

When we compute the Legendrian homology of $\Lambda_K$, we will use an almost complex
structure $J_\eta$ (to be defined after Lemma~\ref{l:goodmetric} for small $\eta >0$) on $T^\ast S^2$ which differs from $J_{\sigma_0}.$
In particular, to relate holomorphic disks with boundary on $\Lambda_K$ with quantum flow trees of $\Lambda_K$, it will be important that $J_\eta$ agrees with the almost complex structure induced by a metric on $\Lambda$ in a neighborhood $N_\eta$ of $\Pi(\Lambda)\subset T^\ast S^2$. Here $N_\eta$ is the image under a symplectic immersion of a small neighborhood of the $0$-section in $T^{\ast}\Lambda$ which extends $\Pi|_{\Lambda}$.  Lemma~\ref{l:goodmetric} below, establishes the existence of such a metric on $\Lambda.$ Specifically,
the metric induces an almost complex structure on $N_\eta$ which (has a push-forward under an immersion which) agrees up to first order with $J_{\sigma_0}$ in the fixed size neighborhood $\Pi(X_{\Lambda})\subset\Pi(\Lambda)$ of the union of all boundaries of constrained rigid flow trees in $\Lambda$.  Our desired almost complex structure $J_\eta$ will interpolate between this push-forward
and $J_{\sigma_0}.$

Recall the special form of $\Pi(\Lambda)$ near its double points, see Remark \ref{rmk:Lneartrees} $(3)$. Choose a metric $g$ on $\Lambda$ flat near its Reeb chord endpoints. Let $J_g$ be the almost complex structure induced by $g.$
Note that for such a metric we can find an immersion $\phi$ defined on the cotangent bundle of the neighborhood of the Reeb chord endpoint which is $(J_g, J_{\sigma_0})$ holomorphic.
In particular, if  $\phi\colon T^{\ast}\Lambda\to T^{\ast} S^{2}$ an immersion which extends $\Pi|_{\Lambda}$ and which has these properties near Reeb chord endpoints then the push-forward of $J_g,$ $\phi_\ast J_g,$ is well-defined. Furthermore, we assume that $(g,\phi)$ satisfies this $(J_g, J_{\sigma_0})$-holomorphicity condition also near other points mentioned in Remark \ref{rmk:Lneartrees} $(3)$ and $(6)$, where we take the extra points to be the junction points of the multiscale trees. We call the points in $(3)$ and the extra points the \emph{distinguished} points. We call a pair of a metric and a symplectic immersion $(g,\phi)$ with properties as above \emph{adapted to $\Lambda$}.

\begin{lma}\label{l:goodmetric}
There exists a neighborhood $N$ of the $0$-section in $T^{\ast}\Lambda$ and a pair $(g,\phi)$ consisting of a metric $g$ and an immersion $\phi\colon N\to T^{\ast} S^{2}$ which extends $\Pi|_{\Lambda}$, which is adapted to $\Lambda$ and such that the following holds on $\phi(N)$.
\begin{itemize}
\item[$(1)$] $J_{\sigma_0}$ and $\phi_\ast J_g$ agree along $\Pi(\Lambda)$,
\item[$(2)$] $J_{\sigma_0}$ and $\phi_\ast J_g$ agree in neighborhoods of distinguished points.
\item[$(3)$] $J_{\sigma_0}$ and $\phi_\ast J_g$ agree to first order in $\Pi(X_{\Lambda})$.
\end{itemize}
\end{lma}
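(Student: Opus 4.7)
\emph{Approach.} Both $g$ and $\phi$ will be built by imposing prescribed first-order data along the zero section of $T^{\ast}\Lambda$. Condition $(1)$ is a pointwise condition along $\Pi(\Lambda)$, so it amounts to constraining $d\phi$ on the cotangent fiber directions so as to match the Lagrangian splitting of $T(T^{\ast}S^{2})|_{\Pi(\Lambda)}$ determined by $J_{\sigma_{0}}$. Condition $(2)$ is enforced by using explicit flat local models in Darboux coordinates near each distinguished point, exploiting the affine form of $\Pi(\Lambda)$ recalled in Remark~\ref{rmk:Lneartrees}\,$(3)$, $(6)$. Condition $(3)$ is achieved simply by copying the flat metric already used in the construction of $J_{\sigma_{0}}$ along rigid trees. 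After these local prescriptions are in place, a partition-of-unity glueing gives $(g,\phi)$ on all of $\Lambda$.

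\emph{Step 1: local models at distinguished points.} Near any distinguished point $p$, Remark~\ref{rmk:Lneartrees} lets us choose Darboux coordinates in which $\Pi(\Lambda)$ is a union of affine Lagrangian planes, and the almost complex structure $J_{\sigma_{0}}$ has been arranged to equal the standard $J_{0}$ on a neighborhood of $p$ (this is part of the construction of $\widetilde{\Lambda}_{\sigma_{0}}$ in \cite[Section~4]{Ekholm07}). Take $g$ to be the flat Euclidean metric on each sheet in these coordinates, and $\phi$ to be the standard Weinstein model that sends the cotangent fiber into the Euclidean orthogonal complement of the sheet. Then $\phi_{\ast}J_{g}=J_{0}=J_{\sigma_{0}}$ on an open neighborhood of $p$, giving $(2)$ (and hence $(1)$ locally).

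\emph{Step 2: pointwise matching on $\Pi(\Lambda)$.} Away from the distinguished points, at each $q\in\Pi(\Lambda)$ the compatible almost complex structure $J_{\sigma_{0}}$ defines a Lagrangian complement $V_{q}:=J_{\sigma_{0}}(q)\cdot T_{q}\Pi(\Lambda)$ of $T_{q}\Pi(\Lambda)$. By Weinstein's neighborhood theorem, we may extend $\Pi|_{\Lambda}$ to a symplectic immersion $\phi\colon N\to T^{\ast}S^{2}$ from a neighborhood of the zero section whose differential sends each cotangent fiber $T_{q}^{\ast}\Lambda\subset T_{q}(T^{\ast}\Lambda)$ onto $V_{\pi(q)}$; the space of such extensions agreeing with the models of Step~1 on each local chart is a nonempty convex set at each point and hence globally nonempty by a partition of unity. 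Choose any metric $g$ on $\Lambda$ that coincides with the flat models from Step~1 near distinguished points and whose musical isomorphism $T\Lambda\to T^{\ast}\Lambda$, composed with $d\phi$, realizes the prescribed identification of $T_{q}\Pi(\Lambda)$ with $V_{q}$; this is again a pointwise convex condition, so the required $g$ exists. For this $(g,\phi)$ we have $\phi_{\ast}J_{g}=J_{\sigma_{0}}$ along $\Pi(\Lambda)$, which is $(1)$.

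\emph{Step 3: first-order matching on $\Pi(X_{\Lambda})$.} On the neighborhood $X$ of all constrained rigid trees the metric $g$ is flat and $\Pi(\widetilde{\Lambda}_{\sigma_{0}})=\Pi(\Lambda)$ is affine outside a finite number of $\Ordo(\sigma_{0})$-regions around edge points, by Remark~\ref{rmk:Lneartrees}\,$(1)$, $(4)$, $(5)$. In the Weinstein coordinates induced by this same flat metric on $\Lambda$, the pushforward $\phi_{\ast}J_{g}$ is the constant standard almost complex structure. Since $J_{\sigma_{0}}$ was constructed from precisely this flat metric and affine boundary, $J_{\sigma_{0}}$ agrees with $\phi_{\ast}J_{g}$ to all orders on the affine pieces, and one checks directly that the construction of $J_{\sigma_{0}}$ in the edge-point regions matches $J_{g}$ to first order (the defect is of order $\sigma_{0}$ in the normal coordinate and vanishes on the Lagrangian itself). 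So taking $g$ flat and $\phi$ the standard Weinstein immersion on $X$ gives $(3)$. Partition-of-unity glueing of Steps~1--3 along $\Lambda$, shrinking $N$ so that $\phi$ remains an immersion, completes the construction.

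\emph{Main obstacle.} The delicate part is $(3)$: it is not merely pointwise but first order, so $g$ and $\phi$ are essentially forced to reproduce the data used to build $J_{\sigma_{0}}$ in \cite[Section~4]{Ekholm07}. Happily, that construction starts from a flat metric on a neighborhood of each rigid tree, which is canonical enough that the patching with Steps~1 and~2 causes no incompatibility; the rest is linear algebra and a partition of unity.
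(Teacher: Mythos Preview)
Your Steps 1 and 2 are reasonable, but Step 3 contains a genuine gap and is where the real content of the lemma lies.

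You assert that on the affine pieces of $\Pi(\Lambda)$ in $X$ the two almost complex structures agree to all orders, and that near edge points ``one checks directly'' that the defect is first order in the normal coordinate and hence negligible. The first claim is fine where $\Pi(\Lambda)$ is genuinely affine, but the second is not a proof: near edge points the Lagrangian curves in the direction along the tree, and the naive Weinstein immersion $\psi$ produces a first-order discrepancy $\psi^{\ast}J_{\sigma_0}=J_0+B(x_1)y_1+\Ordo(2)$ with $B\ne 0$. This term does not vanish just because both metrics are flat; the flat metric on $S^{2}$ determines $J_{\sigma_0}$ in $T^{\ast}S^{2}$, the flat metric on $\Lambda$ determines $J_{g}$ in $T^{\ast}\Lambda$, and the question is whether $\phi$ intertwines them to first order. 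In general it does not.

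What the paper actually does is compute this first-order term explicitly and then kill it by precomposing $\psi$ with a diffeomorphism $\Phi(x,y)=(x,y)+\tfrac{1}{2}(B(x_1)\pa_{x_1})y_1^{2}$. The reason this works is specific to the situation: because $\Pi(\Lambda)$ is a product of a curve in the edge direction and a line parallel to the zero section in the perpendicular direction (Remark~\ref{rmk:Lneartrees}\,(4)), the obstruction $B$ satisfies $B\pa_{x_2}=0$ and depends only on $x_1$, so a single quadratic correction in $y_1$ suffices. The remark immediately following the lemma makes the point sharply: for a general ambient $J$ one would need simultaneously $C=B_1\pa_{x_2}$ and $C=B_2\pa_{x_1}$, which is overdetermined. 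So first-order matching is \emph{not} a routine check---it fails generically and succeeds here only because of the product structure of $\Pi(\Lambda)$ along the trees. Your Step 3 needs to identify the first-order defect and exhibit the correction to $\phi$ that removes it.
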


\begin{pf}
It is straightforward to check that statement $(1)$ can be achieved and statement $(2)$ follows by the definition of adapted pair (where the fact that the metric is flat and the Lagrangian affine near distinguished points readily implies existence).

We turn our attention to statement $(3)$. Let $p$ be a point on the $1$-jet lift of a rigid flow tree. Pick normal coordinates $x=(x_1,x_2)$ on $\Lambda$ around $p$ with the $1$-jet lift corresponding to
$\{x_2=0\}$. Since the metric on $S^2$ is flat we can identify it locally with $\C^{2}$ with coordinates $(u_1,v_1,u_2,v_2)$ and we can choose these coordinates so that the flow tree under consideration lies along $\{u_2=0\}$.

Since $\Pi(\Lambda)$ is a product of a curve in the $u_1$-direction and a line segment parallel to the $0$-section in the $u_2$-direction we have the following local parametrization of $\Pi(\Lambda)$:
\[
f(x_1,x_2)=(x_1,f(x_1),x_2,c_2).
\]
Let $y = (y_1,y_2)$ denote the fiber coordinate. Defining the local immersion
\[
\psi(x,y)=f(x_1,x_2)+y_1(-f'(x_1)\pa_{x_1}+\pa_{y_1})+y_2\pa_{y_2}
\]
we find that condition $(1)$ holds and that the Taylor expansion of $\psi^{\ast}J$ ({\em i.e.}, the complex structure of the flat metric on $S^{2}$ which corresponds to the standard complex structure in $u+iv$-coordinates pulled back by $\psi$) with respect to  $y$ is
\[
(\psi^{\ast}J)(x,y)=J_0+B(x_1)y_1+\Ordo(2),
\]
where $J_0$ is the standard complex structure on $\C^2$ with $x+iy$-coordinates which is the complex structure induced by the flat metric on $\Lambda$. Here
$B(x_1)J_0+J_0B(x_1)=0$, and $B(x_1)\pa_{x_2}=0$. A straightforward calculation shows that if
we change the immersion $\psi$ by pre-composing with a diffeomorphism $\Phi$ with the Taylor expansion
\[
\Phi(x,y)=(x,y)+ \frac12 \bigl(B_1(x_1)\pa_{x_1}\bigr) y_1^2,
\]
then the almost complex structures agree to first order, {\em i.e.\ }~if $\phi=\psi\circ\Phi$ then $\phi^{\ast}J_{\sigma_0}$ and $J_0$ agree to first order along $\{y_1=y_2=0\}$.
\end{pf}

\begin{rmk}
For a general ambient almost complex structure $J$ it is not possible to make the push forward agree up to first order. The Taylor expansion above with respect to $(y_1,y_2)$ for a general $J$ is
\[
(\psi^{\ast}J_{\sigma_0})(x,y)=J_0+B_1(x)y_1+B_2(x)y_2+\Ordo(2),
\]
where $B_j$ anti-commutes with $J_0$. One would then look for a map with Taylor expansion of the form
$$
\Phi(x,y)=
(x,y)+\frac12\bigl(B_1\pa_{x_1}\bigr)y_1^2
+\frac12\bigl(B_2\pa_{x_2}\bigr)y_2^2+ C y_1y_2.
$$
However, in order for the almost complex structures to agree up to first order one needs both $C=B_1\pa_{x_2}$ and $C=B_2\pa_{x_1}$. In general $B_1\pa_{x_2}\ne B_2\pa_{x_1}$
so no solution $\Phi$ exists.
\end{rmk}

Let $(g,\phi)$ be as in Lemma~\ref{l:goodmetric}. For small $\eta>0$, write $N_\eta$ for the image under $\phi$ of an $\eta$-neighborhood of the $0$-section in $T^\ast\Lambda$ and let $\Xi_\eta$ denote the image of an $\eta$-neighborhood of $T^{\ast}X_{\Lambda}$.
Theorem \ref{t:basicdisktree} implies that (for $\sigma_0$ small enough) every (constrained) rigid $J_{\sigma_0}$-holomorphic disk intersects a neighborhood of $\Pi(\Lambda)$ inside $\Xi_{\eta'}$ for some $\eta'>0$. Write $M_{\eta}$ for a $\eta$-neighborhood of all (constrained) rigid holomorphic disks.
Let $J_\eta$ denote an almost complex
structure on $T^\ast S^2$ which equals $\phi_\ast J_g$ on $N_{\eta}$, which equals $J_{\sigma_0}$ outside
$N_{2\eta}$, and which interpolate between the two in the remaining region in such a way that
that the following hold.
\begin{align}\label{e:Jgto0}
&|J_{\sigma_0}-J_\eta|_{C^0}\to 0,\text{ as }\eta\to 0,\\\label{e:Jg1to0}
&|J_{\sigma_0}-J_\eta|_{C^1}\to 0\text{ in $\Xi_{\eta_0}\cup M_{\eta_0}$ as $\eta\to 0$, for fixed $\eta_0>0$}\\\label{e:Jg1}
&|J_{\sigma_0}-J_\eta|_{C^2}\le K_1\text{ in $\Xi_{\eta_0}\cup M_{\eta_0}$, for fixed $\eta_0, K_1$, and $\eta < \eta_0$}\\\label{e:Jg2}
&|J_{\sigma_0}-J_\eta|_{C^1}\le K_2,\text{ for fixed $\eta_0,K_2$, and for $\eta < \eta_0$ }.
\end{align}

We make two remarks.
As a consequence of the fact that in general $J_{\sigma_0}$ and $\phi_\ast J_g$ do not agree up to first order outside
$X$ we typically have $|J_{\sigma_0}-J_\eta|_{C^2}\to\infty$ as $\eta\to 0$.
As shown in the $C^0$-convergence portion of
the proof of Lemma~\ref{l:C1conv}  below, we may assume that for sufficiently small $\eta$, any rigid $J_\eta$-holomorphic
disk lies in $M_{\eta_0}.$

We next show that rigid $J_\eta$-holomorphic disks are $C^{1}$-close to rigid $J_{\sigma_0}$-holomorphic disks. Note that it follows from Theorem~\ref{t:basicdisktree} that (for $\sigma_0$ sufficiently small) the almost complex structure $J_{\sigma_0}$ is regular in the sense that all (constrained) $J_{\sigma_0}$-holomorphic disks in $T^{\ast} S^{2}$ with boundary on $\Pi(\Lambda)$ of formal dimension $\le 0$ are transversely cut out.
\begin{lma}\label{l:C1conv}
Let $u_\eta\colon D_m\to T^\ast S^2$, $\eta\to 0$ be a sequence of (constrained) rigid $J_\eta$-holomorphic disks with boundary on $\Lambda$. Then some subsequence of $u_\eta$ $C^1$-converges on compact subsets of $D_m$ to a (constrained) rigid $J_{\sigma_0}$-holomorphic disk. Moreover, for all $\eta>0$ small enough there is a unique (constrained) rigid $J_\eta$-holomorphic disk in a neighborhood of each rigid $J_{\sigma_0}$-holomorphic disk.
\end{lma}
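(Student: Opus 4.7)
The plan is to first establish $C^0$ convergence of a subsequence to a rigid $J_{\sigma_0}$-holomorphic disk by a Gromov-type compactness argument, then upgrade this to $C^1$ convergence on compact subsets using elliptic regularity in the region where the almost complex structures have uniform $C^2$ bounds, and finally prove the existence/uniqueness statement by a Newton iteration starting from a rigid $J_{\sigma_0}$-disk.

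The first step is to control where $u_\eta$ lives. Rigid disks have a priori bounded symplectic area (given by the action of the positive puncture), and the Lagrangian boundary condition $\Pi(\Lambda)$ is fixed. Using the uniform $C^0$-closeness of $J_\eta$ to $J_{\sigma_0}$ (estimate \eqref{e:Jgto0}) together with the uniform $C^1$-bound \eqref{e:Jg2}, a standard Gromov compactness argument (valid for $C^0$-convergent families of tame almost complex structures, see for example McDuff--Salamon) produces a subsequence $u_{\eta_k}$ that converges in $C^0$, modulo bubbling and boundary nodes, to a $J_{\sigma_0}$-holomorphic configuration with boundary on $\Pi(\Lambda)$. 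By regularity and transversality of $J_{\sigma_0}$-holomorphic disks of formal dimension $\le 0$ (which follows from Theorem~\ref{t:basicdisktree}), all bubbles and sphere/disk components would have negative expected dimension; hence no bubbling occurs and the limit is a single (constrained) rigid $J_{\sigma_0}$-holomorphic disk $u_0$. In particular, for $\eta$ small enough, $u_\eta(D_m)$ is contained in the fixed neighborhood $M_{\eta_0}$ of rigid $J_{\sigma_0}$-disks.

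Once we know that $u_\eta$ maps into $M_{\eta_0}\cup\Xi_{\eta_0}$, estimate \eqref{e:Jg1} gives a uniform $C^2$ bound on $J_\eta$ in the region where the maps live. The $\bar\partial_{J_\eta}$-equation becomes a uniformly elliptic system with coefficients bounded in $C^1$ (using \eqref{e:Jg1to0}), so standard elliptic bootstrapping from the $C^0$-convergence yields $C^{1,\alpha}$-convergence (in fact $C^\infty$-convergence in the interior) on compact subsets of $D_m$. This proves the convergence statement.

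For the existence and uniqueness of a $J_\eta$-disk near each rigid $J_{\sigma_0}$-disk $u_0$, the plan is to apply the implicit function theorem to the section $\bar\partial_{J_\eta}$ of the appropriate Banach bundle over a neighborhood of $u_0$ in a suitable space of maps (for constrained disks, with an additional incidence condition at the marked points that is transverse to the evaluation map, by Theorem~\ref{t:basicdisktree}). The linearization $D_{u_0}\bar\partial_{J_{\sigma_0}}$ is invertible since $u_0$ is rigid and transversely cut out. To run Newton iteration we need (i) the error $\bar\partial_{J_\eta}(u_0)$ to be small in a suitable norm, which follows from \eqref{e:Jgto0} and the fact that $u_0(D_m)\subset M_{\eta_0}\cup\Xi_{\eta_0}$ where the stronger estimate \eqref{e:Jg1to0} holds, and (ii) quadratic estimates for the nonlinear term, which use the $C^2$-bound \eqref{e:Jg1} on the fixed neighborhood. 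The main subtlety, and the principal obstacle to simply quoting a black-box implicit function theorem, is that $|J_{\sigma_0}-J_\eta|_{C^2}$ may blow up globally as $\eta\to 0$; we circumvent this by working in function spaces adapted to $M_{\eta_0}\cup\Xi_{\eta_0}$ and using the explicit localization of all rigid disks into this region from the first part of the argument. The output is a unique $J_\eta$-holomorphic disk $u_\eta$ near $u_0$, which completes the proof.
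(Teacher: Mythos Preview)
Your proposal is correct and follows essentially the same route as the paper: Gromov compactness with the $C^0$-convergence of $J_\eta$ rules out bubbling by regularity of $J_{\sigma_0}$, giving $C^0$-convergence into the neighborhood $M_{\eta_0}$; then the improved $C^1$-estimate \eqref{e:Jg1to0} on that region upgrades this to $C^1$-convergence. The paper carries out the bootstrap step a bit more concretely, writing $h_\eta=u-u_\eta$ and deriving the equation $\bar\partial h_\eta + q_\eta\,\partial h_\eta = (q-q_\eta)\partial u$ in local $(\C^2,\R^2)$ coordinates, whereas you invoke elliptic regularity as a black box; conversely, you spell out the implicit-function-theorem argument for existence and uniqueness (using smallness of $\bar\partial_{J_\eta}(u_0)$ and the $C^2$ bound \eqref{e:Jg1} for the quadratic estimate), which the paper leaves implicit. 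One small wording issue: it is not that \emph{all} components of a putative broken limit have negative expected dimension, but rather that at least one must (since the dimensions of the pieces sum to $\dim(u_\eta)-1=-1$), which contradicts regularity of $J_{\sigma_0}$.
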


\begin{pf}
For the first statement we use Gromov compactness (for $|J_\eta-J_{\sigma_0}|_{C^0}\to 0$) to conclude that either
$|Du_\eta|$ is uniformly bounded or there is bubbling in the limit, see {\em e.g.\ }\cite{Sikorav94}, and the fact that point constraints are closed.
The case that $u_\eta$ is a sequence of rigid disks bubbling is not possible: all bubbles of the limit have
dimension at least $0$, since $J_{\sigma_0}$ is regular, this implies $\dim(u_\eta)>0$ in contradiction to
$u_\eta$ being (constrained) rigid. We conclude thus that $|Du_\eta|$ is uniformly bounded; thus $u_\eta$ converges uniformly to a (constrained) rigid $J_{\sigma_0}$-holomorphic disk $u$. We must show that it convergence with one derivative as well.

Consider a point $z\in D$ and its image $u(z)$ under $u$. Since $u_\eta(z)\to u(z)$ and since
$|Du_\eta|$ and $|Du|$ are bounded we can find a coordinate neighborhood $W\subset M_{\eta_0}$ of $u(z)$ and a small disk $E$ around $z$ in $D$ so that $u_\eta(E)\subset W$ for all $\eta>0$ and $u(E)\subset W$.

We pick $\C^{2}$-coordinates on $W$ so that $\Pi(\Lambda)$ corresponds to the totally real $\R^2\subset \C^2$. The neighborhood $E$ is either a disk or a half disk, with complex structure $j.$
We find as in Section 2.3 of  \cite{Sikorav94} that in local coordinates $u$ and $u_\eta$ satisfy the equations
\begin{align}
&\bar\pa u + q\,\pa u=0,\\
&\bar\pa u_\eta + q_\eta\,\pa u_\eta=0,
\end{align}
where
\begin{align}
&q(z)=(i+J_{\sigma_0}(u))^{-1}(i-J_{\sigma_0}(u)),\\
&q_\eta(z)=(i+J_\eta(u_\eta))^{-1}(i-J_\eta(u_\eta)).
\end{align}
Letting $u-u_\eta=h_\eta$ we conclude that
\begin{equation}
\bar\pa h_\eta + q_\eta\,\pa h_\eta=(q-q_\eta)\pa u.
\end{equation}
By scaling we may take $|q_\eta|_{C^2}\le \epsilon\ll 1,$ see \cite{Sikorav94}. Moreover, by $C^0$-convergence $u_\eta$ lies in $M_{\eta_0}$ an therefore $|q-q_\eta|_{C^1}\to 0$. A standard bootstrap argument for $h_\eta$ now shows that $|h_\eta|_{C^1}\to 0$ as $\eta\to 0$.
\end{pf}

\begin{rmk}
A general sequence of $J_\eta$-holomorphic disks which does not blow up would $C^0$-converge to a
$J_{\sigma_0}$-holomorphic disk but not necessarily $C^1$-converge.
\end{rmk}

\subsubsection{Metric and perturbations---metric near flow trees of $\Lambda_K$}\label{sssec:metrpert}
Note that the metric $g$ in Lemma \ref{l:goodmetric} has arbitrary form outside a small neighborhood of the $1$-jet lifts of constrained rigid flow trees of $\Lambda$ and that furthermore it is flat near all distinguished points. In this section we will impose further conditions on $g$ outside this region in order to adapt it to the (partial) flow trees of $\Lambda_K\subset J^{1}(\Lambda)$ that are parts of rigid multiscale flow trees of $\Lambda_K$. Furthermore, we will also deform $\Lambda_K$ itself in a way analogous to how $\Lambda_\sigma$ was deformed into $\widetilde\Lambda_\sigma$. The construction is completely analogous to the construction in \cite[Section 4]{Ekholm07}, although it is simpler in the present situation since $\Lambda_K$ has no front
singularities. The construction in Section \ref{sssec:modifacs} gives a metric in $\Lambda$ which is flat in a neighborhood of the $1$-jet lifts of all (constrained) rigid flow trees whose $\Pi$-projections contain the boundaries of all (constrained) rigid holomorphic disks. Furthermore, we take the distinguished points to include all junction points as well as points where multiscale flow trees intersect constrained rigid disks.
We next extend the region where the metric is flat to contain all rigid flow trees of $\Lambda_K\subset J^{1}(\Lambda)$ as well as all partial flow trees of $\Lambda_K$ which are parts of rigid generalized disks. Note that these regions include the projection of any Reeb chord of $\Lambda_K\subset J^{1}(\Lambda)$. We next deform $\Lambda_K$ slightly so that it has the form described in Remark \ref{rmk:Lneartrees} over all the flow trees just mentioned, see \cite[Section 4.2]{Ekholm07}. (Here we
treat junction points corresponding to positive punctures of special trees like the $3$-valent
vertices of flow trees in \cite{Ekholm07}	and treat the other junction points like the $2$-valent punctures in \cite{Ekholm07}.) In particular, $\Lambda_K$ is affine at Reeb chord endpoints, lifts of flow trees are geodesics in the flat metric, and
the sheets of $\Lambda_{K}$ near a junction point which is a special Reeb chord will be
parallel to the $0$-section in $J^{1}(\Lambda)$ (which in turn is parallel to the $0$-section in $J^{1}(S^{2})$ over a subset $U\subset S^2$ where the metric on $S^2$ is flat). Similarly, the metric on $S^{2}$ is flat near junction points which are Reeb chords, where the sheet of $\Lambda$ is parallel to the $0$ section, and where the sheets of $\Lambda_K$ are affine (and almost parallel to the $0$-section).

Let $0\le \eta\le 1$ and define $\Lambda_{K;\eta} = s_\eta(\Lambda_K)$ to be the image under fiber scaling by $\eta$ in $J^{1}(\Lambda)$. Then as above, along $1$-jet lifts of flow lines that are part of rigid generalized disks, the Lagrangian $\Pi(\Lambda_{K;\eta})$ is a product of a horizontal line segment
and a curve over the distinguished curve. Thus, as in \cite{Ekholm07, EkholmEtnyreSabloff09}, the regions where this curve is not
affine have diameters $\Ordo(\eta)$ as $\eta\to 0$. With this metric we construct the almost
complex structure $J_{\eta}$ of Lemma~\ref{l:C1conv} for some sufficiently small but fixed
$\eta>0$. Note that $J_\eta$ then
agrees with the complex structure induced by the metric on $\Lambda$ in a neighborhood of $\Pi(\Lambda)$
which is the image under an immersion of a small neighborhood of the $0$-section in $T^\ast \Lambda$.

\subsection{From disks to quantum flow trees} \label{ssec:disktoqtree}
Now that we have finished modifying our almost complex structures to achieve $J_\eta$ with the desired properties
from the previous subsection, we simplify notation and let
\[
J = J_\eta.
\]

The main result of this section is that any sequence of rigid $J$-holomorphic disks with boundary on
$\Lambda_{K,\eta}= s_\eta(\Lambda_K)$ has a subsequence that converges to a rigid quantum flow tree of $\Lambda_K$ and $\Lambda$. In Section
\ref{sssec:outflowtree} we characterize certain subsets of the domains of any sequence of
$J$-holomorphic disks with boundary on $\Lambda_{K,\eta}$ such that the restrictions of the maps to
these subsets converge to a (partial) flow tree of $\Lambda_K\subset J^1(\Lambda)$. In particular, in case these
subsets constitute the whole domains of the members in the sequence we find that the $J$-holomorphic
disks converge to a flow tree. In Section \ref{sssec:blowup} we show that there can be at most one
disk bubbling off in the limit of a sequence of rigid $J$-holomorphic disks with boundary on
$\Lambda_{K,\eta}$ as $\eta\to 0$ and that by adding a puncture in the domain near the point where
the bubble forms we ensure that the maps in the sequence satisfy a uniform derivative bound. In
Section \ref{sssec:qtreeconv} we prove the main result of the section. After the previous subsections
there are two main points which must be demonstrated. First, we show that the limit has only one
holomorphic disk part which must be a (constrained) rigid disk. Second, we show that our analysis of
the two separate parts (the flow tree- and the disk part) gives a complete description of the limit
objects.

\subsubsection{Notation}\label{sssec:notatdisktotree}
Consider $\Lambda_K\subset N\subset J^{1}(\Lambda)$ where $N$ is a neighborhood of the $0$-section which is identified with a neighborhood of $\Lambda\subset J^{1}(S^{2})$. In particular, if $0<\eta<1$ and $s_\eta\colon J^{1}(\Lambda)\to J^{1}(\Lambda)$ denotes the fiber scaling then $s_\eta(N)\subset N$ and hence $\Lambda_{K,\eta}$ is a Legendrian submanifold in $N$ which is Legendrian isotopic to $\Lambda_K.$

Consider the Reeb chords of $\Lambda_{K;\eta}$. Recall that these are of two types: short and long chords. We will use the notation for chords introduced in Remark \ref{rmk:Reebchordtypes}.
The {\emph{action}} $\action(c)$ of a Reeb chord $c$ is the positive difference of the $z$-coordinates of its two endpoints.
Stokes' Theorem implies the area of a holomorphic disk is the signed sum of the actions of the Reeb chords at its punctures;
see \cite[Lemma 2.1]{EkholmEtnyreSullivan05a}, for example.
Recall the two Reeb chords $e$ and $c$ of $\Lambda=\Lambda_U$ introduced in Section \ref{ssec:flowtreesonU}.
Chords of $\Lambda_{K;\eta}$ satisfies the following: chords of type $\mathbf{L}_2$ lie close to $e$ and have action $\action(e)+\Ordo(\eta)$, chords of type $\mathbf{L}_1$ lie close to $c$ and have action $\action(c)+\Ordo(\eta)$, and chords of types $\mathbf{S}_0$ and $\mathbf{S}_1$ have action $\Ordo(\eta)$.

For the fixed almost complex structure $J=J_\eta$, Lemma \ref{l:C1conv} holds and $J$ has properties as in Section \ref{sssec:metrpert}. Then, in particular, boundaries of (constrained) rigid disks of $\Lambda$ lie $C^{1}$-close to 1-jet lifts of its corresponding (constrained) rigid trees.
It follows, in particular, that there is a natural one-to-one correspondence between rigid quantum trees of $\Lambda_K$ and rigid generalized trees of $\Lambda_K$. Furthermore, in a neighborhood of $\Pi(\Lambda)$ the almost complex structure $J$ agrees with the one induced by the metric on $\Lambda$ by Lemma \ref{l:goodmetric}.

Below we will discuss $J$-holomorphic disks in $T^{\ast}S^{2}$ with boundary punctures. Throughout we will think of these as maps $u\colon\Delta_m\to T^{\ast} S^{2}$, where the source is a standard domain. For details on standard domains we
refer to \cite[Subsection 2.2.1]{Ekholm07}; here we give a
brief description. Consider $\R^{m-2}$ with coordinates
$\underline{\tau}=(\tau_1,\dots, \tau_{m-2})$. Let $t\in\R$ act on $\R^{m-2}$ by $t\cdot
\underline{\tau}=(\tau_1+t,\dots,\tau_{m-2}+t)$. The orbit space of this action is
the space of conformal structures of the disk with $m$ boundary punctures, one of which is distinguished, $\conf_m\approx\R^{m-3}$. Define a {\em standard domain} $\Delta_{m}(\underline{\tau})$ as the
subset of $\R\times[0,m]$ obtained by removing $m-2$ horizontal slits
of width $\epsilon$, $0<\epsilon\ll1$, starting at $(\tau_j,j)$,
$j=1,\dots,m-2$ and going to $+\infty$. All slits have the same shape,
ending in a half-circle, see Figure~\ref{fig:stdom}.  The points $(\tau_j, j)$ are called the {\emph{boundary minima}}.
\begin{figure}[htb]
\labellist
\small\hair 2pt
\pinlabel $\tau_1$ [Br] at 80 12
\pinlabel $\tau_2$ [Br] at 126 12
\pinlabel $\tau_3$ [Br] at 164 12
\pinlabel $x$ [Br] at 298 13
\pinlabel $y$ [Br] at 156 146
\endlabellist
\centering
\includegraphics{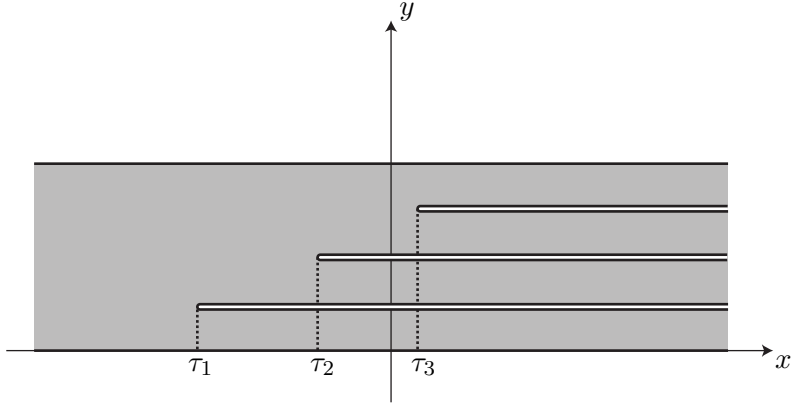}
\caption{The standard domain $\Delta_5(\tau_1,\tau_2,\tau_3)$ with three boundary minima.}
\label{fig:stdom}
\end{figure}

In the case that $u$ has less than two negative punctures we add marked points at
intersections with a small circle around the positive Reeb chord endpoint and puncture the domain there, so that it admits a description as a standard domain. We will often write $\Delta_m$ dropping the precise information about the conformal structure from the notation.

Consider a standard domain $\Delta_m\subset\C$. Let $V_{\beta}=\{x+iy\colon x=\beta\}$. A connected component of the closure of $V_\beta\cap(\Delta_m-\pa\Delta_m)$ in $\Delta_m$ will be called a \emph{vertical segment} in $\Delta_m$.

\subsubsection{Flow tree convergence}\label{sssec:outflowtree}
Consider a sequence $u_\eta\colon\Delta_m\to T^\ast S^2$ of rigid $J$-holomorphic disks with
boundary on $\Lambda_{K;\eta}$, $\eta\to 0$. As $\eta\to 0$, the actions of Reeb chords of $\Lambda_{K;\eta}$ of type $\mathbf{S}$ satisfy a $\Ordo(\eta)$ bound. Hence, by Stokes' Theorem and the
dimension formula for holomorphic disks, if $\eta>0$ is sufficiently small then a moduli spaces of
$J$-holomorphic disks with one positive puncture, with boundary on $\Lambda_{K,\eta}$, and of formal
dimension $0$ can be non-empty only if it contains disks with punctures of the following types:
\begin{itemize}
\item[$(\mathrm{QT}_{\varnothing})$] The positive puncture is of type ${\bf S}_1$ and all negative punctures are type ${\bf
S}_0$.
\item[$(\mathrm{QT}_0)$] The positive puncture is of type ${\bf L}_1$ and all negative punctures are of type
${\bf S}_0$.
\item[$(\mathrm{QT}_0')$] The positive puncture is of type ${\bf L}_2$, one negative puncture is of type ${\bf
L}_1$, and all other negative punctures are of type ${\bf S}_0$.
\item[$(\mathrm{QT}_1)$] The positive puncture is of type ${\bf L}_2$, one negative puncture is of type ${\bf
S}_1$, and all other negative punctures are of type ${\bf S}_0$.
\end{itemize}

\begin{lma}\label{l:onlytreeconv}
If $u_\eta\colon\Delta_m\to T^\ast S^2$ is a sequence of $J$-holomorphic disks of type
$(\mathrm{QT}_{\varnothing})$ then $u_\eta$ has a subsequence that converges to a flow tree of $\Lambda_K\subset N\subset J^1(\Lambda)$ as $\eta\to0$.
\end{lma}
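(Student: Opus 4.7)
The plan is to reduce this lemma to the disk/flow tree correspondence already established for Legendrians in $J^1(\Lambda)$ by Theorem~\ref{t:basicdisktree}. The key observation is that for disks of type $(\mathrm{QT}_\varnothing)$, every puncture is at a short Reeb chord of action $\Ordo(\eta)$, so the disks must collapse onto $\Pi(\Lambda)$ as $\eta\to 0$. Once they are confined to a sufficiently small neighborhood of $\Pi(\Lambda)$, the modifications of $J$ made in Section~\ref{sssec:modifacs} ensure that the disks are in fact $\phi_{\ast}J_g$-holomorphic, and therefore pull back through $\phi$ to $J_g$-holomorphic disks in $T^{\ast}\Lambda$ with boundary on $\Pi^{\Lambda}(\Lambda_{K;\eta})$.

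First I would invoke Stokes' theorem: since the positive puncture of $u_\eta$ is at a type~$\mathbf{S}_1$ chord and the negatives at type~$\mathbf{S}_0$ chords, all of which have action $\Ordo(\eta)$, we get
\[
\area(u_\eta) \;=\; \action(\text{pos.\ chord}) - \sum \action(\text{neg.\ chords}) \;=\; \Ordo(\eta).
\]
Second, I would apply a standard monotonicity argument for $J$-holomorphic curves with totally real boundary (the boundary $\Pi(\Lambda_{K;\eta})$ lies within distance $\Ordo(\eta)$ of $\Pi(\Lambda)$, and near its self-intersections the two sheets of $\Pi(\Lambda_{K;\eta})$ are $C^1$-close to fixed transverse affine Lagrangians by Remark~\ref{rmk:Lneartrees}, so the monotonicity constant is uniform in $\eta$). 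Combined with the area bound, this forces $u_\eta(\Delta_m) \subset N_\eta$ for all sufficiently small $\eta$.

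Third, on $N_\eta$ we have $J = \phi_{\ast}J_g$, so $u_\eta$ locally pulls back under the immersion $\phi$ to a $J_g$-holomorphic map. Because $\Delta_m$ is simply connected and $u_\eta$ has boundary on $\Pi(\Lambda_K) \subset \phi(T^{\ast}\Lambda)$, a choice of lift of $u_\eta|_{\pa\Delta_m}$ to $\Lambda_{K;\eta}\subset J^1(\Lambda)$ extends to a global lift $\widetilde u_\eta\colon \Delta_m\to T^{\ast}\Lambda$ which is $J_g$-holomorphic with boundary on $\Pi^{\Lambda}(\Lambda_{K;\eta})$ and with Reeb chord asymptotics at the corresponding short chords of $\Lambda_{K;\eta}\subset J^1(\Lambda)$ (using Remark~\ref{rem:shortchords}).

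Finally, the sequence $\widetilde u_\eta$ is a sequence of $J_g$-holomorphic disks with boundary on the fiber-scaled Legendrian $\Lambda_{K;\eta} = s_\eta(\Lambda_K)$ inside $J^1(\Lambda)$. The almost complex structure $J_g$ and the metric $g$ on $\Lambda$ were precisely chosen (Section~\ref{sssec:metrpert}) to satisfy the hypotheses of Theorem~\ref{t:basicdisktree} for the Legendrian $\Lambda_K\subset J^1(\Lambda)$; that theorem then provides a subsequence converging to a (possibly constrained) rigid flow tree of $\Lambda_K\subset J^1(\Lambda)$. Pushing this convergence forward by $\phi$ yields the desired convergence of $u_\eta$ to a flow tree. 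The main obstacle is the monotonicity step at immersed double points and close to Reeb chords: one must control the boundary condition uniformly in $\eta$ to prevent the implied constant in the area-lower-bound from degenerating; the affine normal form from Remark~\ref{rmk:Lneartrees}(3) together with the $\Ordo(\eta)$-closeness of $\Pi(\Lambda_{K;\eta})$ to $\Pi(\Lambda)$ is precisely what is needed to handle this uniformly.
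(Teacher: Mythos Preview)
Your proposal is correct and follows essentially the same approach as the paper: an $\Ordo(\eta)$ area bound via Stokes' theorem, monotonicity to confine the image to a small neighborhood of $\Pi(\Lambda)$ where $J=\phi_\ast J_g$, and then invocation of the disk-to-tree convergence from \cite{Ekholm07}. The only cosmetic difference is that the paper handles the immersion issue by lifting to the symplectization of $J^1(S^2)$ (where $\Lambda\times\R$ is embedded) rather than by your explicit lift through $\phi$ using simple connectivity of $\Delta_m$, and the paper quotes the neighborhood size as $\Ordo(\eta^{1/2})$ directly from monotonicity rather than $N_\eta$.
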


\begin{pf}
The actions of Reeb chords in $\mathbf{S}_1$ are $\Ordo(\eta)$. Thus the area of $u_\eta$ is $\Ordo(\eta)$ as well. Monotonicity then implies that $u_\eta(\Delta_m)$ must stay inside an $\Ordo(\eta^{\frac12})$-neighborhood of
$\Pi(\Lambda)$. Since $J$ agrees with the complex structure coming from the metric on $\Lambda$ in a finite
neighborhood of $\Pi(\Lambda)$ and since disks lift to the symplectization of $J^{1}(S^{2})$ where $\Lambda\times\R$ is embedded, the lemma follows from \cite[Theorem 1.2]{Ekholm07}.
\end{pf}

We next show that for any sequence of $J$-holomorphic disks $u_\eta\colon\Delta_m\to T^{\ast}S^{2}$ there are neighborhoods of each negative puncture of type ${\bf S}$, where the disk converges to a flow tree (which may be constant). The key to establishing this convergence is an $\Ordo(\eta)$ derivative bound on neighborhoods of the punctures. Consider the inclusion $\Lambda_{K;\eta}\subset N\subset J^{1}(\Lambda)$ and let $z$ denote a coordinate in the $\R$-direction of $J^{1}(\Lambda)\approx T^{\ast}\Lambda\times\R$. If $\gamma$ is a curve in $T^{\ast} S^{2}$ then let $\ell(\gamma)$ denote the length of $\gamma$ in the metric induced by $\omega$ and $J$. Fix $M>0$ larger than the maximum of the function $|z|$ on $\Lambda_K=\Lambda_{K;1}$. A vertical segment $l_\eta\approx [0,1]$
in the domain $\Delta_m$ of $u_{\eta}$ such that
\begin{align}\label{eq:vertlineL}
\ell(u_\eta(l_\eta))\le M\eta,\\\label{eq:vertlineZ}
|z(u_\eta|_{l_\eta}(1))-z(u_\eta|_{l_\eta}(0))|\le M\eta,
\end{align}
will be called an \emph{$\eta$-short} vertical segment. It follows from the asymptotic properties of holomorphic disks near punctures that there exists $\eta$-short vertical segments in a neighborhood of each puncture of $u_{\eta}$ of type $\mathbf{S}$. Note that a vertical segment $l_\eta$ subdivides $\Delta_m$ into two components: $\Delta_m=\Delta_m^{+}(l_\eta)\cup\Delta_m^{-}(l_\eta)$, where $\Delta^{+}(l_\eta)$ contains the positive puncture of $u_\eta$. For $d>0$, let $\Delta^\pm(l_\eta,d)$ denote the subset of points in $\Delta^\pm(l_\eta)$ which are at distance at least $d$ from $l_\eta$.
\begin{lma}\label{l:outest}
For all sufficiently small $\eta>0$ the following derivative bound holds: if $l_\eta$ is an $\eta$-short vertical segment in the domain of  $u_\eta\colon\Delta_m\to T^{\ast}S^{2}$ then
\[
|du_\eta(z)|=\Ordo(\eta),\quad z\in\Delta^-(l_\eta,1).
\]
\end{lma}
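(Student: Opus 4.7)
The plan is to show that $u_\eta$ restricted to $\Delta^-(l_\eta)$ has very small area (of order $\eta$), deduce that its image lies in a thin neighborhood of $\Pi(\Lambda)$ where we can pass to the $1$-jet neighborhood coordinates of Section~\ref{sssec:modifacs}, and then derive the pointwise gradient bound by rescaling the fibers of $J^{1}(\Lambda)$ by $\eta^{-1}$.

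\textbf{Step 1 (area bound).} Applying Stokes' theorem to $\Delta^-(l_\eta)$, one expresses the symplectic area $\int_{\Delta^-(l_\eta)} u_\eta^{\ast}\omega$ as a signed sum of contributions from: the negative punctures of $u_\eta$ lying in $\Delta^-(l_\eta)$, the boundary arcs on $\Lambda_{K;\eta}$, and the segment $l_\eta$ itself. On $\Lambda_{K;\eta}$ one has $\theta = dz$, so the boundary-arc contribution is a sum of $z$-differences, each of which is $\Ordo(\eta)$ since $|z|$ on $\Lambda_{K;\eta}$ is $\Ordo(\eta)$. The negative punctures are of type $\mathbf{S}$, whose actions are $\Ordo(\eta)$ by the discussion in Section~\ref{sssec:notatdisktotree}. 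Finally, the integral over $l_\eta$ is controlled by $\ell(u_\eta(l_\eta)) + |z(u_\eta|_{l_\eta}(1))-z(u_\eta|_{l_\eta}(0))| = \Ordo(\eta)$ from the $\eta$-short hypothesis (\ref{eq:vertlineL})--(\ref{eq:vertlineZ}). Summing gives
\[
\int_{\Delta^-(l_\eta)} u_\eta^{\ast}\omega = \Ordo(\eta).
\]

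\textbf{Step 2 (image localization).} By monotonicity and the area bound, any $\delta_0$-ball in $T^{\ast}S^{2}$ meeting $u_\eta(\Delta^-(l_\eta))$ at an interior point would contain area at least $c\delta_0^{2}$; hence if such a ball is disjoint from $\Pi(\Lambda)$ we get $\delta_0 = \Ordo(\eta^{1/2})$. Together with the boundary condition $u_\eta(\partial\Delta^-(l_\eta)) \subset \Pi(\Lambda_{K;\eta}) \subset N_\eta$, this forces $u_\eta(\Delta^-(l_\eta))$ to lie in $N_{C\eta^{1/2}}$ for some constant $C$ and all sufficiently small $\eta$.

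\textbf{Step 3 (rescaling and gradient bound).} In $N_\eta$ the almost complex structure $J$ agrees with the push-forward under $\phi$ of the complex structure $J_g$ induced by the metric on $\Lambda$; thus $u_\eta|_{\Delta^-(l_\eta)}$ lifts via $\phi^{-1}$ to a $J_g$-holomorphic map $\tilde u_\eta$ into $T^{\ast}\Lambda$ with boundary on $\Pi^{\Lambda}(\Lambda_{K;\eta}) = \Pi^{\Lambda}(s_\eta \Lambda_K)$. Composing with fiber scaling $s_{1/\eta}$ in $T^{\ast}\Lambda$ produces a $J'_\eta$-holomorphic map $\hat u_\eta$ (where $J'_\eta$ is the pull-back of $J_g$ by $s_{1/\eta}$) with boundary on the \emph{unscaled} $\Pi^{\Lambda}(\Lambda_K)$, on which the symplectic area is a fixed constant (of the same order $\Ordo(1)$ as the rescaled version of Step 1) and whose negative punctures are at short Reeb chords of $\Lambda_K$ of bounded action. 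For a standard metric induced complex structure, $J'_\eta$ converges to $J_g$ in $C^\infty$ on compact subsets as $\eta \to 0$. Standard a~priori gradient estimates for such $J'_\eta$-holomorphic maps with boundary on a fixed compact Lagrangian and bounded area (see e.g.\ \cite[Lemma 5.11]{Ekholm07}), applied at interior/boundary points of $\hat u_\eta$ at domain distance at least $1$ from $l_\eta$, yield $|d\hat u_\eta| = \Ordo(1)$ on $\Delta^-(l_\eta,1)$. Undoing the fiber rescaling by $s_\eta$ contracts the fiber coordinates by $\eta$, so
\[
|du_\eta(z)| = \Ordo(\eta), \qquad z \in \Delta^-(l_\eta,1).
\]

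The main point to verify carefully is the non-bubbling statement used to extract the $\Ordo(1)$ bound on $|d\hat u_\eta|$: one must show that no $J_g$-holomorphic sphere or disk with boundary on $\Pi^{\Lambda}(\Lambda_K)$ can bubble off in the interior of $\Delta^-(l_\eta,1)$. This is where the combination of the $\Ordo(1)$ area bound, the absence of long Reeb chords of $\Lambda_K$ inside $N$, and the domain-distance hypothesis of being at least $1$ away from $l_\eta$ (which keeps the renormalization point away from any nontrivial neck) come together; the argument is parallel to the bubble exclusion in the proof of Lemma~\ref{l:C1conv}.
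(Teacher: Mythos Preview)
Your Steps~1 and~2 match the paper's argument: the $\Ordo(\eta)$ area bound via Stokes' theorem and the $\Ordo(\eta^{1/2})$ localization via monotonicity are exactly what is used there. The divergence, and the gap, is in Step~3.

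The rescaling argument does not work as written. Fiber scaling $s_{1/\eta}$ on $T^{\ast}\Lambda$ is only a \emph{conformal} symplectomorphism (it scales $\omega$ by $\eta^{-1}$), so it does not preserve $\omega$-compatible almost complex structures. Concretely, in flat local coordinates $(q,p)$ where $J_g(\partial_q)=\partial_p$ and $J_g(\partial_p)=-\partial_q$, one computes
\[
(s_{1/\eta})_{\ast}J_g(\partial_q)=\eta^{-1}\partial_p,\qquad
(s_{1/\eta})_{\ast}J_g(\partial_p)=-\eta\,\partial_q,
\]
so $J'_\eta=(s_{1/\eta})_{\ast}J_g$ degenerates rather than converging to $J_g$. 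Your assertion that ``$J'_\eta$ converges to $J_g$ in $C^{\infty}$ on compact subsets'' is therefore false, and the appeal to standard a~priori estimates for a convergent family of almost complex structures collapses. A related issue: Step~2 only puts the image in an $\Ordo(\eta^{1/2})$-neighborhood, so after scaling by $\eta^{-1}$ the image of $\hat u_\eta$ lies in an $\Ordo(\eta^{-1/2})$-neighborhood of the zero section, which is not uniformly compact; this alone already blocks a direct compactness/bubbling argument.

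The paper closes the gap differently and without rescaling. Once the image lies in the finite neighborhood where $J=\phi_{\ast}J_g$, it invokes the fact (from \cite[Lemma~5.4]{Ekholm07}) that $|p|^{2}\circ u_\eta$ is subharmonic for the metric-induced complex structure; the maximum principle then forces $|p|\le\sup_{\partial\Delta^{-}(l_\eta)}|p|=\Ordo(\eta)$ on all of $\Delta^{-}(l_\eta)$, upgrading your $\Ordo(\eta^{1/2})$ localization to $\Ordo(\eta)$. With this in hand, \cite[Lemma~5.6]{Ekholm07} gives the pointwise bound $|du_\eta|=\Ordo(\eta)$ on $\Delta^{-}(l_\eta,1)$. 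The missing idea in your sketch is precisely this subharmonicity/maximum-principle step; once you have $|p|=\Ordo(\eta)$ rather than $\Ordo(\eta^{1/2})$, one can in fact make a rescaling-type argument work, but at that point the direct route via \cite[Lemma~5.6]{Ekholm07} is shorter.
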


\begin{pf}
Let $b_1,\dots,b_r$ denote the negative punctures of $u_\eta$ which lie in $\Delta_m^{-}(l_\eta)$. Then the area $A^{-}_\eta$ of $u_{\eta}(\Delta^-(l_\eta))$ satisfies
\begin{align*}
0 \le A^{-}_{\eta}&=\int_{u_{\eta}(\pa\Delta^{-}(l_\eta))}p\,dq\\
&= \int_{u_\eta(l_\eta)} p\,dq + (z(u_\eta|_{l_\eta}(1))-z(u_\eta|_{l_\eta}(0))) - \sum_{j=1}^{r}\ell(b_j)=\Ordo(\eta).
\end{align*}
We conclude by
monotonicity that $u_\eta(\Delta^-(l_\eta))$ must lie in an
$\Ordo(\eta^{\frac12})$-neighborhood of $\Lambda$ (in particular all chords $b_j$ are of type $\mathbf{S}$). Since $J$ agrees with the almost complex structure induced by the metric on $\Lambda$ in such a neighborhood, \cite[Lemma 5.4]{Ekholm07},  shows that the
function $|p|^2$, where $p$ is the fiber coordinate in $T^\ast \Lambda,$ composed with $u_\eta$ is
subharmonic on $\Delta^-(l_\eta)$ and therefore attains its maximum on the boundary. The
lemma then follows from \cite[Lemma 5.6]{Ekholm07}.
\end{pf}

The derivative bound of Lemma \ref{l:outest} leads to flow tree convergence on $\Delta^{-}(l_\eta)$. Consider a sequence of $\eta$-short vertical segments $l_\eta$ such that each domain $\Delta^{-}(l_\eta)$ contains a puncture mapping to a Reeb chord $\eta\cdot b$ of $\Lambda_{K;\eta}$ for some Reeb chord $b$ of $\Lambda_{K}$ with $\Pi(b)\in N$.
\begin{cor}\label{c:outconv}
There exists a constant $C>0$ such that the sequence of restrictions $u_\eta|_{\Delta^-(l_\eta,C\log(\eta^{-1}))}$ has a subsequence that converges to a flow tree of $\Lambda_K\subset J^1(\Lambda)$. (Note that the flow tree in the limit may be constant.)
\end{cor}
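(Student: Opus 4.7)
The plan is to reduce the corollary to an application of Theorem~\ref{t:basicdisktree} (as proved in \cite{Ekholm07}) for the Legendrian $\Lambda_K \subset J^1(\Lambda)$, via a fiber rescaling that turns the $\Ordo(\eta)$ derivative bound of Lemma~\ref{l:outest} into a uniform $\Ordo(1)$ bound. First I would revisit the monotonicity step already used in the proof of Lemma~\ref{l:outest}: the $\eta$-short condition on $l_\eta$ together with the action bound on the Reeb chords $b_1,\dots,b_r \in \Delta^-(l_\eta)$ gives area $\Ordo(\eta)$ for $u_\eta|_{\Delta^-(l_\eta)}$, so by monotonicity the image lies in an $\Ordo(\eta^{1/2})$-neighborhood of $\Pi(\Lambda)$. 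Using the identification $N \approx$ neighborhood of the zero section in $J^1(\Lambda)$, lift this restriction to a map $\hat u_\eta \colon \Delta^-(l_\eta) \to T^*\Lambda$ with boundary on $\Pi^\Lambda(\Lambda_{K;\eta})$.

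Next I would apply the fiber rescaling $s_{\eta^{-1}}$ in the $J^1(\Lambda)$ direction and set $\tilde u_\eta = s_{\eta^{-1}} \circ \hat u_\eta$, which now has boundary on $\Lambda_K$ itself rather than on $\Lambda_{K;\eta}$. The derivative bound $|du_\eta| = \Ordo(\eta)$ on $\Delta^-(l_\eta,1)$ from Lemma~\ref{l:outest} transforms under this rescaling into $|d\tilde u_\eta| = \Ordo(1)$. Crucially, by the construction of $J = J_\eta$ in Section~\ref{sssec:modifacs}, in the neighborhood $N$ of $\Pi(\Lambda)$ the almost complex structure agrees with the push-forward of the metric-induced complex structure $J_g$ on $T^*\Lambda$ (which is itself invariant under fiber rescaling up to first order along the zero section). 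Hence on the region where $\hat u_\eta$ is defined, the rescaled maps $\tilde u_\eta$ are essentially $J_g$-holomorphic disks in $T^*\Lambda$ with boundary on $\Pi^\Lambda(\Lambda_K)$, with errors in the Cauchy--Riemann equation that tend to $0$ uniformly as $\eta \to 0$ because $\hat u_\eta$ is concentrated in an $\Ordo(\eta^{1/2})$-neighborhood of the zero section where the first-order agreement in Lemma~\ref{l:goodmetric} holds.

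Given the uniform derivative bound and the (approximately) holomorphic condition for $\Lambda_K \subset J^1(\Lambda)$, the convergence statement of Theorem~\ref{t:basicdisktree} for $\Lambda_K$ can now be applied to $\tilde u_\eta|_{\Delta^-(l_\eta,1)}$ exactly as in the proof of \cite[Theorem~1.2]{Ekholm07}. After passing to a subsequence, the boundaries of $\tilde u_\eta$ converge, and away from $\Ordo(\log(\eta^{-1}))$-neighborhoods of $Y_0$-, $Y_1$-vertices and switches in the domain, the convergence is in $C^1$ to the cotangent lift of a (possibly constant) flow tree of $\Lambda_K \subset J^1(\Lambda)$. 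This is exactly the assertion with $\Delta^-(l_\eta,C\log(\eta^{-1}))$, with the constant $C$ chosen to absorb these vertex neighborhoods. If the rescaled sequence degenerates in the sense that all vertical segments collapse to a single point, one gets a constant flow tree at a Reeb chord of $\Lambda_K$.

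The main subtlety I would expect is the verification that the error terms introduced by (a) the discrepancy between $J$ and $\phi_* J_g$ inside $N$ and (b) the non-invariance of $J_g$ under fiber rescaling away from the zero section both tend to zero in the appropriate function-space norm on $\Delta^-(l_\eta,1)$; this is where the first-order agreement provided by Lemma~\ref{l:goodmetric} together with the $\Ordo(\eta^{1/2})$ concentration estimate are essential, and where the choice of $C\log(\eta^{-1})$ rather than a constant cutoff becomes necessary to avoid bubbling or splitting at the vertex scales where $\Pi(\tilde\Lambda_\sigma)$ is only $C^0$-close to affine.
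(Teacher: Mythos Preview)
Your approach and the paper's are both reductions to the flow-tree convergence machinery of \cite{Ekholm07}, but the paper proceeds more directly and avoids your rescaling step. The paper's proof simply observes that the $\Ordo(\eta)$ derivative bound from Lemma~\ref{l:outest} means any region of diameter $\log(\eta^{-1})$ in $\Delta_m^-(l_\eta,1)$ has image of diameter $\Ordo(\eta\log(\eta^{-1}))$, and then invokes Lemmas~5.12, 5.16, and 5.17 of \cite{Ekholm07} directly to conclude convergence to a flow tree on strip regions away from $\Ordo(\log(\eta^{-1}))$-neighborhoods of the boundary minima. The point is that $\Lambda_{K;\eta} = s_\eta(\Lambda_K) \subset J^1(\Lambda)$ is \emph{already} exactly the set-up of \cite{Ekholm07} (a Legendrian scaled by a parameter going to $0$, with the ambient almost complex structure induced by a metric on the base in a neighborhood of the zero section), so those lemmas apply verbatim with $\sigma=\eta$.

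Your rescaling $\tilde u_\eta = s_{\eta^{-1}} \circ \hat u_\eta$ is an unnecessary detour that trades a varying boundary condition (on $\Lambda_{K;\eta}$) for a varying almost complex structure (the pullback of $J_g$ under $s_{\eta^{-1}}$), and as you acknowledge at the end, you are then left to verify that the rescaled maps are approximately $J_g$-holomorphic. This is not automatic: the metric-induced $J_g$ on $T^*\Lambda$ is fiber-scale invariant only at zeroth order along the zero section, so the error terms you worry about are genuine and would require their own argument. The paper sidesteps all of this by staying in the $\eta$-scaled picture, where the convergence analysis of \cite{Ekholm07} was built to operate. A minor point: since $\Lambda_K \subset J^1(\Lambda)$ has empty caustic, there are no $Y_1$-vertices or switches here, only $Y_0$-vertices and punctures; your mention of these is harmless but extraneous.
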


\begin{pf}
By Lemma \ref{l:outest}, the image under $u_\eta$ of any	region in $\Delta_m^{-}(l_\eta,1)$ of diameter $\log(\eta^{-1})$ lies inside a disk of radius $\Ordo(\eta\log(\eta^{-1}))$. Furthermore, along any strip region in
$\Delta^-(l_\eta,\log(\eta^{-1}))$ outside an $\Ordo(\log(\eta^{-1}))$-neighborhood of the
boundary minima in $\Delta^-(l_\eta)$, the map converges to a flow tree by the proof of \cite[Theorem~1.2]{Ekholm07}, see in particular Lemmas~5.12, 5.16, and~5.17 in \cite{Ekholm07}.
\end{pf}

\begin{rmk}
If the limiting flow tree in Corollary~\ref{c:outconv} is constant then it lies at $\Pi(b)\in N$, where $b$ is the Reeb chord of $\Lambda_K$ above. To see this note that $\Delta_m^{-}(l_\eta,C\log(\eta^{-1}))$ always contain
a half infinite strip which is a neighborhood of the puncture mapping to $b$. If the vertical segment bounding this strip does not converge to $\Pi(c)$ for some Reeb chord $c$, then the limiting tree is non-constant. Since $\Pi(b)$ is in the image we find that the tree must lie at $\Pi(b)$.
\end{rmk}

\subsubsection{Blow-up analysis}\label{sssec:blowup}
We next show that the limit of any sequence of $J$-holomorphic disks $u_\eta\colon\Delta_m\to T^{\ast}S^{2}$ with boundary on $\Lambda_{K;\eta}$ can contain at most one bubble. We also show how to add one puncture consistently to each domain
so that this forming bubble corresponds to some coordinate of the domains of $u_\eta$, which give points in
the space of conformal structures on the disk with $m$ boundary punctures, one distinguished, $\conf_m\approx\R^{m-3}$, approaching $\infty$ (rather than the derivative of $u_\eta$ blowing up).

\begin{lma}\label{l:blowup}
If $u_\eta\colon\Delta_m\to T^\ast S^2$ is a sequence of $J$-holomorphic disks with
boundary on $\Lambda_{K;\eta}$ and with one positive puncture such that $\sup_{\Delta_m}|du_\eta|$ is unbounded as
$\eta\to 0$ then, after adding one puncture in the domains $\Delta_m$ of $u_\eta$, we get an induced sequence $u_\eta\colon\Delta_{m+1}\to T^{\ast}S^{2}$ for which $|du_\eta|$ is uniformly bounded from above.
\end{lma}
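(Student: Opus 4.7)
The plan is to carry out a standard bubble analysis at the points where $|du_\eta|$ blows up, use the exactness of $T^*S^2$ and the specific geometry of $\Lambda = \Lambda_U$ to control the bubbles, and conclude that exactly one bubble forms, which can be absorbed by inserting an additional puncture.

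First I would pick $z_\eta \in \Delta_m$ achieving (or nearly achieving) the maximum of $|du_\eta|$ and rescale by $\varepsilon_\eta = |du_\eta(z_\eta)|^{-1}\to 0$, following the standard Gromov/Sikorav rescaling argument (compare \cite{Sikorav94} and the analogous step in the derivation of Theorem \ref{t:basicdisktree}). Since $\Lambda_{K;\eta} \to \Lambda$ in $C^0$ as $\eta\to 0$ and $|J-J_{\sigma_0}|_{C^0}\to 0$ in the bubble region by \eqref{e:Jgto0}, the rescaled sequence subconverges to a non-constant $J_{\sigma_0}$-holomorphic map $v\colon \mathbb C\to T^*S^2$ or $v\colon \mathbb H \to T^*S^2$ with boundary on $\Pi(\Lambda)$. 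Because $\omega = d\theta$ is exact, Stokes's theorem rules out non-constant holomorphic spheres, so the bubble is a disk with boundary on $\Pi(\Lambda)$; a standard removable-singularities argument and the absence of Reeb orbits in $J^1(S^2)$ show that $v$ extends to a finite-energy $J_{\sigma_0}$-holomorphic disk with boundary punctures at Reeb chords of $\Lambda$, and by positivity of action it has at least one positive puncture. Consequently its symplectic area is at least $\action(c)$, the smallest Reeb chord action of $\Lambda$.

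Next I would show that at most one such bubble can form. By Stokes's theorem applied to $u_\eta$, the total symplectic area is the action of the positive puncture minus the actions of the negative ones; going through the four possible puncture types $(\mathrm{QT}_\varnothing),(\mathrm{QT}_0),(\mathrm{QT}_0'),(\mathrm{QT}_1)$ listed in Section~\ref{sssec:outflowtree}, this area is bounded above by $\action(e) + \Ordo(\eta)$. Since both $e$ and $c$ arise from a Morse perturbation of the Bott-degenerate circle of Reeb chords for the round unknot (Section~\ref{ssec:flowtreesonU}), their actions differ by an amount controlled by the size of the Morse perturbation, and we may arrange
\[
\action(e) < 2\,\action(c).
\]
Hence the total area budget cannot accommodate two disk bubbles, each of area at least $\action(c)$; so at most one bubble forms. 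A standard iterated bubble-tree argument (extract the first bubble, check whether $|du_\eta|$ remains unbounded on the complement, extract another bubble if so) then terminates after one step.

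Finally I would add the puncture. Passing to a subsequence, $z_\eta \to z_\infty$. If $z_\infty$ lies on the boundary, we insert a new boundary puncture there; in the $\Delta_{m+1}$ model this amounts to sending one of the boundary-minimum coordinates $\tau_j$ to $+\infty$, which is precisely the limiting conformal degeneration captured by the bubble. After this reparametrization, the only possible source of derivative blow-up has been removed, and standard elliptic bootstrap together with the $C^1$-closeness estimates available away from the bubble point (as in Lemma~\ref{l:outest} and Corollary~\ref{c:outconv}) yields a uniform bound $|du_\eta| = \Ordo(1)$ on $\Delta_{m+1}$.

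The main obstacle will be the area inequality $\action(e) < 2\,\action(c)$, which depends on choosing the perturbation of $\Lambda_U$ small enough in Section~\ref{ssec:flowtreesonU}; one also needs to verify that the bubble point cannot coincide with a puncture already present (so that the new puncture is genuinely distinct), which is handled by the monotonicity-based energy estimates near the existing punctures that were already established in the proof of Lemma~\ref{l:outest}.
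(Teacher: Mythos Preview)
Your proposal is essentially correct and follows the same strategy as the paper: rescale at a blow-up point to extract a non-constant disk bubble with boundary on $\Pi(\Lambda)$, use the action inequality $\action(e) < 2\,\action(c)$ to show at most one such bubble can form, and add a single puncture to absorb it.

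Two points where the paper is more concrete than your write-up. First, rather than adding the puncture at the limit $z_\infty$ of the blow-up points, the paper fixes an arc $A\subset\Lambda$ transverse to the boundary of the bubble disk $v$, far from all Reeb chord endpoints, and adds the puncture at a point $q_\eta\in\partial\Delta_m$ with $u_\eta(q_\eta)\in\Pi(A)$; convergence of the rescaled maps guarantees such $q_\eta$ exist with $|q_\eta-p_\eta|\to 0$. This choice forces the new puncture to be a boundary point and, by monotonicity, gives a uniform lower bound on the area of $u_\eta$ in a neighborhood of $q_\eta$, since that neighborhood maps far from all Reeb chords. Second, the paper does not invoke Lemma~\ref{l:outest} or Corollary~\ref{c:outconv} for the final derivative bound (those results control the flow-tree parts, where the derivative is $\Ordo(\eta)$, not the big-disk region). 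Instead it argues directly: if $|du_\eta|$ were still unbounded on $\Delta_{m+1}$, a second rescaling would produce another bubble $v^1$, and since the neighborhood of the first added puncture already carries area $\geq\action(c)$ (by the monotonicity observation above), the limit would contain at least two non-constant one-punctured disks of total area $\geq 2\,\action(c) > \action(e)$, contradicting the one-positive-puncture area bound. Your iterated bubble-tree remark is the right idea; the citations to Lemma~\ref{l:outest} and Corollary~\ref{c:outconv} in your last paragraph should be replaced by this two-bubble area contradiction.
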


\begin{pf}
The proof uses standard blow-up arguments, see {\em e.g.\ }~\cite{Sikorav94}.  Assume that $M_\eta=\sup_{\Delta_m}|du_\eta|$ is not bounded as $\eta\to 0$. Using asymptotic properties of $J$-holomorphic disks near the punctures of $u_\eta$, we find that for $\eta>0$ there exist points $p_\eta\in\Delta_m$ such that $|du_\eta(p_\eta)|=M_\eta$. View $\Delta_m$ as a subset of $\C$ and consider the sequence of maps $g_\eta(z)=u_\eta\left(p_\eta+\frac{z}{M_\eta}\right)$ defined on $U=\bigr\{z\in\C\colon p_\eta+\frac{z}{M_\eta}\in \Delta_m\bigr\}$, where $\Delta_m$ refers to the domain of $u_{\eta}$. Note that the derivative $|dg_\eta|$ of $g_\eta$ is uniformly bounded as $\eta\to 0$. We can thus extract a convergent subsequence, which in the limit $\eta=0$ gives a non-constant holomorphic disk $v\colon H\to T^{\ast}S^{2}$, where $H$ is the upper half plane, with boundary on $\Lambda$, with a positive puncture at infinity, and no other puncture. (There may be other limiting bubble disks,
but at least one of them has a single puncture. In the limit, only the chords of type ${\bf{L}}$ exist.)
Fix an arc $A$ in $\Lambda$ that intersects $v(\pa H)$ transversely at a point far from all Reeb chord endpoints. It follows from the convergence $g_\eta\to v$ that there exist a point $q_\eta$ in the domain $\Delta_m$ for $u_\eta$ with $|p_\eta-q_\eta|\to 0$ as $\eta\to 0$ such that $u_\eta(q_\eta)\in \Pi(\Lambda)$. Adding a puncture at $q_\eta$ in the domain $\Delta_m$ of $u_\eta$ gives a new sequence of maps $u_\eta^{1}\colon\Delta_{m+1}\to T^\ast S^2$. Assume now that   $\sup_{\Delta_{m+1}}|d u_\eta^{1}|$ is unbounded. Repeating the blow up argument sketched above, we would again find a bubble disk $v^{1}\colon H\to T^{\ast}S^{2}$ in the limit with one positive puncture and no other punctures.

Since the area contributions of $u_\eta^{1}$ in a neighborhood of the added puncture $q_\eta$ is uniformly bounded from below and since this neighborhood can be taken to map  to a region far from all Reeb chords, it follows that there are at least two non-constant disks in the limit, both with one positive puncture and no other punctures. The sum of the areas of these two disks is bounded from below by $2\action(c)+\Ordo(\eta)$, where $c$ is the shorter of the two Reeb chords of $\Lambda$. This however contradicts $u_\eta$ having one positive puncture since the lengths of Reeb chords then implies $\area(u_\eta)\le \action(e)+\Ordo(\eta)$ and $2\action(c)>\action(e)$. The lemma follows.
\end{pf}

\subsubsection{Quantum flow tree convergence}\label{sssec:qtreeconv}
Consider a sequence $u_\eta\colon\Delta_m\to T^{\ast}S^{2}$ of rigid $J$-holomorphic disks with boundary on $\Lambda_{K,\eta}$. Assume, without loss of generality (see Lemma \ref{l:blowup}), that $|du_\eta|$ is uniformly bounded.

\begin{lma}\label{l:someD}
If each of the disks $u_\eta$ has a positive puncture at a Reeb chord of type ${\bf L}$
then $\sup_{\Delta_m}|du_\eta|$ is uniformly bounded from below.
\end{lma}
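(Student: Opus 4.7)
The plan is to argue by contradiction using Gromov compactness combined with the action lower bound that comes from having a type $\mathbf{L}$ positive puncture. Suppose along some subsequence $\eta\to 0$ one has $\sup_{\Delta_m}|du_\eta|\to 0$.

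First I would use Stokes' Theorem to get a uniform positive lower bound on the areas of the disks:
\[
\operatorname{Area}(u_\eta)\;=\;\action(q_0^\eta)\,-\,\sum_{j\ge 1}\action(q_j^\eta),
\]
where $q_0^\eta$ is the positive and $q_j^\eta$ the negative Reeb chord punctures of $u_\eta$. By hypothesis $q_0^\eta$ is of type $\mathbf{L}$, so its action is close to $\action(e)$ or $\action(c)$; negative punctures of type $\mathbf{S}$ contribute $\Ordo(\eta)$; and the only long negative puncture allowed is the single $\mathbf{L}_1$ occurring in case $(\mathrm{QT}_0')$, where $q_0^\eta$ is $\mathbf{L}_2$ and $\action(e)>\action(c)$ yields a positive difference. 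Thus in every one of the cases $(\mathrm{QT}_\varnothing)$, $(\mathrm{QT}_0)$, $(\mathrm{QT}_0')$, $(\mathrm{QT}_1)$ listed above,
\[
\operatorname{Area}(u_\eta)\;\ge\;A_0\;>\;0
\]
with $A_0$ independent of $\eta$.

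Second, because $|du_\eta|$ is already uniformly bounded above (which is the standing assumption preceding the lemma, arranged in Section~\ref{sssec:blowup}) and because $|J_\eta-J_{\sigma_0}|_{C^0}\to 0$ (see Equation~\eqref{e:Jgto0}), Gromov--SFT compactness applies: after passing to a subsequence and allowing the conformal parameters $\underline{\tau}$ to degenerate, $u_\eta$ converges to a holomorphic building $u_0$ whose components are $J_{\sigma_0}$-holomorphic disks with boundary on $\Lambda$, matched along Reeb chords $e,c$ of $\Lambda$ at the breakings (the short chords of $\Lambda_{K,\eta}$ collapse to zero action as $\eta\to 0$ and disappear in the limit). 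The assumption $\sup|du_\eta|\to 0$ would force every component of $u_0$ to be constant, so the limit would have zero area and its positive puncture could not asymptote to the non-trivial chord $e$ or $c$. This contradicts both the area bound, $\operatorname{Area}(u_0)\ge A_0$ by lower semicontinuity, and Stokes applied to the limit, whose positive puncture is still at $e$ or $c$.

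The main obstacle I anticipate is not the action-area dichotomy itself, which is immediate from the enumerated puncture types, but the bookkeeping needed to invoke Gromov--SFT compactness in a setting where the Lagrangian boundary condition is itself deforming and the short Reeb chord actions collapse to zero. The preceding subsections (uniform $C^0$ bound on $|du_\eta|$ from Section~\ref{sssec:blowup}, and the $C^0$-closeness $|J_\eta-J_{\sigma_0}|_{C^0}\to 0$ together with the $C^1$-closeness on a fixed neighborhood of $\Pi(\Lambda)$ from Section~\ref{sssec:modifacs}) are precisely what is needed to make the Gromov compactness step routine, after which the contradiction is immediate.
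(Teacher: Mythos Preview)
Your contradiction is the same as the paper's---a type-$\mathbf{L}$ positive puncture forces a uniform positive area lower bound, incompatible with $\sup|du_\eta|\to 0$---but you reach the second half via Gromov/SFT compactness, whereas the paper argues more directly. The paper observes that every point of the standard domain $\Delta_m$ lies within bounded vertical distance of $\partial\Delta_m$, so a small derivative bound keeps the whole image $u_\eta(\Delta_m)$ inside the fixed neighborhood $N$ of $\Pi(\Lambda)$ where $J$ agrees with the metric complex structure on $T^\ast\Lambda$; there $|p\circ u_\eta|^2$ is subharmonic \cite[Lemma~5.4]{Ekholm07}, and the maximum principle together with the flow-tree estimates of \cite[Lemma~5.6]{Ekholm07} force the area to be $\Ordo(\eta)$. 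This exploits precisely the structure of $J$ arranged in Section~\ref{sssec:modifacs} and avoids compactness machinery altogether. Your compactness step, by contrast, is not quite off-the-shelf: the Lagrangian boundary $\Lambda_{K;\eta}$ is degenerating (its $n$ sheets collapse onto $\Lambda$) and the short-chord actions go to zero, so the usual SFT hypotheses fail and justifying the limit building requires exactly the sort of analysis the subharmonicity argument sidesteps.

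Two minor slips: case $(\mathrm{QT}_\varnothing)$ has positive puncture of type $\mathbf{S}_1$, not $\mathbf{L}$, so it is excluded by the hypothesis and should not appear in your area estimate (its area is only $\Ordo(\eta)$); and in this subsection $J$ is already fixed, so the reference to $|J_\eta-J_{\sigma_0}|_{C^0}\to 0$ from Equation~\eqref{e:Jgto0} pertains to the earlier deformation of the almost complex structure rather than to the present degeneration of $\Lambda_{K;\eta}$.
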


\begin{pf}
Consider the neighborhood $N$ of $\Pi(\Lambda)$ where $J$ is induced by the metric on $\Lambda$. Since $u_\eta$ maps $\pa\Delta_m$ to an $\Ordo(\eta)$ neighborhood of $\Pi(\Lambda)$ there exists $\epsilon>0$ such that if $|du_\eta|\le\epsilon$ then $u_\eta(\Delta_m)\subset N$ for all sufficiently small $\eta>0$. Lemma~5.4 from  \cite{Ekholm07} then shows that the function $|p\circ u_\eta|^2$ is subharmonic and, consequently, that the sequence $u_\eta$ has a subsequence $u_{\eta'}$ which converges to a flow tree, see \cite[Lemma 5.6]{Ekholm07}. In particular the area of $u_{\eta'}$ is $\Ordo(\eta').$  But since $u_{\eta'}$ has a positive puncture of type ${\bf L}$, there is a uniform bound
from below on the area of $u_{\eta'}$ restricted to a neighborhood of this puncture. See the proof of \cite[Lemma 9.3]{EkholmEtnyreSullivan05b},
for example.
\end{pf}

Lemma \ref{l:someD} leads to a description of the $J$-holomorphic components in the limit of the sequence $u_\eta$ with positive puncture at a chord of type $\mathbf{L}$ as follows. Consider a sequence of points $p_\eta$ in the domains $\Delta_m$ of $u_\eta$ such that $|du_\eta(p_\eta)|\ge\epsilon>0$ for all $\eta>0$. After passing to a subsequence, we may assume that $u_\eta(p_\eta)$ converges in $T^\ast S^2$. Consider coordinates $\tau+it$ on $\C$ and represent the domain $\Delta_m\subset\C$ of $u_\eta$ by letting the $\tau$-coordinate of $p_\eta$ equal $0$. Then as $\eta\to 0$ in the sequence of domains of $u_\eta$, some boundary minima of $\Delta_m$ stay at finite distance from $p_\eta$ and other do not. After passing to a subsequence we may assume that every sequence of boundary minima on a fixed height has a limit which may be finite or infinite and we find a limiting conformal structure on a domain $\Delta_{m_0}$ that contains $p_0=\lim_\eta p_\eta$. It follows in a  straightforward way that $u_\eta$ converges (uniformly on compacts) to a non-constant
$J$-holomorphic disk $v\colon\Delta_{m_0}\to T^{\ast}S^{2}$ with boundary on $\Lambda$. We say that such a disk is a {\em non-constant $J$-holomorphic component} of the limit.

We next consider the role of the choice of $p_\eta$. If the sequence of domains $\Delta_m$ converges and if they contain some point $q_\eta$ such that $|du_\eta(q_\eta)|\ge\delta>0$ and if $|p_\eta-q_\eta|\to\infty$ as $\eta\to 0$ then, repeating the above argument, we extract another non-constant $J$-holomorphic component $v'$ containing $q_0=\lim_{\eta\to 0} q_\eta$ of the limit, which is distinct from the component $v$ that contains $p_0$. Furthermore, if the $\tau$-coordinate of $q_\eta$ approaches $\pm\infty$ in coordinates where the $\tau$-coordinate of $p_\eta$ equals $0$ and if $a$ is the Reeb chord at the positive puncture of $v'$ then $a$ is also the negative puncture of some non-constant $J$-holomorphic component in the limit. Arguing by action it is easy to see that the number of non-constant components in the limiting configuration is finite. And since $\Lambda$ has only two Reeb chords of almost equal actions the number of such components is at most two.

We next show that the flow trees in Lemma \ref{l:outest} fit together with the non-constant components to form a quantum flow tree. Consider a puncture $\zeta_r$ at $+\infty$ in the domains $\Delta_m$ of $u_\eta$ that map to a Reeb chord $r$ of $\Lambda_{K;\eta}$. By asymptotic properties of holomorphic disks at punctures there are $\eta$-short vertical segments which separate $\zeta_r$ from the positive puncture $\zeta_+$ at $-\infty$ of $\Delta_m$. In particular, there is an $\eta$-short vertical segment $l_\eta(\zeta_r)$ of minimal $\tau$-coordinate that separates $\zeta_r$ from $\zeta_+$, we call it the \emph{extremal $\eta$-small} vertical segment of $\zeta_r$. If $v\colon \Delta_{m'}\to T^{\ast}S^{2}$ is a non-constant $J$-holomorphic component in the limit configuration then we write $\pa v=v(\pa\Delta_{m'})$ for the image of $v$ restricted to the boundary.

\begin{lma}\label{l:nogap}
If $\zeta_r$ is any puncture at $+\infty$ in the domains $\Delta_m$ of $u_\eta$ which maps to a Reeb chord $r$ and if $l_\eta$ is the extremal $\eta$-small vertical segment of $\zeta$ then there exists a non-constant component $v$ of the limit such that $u_\eta(l_\eta)$ converges to a point in $\pa v$.
\end{lma}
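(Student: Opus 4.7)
The plan is an argument by contradiction using the extremality of $l_\eta$. After passing to a subsequence, $u_\eta(l_\eta)$ converges to a point $p\in T^{\ast}S^{2}$: the limit exists because $\ell(u_\eta(l_\eta))\le M\eta\to 0$, and the subharmonicity argument from Lemma~\ref{l:outest} applied on $\Delta^-(l_\eta)$ forces $u_\eta(l_\eta)$ to lie within an $\Ordo(\eta^{1/2})$--neighborhood of $\Pi(\Lambda)$, so $p\in \Pi(\Lambda)$. Suppose, for contradiction, that $p\notin \pa v$ for any non-constant $J$-holomorphic component $v$ of the limit of $u_\eta$.

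First I would use the extremality of $l_\eta$ to produce a uniform lower bound on $|du_\eta|$ at some point near $l_\eta$ lying in $\Delta^+(l_\eta)$. Fix $\epsilon_0>0$ small enough that any holomorphic map into $T^{\ast}S^{2}$ with boundary on $\Pi(\Lambda_{K,\eta})$ and derivative bounded by $\epsilon_0$ is forced into the neighborhood $N_\eta$ of $\Pi(\Lambda)$ on which $J$ agrees with the complex structure induced by the metric on $\Lambda$ (Lemma~\ref{l:goodmetric}). If, contrary to what we want, $|du_\eta|\le\epsilon_0$ on the strip $[\tau(l_\eta)-1,\,\tau(l_\eta)]\times[0,m]\cap \Delta_m$, then $|p|^{2}\circ u_\eta$ is subharmonic there by \cite[Lemma~5.4]{Ekholm07}, and the $\Ordo(\eta)$ boundary bound on the fiber component together with the maximum principle yields $|du_\eta|=\Ordo(\eta)$ in the interior. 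A small leftward vertical translate of $l_\eta$ would then still separate $\zeta_r$ from $\zeta_+$ and would still be $\eta$-short, contradicting the extremality of $l_\eta$. Hence there exist $\delta>0$ and points $q_\eta\in\Delta_m$ with $0\le \tau(l_\eta)-\tau(q_\eta)\le 1$ and $|du_\eta(q_\eta)|\ge\delta$.

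Next I would apply the extraction procedure of Section~\ref{sssec:qtreeconv} to this sequence $q_\eta$: after passing to a subsequence, one obtains a non-constant $J$-holomorphic component $v\colon\Delta_{m'}\to T^{\ast}S^{2}$ whose image contains $\lim_\eta u_\eta(q_\eta)$. I then need to show $p=\lim_\eta u_\eta(l_\eta)\in \pa v$. Since $l_\eta$ and $q_\eta$ lie on $\pa \Delta_m$ within bounded $\tau$-distance of each other and are joined by a boundary arc $\alpha_\eta\subset\pa\Delta_m$ of uniformly bounded length, the image $u_\eta(\alpha_\eta)$ is contained in $\Pi(\Lambda_{K,\eta})$, which lies in an $\Ordo(\eta)$-neighborhood of $\Pi(\Lambda)$. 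The total-action argument of Lemma~\ref{l:blowup} (together with $\action(e)<2\action(c)$) rules out the formation of any additional bubble along $\alpha_\eta$, so $|du_\eta|$ is uniformly bounded along $\alpha_\eta$, and the images $u_\eta(\alpha_\eta)$ converge, along a further subsequence, to a segment of $\pa v$ joining $\lim u_\eta(q_\eta)\in\pa v$ to $p$. In particular $p\in\pa v$, contradicting the standing assumption and proving the lemma.

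The main obstacle will be the last paragraph: rigorously transporting $p$ into $\pa v$ through the limit change of conformal structure. The conformal distance between $q_\eta$ and $l_\eta$ in the domain of $v$ may be zero, finite, or infinite, even though the ambient distance between $u_\eta(q_\eta)$ and $u_\eta(l_\eta)$ stays bounded. Handling this requires a careful $C^0$-compactness argument along the boundary arc $\alpha_\eta$, combined with the action bookkeeping from Lemma~\ref{l:blowup} and Lemma~\ref{l:someD} to exclude any hidden bubbling that would detach $l_\eta$ from the component containing $q_\eta$ in the limiting configuration.
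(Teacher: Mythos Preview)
Your first step has a genuine gap, and it is more serious than the obstacle you flag in your last paragraph. You want a point $q_\eta$ within unit $\tau$-distance of $l_\eta$ with $|du_\eta(q_\eta)|\ge\delta$ for some $\eta$-independent $\delta>0$, arguing that otherwise the maximum principle for $|p|^{2}$ on the unit strip $S_\eta=[\tau(l_\eta)-1,\tau(l_\eta)]\times[0,m]\cap\Delta_m$ forces $|du_\eta|=\Ordo(\eta)$ there, so a leftward translate of $l_\eta$ would again be $\eta$-short, contradicting extremality. But the maximum principle bounds $|p|$ only by its boundary values on $\partial S_\eta$: on the horizontal pieces and on the right edge $l_\eta$ you do have $|p|=\Ordo(\eta)$, yet on the \emph{left} vertical edge the only information is $|du_\eta|\le\epsilon_0$, hence merely $|p|\le C\epsilon_0$. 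The resulting bound is $|du_\eta|=\Ordo(\epsilon_0)$, not $\Ordo(\eta)$, and a leftward translate of $l_\eta$ then has image of length $\Ordo(\epsilon_0)$---small, but not $\eta$-short in the sense of \eqref{eq:vertlineL}--\eqref{eq:vertlineZ}. So extremality is not violated. The regime you must rule out---$|du_\eta|$ of some intermediate scale such as $\eta^{1/2}$ on a long region to the left of $l_\eta$---is fully compatible with both extremality and $|du_\eta|\le\epsilon_0$, and excluding it is precisely the content of the lemma.

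The paper's proof confronts this intermediate regime head-on with an area estimate rather than a pointwise derivative bound. Assuming $u_\eta(l_\eta)$ stays a fixed distance $\epsilon$ from every $\partial v$, one covers the gap of length $\sim\epsilon$ along $\Pi(\Lambda)$ by many disjoint domain strips with maximal derivatives $K_j$. Each $K_j=\Ordo(\eta^{1/2})$ by monotonicity (the area discrepancy with the limit is $\Ordo(\eta)$), yet no $K_j$ is $\Ordo(\eta)$, since that would produce an $\eta$-short vertical segment whose image lies strictly closer to $\partial v$. Rescaling the target by $K_j^{-1}$ yields a nondegenerate holomorphic limit and hence area $\ge CK_j^{2}$ per strip; summing and using that the image diameters add to at least $c\epsilon$ gives total excess area $\gtrsim\inf_j K_j\gg\eta$, contradicting the $\Ordo(\eta)$ bound from Stokes' theorem. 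Thus extremality enters only in the weak form $K_j\gg\eta$, not the uniform $K_j\ge\delta$ your argument requires.
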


\begin{pf}
We prove this lemma by contradiction: if the statement of the lemma does not hold, then the area difference between a limit disk and the disks before the limit violates an $\Ordo(\eta)$ bound derived from Stokes' theorem.

Assume the lemma does not hold. Then there exists $\epsilon>0$ such that for any sequence of $l_\lambda$ which satisfies the Inequalities~\eqref{eq:vertlineL} and \eqref{eq:vertlineZ}, some point on $l_\eta$ maps a distance at least $\epsilon>0$ from $\partial v$. Consider a strip region  between two slits,$[-d,d]\times[0,1]\subset \Delta_m,$ for which some point converges to a point at distance $\delta$ from $\partial v$, where $\frac{\epsilon}{4}<\delta<\frac{\epsilon}{2}$. Let $\sup_{[-d,d]\times[0,1]}|du_\eta|=K$. Then $K$ is not bounded by $M\eta$ for any $M>0$. Since the difference between the sum of areas of the non-constant components in the limit and that of $u_\eta$ is $\Ordo(\eta),$ it follows that $|du_\eta|=\Ordo(\eta^{\frac12})$ (by monotonicity and a standard bootstrap estimate). Thus $K=\Ordo(\eta^{\frac12})$. Consider next the scaling of the target by $K^{-1}$ at the image of $(0,0) \in [-d,d]\times[0,1].$ We get a sequence of maps $\widehat u_\eta$ from $[-d,d]\times[0,1]$ with bounded derivative. Note moreover that the scaled boundary condition is $\Ordo(\eta^{\frac12})$ from the $0$-section. Changing coordinates to the standard $(\C^{n},\R^{n})$ respecting the complex structure at the limit point, we find that there are maps $f_\eta\colon[-d,d]\times[0,1]\to\C^{n}$ with the following properties
\begin{itemize}
\item $\sup_{[-d,d]\times[0,1]}|D^{k}f_\eta|=\Ordo(\eta^{\frac12})$, $k=0,1$,
\item $\widehat u_\eta+f_\eta$ satisfies $\R^{n}$ boundary conditions, and
\item $\bar\partial (\widehat u_\eta+f_\eta)=\Ordo(\eta^{\frac12})$.
\end{itemize}
It follows that $\widehat u_\eta + f_\eta$ converges to a
  holomorphic map with boundary on $\R^{n}$, which takes $0$ to $0$
  and which has derivative of magnitude $1$ at $0$. Using solvability
  of the $\bar\partial$-equation in combination with $L^{2}$-estimates
  in terms of area we find that the area of $\widehat u_\lambda$ must be
  uniformly bounded from below by a constant $C$. The area
  contribution to the original disks near the limit is thus at least
  $K^{2}C$.  Since the image of $[-d,d] \times [0,1]$ in $\Pi(\Lambda)$ has diameter
  at most $2Kd$, we may repeat the argument with many disjoint finite
  strips with maximal
  derivatives $K_j$ and with sum of diameters bounded below by $\frac{\epsilon}{100}.$ We find that the area contribution is bounded
  from below by $C \sum K_j^{2}.$ Since the length
  contribution is bounded below, we get:
  \[ 2d \sum K_j\ge \frac{\epsilon}{100}.
  \] Now,
  \[ C \sum K_j^{2}\ge C\inf_j\{K_j\}\sum K_j\ge C'\inf_j\{K_j\}.
  \] For any $M>0$, $\inf_j\{K_j\}\ge M\eta$. To see this assume
  that it does not hold true. Then there is a sequence of vertical
  segments $l_\eta$ such that $|du_\eta|\le 2M\eta$ with the
  property that the distance between $u(l_\eta)$ and $\partial v$
  is at most $\frac34\epsilon$. This however contradicts our
  hypothesis. Consequently, the area contribution from the remaining
  part of the disk is not $\Ordo(\eta)$, which contradicts Stokes'
  theorem.
\end{pf}

As a consequence of the preceding lemmas, we get the following result.
\begin{cor}\label{c:rigidftconv}
Any sequence of rigid holomorphic disks $u_\eta$ with boundary on $\Lambda_{K;\eta}$ has a subsequence which converges to a quantum flow tree. The quantum flow trees which arise as limits of rigid disks are of the following types.
\begin{itemize}
\item[$(\mathrm{QT}_{\varnothing})$] No non-constant $J$-holomorphic components ({\em i.e.}, flow trees), positive puncture of type $\mathbf{S}_1$, and negative punctures of type $\mathbf{S}_1$.
\item[$(\mathrm{QT}_0)$] Rigid non-constant $J$-holomorphic component, positive puncture of type $\mathbf{L}_1$, and negative punctures of type $\mathbf{S}_0$.
\item[$(\mathrm{QT}_0')$] Rigid non-constant $J$-holomorphic component, positive puncture of type $\mathbf{L}_1,$ one negative puncture is of type ${\bf L}_1$, and all other negative punctures are of type ${\bf S}_0$.
\item[$(\mathrm{QT}_1)$] Constrained rigid non-constant $J$-holomorphic component, positive puncture of type $\mathbf{L}_2$, one negative puncture of type $\mathbf{S}_1$ and remaining negative punctures of type $\mathbf{S}_0$.
\end{itemize}
\end{cor}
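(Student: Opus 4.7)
The plan is to assemble the preceding lemmas. First, apply Lemma~\ref{l:blowup} to assume, after possibly introducing an additional puncture in each $\Delta_m$, that $\sup_{\Delta_m}|du_\eta|$ is uniformly bounded. Action considerations together with the dimension formula (as in the bullet list preceding Lemma~\ref{l:onlytreeconv}) force the puncture configuration of $u_\eta$ to be one of the four types $(\mathrm{QT}_\varnothing)$, $(\mathrm{QT}_0)$, $(\mathrm{QT}_0')$, $(\mathrm{QT}_1)$ listed in Section~\ref{sssec:outflowtree}. I will treat each case by the same two-step procedure: first extract the non-constant $J$-holomorphic components of the limit, then use the $\eta$-short vertical segment analysis to attach flow trees of $\Lambda_K\subset J^1(\Lambda)$ along their boundaries.

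In case $(\mathrm{QT}_\varnothing)$, all Reeb chords are of type $\mathbf{S}$, so the area of $u_\eta$ is $\Ordo(\eta)$ and Lemma~\ref{l:onlytreeconv} gives convergence to a single flow tree of $\Lambda_K\subset J^1(\Lambda)$, which is a quantum flow tree with no non-constant $J$-holomorphic component. In the remaining three cases the positive puncture is of type $\mathbf{L}$, so Lemma~\ref{l:someD} ensures $\sup|du_\eta|$ is bounded below, and the rescaling procedure described just after that lemma extracts at least one non-constant $J$-holomorphic component $v\colon \Delta_{m_0}\to T^\ast S^2$ with boundary on $\Pi(\Lambda)$. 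The action computation $\area(u_\eta)=\action(\text{pos.\ puncture})-\sum\action(\text{neg.\ punctures})+\Ordo(\eta)$ combined with the fact that any non-constant component on $\Lambda$ has area at least $\action(c)+\Ordo(\eta)$ (and with $2\action(c)>\action(e)$, as used in Lemma~\ref{l:blowup}) forces exactly one non-constant component, whose puncture types and multiplicities are precisely those listed in $(\mathrm{QT}_0)$, $(\mathrm{QT}_0')$, and $(\mathrm{QT}_1)$.

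Next, for each puncture $\zeta_r$ of $u_\eta$ mapping to a Reeb chord of type $\mathbf{S}$, choose the extremal $\eta$-short vertical segment $l_\eta(\zeta_r)$ separating $\zeta_r$ from the positive puncture. Lemma~\ref{l:outest} provides the $\Ordo(\eta)$ derivative bound on $\Delta^-(l_\eta(\zeta_r),1)$, and Corollary~\ref{c:outconv} then produces a (possibly constant) limiting partial flow tree of $\Lambda_K\subset J^1(\Lambda)$ on the subdomain $\Delta^-(l_\eta(\zeta_r),C\log(\eta^{-1}))$. Lemma~\ref{l:nogap} shows that the image $u_\eta(l_\eta(\zeta_r))$ converges to a point of $\partial v$, so the special positive puncture of each limiting partial flow tree attaches to $\partial v$ without gap. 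Taking the collection of these partial flow trees together with $v$ and with the (chosen) neighborhood lifts of $\partial v$ to $\Lambda_K$, one obtains a quantum flow tree in the sense of Section~\ref{ssec:refinedft}.

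Finally, one must check rigidity of the non-constant component $v$: in cases $(\mathrm{QT}_0)$ and $(\mathrm{QT}_0')$ the component $v$ is unconstrained and the formal dimension inherited from $\dim(u_\eta)=0$ together with the formal dimensions of the attached partial flow trees forces $\dim(v)=0$, while in case $(\mathrm{QT}_1)$ the $\mathbf{S}_1$ negative puncture limits onto a point condition on $\partial v$, rendering $v$ constrained rigid. The main obstacle in carrying out this plan is Lemma~\ref{l:nogap}, which is the content-bearing step: the monotonicity/bootstrap area argument used there must be executed carefully so that the area deficit between $u_\eta$ and its limiting components is indeed $\Ordo(\eta)$, thereby excluding any ``invisible'' non-constant piece in the region bridging the non-constant component to the attached flow trees; once this gluing-at-the-boundary result is in place, all other steps are applications of the already-proved lemmas and standard action bookkeeping.
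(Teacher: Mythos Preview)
Your overall structure matches the paper's: use Lemma~\ref{l:onlytreeconv} for $(\mathrm{QT}_\varnothing)$, and for the $\mathbf{L}$-positive cases combine Lemma~\ref{l:someD} (existence of a non-constant component), Corollary~\ref{c:outconv} (flow-tree limits on the $\Delta^-$ pieces), and Lemma~\ref{l:nogap} (the flow trees attach to $\partial v$). That part is fine.

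The gap is in your argument that there is \emph{exactly one} non-constant component, specifically in case $(\mathrm{QT}_1)$. Your action estimate does not rule out a two-level broken configuration on $\Lambda$: a rigid strip with positive puncture at $e$ and negative at $c$ (area $\action(e)-\action(c)$) glued to a rigid one-punctured disk at $c$ (area $\action(c)$) has total area $\action(e)$, which is exactly the area available. Relatedly, your claim that ``any non-constant component on $\Lambda$ has area at least $\action(c)$'' is false---the $e\to c$ strip has area $\action(e)-\action(c)<\action(c)$---so the $2\action(c)>\action(e)$ inequality from Lemma~\ref{l:blowup} (which concerned bubbles with \emph{no} negative punctures) does not transfer here.

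The paper excludes this broken configuration differently. The negative puncture of type $\mathbf{S}_1$ sits at a Morse maximum $b$, and any partial flow tree of $\Lambda_K$ with a negative puncture at $b$ and special positive puncture is $0$-dimensional; generically its special puncture does not lie on the $1$-dimensional curve $\partial v$, so the only such tree that can attach is the constant one. Hence $\partial v$ must pass through $\Pi(b)$. But generically no \emph{rigid} disk of $\Lambda$ passes through the fixed point $\Pi(b)$, so the non-constant component cannot be a broken configuration of rigid pieces; it is a single unbroken disk in a $1$-dimensional moduli space, made constrained rigid by the point condition at $\Pi(b)$. You gesture at this (``the $\mathbf{S}_1$ negative puncture limits onto a point condition''), but you need this argument both to justify the point condition \emph{and} to replace your failed action bound on the number of components.
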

\begin{pf}
Consider $(\mathrm{QT}_{\varnothing})$, if the positive puncture of $u_\eta$ is of type $\mathbf{S}_1$ flow tree convergence was established in Lemma~\ref{l:onlytreeconv}.
For $(\mathrm{QT}_0)$ and $(\mathrm{QT}_1)$, assume that the positive puncture has type $\mathbf{L}$. Convergence to a quantum flow tree follows from the above: Lemma \ref{l:someD} implies that there is some non-constant component in the limit and the discussion following that lemma shows that the non-constant component is a broken disk with at most two levels, Lemma~\ref{l:nogap} then implies that the flow tree pieces of Corollary~\ref{c:outconv} are attached to the non-constant components. If the positive puncture is of type $\mathbf{L}_1$ then by the dimension formula there can be only one non-constant component in the limit which must be rigid. Since no rigid disk passes through any chord of type $\mathbf{S}$ and since flow trees with negative punctures at chords of type $\mathbf{S}_1$ are constant we conclude that $(\mathrm{QT}_0)$ holds in this case. A similar argument shows that $(\mathrm{QT}_0')$ holds also when the positive puncture is of type $\mathbf{L}_2$ and there is a negative puncture of type $\mathbf{L}_1$. In the case that the positive puncture is of type $\mathbf{L}_2$ and all negative punctures are of type $\mathbf{S}$ it follows from the dimension formula that exactly one negative puncture maps to a chord $b$ of type $\mathbf{S}_1$. Since all flow trees with a negative puncture at $b$ are constant it follows that some non-constant component in the limit passes $\Pi(b)$. Since no rigid disk passes through $\Pi(b)$ it follows that the non-constant component is un-broken and hence constrained rigid. We conclude that $(\mathrm{QT}_1)$ holds in this case.    	
\end{pf}

\begin{rmk}\label{r:domaincontrol}
As in \cite{Ekholm07}, the proof of Corollary~\ref{c:rigidftconv} allows us to control the conformal
structures of the sources of a sequence of rigid disks $u_\eta\colon \Delta_m\to T^\ast S^2$ with boundary on
$\Lambda_{K;\eta}$ in the following way. The distance from a boundary minimum which maps near a
trivalent puncture of the tree to its nearest boundary minimum equals
$c\eta^{-1}+\Ordo(\log(\eta^{-1}))$ where $c$ is a constant determined by the quantum flow
tree. The distance between other boundary minima equals $c'+\ordo(1)$ where $c'$ depends only on the disk component (and its marked points) of the quantum limit tree. Thus, the conformal structure of the big disk part converges to that of the limiting disk and the conformal structures (represented as truncated standard domains) of the flow tree parts converge after rescaling by $\eta^{-1}$.
\end{rmk}

\subsection{From quantum flow trees to disks}\label{ssec:qtreetodisk}
In this section we construct rigid $J$-holomorphic disks near any rigid quantum flow tree. Technical results needed for this were already developed in  \cite{Ekholm07} and \cite{EkholmEtnyreSabloff09}. Here we will thus present the main steps together with detailed references to these two papers. The construction follows the standard gluing scheme often used in Floer theory: we associate
an approximately holomorphic disk to each rigid quantum flow tree and use Floer's Picard lemma to show that near each such disk there is a unique actual holomorphic disk. We begin with the following observations.

\subsubsection{Properties of rigid quantum flow trees}
We start by recalling some of the properties of rigid quantum flow trees of $\Lambda_K\subset N\subset T^{\ast}(S^{2})$ that will be used below. We will write $(u,\Gamma)$ for a quantum flow tree where $u\colon\Delta_m\to T^{\ast}S^{2}$ is the holomorphic disk with boundary on $\Lambda$ part of the quantum flow tree and where $\Gamma$ denotes the flow tree part.
\begin{itemize}
\item Rigid quantum flow trees $(u,\Gamma)$ of $\Lambda_{K}$ are of two main types: those with $u$ constant and those with $u$ non-constant.
\item If $(u,\Gamma)$ is a rigid quantum flow tree with $u$ non-constant then $u$ is either rigid or constrained rigid.
\item If $(u,\Gamma)$ is a rigid quantum flow tree then it consists of a (constrained) rigid disk with a finite number of partial flow trees $\Gamma$ attached along its boundary at junction points. In the case that $u$ is constrained rigid then the constraint is at the image in $\Pi(\Lambda)$ of a Reeb chord of type $\mathbf{S}_1$. The partial flow trees attached to the holomorphic disk have trivalent $Y_0$-vertices, $1$-valent vertices at critical points of index $1$, and no other vertices.
\end{itemize}

Rigid quantum flow trees $(u,\Gamma)$ with $u$ constant are rigid flow trees of $\Lambda_{K;\eta}\subset J^{1}(\Lambda)$ in the sense of \cite{Ekholm07}. By Lemma~\ref{l:onlytreeconv}, holomorphic disks with positive puncture at a Reeb chord of type $\mathbf{S}$ lie in an $\Ordo(\eta)$ neighborhood of $\Pi(\Lambda)$. Consequently existence and uniqueness of rigid holomorphic disks near each local rigid quantum flow tree follows from \cite[Theorem 1.2]{Ekholm07}. (In fact, the convergence rate is $\Ordo(\eta\log(\eta^{-1}))$, see \cite{Ekholm07}.)  Because of this we will mainly focus on quantum flow trees with non-constant $u$ below.

Consider a rigid quantum flow tree $(u,\Gamma)$ and let $\Gamma'$ denote one of the partial flow trees attached to $u$ at the junction point $p$, which is then also the special puncture of $\Gamma'$. Recall that we choose a metric $g$ on $\Lambda$ and an almost complex structure on $T^{\ast}S^{2}$ which agrees with that in a neighborhood of $\Pi(\Lambda)$ and which have the following additional properties:
\begin{itemize}
\item The metric is flat in a neighborhood of $\Gamma'$.
\item $\Pi(\Lambda_{K;\eta})\subset T^{\ast}\Lambda$ is affine outside neighborhoods a finite number of edge points, which are points on the edges of $\Gamma'$, at least one on each edge. We call these neighborhoods \emph{edge point regions}. Furthermore, along any edge $\Pi(\Lambda_{K;\eta})$ is a product of a curve in the direction of the edge and horizontal line segments perpendicular to it.
\item Near each trivalent vertex $\Pi(\Lambda_{K;\eta})\subset T^{\ast}\Lambda$ is parallel to the $0$-section, and near each critical point $\Pi(\Lambda_{K;\eta})\subset T^{\ast}\Lambda$ is affine.
\end{itemize}

\subsubsection{Local solutions}
The approximately holomorphic disks near the rigid quantum flow tree $(u,\Gamma)$ will be constructed by patching local solutions. The local solutions are as follows:
\begin{itemize}
\item The map $u\colon\Delta_m\to T^{\ast}S^{2}$. Here the standard domain has punctures mapping to double points of $\Pi(\Lambda)$ as well as to each junction point and constraining critical points. See \cite[Equations~(6.9) and~(6.10)]{EkholmEtnyreSabloff09} for explicit forms of $u$ near junction points and double points. The normal form at a constraining puncture is the same as that at a junction point, see \cite[Section~6.5.1~(gd.2)]{EkholmEtnyreSabloff09}.
\item Along each part of an edge between edge points where $\Pi(\Lambda_K)$ is affine we have a local solution $s_\eta\colon [T_1,T_2]\times[0,1]\to T^{\ast} S^{2}$ which is a holomorphic strip with image in the strip region which consists of straight line segments in the fibers connecting the two sheets of the tree. See \cite[Section~6.1.1]{Ekholm07} for details.
\item At each $1$-valent puncture we choose coordinates $(x_1,x_2)$ along $\Pi(\Lambda)$ with the critical point at $0$ and corresponding holomorphic coordinates $(z_1,z_2)=(x_1+iy_1,x_2+iy_2)$ so that the flow line of the tree lies along the $x_1$-direction. The local solution $s_\eta\colon[0,\infty)\times[0,1]$ is then
\[
s_\eta(\tau+it)=(\eta ic_1+c_\eta e^{-\theta_\eta(\tau+it)},0),
\]
where $\pi-\theta_\eta$ is the largest complex (K\"ahler) angle of the intersection point and where $c_\eta$ is chosen so that the distance from $s_\eta(0+it)$ to the nearest edge point is $\Ordo(\eta)$. See \cite[Section~6.1.2]{Ekholm07} for details.
\item At each trivalent vertex we chose $\C^{2}$-coordinates as above with the three sheets corresponding to constant sections with values $i v_j$, $j=1,2,3$, where a flow line of $v_1-v_2$ breaks into flow lines of $v_1-v_3$ and $v_3-v_2$. Let $p_1,p_2,p_3$ denote the punctures of a standard domain $\Delta_3.$ Consider the biholomorphic maps
\[U_j\colon\Delta_3\to\R\times[0,1], \,\,j=2,3
\]
with $U_2(p_1)=-\infty$, $U_2(p_3)=i$, and $U_2(p_2)=\infty$, and $U_3(p_1)=-\infty$, $U_3(p_2)=0$, and $U_3(p_3)=\infty$. Let $a_j\colon\R\times[0,1]\to\C^{2}$, $j=2,3$ be the maps $a_2(z)=(v_3-v_2)z+iv_2$, $a_3(z)=(v_1-v_2)z$. Then $s_\eta$ is a restriction to a subdomain of $\Delta_3$ cut off by vertical segments of the map
\[
\widetilde s_\eta=\eta(a_2\circ U_2+a_3\circ U_3)
\]
such that the vertical segments lie at distance $\Ordo(\eta\log(\eta^{-1}))$ from an $\epsilon$-sphere around $0$. See \cite[Section~6.1.5]{Ekholm07} for details.
\item at each junction point we choose $\C^{2}$-coordinates so that the two sheets of $\Pi(\Lambda_{K;\eta})$ corresponds to $\R^{2}$, and to the section $(i\eta,0)$, respectively. We take
\[
w_\eta^{\rm jun}(z)=(\eta z,0)+\sum_{n} c_ne^{n\pi z},\quad c_n\in\R^{2},
\]
where the latter sum agrees with the Fourier expansion of $u$ in the strip neighborhood of the junction point.
\end{itemize}

\subsubsection{The domain of a rigid quantum flow tree and approximately holomorphic disks}
Consider a rigid flow tree $(u,\Gamma)$. The local solutions discussed above are associated with domains that are determined by requiring that their vertical boundary segments map close to edge points. There are in particular finite strip regions of length bounded below by $c\eta^{-1}$ and above by $C\eta^{-1}$ associated to each segment of an edge between edge points, finite neighborhoods of the boundary minimum in $\Delta_3$ of the same size around each trivalent vertex, as well as half-infinite strips associated to critical points. These regions are patched together over uniformly finite size rectangles corresponding to the edge points where we also interpolate between local solutions. We thus get a domain $\bar\Delta_{p,m}(\Gamma',\eta)$ for each tree $\Gamma'$ in $\Gamma$. We construct the domains $\Delta_{p,m}(u,\Gamma,\eta)$ by gluing the tree domains to the disk domain in the strip regions corresponding to the junction points. In this way we obtain the desired domain with a map $w_\eta\colon\Delta_{p,m}(u,\Gamma,\eta)\to T^{\ast}S^{2}$ obtained by patching local solutions which is approximately holomorphic in a sense that we will next make precise.

In order to prove existence and uniqueness of holomorphic disks near rigid flow trees we need an appropriate functional analytic setting for Fredholm theory. Here one cannot use standard Sobolev spaces because the domains are degenerating near the limit and derivatives of maps go to $0$ accordingly. For this reason we use weighted Sobolev spaces. The norms of the natural vector fields associated to shifting the local solutions are then unbounded as $\eta\to 0$. To correct this we use instead a subspace of the Sobolev space determined by a vanishing condition at a marked point in the middle of each strip region between edge point and add a finite dimensional space of shifts endowed with the supremum norm. The total configuration space is then obtained by adding conformal variations of the target which corresponds to moving boundary minima and the marked points of the above mentioned vanishing conditions. More precisely, the functional analytic spaces are constructed as follows:
\begin{itemize}
\item The domain $\Delta_{m,p}$ of $u$ is cut off as described in \cite[Section~6.5.1]{EkholmEtnyreSabloff09},
at vertical segments corresponding to junction points and to punctures. In the
finite strip regions near the cut offs corresponding to junction points of the form $[0,d\eta^{-1}]\times[0,1]$ we use an exponential weight peaked in the middle of this strip, see \cite[Equation~(6.12)]{EkholmEtnyreSabloff09}.
\item For each flow tree $\Gamma'$ in $\Gamma$ we take the domain $\overline\Delta_{p',m'}(\Gamma',\eta)$ associated to a partial flow tree as in \cite[Section~6.4.1]{Ekholm07}, cut off at a vertical segment corresponding to the edge point closest to its special puncture, with the weight function constructed there. We attach these domains at the vertical segments of the corresponding junction points. Note that the weight functions match.
\end{itemize}
We denote the resulting standard domain $\Delta_{p,m}^{\eta}$ and the weight function $h\colon \Delta_{p,m}^{\eta}\to[1,\infty)$. Associated to the junction points are spaces of cut off local solutions. We denote the direct sum of these spaces with the supremum norm
\[
V_\sol^{\rm jun},
\]
see \cite[Section~6.5.1, p.~66]{EkholmEtnyreSabloff09}. Associated to each partial flow tree there is a space of cut off solutions we denote the sum of these spaces with the supremum norm
\[
V_\sol^{\Gamma},
\]
see \cite[Section~5.4.1, p.~1209]{Ekholm07}.
Furthermore, conformal variations corresponding to moving boundary minima in the domain of $u$ give conformal variations of $\Delta_{p,m}$ we denote the sum of these spaces with the supremum norm
\[
V_\con^{u},
\]
see \cite[Sections~6.2.3 and~6.3.5]{Ekholm07}. Finally there are also conformal variations in the domains of the trees corresponding to moving boundary minima and marked points. They form a space $V_\con(\Gamma',\eta)$ and we denote the sum over all trees in $\Gamma$
\[
V_\con^{\Gamma}.
\]
Finally, we denote the space of vector fields that vanishes at the marked points in $\Delta_{p,m}^{\eta}$ and that satisfies Lagrangian boundary conditions and are holomorphic along the boundary, and which has two derivatives in $L^{2}$ weighted by $h$
\[
\sblv_{2,\delta},
\]
where $\delta$ denotes the small positive exponential weight that controls the size of $h$. Then, as in \cite{EkholmEtnyreSabloff09}, we view the $\bar\pa$-operator on function in a neighborhood of a tree as a Fredholm map
\[
\bar\pa_J\colon \sblv_{2,\delta}\oplus V_\sol \oplus V_\con\to \sblv_{1,\delta},
\]
where $\sblv_{1,\delta}$ is the Sobolev space of vector fields with one derivative in $L^{2}$ weighted by $h$.
We denote the norm in $\sblv_{1,\delta}$ by $\|\cdot\|_{1,\delta}$ and that in $\sblv_{2,\delta}\oplus V_\sol \oplus V_\con$ by $\|\cdot\|_{2,\delta}$.

The proof now follows the same steps as in \cite{EkholmEtnyreSabloff09}. First we estimate the approximate solution:
\begin{lma}
The function $w_{\eta}$ satisfies
\[
\|\bar\pa_J w_\eta\|_{1,\delta}=\Ordo(\eta^{3/4-\delta}\log(\eta^{-1})).
\]
\end{lma}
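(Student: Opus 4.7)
The plan is to localize the error $\bar\pa_J w_\eta$ to a small number of transition regions in $\Delta_{p,m}^{\eta}$ and to bound its weighted norm region by region, mirroring the scheme of \cite[Section~6.5]{EkholmEtnyreSabloff09} and \cite[Section~6.4]{Ekholm07}. Since each individual local solution (the disk $u$ on the big-disk region, the affine-strip solutions $s_\eta$ along edges, the $1$-valent puncture solution at critical points, the trivalent vertex solution $\widetilde s_\eta$, and the junction models $w_\eta^{\rm jun}$) is either exactly $J$-holomorphic or $J$-holomorphic to infinite order in the appropriate model, the cutoff function used to patch them produces $\bar\pa_J w_\eta\neq 0$ only on: (a) the edge-point rectangles along flow-tree edges of $\Gamma$, which are of fixed conformal size; (b) the finite strip regions near junction points, where $u$ is spliced to $w_\eta^{\rm jun}$ and $w_\eta^{\rm jun}$ is spliced to the adjacent affine strip solution of the tree; (c) on the big-disk region, the discrepancy $J-\phi_{\ast}J_g$ away from $\Pi(\Lambda)$, which makes $u$ only approximately $J_\eta$-holomorphic.

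First I would handle (a). In each edge-point region the Lagrangian $\Pi(\Lambda_{K;\eta})$ deviates from the affine pieces it interpolates by a curvature of size $\Ordo(\eta)$ in the fiber direction; the patched $w_\eta$ therefore satisfies a pointwise estimate $|\bar\pa_J w_\eta| = \Ordo(\eta)$ on a region of uniformly bounded area, and the weight $h$ is bounded by $\Ordo(\eta^{-\delta})$ there (since these regions are at controlled conformal distance from the marked points used to define $h$). This contributes $\Ordo(\eta^{1-\delta})$ per edge point, and since the number of edge points along each edge is $\Ordo(\log \eta^{-1})$ the total contribution from (a) is $\Ordo(\eta^{1-\delta}\log \eta^{-1})$. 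For (b), the Fourier coefficients $c_n$ in the definition of $w_\eta^{\rm jun}$ are chosen to match those of $u$ at the junction point, so the difference $u - w_\eta^{\rm jun}$ on the overlap region decays like $c_n e^{-n\pi |\tau|}$, and the patched error $\bar\pa_J w_\eta$ is governed by the tail of this Fourier series combined with the derivative of the cutoff; the same splicing analysis as at a cusp in \cite[Section~5.4.1]{Ekholm07} gives the sharper bound $\Ordo(\eta^{3/4-\delta})$ once the patching rectangles are placed at optimal distance $\Ordo(\eta^{-1/2})$ from the junction (and, for trivalent vertex regions, at distance $\Ordo(\eta\log\eta^{-1})$ from the $\epsilon$-sphere). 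Finally, for (c), Equation~\eqref{e:Jg1to0} gives $|J_\eta - J_{\sigma_0}|_{C^1} = o(1)$ on the big-disk region, and combined with the uniform derivative bound on $u$ this yields a contribution of $o(1)$ in $\sblv_{1,\delta}$, subdominant to the other two.

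Collecting these estimates gives $\|\bar\pa_J w_\eta\|_{1,\delta} = \Ordo(\eta^{3/4-\delta}\log\eta^{-1})$, as stated. The main obstacle is obtaining the sharp $\eta^{3/4}$ exponent near the junction strips rather than the naive $\eta^{1-\delta}$ from uniform pointwise bounds: as in \cite[Lemma~6.3]{Ekholm07}, this requires balancing the length of the splicing interval (which controls the pointwise error via exponential decay of the Fourier tail and of the $1$-valent-puncture local solution) against the weighted area where this error is supported. Once this balancing is carried out region by region — which is essentially identical to the scheme in \cite{Ekholm07, EkholmEtnyreSabloff09} with the disk piece $u$ replacing the swallowtail model and with junction points playing the role of cusp edges — the lemma follows.
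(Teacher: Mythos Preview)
Your overall strategy---localize $\bar\pa_J w_\eta$ to the patching regions and estimate region by region---is exactly the approach of the references the paper invokes (\cite[Remark~6.16]{Ekholm07} for the flow-tree part and \cite[Lemma~6.20]{EkholmEtnyreSabloff09} for the junction/disk part). In that sense there is no substantive difference in method.

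However, two of your detailed claims are wrong, and one of them, if taken at face value, would break the argument.

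\textbf{Item (c) is a phantom contribution.} You assert that on the big-disk region $u$ is only approximately $J$-holomorphic because of the discrepancy $J-\phi_\ast J_g$. This is a misreading of the setup: by the beginning of Section~\ref{ssec:disktoqtree} the almost complex structure has been \emph{fixed} as $J=J_\eta$, and the quantum-flow-tree disk $u$ is by definition a $J$-holomorphic disk with boundary on $\Pi(\Lambda)$. Hence $\bar\pa_J u=0$ exactly on the big-disk region, and the only error there arises from the interpolation near junction punctures, which you already account for in (b). More seriously, your conclusion ``a contribution of $o(1)$ \dots\ subdominant to the other two'' is a logical error: an unspecified $o(1)$ term is \emph{not} dominated by $\Ordo(\eta^{3/4-\delta}\log(\eta^{-1}))$, since the latter also tends to $0$. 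Had (c) actually contributed $o(1)$, your final bound would not follow. Fortunately the true contribution is zero.

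\textbf{The edge-point count is wrong.} There are finitely many edge points per edge (see Remark~\ref{rmk:Lneartrees}(5) and the analogous setup for $\Lambda_K$ in Section~\ref{sssec:metrpert}), not $\Ordo(\log(\eta^{-1}))$ of them. This only improves your (a) estimate, so it does not damage the conclusion, but the stated source of the logarithm is incorrect. The $\log(\eta^{-1})$ in the lemma comes instead from the size of the trivalent-vertex regions, where the local solution is cut off at vertical segments whose images lie at distance $\Ordo(\eta\log(\eta^{-1}))$ from the $\epsilon$-sphere (see the description of the trivalent local solution in Section~\ref{ssec:qtreetodisk}).

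Once (c) is deleted and the origin of the logarithm is corrected, your sketch matches the cited argument.
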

\begin{pf}
The restriction of $\bar\pa_Jw_\eta$ to the part of the domain corresponding to flow trees is controlled by \cite[Remark~6.16]{Ekholm07}. The proof is then a repetition of the proof of \cite[Lemma~6.20]{EkholmEtnyreSabloff09}.
\end{pf}
Second we show that the differential of $\bar\pa_J$ is invertible.
\begin{lma}
The differential
\[
L\bar\pa\colon \sblv_{2,\delta}\oplus V_\sol \oplus V_\con\to \sblv_{1,\delta}
\]
is uniformly invertible.
\end{lma}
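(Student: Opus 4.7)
\begin{pf}[Proof plan]
The plan is to prove uniform invertibility of $L\bar\pa$ by patching together inverses on the individual pieces of the glued configuration. The domain $\Delta_{p,m}^{\eta}$ decomposes naturally into: (i) the big disk piece $\Delta_{p,m}$ associated to the non-constant holomorphic disk $u$; (ii) for each partial flow tree $\Gamma' \in \Gamma$, its tree domain $\overline\Delta_{p',m'}(\Gamma',\eta)$; and (iii) finite strip necks $[0,d\eta^{-1}]\times[0,1]$ at each junction point, along which the pieces in (i) and (ii) are glued. Correspondingly the configuration space $\sblv_{2,\delta}\oplus V_\sol\oplus V_\con$ splits into a big-disk factor carrying the conformal variations $V_\con^{u}$ and junction-solution factor $V_\sol^{\rm jun}$, and flow-tree factors carrying $V_\sol^{\Gamma}$ and $V_\con^{\Gamma}$.

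First I would record the two ingredients from existing work. For the big disk piece: since $u$ is a (constrained) rigid $J$-holomorphic disk with boundary on $\Pi(\Lambda)$ and $J$ is regular, the linearized $\bar\pa_J$ on $u$ with appropriate weighted Sobolev spaces (with exponential weights in the neck regions and vanishing conditions encoding the solution/conformal decomposition of the junction punctures) is invertible with a bound independent of $\eta$; this is the standard transversality statement, and the weighted setup together with the $V_\sol^{\rm jun}$ summand is set up exactly as in \cite[Section~6.5]{EkholmEtnyreSabloff09}. For each partial flow tree piece, the uniform invertibility of the corresponding linearized operator on $\overline\Delta_{p',m'}(\Gamma',\eta)$, with the same weight $h$ and with $V_\sol^{\Gamma'}\oplus V_\con^{\Gamma'}$ in the domain, is the content of the flow tree gluing argument of \cite[Section~5, Proposition~6.10]{Ekholm07}. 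Thus on each piece, with its own boundary conditions at the vertical cut-offs over the junction marked points, one obtains an inverse $Q_i$ with $\|Q_i\|\le C$ uniformly in $\eta$.

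Next I would assemble the local inverses into a global approximate inverse $Q_\eta$. Choose a partition of unity $\{\beta_i\}$ on $\Delta_{p,m}^{\eta}$ subordinate to the pieces, with $\nabla\beta_i$ supported in the finite strip necks and bounded independently of $\eta$ (the neck lengths tend to $\infty$, so the cutoffs can be taken to cut at the middle of each neck where the weight $h$ is maximal). Define $Q_\eta \zeta = \sum_i Q_i(\beta_i \zeta)$, where each $Q_i(\beta_i\zeta)$ is extended by the matching solution/conformal data specified by $V_\sol^{\rm jun}$ so as to patch continuously across the neck cutoffs. Then $L\bar\pa \circ Q_\eta = \id + E_\eta$, and the error $E_\eta$ is supported in the neck regions and involves only commutators $[L\bar\pa,\beta_i]$, which produce factors of $\nabla\beta_i$. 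In the weighted norms, these terms are controlled by $h^{-1}$ evaluated at the cutoff location, which is exponentially small as the neck length grows. Thus $\|E_\eta\|\to 0$ as $\eta\to 0$, and a Neumann series argument yields an exact inverse $Q_\eta(\id+E_\eta)^{-1}$ of $L\bar\pa$ with a uniform bound.

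The main obstacle will be keeping track of the junction necks: one must verify that the spaces $V_\sol^{\rm jun}$ really do absorb the pieces of the kernel/cokernel that obstruct direct gluing, i.e.~that once the shift modes and the matching conformal variations are added, no residual obstruction survives on the overlap. This is precisely the role played by the zero-value condition at the neck marked point together with $V_\sol^{\rm jun}$ in the setup. One also has to check compatibility of the weight function $h$ across the gluing, but since both the big-disk weight and the flow-tree weight coincide with the same exponential weight in the neck, this matching is immediate from the construction. With these checks in place, the uniform invertibility follows and the Picard lemma then produces, near each rigid quantum flow tree, a unique actual $J$-holomorphic disk with boundary on $\Lambda_{K;\eta}$, completing the construction needed for Theorem~\ref{thm:diskandgentree}.
\end{pf}
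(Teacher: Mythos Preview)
Your plan is correct and is essentially the approach the paper takes, though the paper's own proof is far terser: it simply says to repeat the proof of \cite[Lemma~6.20]{EkholmEtnyreSabloff09} word for word, replacing the Morse-flow vector field $v^{\rm mo}_\eta$ there by a sum $a^\Gamma+b^\Gamma$ with $a^\Gamma\in\sblv_{2,\delta}$ supported in the tree part and $b^\Gamma\in V_\con^\Gamma$, and invoking \cite[Lemma~6.20]{Ekholm07} to control that replacement. In other words, the paper defers entirely to the two cited gluing arguments, whereas you have unpacked what those arguments do: decompose the domain into the big-disk piece and the tree pieces joined along junction necks, use the known uniform invertibility on each piece, and patch with cutoffs whose error is killed by the exponential weight peaked in the neck.

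One small remark on implementation: the proofs in the cited references proceed by a direct estimate (equivalently a contradiction argument) rather than by building an approximate right inverse and summing a Neumann series. Your approximate-inverse route is equally valid and standard, but if you want to match the references literally you would instead take $v$ with $\|v\|_{2,\delta}=1$, cut it into disk and tree parts along the neck midpoints, and bound each part using the respective piecewise invertibility; the weight $h$ peaking at the neck midpoint then makes the cross terms negligible, which is the same mechanism you identify. Either way the key inputs are exactly the ones you list, and the role of $V_\sol^{\rm jun}$ and the matched weights at the junctions is as you describe.
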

\begin{pf}
After replacing the vector field $v^{\rm mo}_{\eta}$ in the proof of \cite[Lemma~6.20]{EkholmEtnyreSabloff09} with a sum $v^{\rm mo}=a^{\Gamma}+b^{\Gamma}$ where $a^{\Gamma}$ is a vector field in $\sblv_{2,\delta}$ supported in the part of the domain corresponding to $\Gamma$ and $b^{\Gamma}\in V_\con^{\Gamma}$ and using \cite[Lemma~6.20]{Ekholm07} to control $v^{\rm mo}$, the proof is a word by word repetition of the proof of \cite[Lemma~6.20]{EkholmEtnyreSabloff09}.
\end{pf}	
Third we establish a quadratic estimate for the $\bar\pa_J$-map. We let $w_\eta$ correspond to $0\in\sblv_{2,\delta}\oplus V_\sol \oplus V_\con$
\begin{lma}
There exists a constant $C$ so that
\[
\bar\pa_J(v)=\bar\pa_J(0)+L\bar\pa_J(v) + N(v),
\]
where
\[
\|N(v_1)-N(v_2)\|_{2,\delta}=C(\|v_1\|_{2,\delta}+\|v_2\|_{2,\delta})\|v_1-v_2\|_{2,\delta}.
\]
\end{lma}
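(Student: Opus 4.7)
The plan is to proceed by a direct Taylor expansion of the nonlinear operator $\bar\pa_J$ at $w_\eta$, exactly as in the analogous quadratic estimates in \cite[Lemma 6.21]{EkholmEtnyreSabloff09} and \cite[Lemma 6.21]{Ekholm07}. Working in local complex coordinates adapted to $w_\eta$, a section $v\in\sblv_{2,\delta}\oplus V_\sol\oplus V_\con$ corresponds to a perturbation $w_\eta+v$ of the approximately holomorphic disk (via the exponential map for a Lagrangian-adapted connection along $\Pi(\Lambda)$ and $\Pi(\Lambda_{K;\eta})$), and we have pointwise
\[
\bar\pa_J(w_\eta+v) \;=\; \pa w_\eta + \pa v + J(w_\eta+v)\bigl(\bar\pa_{\mathrm{real}} w_\eta + \bar\pa_{\mathrm{real}} v\bigr),
\]
in a schematic Cauchy--Riemann form. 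Setting
\[
N(v)\;:=\;\bar\pa_J(v)-\bar\pa_J(0)-L\bar\pa_J(v),
\]
the Taylor expansion of $J$ at $w_\eta(z)$ immediately yields the decomposition
\[
N(v) \;=\; R_2(w_\eta;v)\,\bar\pa_{\mathrm{real}} w_\eta \;+\; \bigl(J(w_\eta+v)-J(w_\eta)\bigr)\bar\pa_{\mathrm{real}} v,
\]
where $R_2(w_\eta;v)=J(w_\eta+v)-J(w_\eta)-dJ(w_\eta)\cdot v$ is the quadratic Taylor remainder.

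Next I would estimate $N(v_1)-N(v_2)$ term by term. The second term expands as
\[
\bigl(J(w_\eta+v_1)-J(w_\eta+v_2)\bigr)\bar\pa_{\mathrm{real}} v_1
+\bigl(J(w_\eta+v_2)-J(w_\eta)\bigr)\bar\pa_{\mathrm{real}}(v_1-v_2),
\]
and each factor is either pointwise bounded by $C|v_1-v_2|$ (times a first derivative of $v_1$) or by $C|v_2|$ (times a first derivative of $v_1-v_2$), using smoothness of $J$ and the fact that $\|v_i\|_{2,\delta}$ controls $\|v_i\|_{L^\infty}$ through the Sobolev embedding $\sblv_{2,\delta}\hookrightarrow C^0$ (the embedding constant is uniform in $\eta$ by the construction of $h$, compare \cite[Lemma 5.14]{Ekholm07}). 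The first term splits analogously using the identity
\[
R_2(w_\eta;v_1)-R_2(w_\eta;v_2) \;=\; \int_0^1 \bigl(dJ(w_\eta+v_2+s(v_1-v_2))-dJ(w_\eta)\bigr)(v_1-v_2)\,ds,
\]
which is pointwise bounded by $C(|v_1|+|v_2|)|v_1-v_2|$. Combining these with H\"older's inequality in the weighted norms, together with the uniform bound $|\bar\pa_{\mathrm{real}} w_\eta|=\Ordo(1)$ on the support of the tree regions and the disk part, produces the desired estimate. The contributions from $V_\sol$ and $V_\con$ are handled exactly as in the references, since the additional finite-dimensional pieces correspond to smooth variations of the underlying local solutions and conformal structure and introduce only bounded coefficients.

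The main obstacle is controlling the estimate uniformly in $\eta$ as the domain $\Delta_{p,m}^{\eta}$ degenerates, in particular on the long finite strips of length $\Ordo(\eta^{-1})$ between edge points. This is exactly the point where the weight function $h$ and the vanishing-at-marked-point condition that defines $\sblv_{2,\delta}$ were designed to work in \cite{Ekholm07,EkholmEtnyreSabloff09}: with those choices the Sobolev and multiplication constants can be taken independent of $\eta$. Granted this uniformity, which is imported directly from the cited lemmas, the argument above produces a constant $C$ independent of $\eta$ and completes the proof.
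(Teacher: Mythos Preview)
Your proposal is correct and follows exactly the approach indicated in the paper: the paper's proof consists solely of the citation ``See \cite[Lemma~6.22]{EkholmEtnyreSabloff09},'' and your sketch is precisely an unpacking of the standard quadratic-remainder estimate proved there (and in the parallel \cite{Ekholm07}). Your discussion of the uniformity in $\eta$ via the weight function $h$ and the marked-point vanishing condition is the key point and is handled in those references; the only minor discrepancy is the lemma number (6.22 rather than 6.21).
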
	
\begin{pf}
See \cite[Lemma~6.22]{EkholmEtnyreSabloff09}.
\end{pf}
With these results established we get the following result as a consequence of Floer's Picard lemma:
\begin{cor}
For all sufficiently small $\eta>0$ there exists a unique rigid holomorphic disk in a finite $\|\cdot\|_{2,\delta}$-neighborhood of $w_\eta$.
\end{cor}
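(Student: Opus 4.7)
The plan is to apply Floer's Picard lemma (as used in \cite{Ekholm07, EkholmEtnyreSabloff09}) to the $\bar\pa_J$-operator based at the approximate solution $w_\eta$. Concretely, I would view
\[
\bar\pa_J\colon \sblv_{2,\delta}\oplus V_\sol\oplus V_\con \longrightarrow \sblv_{1,\delta}
\]
as a smooth map between Banach spaces and expand it around $0$ (corresponding to $w_\eta$) as
\[
\bar\pa_J(v) = \bar\pa_J(0) + L\bar\pa_J(v) + N(v),
\]
where the three preceding lemmas furnish, respectively, the estimate $\|\bar\pa_J(0)\|_{1,\delta} = \Ordo(\eta^{3/4-\delta}\log(\eta^{-1}))$, uniform invertibility of $L\bar\pa_J$ with operator norm of the inverse bounded by some constant $G$ independent of $\eta$, and the quadratic estimate $\|N(v_1)-N(v_2)\|_{1,\delta}\le C(\|v_1\|_{2,\delta}+\|v_2\|_{2,\delta})\|v_1-v_2\|_{2,\delta}$.

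The equation $\bar\pa_J(v)=0$ is equivalent to the fixed-point problem
\[
v = -(L\bar\pa_J)^{-1}\bigl(\bar\pa_J(0)+N(v)\bigr).
\]
I would apply Floer's Picard lemma (see, e.g., \cite{Ekholm07}) in the ball $B_\rho = \{\|v\|_{2,\delta}\le\rho\}$ with radius $\rho = \rho(\eta)$ chosen so that $\rho \ge 2G\|\bar\pa_J(0)\|_{1,\delta}$ and $4GC\rho < 1$; for instance $\rho = \eta^{1/2}$ works, since for sufficiently small $\eta>0$ both conditions hold by the estimate on $\bar\pa_J(0)$. Under these conditions the map $v\mapsto -(L\bar\pa_J)^{-1}(\bar\pa_J(0)+N(v))$ is a contraction on $B_\rho$ mapping it into itself, so there is a unique $v_\eta\in B_\rho$ with $\bar\pa_J(v_\eta)=0$. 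The perturbed section $w_\eta+v_\eta$ is then an actual $J$-holomorphic disk, and $\|v_\eta\|_{2,\delta}=\Ordo(\eta^{3/4-\delta}\log(\eta^{-1}))$.

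It remains to verify that the resulting disk is rigid and that uniqueness holds in a genuinely finite (not shrinking) $\|\cdot\|_{2,\delta}$-neighborhood of $w_\eta$. Rigidity is automatic: the formal dimension of the moduli space near $w_\eta+v_\eta$ equals that of the underlying rigid quantum flow tree, which is $0$ by hypothesis, and the linearization $L\bar\pa_J$ at $w_\eta+v_\eta$ remains invertible for small $\eta$ by a small perturbation of the uniform invertibility lemma. For uniqueness in a finite-size neighborhood one uses the standard argument: any other holomorphic disk close to $w_\eta$ must, by the $C^1$-convergence result of Lemma~\ref{l:C1conv} combined with the conformal-structure control of Remark~\ref{r:domaincontrol} and the flow-tree convergence established in Section~\ref{ssec:disktoqtree}, already lie in $B_\rho$, where the contraction argument forces it to coincide with $w_\eta+v_\eta$.

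The main obstacle I expect is not the Picard step itself, which is essentially mechanical once the three preceding lemmas are in place, but rather justifying \emph{finite-neighborhood} uniqueness from the a priori much weaker \emph{small-ball} uniqueness provided by the contraction mapping. Concretely, one must exclude the possibility that a second nearby holomorphic disk exists which is $\|\cdot\|_{2,\delta}$-close to $w_\eta$ but outside $B_\rho$. This is where the compactness/convergence results of Section~\ref{ssec:disktoqtree} are essential: any sequence of such hypothetical second disks would itself converge to a rigid quantum flow tree, and by the uniqueness of the limit tree (which equals $(u,\Gamma)$) together with the conformal-structure control of Remark~\ref{r:domaincontrol}, these disks must eventually enter $B_\rho$, yielding the desired contradiction.
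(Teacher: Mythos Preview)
Your approach via Floer's Picard lemma is exactly what the paper does; the paper's entire proof is the one-line remark that the corollary follows from the three preceding lemmas by Floer's Picard lemma. However, you have manufactured a difficulty that is not actually present.

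The point you missed is that the constants $G$ (bounding the inverse of $L\bar\pa_J$) and $C$ (in the quadratic estimate) are both \emph{uniform in $\eta$}. Floer's Picard lemma then gives existence and uniqueness of a zero in a ball of radius $\rho_0$ of order $\tfrac{1}{GC}$, which is a fixed positive number independent of $\eta$; this \emph{is} the ``finite $\|\cdot\|_{2,\delta}$-neighborhood'' in the statement. By choosing $\rho=\eta^{1/2}$ you voluntarily shrank the uniqueness ball and then had to work to recover what the lemma already gave you. Take $\rho=\rho_0$ instead: for $\eta$ small, the smallness hypothesis $\|\bar\pa_J(0)\|_{1,\delta}\ll\rho_0/G$ holds by the first of the three lemmas, and the corollary is immediate.

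Your last two paragraphs are therefore unnecessary, and parts of them are misdirected: Lemma~\ref{l:C1conv} concerns disks with boundary on $\Lambda$, not on $\Lambda_{K;\eta}$, so it does not apply here. The paper does treat the passage from $C^0$-closeness to $\|\cdot\|_{2,\delta}$-closeness, but as a \emph{separate} lemma immediately following this corollary; that lemma is what feeds the convergence results of Section~\ref{ssec:disktoqtree} into the Picard ball, not the other way around.
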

The last lemma needed to show the correspondence is the following.
\begin{lma}
For sufficiently small $\eta>0$, if a holomorphic disk lies in a sufficiently small $C^{0}$-neighborhood of $w_\eta$, then it lies in a $\Ordo(\eta^{\frac12})$ $\|\cdot\|_{2,\delta}$-neighborhood of it.
\end{lma}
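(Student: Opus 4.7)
The plan is to translate the $C^{0}$ hypothesis into quantitative bounds on each of the three pieces of a nearby map--the conformal variation, the solution shift, and the residual Sobolev vector field--separately, and then combine them. The strategy follows the pattern of \cite[Lemma~6.22]{EkholmEtnyreSabloff09} (and the analogous step for pure flow trees in \cite{Ekholm07}), adapted to the present hybrid setting in which a disk part and several tree parts are glued together.

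First, I would decompose any holomorphic disk $u$ in a $C^0$-neighborhood of $w_\eta$ as $u \leftrightarrow (c^u, s^u, v)$ with $c^u \in V_\con^{u}\oplus V_\con^{\Gamma}$, $s^u \in V_\sol^{\rm jun}\oplus V_\sol^{\Gamma}$, and $v\in\sblv_{2,\delta}$. The conformal parameter $c^u$ is read off from the positions of the boundary minima and marked points in $u$'s standard-domain presentation, while the solution shift $s^u$ is read off from the values of $u$ at the designated marked points (relative to the corresponding values of $w_\eta$). The vanishing-at-marked-points condition built into $\sblv_{2,\delta}$ makes this decomposition unique, and the $C^0$ bound on $u-w_\eta$ gives an a priori bound $|c^u|+|s^u|=\Ordo(\epsilon)$ together with $\|v\|_{C^0}=\Ordo(\epsilon)$.

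Second, I would upgrade from $C^0$ to $\|\cdot\|_{2,\delta}$ for the residual vector field $v$. The equation $\bar\pa_J u=0$ combined with the decomposition and the quadratic-remainder structure from the preceding lemma gives an inhomogeneous equation of the form
\[
L\bar\pa\,v \;=\; -\bar\pa_J w_\eta \;-\; \widetilde N(c^u,s^u,v),
\]
where $\widetilde N$ is quadratic in its arguments. Using the previously-established bounds $\|\bar\pa_J w_\eta\|_{1,\delta}=\Ordo(\eta^{3/4-\delta}\log(\eta^{-1}))$ and uniform invertibility of $L\bar\pa$, together with standard interior and boundary elliptic bootstrap from $C^0$ to $W^{2,p}$ and hence to the weighted norm (the exponential weight $h$ is precisely what is needed to handle the long strip regions), one obtains $\|v\|_{2,\delta}=\Ordo(\eta^{3/4-\delta}\log(\eta^{-1}))+\Ordo(\epsilon^{2})$. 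Feeding this back into the finite-dimensional projection of the $\bar\pa_J$-equation onto $V_\sol\oplus V_\con$, the invertibility of $L\bar\pa$ on these directions then upgrades the a priori $\Ordo(\epsilon)$ bound on $c^u$ and $s^u$ to $\Ordo(\eta^{3/4-\delta}\log(\eta^{-1}))+\Ordo(\epsilon^{2})$. For $\epsilon$ sufficiently small (independent of $\eta$), the $\Ordo(\epsilon^{2})$ terms are absorbed and one concludes
\[
\|u-w_\eta\|_{2,\delta}\;=\;\Ordo\bigl(\eta^{3/4-\delta}\log(\eta^{-1})\bigr)\;=\;\Ordo(\eta^{1/2}),
\]
for $\delta>0$ chosen small enough.

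The main obstacle will be the uniformity of all constants across the degenerating strip regions: both the standard domain $\Delta_{p,m}^{\eta}$ and the Lagrangian boundary condition $\Pi(\Lambda_{K;\eta})$ depend on $\eta$, and the usual elliptic constants can degenerate as the strip regions of length $\Ordo(\eta^{-1})$ appear. The weight $h$, the vanishing conditions at marked points, and the finite-dimensional complements $V_\sol$ and $V_\con$ are engineered precisely to tame this degeneration, so the technical content of the proof reduces to verifying that the elliptic bootstrap and the quadratic estimate from \cite[Section~6.5]{EkholmEtnyreSabloff09}, together with the tree-side bootstrap of \cite[Section~6]{Ekholm07}, both apply uniformly in $\eta$ in the glued configuration; this is a direct patch of the two constructions along the junction strips, where both sides already use the same exponential weight.
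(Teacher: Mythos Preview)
The paper does not give an independent proof of this lemma; it simply cites \cite[Lemma~6.24]{EkholmEtnyreSabloff09}. Your sketch is the standard argument behind that cited result, adapted to the present glued domain, and is essentially what one expects the reference to contain: read off the finite-dimensional conformal and shift components from boundary minima and marked-point values, then bootstrap the residual Sobolev piece using the linearization's uniform invertibility and the quadratic estimate. One minor correction: the lemma in \cite{EkholmEtnyreSabloff09} you should be pointing to for this step is Lemma~6.24, not~6.22 (the latter is the quadratic estimate used as input, already cited for the preceding lemma). Otherwise your outline matches the intended approach.
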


\begin{pf}
See \cite[Lemma~6.24]{EkholmEtnyreSabloff09}.
\end{pf}

\section{Orientations}\label{sec:orientations}
The main purpose of this section is to prove Theorems \ref{thm:signunknottree} and \ref{thm:combsign}. To this end we first give an overview of the general  orientation scheme constructed in \cite{EkholmEtnyreSullivan05c} and then interpret this scheme in geometric terms for rigid holomorphic disks near quantum flow trees of $\Lambda_K$.

\subsection{The general orientation scheme}\label{ssec:genori}
We first give a rough outline of the orientation scheme that we will employ below.
For simplicity we restrict attention to closed orientable Legendrian surfaces $\Lambda$ inside the $1$-jet space $J^{1}(S)$ of some orientable surface $S$.
Let $\bar\pa$ be the operator on functions $v\colon D\to\C^{n}$, where $D$ is the unit disk in $\C$, such that $v(e^{i\theta})\in L(\theta)$, where $L(\theta)$ is a \emph{trivialized} Lagrangian boundary condition.
The starting point for constructing orientations of moduli spaces of holomorphic disks with boundary on $\Lambda$ is the fact that the index bundle of $\bar\pa$  is orientable, and that a choice of orientation on $\C$ and on $\R^{n}$ determines an orientation on this index bundle. We call this induced orientation the \emph{canonical orientation}, see \cite[Proposition~8.1.4]{FOOOII} (or \cite[Section~3.3]{EkholmEtnyreSullivan05c}).

Consider now a holomorphic disk $u\colon\Delta_m\to T^{\ast}S$ with boundary on $\Lambda$. In order to parameterize the moduli space of holomorphic disks near $u,$ we look at the $\bar\pa_J$-operator as a section of the bundle of complex anti-linear maps over the configuration space with local chart at $u$ given by $\sblv\oplus \conf_m$, where $\sblv$ is a Sobolev space of vector fields along $u$ and where $\conf_m$ denotes the space of conformal structures on the source $\Delta_m$ of $u$.
In this setting the tangent space of the moduli space can be identified with the kernel of the linearized operator $L\bar\pa_J$ acting on $\sblv\oplus\conf_m$. In order to orient the moduli space we choose capping operators at all Reeb chords of $u$ with oriented determinant bundles. Then closing up the boundary condition of $u$ by these capping operators gives a Lagrangian boundary condition on the closed disk and, provided the Lagrangian boundary conditions are compatibly trivialized, we may use the canonical orientation to orient the determinant of the resulting operator. The orientation of the glued problem and orientations on all capping operators induce an orientation of the determinant of the original linearized problem.
For Legendrian homology in general, the exact sequence which relates all the orientations of the different operators was introduced in \cite[Equation~3.17]{EkholmEtnyreSullivan05c}.
Together with an orientation of the space of conformal structures this then gives an orientation on the tangent space to the moduli spaces,
as described in \cite[Remark~3.18]{EkholmEtnyreSullivan05c}.

Appropriate trivializations on the boundary condition of $u$ can be defined provided $\Lambda$ is spin. In order to have the above scheme compatible with disk breaking at the boundary of the compactified moduli space one must choose oriented capping operators at Reeb chords as positive and negative punctures that add to the trivialized boundary condition with the canonical orientation. We note also that when discussing orientations we can stabilize the operator and add oriented finite dimensional spaces to the source or target of an operator or take direct sums with other oriented Fredholm problems as long as we keep track of the orientations that these extra directions carry.

\subsection{Basic choices for the canonical orientation}\label{ssec:basicori}
As mentioned in Section~\ref{ssec:genori} the orientation scheme that we use derives from orientation properties of the index bundle over trivialized Lagrangian boundary conditions on the disk. In this section we study some of the details of this construction.
Let $L$ be an $n$-dimensional Lagrangian boundary condition on the unit disk $D\subset \C$ which is trivialized. Consider the $\bar\pa$-operator acting on vector fields $v\colon D\to\C^{n}$ which satisfy the boundary condition given by $L$, $v(e^{i\theta})\in L(e^{i\theta})$. Denote this operator $\bar\pa_L$. Then $\bar\pa_L$ is a Fredholm operator of index
\[
\ind(\bar\pa_L)=n+\mu(L),
\]
where $\mu$ is the Maslov index. Since the boundary condition is trivialized the monodromy of $L$ is orientation preserving and $\mu(L)$ is even.

As mentioned above, an orientation of $\R^{n}$ together with a choice of complex orientation in $\C$ induces a canonical
orientation on the determinant of the operator $\bar\pa_L$. This canonical orientation is obtained by trivializing the complex bundle over almost all of $D$, splitting off a
complex vector bundle over $\C P^{1}$ at the center of the disk in the limit. The split problem in the limit consists of the operator $\bar\pa_{\R^{n}}$ on $D$, where $\R^{n}$ denotes the constant trivialized Lagrangian boundary condition given by $\R^{n}\subset\C^{n}$ with the standard basis, and the $\bar\pa$-operator on a complex vector bundle over $\C P^{1}$. The determinant of the latter operator is a wedge product of complex vector spaces and hence has an orientation induced from the choice of complex orientation on $\C$. The former operator has trivial cokernel and kernel
spanned by constant sections with values in $\R^{n};$ hence, the orientation of $\R^{n}$ gives an orientation on its determinant. For operators near the split limit the kernel and cokernel are isomorphic to the sum of kernel and cokernels of the split problem and we get an induced canonical orientation of the determinant of the original problem by transporting this orientation along the path of the deformation. We call the choice of orientation of $\R^{n}$ and $\C$ \emph{basic orientation choices}.
\begin{rmk}
Changing the choice of basic orientation on $\R^{n}$ clearly changes the canonical orientation of $\det(\bar\pa_L)$ for every $L$. Changing the choice of basic orientation of $\C$ preserves the canonical orientation of $\det(\bar\pa_L)$ for all $L$ with $\ind(\bar\pa_L)-n=\mu(L)$ divisible by $4$ and reverses the canonical orientation of $\det(\bar\pa_L)$ for all $L$ with $\ind(\bar\pa_L)-n=\mu(L)$ not divisible by $4$.
\end{rmk}

In our calculations below the orientation on $\R^{n}$ above will correspond to the choice of an orientation on the
conormal lift of the unknot which we take as fixed once and for all. We will denote the chosen basic orientation on $\C$ by $o_{\C}$.

Furthermore, the trivialization of the boundary conditions of the linearized operators at holomorphic disks with boundary on $\Lambda_K$ are induced from a trivialization of the tangent bundle $T\Lambda_K$. Note that the orientation above depends only on the trivialization modulo $2$, {\em i.e.}, on the corresponding spin structure. In the calculations below we will fix a spin structure on the conormal lift $\Lambda$ of the unknot and pull it back to $\Lambda_K$ under the natural projection in the $1$-jet neighborhood of $\Lambda$.

\subsection{Capping operators and orientation data at Reeb chords}
As mentioned in Section \ref{ssec:genori} orientations of moduli spaces are constructed using capping operators at Reeb chords. In this section we discuss capping operators and their orientations for the conormal lift $\Lambda_K$ of a closed braid $K$
by applying \cite[Section~3.3]{EkholmEtnyreSullivan05c} to $\Lambda_K.$

\subsubsection{Auxiliary directions}
Before we start this discussion we note that the construction of
coherent orientations in \cite{EkholmEtnyreSullivan05c} uses auxiliary
directions. More precisely, the boundary condition of a punctured disk
with boundary on $\Lambda$ is stabilized, {\em i.e.}, multiplied by
boundary conditions for vector fields in $\C^{2}$ with boundary
conditions close to constant $\R^{2}$ boundary conditions, see
\cite[Section~3.4.2]{EkholmEtnyreSullivan05c}. The resulting problem
is split and the operator in the auxiliary direction is an
isomorphism. The reason for introducing these auxiliary directions is
that they enable a continuous choice of capping operators for varying
Legendrian submanifolds and thereby simplify the proof of
invariance. When studying kernels and cokernels of linearized
operators below we need not consider the auxiliary directions. We do
include the auxiliary directions in order to use the capping operators
in \cite{EkholmEtnyreSullivan05c} which we discuss next. 

We first describe capping operators of the Reeb chords of $\Lambda_K$ and then connect the operators to geometry. In Tables \ref{tb:braidextr}--\ref{tb:unknotextr} below we represent the Legendrian submanifold near its Reeb chords. For detailed properties of the capping operators, we refer to \cite[Section~3.3.6]{EkholmEtnyreSullivan05c}. The actual Legendrian $\Lambda_K\subset J^{1}(S^{2})$ is stabilized and appears as the restriction of an embedding $\Lambda_K\times(-\epsilon,\epsilon)^{2}\to J^{1}(S^{2}\times(-\epsilon,\epsilon)^{2})$ to $(0,0)\in(-\epsilon,\epsilon)^{2}$. Reeb chords are generic and hence locally their fronts are determined by the Hessian for the function difference of the local sheets of Reeb chord endpoints. In \cite{EkholmEtnyreSullivan05c}, the stabilization is constructed in such a way that the two eigenvalues of the Hessian of smallest norm are positive and lie in the auxiliary direction. Here we take these eigenvalues to be of largest norm for simpler combinatorics, see Remark \ref{rmk:auxangles}. In Tables \ref{tb:braidextr}--\ref{tb:unknotextr} the corresponding  eigendirections are denoted $\mathrm{Aux}_j$, $j=1,2$, the two remaining eigendirections are denoted $\mathrm{Real}_j$, $j=1,2$, and we will use the following notation:
\begin{itemize}
\item $\delta$, $\delta'$, $\delta''$, $\delta_0$ are numbers such that
$0<\delta'<\delta''<\delta<\delta_0$, and such that $\delta'$, $\delta''$, and $\delta$ all approach
$0$ as the conormal lift of the link approaches the $0$-section.
\item If $\lambda=(\lambda_1,\dots,\lambda_m)$ is a collection of paths of Lagrangian subspaces such
that the endpoint of $\lambda_j$ is transverse to the start point of $\lambda_{j+1}$ then
$\widehat\mu(\lambda)$ denotes the Maslov index of the loop of Lagrangian subspaces obtained by closing
up the collection of paths by rotating the incoming subspace to the outgoing one in the negative
direction. Thus $n+\widehat \mu(\lambda)$ is the index of the $\bar\pa$-operator on the $(m+1)$-punctured disk with
boundary conditions given by $\lambda$, where $n$ is the dimension of the Lagrangian subspaces.
\item The expressions ``Coker, const'' and ``Ker, const'' indicates that the cokernel and the kernel
of some operator can be represented by constant functions, {\em i.e.}, the actual kernel or cokernel functions are approximately constant in the sense that they converge to constant functions on any compact subset as the conormal of the link approaches the $0$-section.
\end{itemize}

\begin{rmk}\label{rmk:auxangles}
As mentioned above, our choice of eigenvalues in the auxiliary directions differs from that in \cite{EkholmEtnyreSullivan05c} and therefore the capping operators differ as well. It is easy to see that this does not affect the DGA: there is a DGA isomorphism relating the two choices that takes each Reeb chord generator $c$ of the algebra to $\pm c$, where the sign depends on the choice of orientation of the capping operator. In fact, the same argument shows that any capping operator would do as long as the negative and positive capping operators glue to an operator with the canonical orientation. Our particular choice of auxiliary directions here was made in order to allow for a uniform treatment of  all kernel and cokernel elements in the auxiliary directions, see Remark \ref{rmk:auxangles2}.
\end{rmk}

\begin{rmk}\label{rmk:auxangles2}
As mentioned above, after the capping operators has been defined, we may disregard the auxiliary directions when studying orientations. The reason for this is that the stabilized boundary conditions of the linearized operator splits into real and auxiliary directions. Here the boundary conditions in the auxiliary directions gives an operator which is an isomorphism over any disk with one positive puncture and boundary on $\Lambda_K$, the capping operators in the auxiliary directions at all negative punctures are isomorphisms as well, and the capping operators in the auxiliary directions at the positive puncture is independent of the particular puncture and has index $-2$ and trivial kernel. Thus using the canonical orientation for the isomorphisms and fixing an oriented basis in the cokernel in the auxiliary directions of the capping operators at positive punctures we get induced orientations of moduli spaces of holomorphic disks with boundary on $\Lambda_K$. If the orientation of the cokernel of the positive capping operator is changed then the orientations of moduli spaces changes by an overall sign. Thus, auxiliary directions affect the differential in the Legendrian contact homology algebra of $\Lambda_K$ only by an overall  sign. We therefore suppress auxiliary directions in our calculations below.
\end{rmk}

\begin{table}[htp]
\begin{tabular}{|l|c|c|c|c|}
\hline
{}&\rule{0pt}{4ex}${\rm Aux}_1$&${\rm Aux}_2$&${\rm Real}_1$ &${\rm Real}_2$\\
\hline
Front
&
\rule{0pt}{8ex}
\includegraphics[scale=0.3]{pics/minfr}
&
\includegraphics[scale=0.3]{pics/minfr}
&
\includegraphics[scale=0.3]{pics/maxfr}
&
\includegraphics[scale=0.3]{pics/maxfr}\\

\hline

Lagrangian
&
\rule{0pt}{5ex}
\includegraphics[scale=0.3]{pics/minlag}
&
\includegraphics[scale=0.3]{pics/minlag}
&
\includegraphics[scale=0.3]{pics/maxlag}
&
\includegraphics[scale=0.3]{pics/maxlag}\\

\hline

Complex angle,
&
\rule{0pt}{5ex}
\includegraphics[scale=0.3]{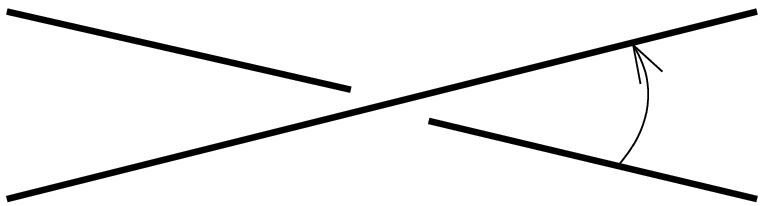}
&
\includegraphics[scale=0.3]{pics/minlag0+}
&
\includegraphics[scale=0.3]{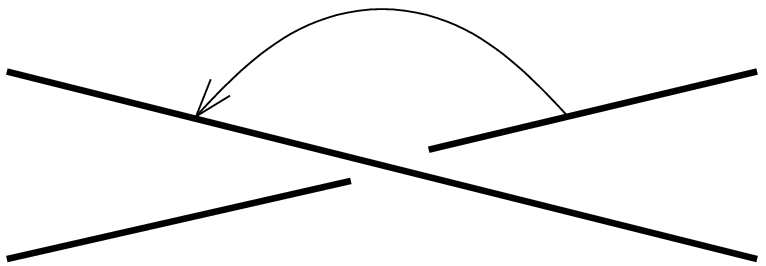}
&
\includegraphics[scale=0.3]{pics/maxlagpi+}\\

positive puncture
&
$\delta$
&
$\delta$
&
$\pi-\delta'$
&
$\pi-\delta''$\\

\hline

Complex angle,
&
\rule{0pt}{5ex}
\includegraphics[scale=0.3]{pics/minlagpi+}
&
\includegraphics[scale=0.3]{pics/minlagpi+}
&
\includegraphics[scale=0.3]{pics/maxlag0+}
&
\includegraphics[scale=0.3]{pics/maxlag0+}\\

negative puncture
&
$\pi-\delta$
&
$\pi-\delta$
&
$\delta'$
&
$\delta''$\\

\hline

Closing rotation,
&
\rule{0pt}{5ex}
\includegraphics[scale=0.3]{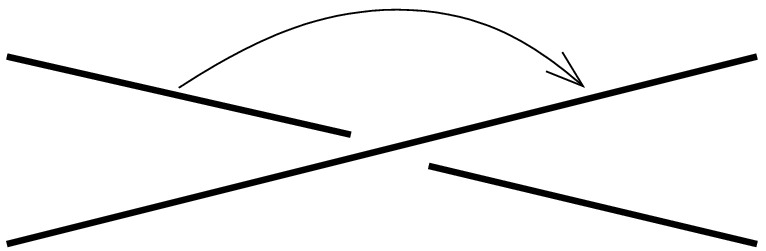}
&
\includegraphics[scale=0.3]{pics/minlagpi-}
&
\includegraphics[scale=0.3]{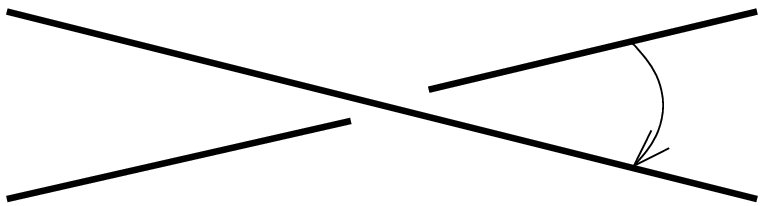}
&
\includegraphics[scale=0.3]{pics/maxlag0-}\\

positive puncture
&
$-(\pi-\delta)$
&
$-(\pi-\delta)$
&
$-\delta'$
&
$-\delta''$\\

\hline

Closing rotation,
&
\rule{0pt}{5ex}
\includegraphics[scale=0.3]{pics/minlag0-}
&
\includegraphics[scale=0.3]{pics/minlag0-}
&
\includegraphics[scale=0.3]{pics/maxlagpi-}
&
\includegraphics[scale=0.3]{pics/maxlagpi-}\\

negative puncture
&
$-\delta$
&
$-\delta$
&
$-(\pi-\delta')$
&
$-(\pi-\delta'')$\\

\hline

Capping operator,
&
\rule{0pt}{5ex}
\includegraphics[scale=0.3]{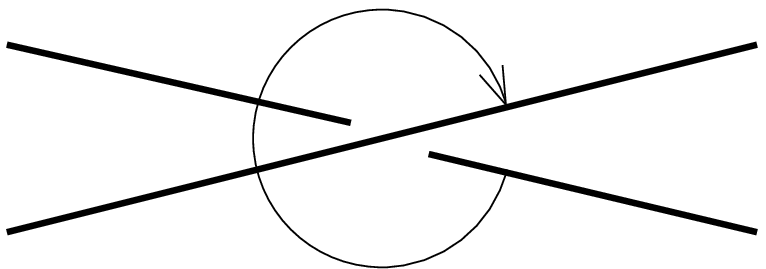}
&
\includegraphics[scale=0.3]{pics/minlag2pi-}
&
\includegraphics[scale=0.3]{pics/maxlag0-}
&
\includegraphics[scale=0.3]{pics/maxlag0-}\\

positive puncture
&
$e^{-i(2\pi-\delta)s}$
&
$e^{-i(2\pi-\delta)s}$
&
$e^{-i\delta's}$
&
$e^{-i\delta''s}$\\

{}
&
$\hat \mu=-2$
&
$\hat\mu=-2$
&
$\hat\mu=-1$
&
$\hat\mu=-1$\\

{}
&
$\ix=-1$
&
$\ix=-1$
&
$\ix=0$
&
$\ix=0$\\

{}
&
Coker, const
&
Coker, const
&
Isomorphism
&
Isomorphism\\

\hline

Capping operator,
&
\rule{0pt}{5ex}
\includegraphics[scale=0.3]{pics/minlag0-}
&
\includegraphics[scale=0.3]{pics/minlag0-}
&
\includegraphics[scale=0.3]{pics/maxlag0+}
&
\includegraphics[scale=0.3]{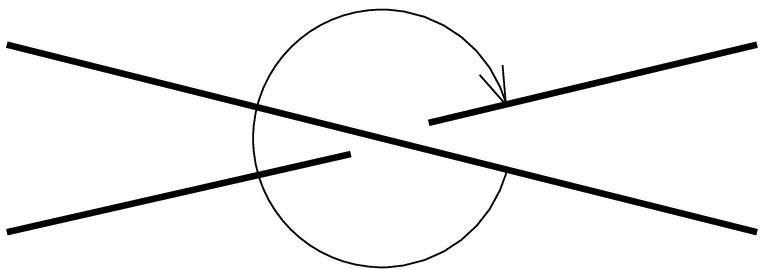}\\

negative puncture
&
$e^{-i\delta s}$
&
$e^{-i\delta s}$
&
$e^{i\delta's}$
&
$e^{-i(2\pi-\delta'')s}$\\

{}
&
$\hat \mu=-1$
&
$\hat\mu=-1$
&
$\hat\mu=0$
&
$\hat\mu=-2$\\

{}
&
$\ix=0$
&
$\ix=0$
&
$\ix=1$
&
$\ix=-1$\\

{}
&
Isomorphism
&
Isomorphism
&
Ker, const
&
Coker, const\\
\hline
\end{tabular}

\vspace{10pt}

\caption{Capping operator at a chord of type $\mathbf{S}_1$. The direction corresponding to the angle $\delta'$ is along the flow line in the direction of the unknot parameter. The direction corresponding to the angle $\delta''$ is the direction of the fiber of the conormal bundle.}\label{tb:braidextr}
\end{table}

\begin{table}[htp]
\begin{tabular}{|l|c|c|c|c|}
\hline
{}&\rule{0pt}{4ex}${\rm Aux}_1$&${\rm Aux}_2$&${\rm Real}_1$ &${\rm Real}_2$\\
\hline
Front
&
\rule{0pt}{8ex}
\includegraphics[scale=0.3]{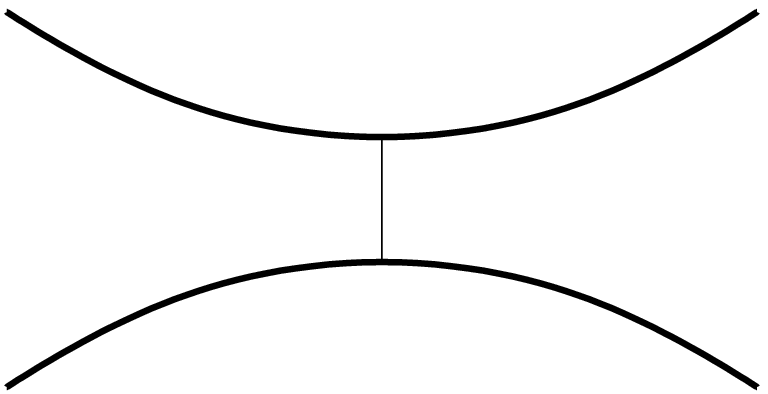}
&
\includegraphics[scale=0.3]{pics/minfr.eps}
&
\includegraphics[scale=0.3]{pics/minfr.eps}
&
\includegraphics[scale=0.3]{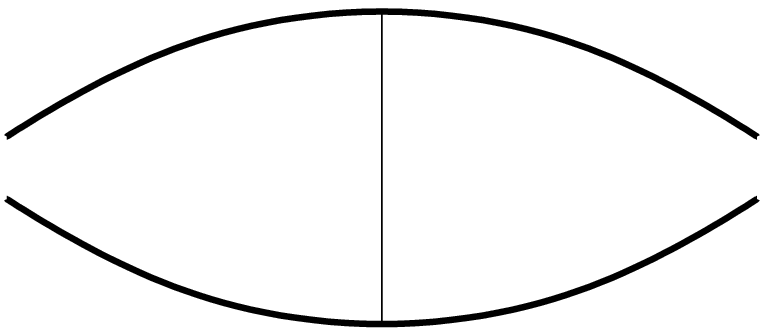}\\

\hline

Lagrangian
&
\rule{0pt}{5ex}
\includegraphics[scale=0.3]{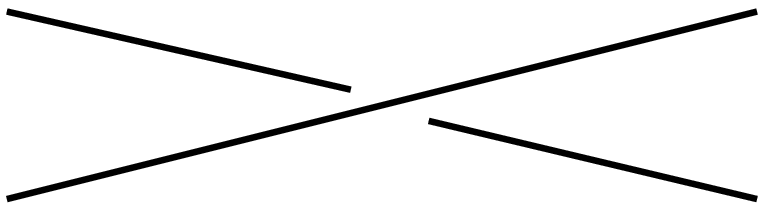}
&
\includegraphics[scale=0.3]{pics/minlag.eps}
&
\includegraphics[scale=0.3]{pics/minlag.eps}
&
\includegraphics[scale=0.3]{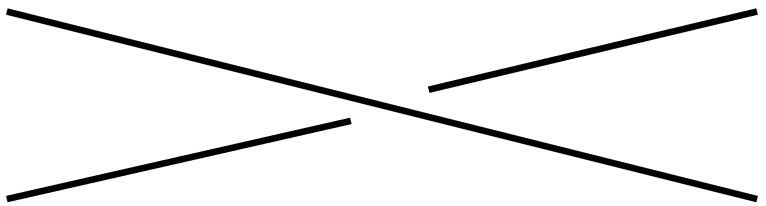}\\

\hline

Complex angle,
&
\rule{0pt}{5ex}
\includegraphics[scale=0.3]{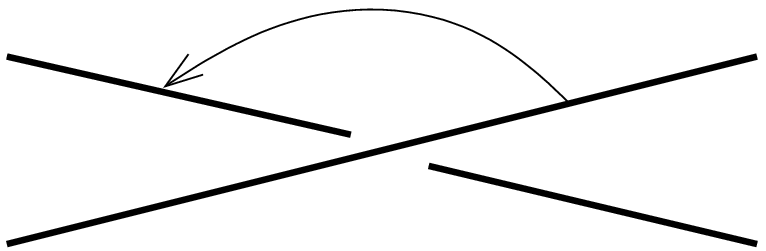}
&
\includegraphics[scale=0.3]{pics/minlagpi+.eps}
&
\includegraphics[scale=0.3]{pics/minlagpi+.eps}
&
\includegraphics[scale=0.3]{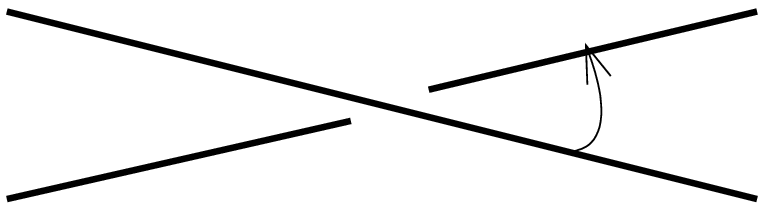}\\

negative puncture
&
$\pi-\delta$
&
$\pi-\delta$
&
$\pi-\delta'$
&
$\delta''$\\

\hline

Closing rotation,
&
\rule{0pt}{5ex}
\includegraphics[scale=0.3]{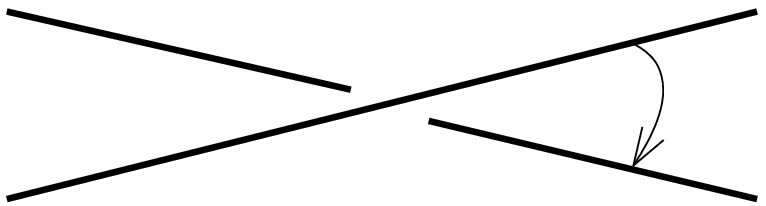}
&
\includegraphics[scale=0.3]{pics/minlag0-.eps}
&
\includegraphics[scale=0.3]{pics/minlag0-.eps}
&
\includegraphics[scale=0.3]{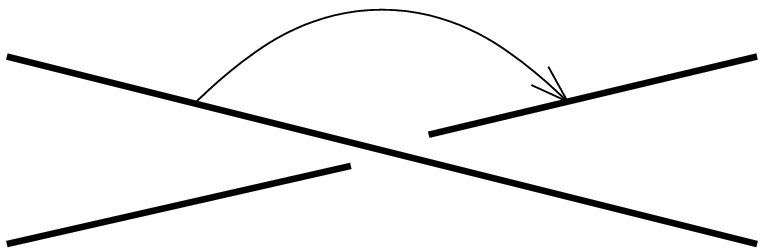}\\

negative puncture
&
$-\delta$
&
$-\delta$
&
$-\delta'$
&
$-(\pi-\delta'')$\\

\hline

Capping operator,
&
\rule{0pt}{5ex}
\includegraphics[scale=0.3]{pics/minlag0-.eps}
&
\includegraphics[scale=0.3]{pics/minlag0-.eps}
&
\includegraphics[scale=0.3]{pics/minlag0-.eps}
&
\includegraphics[scale=0.3]{pics/maxlag0+.eps}\\

negative puncture
&
$e^{-i\delta s}$
&
$e^{-i\delta s}$
&
$e^{-i\delta's}$
&
$e^{i\delta''s}$\\

{}
&
$\hat \mu=-1$
&
$\hat\mu=-1$
&
$\hat\mu=-1$
&
$\hat\mu=0$\\

{}
&
$\ix=0$
&
$\ix=0$
&
$\ix=0$
&
$\ix=1$\\

{}
&
Isomorphism
&
Isomorphism
&
Isomorphism
&
Ker, const\\
\hline
\end{tabular}

\vspace{10pt}

\caption{Local data at a chord of type $\mathbf{S}_0$.}\label{tb:braidsaddle}
\end{table}

\begin{table}[htp]
\begin{tabular}{|l|c|c|c|c|}
\hline
{}&\rule{0pt}{4ex}${\rm Aux}_1$&${\rm Aux}_2$&${\rm Real}_1$ &${\rm Real}_2$\\
\hline
Front
&
\rule{0pt}{8ex}
\includegraphics[scale=0.3]{pics/minfr}
&
\includegraphics[scale=0.3]{pics/minfr}
&
\includegraphics[scale=0.3]{pics/minfr}
&
\includegraphics[scale=0.3]{pics/maxfr}\\

\hline

Lagrangian
&
\rule{0pt}{5ex}
\includegraphics[scale=0.3]{pics/minlag}
&
\includegraphics[scale=0.3]{pics/minlag}
&
\includegraphics[scale=0.3]{pics/minlag}
&
\includegraphics[scale=0.3]{pics/maxlag}\\

\hline

Complex angle,
&
\rule{0pt}{5ex}
\includegraphics[scale=0.3]{pics/minlag0+}
&
\includegraphics[scale=0.3]{pics/minlag0+}
&
\includegraphics[scale=0.3]{pics/maxlag0+}
&
\includegraphics[scale=0.3]{pics/maxlagpi+}\\

positive puncture
&
$\delta$
&
$\delta$
&
$\delta'$
&
$\pi-\delta_0$\\

\hline

Complex angle,
&
\rule{0pt}{5ex}
\includegraphics[scale=0.3]{pics/minlagpi+}
&
\includegraphics[scale=0.3]{pics/minlagpi+}
&
\includegraphics[scale=0.3]{pics/minlagpi+}
&
\includegraphics[scale=0.3]{pics/maxlag0+}\\

negative puncture
&
$\pi-\delta$
&
$\pi-\delta$
&
$\pi-\delta'$
&
$\delta_0$\\

\hline

Closing rotation,
&
\rule{0pt}{5ex}
\includegraphics[scale=0.3]{pics/minlagpi-}
&
\includegraphics[scale=0.3]{pics/minlagpi-}
&
\includegraphics[scale=0.3]{pics/minlagpi-}
&
\includegraphics[scale=0.3]{pics/maxlag0-}\\

positive puncture
&
$-(\pi-\delta)$
&
$-(\pi-\delta)$
&
$-(\pi-\delta')$
&
$-\delta_0$\\

\hline

Closing rotation,
&
\rule{0pt}{5ex}
\includegraphics[scale=0.3]{pics/minlag0-}
&
\includegraphics[scale=0.3]{pics/minlag0-}
&
\includegraphics[scale=0.3]{pics/minlag0-}
&
\includegraphics[scale=0.3]{pics/maxlagpi-}\\

negative puncture
&
$-\delta$
&
$-\delta$
&
$-\delta'$
&
$-(\pi-\delta_0)$\\

\hline

Capping operator,
&
\rule{0pt}{5ex}
\includegraphics[scale=0.3]{pics/minlag2pi-}
&
\includegraphics[scale=0.3]{pics/minlag2pi-}
&
\includegraphics[scale=0.3]{pics/minlagpi-}
&
\includegraphics[scale=0.3]{pics/maxlag0-}\\

positive puncture
&
$e^{-i(2\pi-\delta)s}$
&
$e^{-i(2\pi-\delta)s}$
&
$e^{-i(\pi-\delta')s}$
&
$e^{-i\delta_0s}$\\

{}
&
$\hat \mu=-2$
&
$\hat\mu=-2$
&
$\hat\mu=-1$
&
$\hat\mu=-1$\\

{}
&
$\ix=-1$
&
$\ix=-1$
&
$\ix=0$
&
$\ix=0$\\

{}
&
Coker, const
&
Coker, const
&
Isomorphism
&
Isomorphism\\

\hline

Capping operator,
&
\rule{0pt}{5ex}
\includegraphics[scale=0.3]{pics/minlag0-}
&
\includegraphics[scale=0.3]{pics/minlag0-}
&
\includegraphics[scale=0.3]{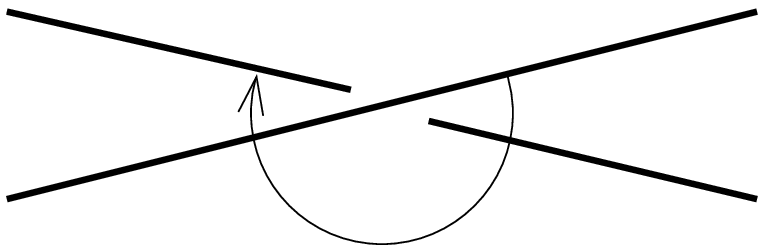}
&
\includegraphics[scale=0.3]{pics/maxlag0+}\\

negative puncture
&
$e^{-i\delta s}$
&
$e^{-i\delta s}$
&
$e^{-i(\pi+\delta')s}$
&
$e^{i\delta_0s}$\\

{}
&
$\hat \mu=-1$
&
$\hat\mu=-1$
&
$\hat\mu=-2$
&
$\hat\mu=0$\\

{}
&
$\ix=0$
&
$\ix=0$
&
$\ix=-1$
&
$\ix=1$\\

{}
&
Isomorphism
&
Isomorphism
&
Coker, const
&
Ker, const\\
\hline
\end{tabular}

\vspace{10pt}

\caption{Capping operator at a chord of type $\mathbf{L}_1$. The direction corresponding to $\delta'$ is along the equator (along the parameter of the unknot). The direction corresponding to $\delta_0$ is perpendicular to the equator (along the fiber).}\label{tb:unknotsaddle}
\end{table}

\begin{table}[htp]
\begin{tabular}{|l|c|c|c|c|}
\hline
{}&\rule{0pt}{4ex}${\rm Aux}_1$&${\rm Aux}_2$&${\rm Real}_1$ &${\rm Real}_2$\\
\hline
Front
&
\rule{0pt}{8ex}
\includegraphics[scale=0.3]{pics/minfr}
&
\includegraphics[scale=0.3]{pics/minfr}
&
\includegraphics[scale=0.3]{pics/maxfr}
&
\includegraphics[scale=0.3]{pics/maxfr}\\

\hline

Lagrangian
&
\rule{0pt}{5ex}
\includegraphics[scale=0.3]{pics/minlag}
&
\includegraphics[scale=0.3]{pics/minlag}
&
\includegraphics[scale=0.3]{pics/maxlag}
&
\includegraphics[scale=0.3]{pics/maxlag}\\

\hline

Complex angle,
&
\rule{0pt}{5ex}
\includegraphics[scale=0.3]{pics/minlag0+}
&
\includegraphics[scale=0.3]{pics/minlag0+}
&
\includegraphics[scale=0.3]{pics/maxlagpi+}
&
\includegraphics[scale=0.3]{pics/maxlagpi+}\\

positive puncture
&
$\delta$
&
$\delta$
&
$\pi-\delta'$
&
$\pi-\delta_0$\\

\hline

Closing rotation,
&
\rule{0pt}{5ex}
\includegraphics[scale=0.3]{pics/minlagpi-}
&
\includegraphics[scale=0.3]{pics/minlagpi-}
&
\includegraphics[scale=0.3]{pics/maxlag0-}
&
\includegraphics[scale=0.3]{pics/maxlag0-}\\

positive puncture
&
$-(\pi-\delta)$
&
$-(\pi-\delta)$
&
$-\delta'$
&
$-\delta_0$\\

\hline

Capping operator,
&
\rule{0pt}{5ex}
\includegraphics[scale=0.3]{pics/minlag2pi-}
&
\includegraphics[scale=0.3]{pics/minlag2pi-}
&
\includegraphics[scale=0.3]{pics/maxlag0-}
&
\includegraphics[scale=0.3]{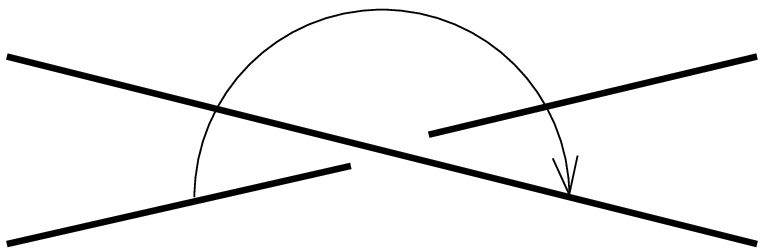}\\

positive puncture
&
$e^{-i(2\pi-\delta)s}$
&
$e^{-i(2\pi-\delta)s}$
&
$e^{-i\delta's}$
&
$e^{-i(\pi+\delta_0)s}$\\

{}
&
$\hat \mu=-2$
&
$\hat\mu=-2$
&
$\hat\mu=-1$
&
$\hat\mu=-2$\\

{}
&
$\ix=-1$
&
$\ix=-1$
&
$\ix=0$
&
$\ix=-1$\\

{}
&
Coker, const
&
Coker, const
&
Isomorphism
&
Coker, const\\

\hline

\end{tabular}

\vspace{10pt}

\caption{Capping operator at a chord of type $\mathbf{L}_2$. The direction of $\delta'$ is along the equator. The direction of $\delta_0$ perpendicular to it.}\label{tb:unknotextr}
\end{table}

\subsubsection{Details for the orientation data at a chord of type $\mathbf{S}_1$}\label{sssec:oriS_1}
As indicated in Corollary~\ref{c:rigidftconv} a Reeb chord  $b$ of $\Lambda_K$ of type $\mathbf{S}_1$ can appear
as a positive puncture for a disk of type $(\mathrm{QT}_{\varnothing})$ and a negative puncture for a disk of type $(\mathrm{QT}_1).$
Let $\bar\pa_{b^+}$ (respectively, $\bar\pa_{b^-}$) denote the capping operator associated to the  Reeb chord  $b$ of $\Lambda_K$ when it
appears as a positive (respectively, negative) puncture of the
$J_\eta$-holomorphic (see Equation~\eqref{e:Jgto0}) disk $u_\eta.$
Although as Lemma~\ref{lma:braidnicechords} indicates, there are many such chords of type $\mathbf{S}_1$,
each is a parallel translate of the other;
thus, we can consider their capping operators simultaneously.

At a Reeb chord $b$ of $\Lambda_K$ of type $\mathbf{S}_1,$ $\bar\pa_{b^-}$ splits into two 1-dimensional problems. Recall that auxiliary directions are disregarded. Because the grading of $b$ is odd, the conventions set in \cite[Subsection~3.3.6]{EkholmEtnyreSullivan05c} imply one 1-dimensional problem has index $1$ with $1$-dimensional kernel and the other has index $-1$ with $1$-dimensional cokernel.
This is indicated in the two left ``Real" columns at the bottom row of Table~\ref{tb:braidextr}.

Consider first the index $1$ component. This operator is an operator on a Sobolev space of complex valued functions on the disk with one boundary puncture. As the parameter $\delta'\to 0$ in Table~\ref{tb:braidextr} we continue the operator family continuously to the limit by introducing a small negative exponential weight in a strip neighborhood of the puncture. In the limit, the kernel is spanned by a constant real valued function. By continuity, it follows that solutions near the limit are close to constant functions and in particular, the $L^{2}$ pairing with a kernel function is close to the $L^{2}$-pairing of the corresponding constant function in the limit. We thus fix an orientation of the kernel of the index $1$-component of the capping operator at $b$ by fixing a vector
\[
v^\krn(b)\in T_b\Lambda
\]
parallel to the direction of the knot.
To see that $v^\krn(b)$ should be chosen parallel to knot, note that the direction of the knot is the direction of the Bott manifold of Reeb chords of $\Lambda_U$ and hence corresponds to the smaller of the two eigenvalues of the Hessian.

Similarly, elements in the cokernel of the index $-1$ component are solutions to a dual boundary value problem for the $\pa$-operator. The cokernel functions converges to constant real valued functions as $\delta''\to 0$ and we fix an orientation in the cokernel by fixing a vector
\[
v^\cokrn(b)\in T_b\Lambda,
\]
perpendicular to the direction of the knot, which is the direction corresponding to the positive eigenvalue of the Hessian along the Bott manifold of Reeb chords of $\Lambda_U$ and hence correspond to the largest eigenvalue after perturbation. The basis
\[
\left(v^{\cokrn}(b),v^{\krn}(b)\right)
\]
determines the orientation of the operator $\bar\pa_{b^-},$ and constant functions with values in the lines spanned by the basis vectors are approximate solutions.

When $b$ appears as a negative puncture, we see from its parity and the conventions of \cite[Subsection~3.3.6]{EkholmEtnyreSullivan05c}
that both the kernel and cokernel are trivial.

\subsubsection{Orientation data at a chord of type $\mathbf{S}_0,\mathbf{L}_1$ or $\mathbf{L}_2$}
The discussion for the other types of chords is similar to that of chords of type $\mathbf{S}_1$ in Section \ref{sssec:oriS_1},
so we discuss them only briefly here.
Recall that while a chord of type $\mathbf{L}_1$ can appear as a positive or negative puncture,
chords of the other two types only occur as punctures of one sign.

A Reeb chord $a$ of $\Lambda_K$ of type $\mathbf{S}_0$ has even parity and only appears as a negative puncture.
Thus, as indicated in Table~\ref{tb:braidsaddle},
the capping operator $\bar\pa_{a^-}$  splits into two $1$-dimensional problems, one of index $0$ which is an isomorphism and one of index $1$ with $1$-dimensional kernel. As in Section~\ref{sssec:oriS_1} the kernel functions of the index $1$ component are approximately constant and we fix an orientation of the capping operator by fixing a vector
\[
v^\krn(a)\in T_a\Lambda
\]
perpendicular to the direction of the knot.
We can assume  $v^\krn(a)$ is perpendicular to the knot because the second real coordinate
(the last column in Table~\ref{tb:braidsaddle}) represents the unstable manifold of $a,$ thought of as a braid saddle point.
Constant functions with values in the line spanned by this vector are approximate solutions of $\bar\pa_{a^-}.$

Noting the parity of the grading of a chord $c$ of  type $\mathbf{L}_1$ and a chord $e$ of type $\mathbf{L}_2,$
the conventions of \cite[Subsection~3.3.6]{EkholmEtnyreSullivan05c} imply the two capping operators each split into
two 1-dimensional problems with  kernels
and cokernels as indicated in Tables~\ref{tb:unknotsaddle} and~\ref{tb:unknotextr}.
As above, we fix orientations on these problems
by fixing vectors $v^\krn(c), v^\cokrn(c)$ and $v^\cokrn(e).$

\subsection{Signs in the unknot differential---proof of Theorem \ref{thm:combsign}}\label{ssec:pfcombsign}
Fix the Lie group spin structure on $\Lambda$. The corresponding trivialization of $T \Lambda$ is then the canonical trivialization of
the tangent bundle of the $2$-torus coming from the identification $T^2=\R^2/\Z^2$. Here, we take coordinates on $\Lambda$ as described in Section \ref{sec:combdiff}.

Recall for the unknot there are four rigid flow trees $I_N$, $I_S$, $Y_N$, and $Y_S$ with one puncture which
is at $c$ that contribute to $\pa c$, and there are two rigid strips with positive puncture at $e$ and negative puncture at $c$. Consequently, by Lemma \ref{t:basicdisktree}, for $\sigma>0$ sufficiently small there are four corresponding rigid holomorphic disks with positive puncture at $c$ and two corresponding rigid holomorphic strips $E_1$ and $E_2$ connecting $e$ to $c$. We next compute their signs.
\begin{thm}\label{l:unkotdiff}
For any choice of basic orientations there is a choice of capping operator at $c$ such that the signs of the rigid disks satisfies
\begin{align*}
&\epsilon(I_S)=\epsilon(I_N)=\epsilon(Y_S)=\epsilon(Y_N)=1,\\
&\epsilon(E_1)=-\epsilon(E_2).
\end{align*}
\end{thm}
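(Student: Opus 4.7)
The plan is to treat the two assertions by separate techniques. First I would establish $\epsilon(E_1)=-\epsilon(E_2)$ by a boundary–compactification argument on a one-parameter moduli space, an argument that does not depend on any orientation choice. Second I would argue that the four signs $\epsilon(I_N),\epsilon(Y_N),\epsilon(I_S),\epsilon(Y_S)$ coincide for \emph{any} fixed basic orientations and any capping data, so that after multiplying all four simultaneously by flipping the capping orientation at $c$, one can realize the asserted normalization.

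For the first assertion, I would use the one-parameter family $\widetilde I_N$ of Remark~\ref{rmk:1dfam}. Its compactification is an oriented compact $1$-manifold whose boundary consists of exactly two broken configurations, $E_1\,\#\,I_N$ and $E_2\,\#\,I_N$. The coherent gluing sign formula assigns to each boundary point the sign $\epsilon(E_j)\epsilon(I_N)$, up to a common universal gluing sign. Since the algebraic count of boundary points of an oriented compact $1$-manifold is zero, these two signs must be opposite, and cancelling the nonzero factor $\epsilon(I_N)$ yields $\epsilon(E_1)=-\epsilon(E_2)$. (Exactly the same conclusion is reached from $\partial^{2}=0$ applied to the chord $e$, using that $\partial c$ is a nontrivial linear combination of distinct homology classes.)

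For the second assertion, I would work in the Bott–degenerate model from the second paragraph of Remark~\ref{rmk:1dfam}. Before the small perturbation of the round conormal lift is applied, the Reeb chords of $\Lambda$ form a Bott $S^{1}$-family over the equator, and from each chord in this family one reads off exactly four rigid Bott disks of types $I_N$, $Y_N$, $I_S$, $Y_S$; these assemble into four connected Bott families parameterized by the equatorial circle. The Morse–Bott gluing formula for coherently oriented moduli spaces expresses the sign of the rigid disk produced at a critical point of the perturbing Morse function as the product of the sign of the Bott family and a Morse sign determined solely by the Morse index. All four disks in question sit over the single minimum $c$, so the Morse contributions agree; the remaining issue is to check that the four Bott family orientations themselves agree. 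I would do this by a direct computation with the capping operators in Tables~\ref{tb:unknotsaddle}--\ref{tb:unknotextr}, using the local normal form near the cusp edge of $\Lambda$ from Section~\ref{ssec:flowtreesonU} to handle the $Y$-vertices of $Y_N, Y_S$. Once all four signs are known to be equal, the capping orientation at $c$ may be chosen so that their common value is $+1$.

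The hard part is the equality of Bott family orientations inside Step~2. The disks $I_N$, $I_S$ (with trivial cotangent lift over one hemisphere) and $Y_N$, $Y_S$ (which pass through a $Y_1$-vertex at a cusp edge of $\Lambda$) have linearized $\bar\partial$-operators of topologically different types, so the comparison cannot be performed by a direct Legendrian symmetry argument; rather, it requires the explicit analysis of coherent orientations in terms of capping operators developed in Section~\ref{sec:orientations}, together with the normal form for cusped sheets used in that analysis. With that machinery in place, the computation becomes local near each cusp edge and a systematic application of the canonical orientation of Section~\ref{ssec:basicori} finishes the argument.
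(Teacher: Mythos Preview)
Your approach is viable but differs substantially from the paper's, which is considerably more direct.

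For $\epsilon(E_1)=-\epsilon(E_2)$, the paper does not invoke one-parameter moduli spaces or $\partial^2=0$. Instead it observes that in suitable flat coordinates on $T^\ast S^2$ the two strips are related by the involution $z\mapsto -z$ on the target; the trivialized boundary conditions coincide, and since the automorphism group of the two-punctured disk is one-dimensional, this involution reverses the moduli orientation, giving opposite signs. Your boundary-of-moduli argument is correct in principle, but it leans on the compactification structure of $\widetilde I_N$, which the paper explicitly declines to develop in detail (see the remark following Lemma~\ref{lma:treesofU}); the $\partial^2=0$ version also requires knowing in advance that $\partial c\neq 0$ and controlling the homology coefficients of $E_1,E_2$.

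For the four $c$-disks, the paper again uses direct symmetry rather than a Morse--Bott reduction. First, $\epsilon(I_N)=\epsilon(I_S)$ and $\epsilon(Y_N)=\epsilon(Y_S)$: in flat coordinates along the geodesic through both poles the $N$- and $S$-disks are interchanged by $z\mapsto -z$, and since the automorphism group of $\Delta_1$ is two-dimensional this involution \emph{preserves} orientation. Second, $\epsilon(I_N)=\epsilon(Y_N)$: the two disks have identical boundary conditions except near the north pole, where the Lie-group spin trivialization gives Lagrangian conditions related by the complex rotation $\mathrm{diag}(1,e^{i\theta})$, $0\le\theta\le\pi$; this explicit homotopy carries one oriented kernel to the other, and comparison via evaluation at a common point shows the signs agree. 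The choice of capping operator at $c$ then normalizes the common value to $+1$.

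Your Morse--Bott framework would ultimately need something equivalent to this last homotopy computation to compare the $I$-type and $Y$-type Bott families --- you correctly identify this as the hard step --- but the paper bypasses the Bott setup entirely and handles it in a few lines by working directly with the perturbed rigid disks. The paper's route buys concreteness and brevity; yours would give a more conceptual picture but at the cost of Morse--Bott gluing machinery that the paper has not set up.
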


\begin{pf}
We first show that $\epsilon(I_S)=\epsilon(I_N)$. To this end consider a geodesic arc in $S^{2}$ which passes through $\Pi(c)$ and which contains both poles. Let $x_1$ be a coordinate along this arc with $\Pi(c)$ corresponding to $0$ and let $x_2$ be a coordinate perpendicular to the arc. Then $\{x=(x_1,x_2)\colon |x_1|\le \pi+\delta,\;|x_2|<\delta\}$ parametrizes a disk $D$ in $S^{2}$ and we find a complex trivialization of the tangent bundle of $T(T^{\ast}S^{2})$ over $D$ by noting that the metric is flat and using coordinates $(x+iy)$.

Let $\bar\pa_c$ denote the positive capping operator at $c$, let $\bar\pa_I$ denote the linearized boundary condition of any one of the four disks with positive puncture at $c$, and let $\widehat I$ denote the problem on the closed disk obtained by gluing these two. Then the following gluing sequence is used to orient moduli spaces:
\[
\begin{CD}
0 @>>> \krn(\pa_{\widehat I}) @>>> \krn(\bar\pa_{I}) @>>> 0,
\end{CD}
\]
see \cite[Equation~(3.17)]{EkholmEtnyreSullivan05c} and Table~\ref{tb:unknotsaddle}. Note first that the trivialized boundary conditions of $I_N$ and $I_S$ agree. The signs of the disks are then obtained by comparing the oriented kernel of $\bar\pa_I$ with the orientation induced by conformal automorphism. Note furthermore that if $u\colon \Delta_1\to T^{\ast}S^{2}$ is a parametrization of $I_N$ then $-u$ parametrizes $I_S$, where $-(x+iy)=(-x-iy)$ in the coordinates discussed above. Since the automorphism group of $\Delta_1$ is $2$-dimensional the signs of the two disks agree, $\epsilon(I_N)=\epsilon(I_S)$. An identical argument shows that $\epsilon(Y_N)=\epsilon(Y_S)$.

After noting that the orientation of the capping operator at $c$ determines the sign in the orienting isomorphism above, it remains only to show that $\epsilon(I_N)=\epsilon(Y_N)$ to complete the proof of the first equation. To this end we compare the boundary condition of $\bar\pa_{I_N}$ and $\bar\pa_{Y_N}$. Note that the boundary conditions of the disks are arbitrarily close to the boundary conditions of the corresponding trees  and that the trees $I_N$ and $Y_N$ are identical except near the north pole, $(x_1,x_2)=(\pi,0)$. Using the trivialization  (over the disk $D$ above) of the $(x+iy)$-coordinates around the north pole, the Lie group spin of the torus $\Lambda$ is $(\pa_s,\pa_t)$ and induces the trivialized boundary condition
\[
(\cos t\, \pa_{x_1}+\sin t\,\pa_{x_2}\,\,,\,\, -\sin t\,\pa_{y_1} + \cos t\,\pa_{y_2})
\]
on the $I_N$-disk and
\[
(\cos t\, \pa_{x_1}-\sin t\,\pa_{x_2}\,\,,\,\, \sin t\,\pa_{y_1} + \cos t\,\pa_{y_1})
\]
on the $Y_N$-disk.

Now, the homotopy of Lagrangian boundary conditions which are given by acting by the complex matrix
\[
\left(\begin{matrix}
1 & 0\\
0 & e^{i\theta}
\end{matrix}\right),\qquad 0\le\theta\le\pi
\]
takes one trivialization to the other and we conclude that multiplication by the matrix at $\theta=\pi$ takes the positively oriented kernel of $\bar\pa_{I_N}$ to that of $\bar\pa_{Y_N}$. Comparing this orientation to the orientation induced by source isomorphisms, for example by evaluation at a point where the disks agree, we find that the signs of the two disks agree.

The argument for finding signs of the two disks $E_1$ and $E_2$ is similar to the above arguments: both disks come from flow lines and their boundary conditions are identical. Again the disks are related by multiplication by $-1$ in suitable coordinates. Here however, the kernel of the linearized operator and the automorphism group are both $1$-dimensional and it follows that multiplication by $-1$ reverses orientation. The lemma follows.
\end{pf}

In our computations of the signs for the differential in the Legendrian algebra of $\Lambda_K$ we will use the capping operators of chords of type $\mathbf{L}_2$ and $\mathbf{L}_1$ which correspond to the capping operators of $e$ and $c$, respectively, for which Lemma~\ref{l:unkotdiff} holds.

\subsection{Conformal structures}\label{ssec:conformal}
Conformal parameters for flow trees are best represented as moving boundary minima in standard domains, see Section~\ref{sssec:notatdisktotree}. In the general orientation scheme of \cite{EkholmEtnyreSullivan05c} the space of conformal structures was represented as the location of boundary punctures on the unit disk in the complex plane. The main purpose of this section is to relate these two representations in order to allow for the representation best adapted to trees to be used in computations.

Consider first the representation of conformal structures $\conf_{m}$, used in \cite{EkholmEtnyreSullivan05c}, on the (unit) disk $D_m$ in $\C$
with $m\ge 3$ boundary punctures $p_0, \ldots, p_{m-1}.$ Recall punctures are ordered counter-clockwise.
Fix the (distinguished) puncture $p_0$ at $1,$ $p_1$ at $i,$ and $p_{m-1}$
at $-i$. Then the locations of the remaining punctures in the boundary arc between $i$ and $-i$ determine the conformal structure uniquely. Thus the space $\conf_m$ of conformal structures on the disk with $m$ boundary punctures on, one of which is distinguished, is an $(m-3)$-dimensional simplex. We write $b_j$ for the tangent vector that corresponds to moving the  $j^{\rm th}$ puncture $p_j$ in the positive direction and keeping all other punctures fixed. Then $b_2,\dots,b_{m-2}$ is a basis in $T\conf_m$.

Consider second the representation of $\conf_{m}$ using standard domains $\Delta_m.$ Recall that a standard domain is a strip with slits of fixed width, that a standard domain determines a conformal structure on the disk with $m$ boundary punctures, and that two standard domains determine the same conformal structure if and only if they differ by an overall translation. Assume that $m>3$ and let $t_j\in T\conf_m$ denote the tangent vector which is the first order variation that corresponds to moving the $j^{\rm th}$ boundary minimum toward $-\infty$ and keeping all other boundary minima fixed.

\begin{lma}\label{l:boundminandmovpct}
Let $m>3$. Then
\[
t_j=\sum_{k=2}^{j} \xi_k b_k +\sum_{k=j+1}^{m-2} \eta_k b_k\in T\conf_m,
\]
where  $\xi_k>0$, $k=2,\dots,j$ and $\eta_k<0$, $k=j+1,\dots,m-2$. In particular, we can represent $T\conf_m$ as the vector space generated by $t_1,\dots,t_{m-2}$ divided by the $1$-dimensional subspace generated by the vector
\[
t_1+t_2+\dots+t_{m-2},
\]
and the orientation given by the basis $b_2,\dots,b_m$ agrees with that induced by $t_2,\dots,t_m$.
\end{lma}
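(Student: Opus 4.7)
My plan is to exploit the fact that the map $\Phi\colon \R^{m-2} \to \conf_m$ sending $(\tau_1,\dots,\tau_{m-2})$ to the conformal class of $\Delta_m(\tau_1,\dots,\tau_{m-2})$ is invariant under the diagonal translation $(\tau_j) \mapsto (\tau_j+c)$, since translating the strip produces the same punctured disk up to biholomorphism. Hence $d\Phi$ vanishes on the diagonal vector $t_1+t_2+\cdots+t_{m-2}$. Since $\dim \conf_m = m-3$ and $\Phi$ is surjective onto an open set near any regular value, the kernel of $d\Phi$ is exactly the one-dimensional subspace spanned by $t_1+\cdots+t_{m-2}$. This immediately yields the quotient description of $T\conf_m$ asserted in the lemma, up to the orientation comparison.

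Next I would compute the sign pattern in the expansion of $t_j$ in the basis $(b_2,\dots,b_{m-2})$. Fix a base value $\tau^0$ and a Riemann map $\varphi_{\tau^0}\colon \Delta_m(\tau^0) \to D_m$ normalized so that $p_0,p_1,p_{m-1}$ are sent to $1,i,-i$. For $\tau$ near $\tau^0$, the positions of $p_2,\dots,p_{m-2}$ on the circle $\pa D_m$ depend smoothly on $\tau$, and the $k$-th component of $t_j$ in the $b$-basis equals $-\pa/\pa \tau_j$ applied to the angular coordinate of $p_k$. The key geometric input is that the boundary minimum $(\tau_j,j)$ lies, in the counterclockwise boundary ordering of $\Delta_m$, between the punctures $p_j$ and $p_{j+1}$ (in our labeling). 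Shrinking $\tau_j$ (so that the $j$-th slit grows leftward) lengthens the conformal arc from $p_0$ up to and including $p_j$, while shortening the arc from $p_{j+1}$ on through $p_1$. By conformal monotonicity, this pushes $p_k$ counterclockwise for $k\leq j$ (giving $\xi_k>0$) and clockwise for $k>j$ (giving $\eta_k<0$). I would make the monotonicity precise either by a direct comparison of harmonic measure (as $\tau_j$ decreases, the harmonic measure of the arc from $p_0$ to $p_j$ strictly increases) or by a variational formula of Hadamard type for the Schwarz--Christoffel representation. The main obstacle will be carrying out this sign analysis cleanly; in practice I expect to set up the harmonic measure argument as a limit of finite standard domains with $\epsilon\to 0$, invoking continuity and strict positivity of the normal derivative of the Poisson kernel to rule out vanishing of any $\xi_k$ or $\eta_k$.

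Finally, I would deduce the orientation claim. Using $t_1+\cdots+t_{m-2}=0$ to eliminate $t_1$, the change-of-basis matrix $M$ from $(t_2,\dots,t_{m-2})$ to $(b_2,\dots,b_{m-2})$ has entries $M_{jk}=\xi_k>0$ for $k\leq j$ and $M_{jk}=\eta_k<0$ for $k>j$. Moreover, the relation $t_1=-\sum_{j\ge 2}t_j$ combined with the fact that every coefficient of $t_1$ in the $b$-basis is negative forces every column sum of $M$ to be strictly positive. Adding all rows of $M$ to any one row produces a matrix with a positive row, and by continuing to reduce (or by induction on $m$, performing the same elimination on the resulting $(m-3)\times(m-3)$ minor) one concludes $\det M>0$. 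This verifies that $(t_2,\dots,t_{m-2})$ induces the same orientation on $T\conf_m$ as $(b_2,\dots,b_{m-2})$, completing the lemma. The base case $m=4$ is the trivial $1\times 1$ computation $\det M = \xi_2 > 0$, and the case $m=5$ reduces to $\xi_2^{(2)}\xi_3^{(3)}-\xi_2^{(3)}\eta_3^{(2)}>0$, which is manifest.
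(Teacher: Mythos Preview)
Your approach to the sign pattern via harmonic measure is genuinely different from the paper's, which instead sets up the conformal variation as an element of a weighted Sobolev space, proves nonvanishing of every coefficient $\alpha_k$ by an index count plus automatic transversality (the argument principle in one complex dimension), and then reads off the signs from the degeneration $\tau_j\to -\infty$ in which $\Delta_m$ splits into a pair of pants with two smaller standard domains attached. Amusingly, the paper records exactly your harmonic-measure heuristic in a remark immediately after the proof, calling it an ``intuitive justification'' but not the official argument. So your Parts~1--2 are on solid ground conceptually, though the monotonicity step is, as you say, only sketched.

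Part~3, however, contains a genuine gap. You assert that a square matrix $M$ with strictly positive entries on and below the diagonal, strictly negative entries above, and strictly positive column sums must have positive determinant. This is false. For instance
\[
M=\begin{pmatrix} 1 & -1 & -1.9 \\ 1 & 0.01 & -1 \\ 0.01 & 10 & 3 \end{pmatrix}
\]
has the required sign pattern, column sums $(2.01,\,9.01,\,0.1)$, and determinant $\approx -5.96$. Thus your row-reduction/induction cannot work as stated: the sign pattern together with positive column sums does not propagate to the minor after a single elimination step. The actual change-of-basis matrix arising from conformal structures satisfies further constraints that your argument does not use. The clean fix is to bypass linear algebra entirely: the map from $(\tau_2,\dots,\tau_{m-2})$ (with $\tau_1$ fixed) to the positions $(p_2,\dots,p_{m-2})$ is a diffeomorphism between connected open subsets of $\R^{m-3}$, so its Jacobian is nowhere zero and hence has constant sign; then verify the sign at a single convenient point (e.g., the degenerate limit $\tau_2\ll\tau_3\ll\cdots$, or the base case $m=4$ together with the factorization coming from the pair-of-pants degeneration the paper uses).
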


\begin{pf}
Consider the map which takes a neighborhood of $+\infty$ in an infinite strip to a neighborhood of the origin in the upper half plane:
\[
w= -e^{-\pi z}.
\]
Under such a change of coordinates the vector field $1$ on the $w$-plane corresponds to the vector field $\pi^{-1} e^{\pi z}$ on the strip since
\[
1= \left(\frac{dw}{dz}\right)\pi^{-1}e^{\pi z}.
\]

Let $\sblv_{-\delta;k}$ denote the Sobolev space of vector fields along $\Delta_m$ which are tangent to $\Delta_m$ along the boundary and with a small negative weight at each puncture, {\em i.e.}, a weight function of the form $e^{-\delta|\tau|}$ in a strip region, $\tau+it\in[0,\infty)\times[0,1]$ or $\tau+it\in(-\infty,0]\times[0,1]$, and with $k$ derivatives in $L^{2}$. The $\bar\pa$-operator $\bar\pa\colon \sblv_{-\delta;k}\to\sblv_{-\delta;k-1}$ has index
\[
1-(m-2)=-(m-3),
\]
where $1=\dim(\R)$ and $-(m-2)$ is the Maslov index of the boundary condition with a negative half turn at each boundary minimum. The exact degree of regularity of the vector fields we use will be of no importance and we will be dropped from the notation.
Let $b_j'$ denote cut-off versions of the vector fields $e^{\pi z}$ supported in the $j^{\rm th}$ strip end. Then we can think of $T\conf_{m}$ as the quotient space
\[
\bar\pa (\sblv_{-\delta}\oplus\la b_2',\dots, b_{m-2}'\ra)/\bar\pa(\sblv_{-\delta})
\]

In this setting, we can interpret $t_j$ as follows, see \cite[Section~2.1.1]{Ekholm07}. Consider $\Delta_m\subset \C$ and let $z=x+iy$ be the standard complex coordinate on $\C$. Let $T_j$ denote a vector field on $\C$ supported in a small ball $B_{r}$ centered at the $j^{\rm th}$ boundary minimum and equal to $-\pa_x$ in $B_{r/2}$ and tangent to the boundary of $\Delta_m$ in $B_{r}-B_{r/2}$. The conformal variation $t_j$ is then represented by $\bar\pa T_j\in\sblv_{-\delta}.$  (To see this, linearize the comparison of conformal structures, $\kappa$ and
$d \phi^{-1} \kappa d\phi$ where $\phi\colon \Delta_m \rightarrow \Delta_m$ is a small diffeomorphism associated to the vector field $T_j,$
To first order, $d \phi^{-1} \kappa d\phi \approx \kappa + \kappa \bar\pa T_j.$)
Because  $\{\pa\bar b_k'\}_k$ spans the cokernel of $\pa \bar,$
\[
\bar\pa T_j = \sum_{k=2}^{m-2}\alpha_k(\bar\pa b_k') + \bar\pa v,
\]
for some $v\in\sblv_{-\delta}$ and real constants $\alpha_2,\dots,\alpha_{m-2}.$
We first show that $\alpha_k\ne 0$, $k=2,\dots,m-2$. Define the vector field $w\colon \Delta_m\to\C$ as $w=T_j-\sum_k\alpha_k b'_k-v$. Then $w$ satisfies a Lagrangian boundary condition of Maslov index $-(m-3)$ and lies in a Sobolev space with exponential weight $-\delta$ in the strip like end around the puncture at $-\infty$, at the first and last punctures at $+\infty$, and at all punctures for which $\alpha_k=0$, at punctures where $\alpha_k\ne 0$ the weight is $-\pi-\delta$. If the number of punctures with $\alpha_k=0$ is $N$ then the index of the $\bar\pa$-operator on the Sobolev space with Lagrangian boundary condition and weights as just explained equals
\[
\ix(\bar\pa)=1-(m-3)+(m-3-N)=1-N.
\]
Note that $w \ne 0$ since $T_j$ is not tangent at the boundary while  $v+ \sum_{k=2}^{m-2}\alpha_k b_k'$ is tangent.
Since $\bar\pa w=0$ and $w \ne 0,$ it follows by automatic transversality in dimension $1$ ({\em i.e.}, the argument principle) that $N=0$, {\em i.e.}, $\alpha_k\ne 0$, all $k$.

To determine the signs in the expression for $t_j$, consider the limit as the shift of the boundary minimum goes to $-\infty$. In this limit the disk $\Delta_m$ splits into three components: a three punctured disk containing the puncture at $-\infty$ and two punctures at $+\infty$ where two standard domains $\Delta_{m'}$ and  $\Delta_{m''}$ are attached. We choose notation so that the punctures in $\Delta_m$ at $+\infty$ to the left of the moving slit ends up in $\Delta_{m'}$ and those to the right in $\Delta_{m''}$. From the point of view of the representations of conformal structures via boundary punctures on the closed disk, the punctures in $\Delta_{m'}$ collides at $-1$ and those in $\Delta_{m''}$ collides at $1$. As the coefficients $\alpha_k$, $k=2,\dots,m-2$  are non-zero for the infinitesimal deformation $t_j$ at each instance of this total deformation it follows that $\alpha_k>0$ for $k<j$ and $\alpha_k<0$ for $k\ge j$.
\end{pf}

\begin{rmk}
Lemma~\ref{l:boundminandmovpct} also has an intuitive justification using harmonic measure.
Suppose the conformal structure changes by slightly decreasing the $j^{\rm th}$ boundary minimum.
Then the harmonic measure of the $j^{\rm th}$ slit (the probability of a Brownian motion particle
first hitting the boundary of $\Delta_m$ at that slit) increases while the measures of all other boundary components
decrease. Harmonic measure is preserved under a conformal map from $\Delta_m$ to $D_m.$
Thus, the corresponding changes in measures of the boundary arcs of $D_m$ can only occur if puncture $p_i$ moves
in the negative direction for $i = 2 ,\ldots j$ and the positive direction for $i=j+1, \ldots, m-2.$
\end{rmk}

\subsection{Signs of rigid quantum flow trees---proof of Theorem~\ref{thm:combsign}}\label{ssec:multisigns}
In this subsection we compute the signs of rigid quantum flow trees determined by $\Lambda_K\subset J^{1}(S^{2})$ and thereby prove Theorem~\ref{thm:combsign}.
Recall from Corollary~\ref{c:rigidftconv} that there are four types of rigid quantum flow trees: $(\mathrm{QT}_\varnothing), (\mathrm{QT}_0),
(\mathrm{QT}_0')$ and $(\mathrm{QT}_1).$
We will consider each case separately.

Let $A$ denote the Lagrangian boundary condition, suppressing auxiliary directions,
on the domain $\Delta_m$ of the linearized $\bar\pa$-problem corresponding to the
$J_\eta$-holomorphic  disk $u_\eta.$
Recall  that we think of $\eta>0$ as small, see Equation~\eqref{e:Jgto0}.
As in \cite[Section~6]{EkholmEtnyreSullivan05b}, we must close up the boundary conditions at each puncture using a ``negative $J$-twist."
This is illustrated in Tables~\ref{tb:braidextr}--\ref{tb:unknotextr} under the ``Closing rotation" row(s). Note that
this may contribute to the index of $\bar\pa_A.$ Let $\bar\pa_A$ denote the linearized problem with this boundary.
Let $\widehat{A}$ denote the boundary condition after adding the appropriate capping operators. Define $\bar\pa_{\widehat{A}}$
similarly.
For each of the four cases above, we must compute the exact sequence \cite[Equation~3.17]{EkholmEtnyreSullivan05c}.

\subsubsection{The sign of a quantum flow tree of type $(\mathrm{QT}_\varnothing)$}
A rigid holomorphic disk near the limit in a neighborhood of a quantum flow tree of type $(\mathrm{QT}_\varnothing)$ lies in a small neighborhood of a rigid flow tree in $\Lambda\subset J^{1}(S^{2})$ determined by $\Lambda_{K}$. Let $\Gamma$ be such a rigid flow tree with  positive puncture $b$ and negative punctures $a_1,\dots,a_{m-1}$. Recall that, since the front of $\Lambda_K\subset J^{1}(\Lambda)$ has no singularities, all vertices of such a rigid tree are trivalent $Y_0$-vertices except for $1$-valent vertices at Reeb chords. Let $t_1,\dots,t_{m-2}$ denote the trivalent vertices of $\Gamma$. Note that each trivalent vertex corresponds to a boundary minimum in the domain $\Delta_m$ of the holomorphic disks $u_\eta$ which corresponds to $\Gamma$ for small $\eta$. We number the trivalent vertices according to the order of the corresponding boundary minima in the vertical direction of the complex plane. We write $\tau_j$ for the boundary minimum corresponding to the trivalent vertex $t_j$.

\begin{lma}\label{l:signbraidtree}
There exists a choice of basic complex orientation $o_\C$ such that if $\eta>0$ is sufficiently small and if $u_\eta$ is a rigid holomorphic disk in a neighborhood of the rigid flow tree $\Gamma$ then the sign of $u_\eta$ is given by
\[
\epsilon(u_\eta)=\epsilon(\Gamma)=\sigma_{\rm pos}(\Gamma)\sigma(n, \Gamma),
\]
where $n=v^{\cokrn}(b)$, see Section~\ref{sssec:signrules} for notation.
\end{lma}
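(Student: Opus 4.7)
The plan is to compute the exact sequence \cite[Equation~(3.17)]{EkholmEtnyreSullivan05c} relating the oriented determinant of $\bar\pa_A$ to the canonical orientation on the glued problem $\bar\pa_{\widehat A}$, identifying explicitly how the kernel of $\bar\pa_A$ decomposes into contributions from the capping data at the punctures and the conformal parameters at trivalent vertices. Since $\Lambda_K$ inherits its spin structure from the Lie group spin structure on $\Lambda$, the canonical orientation on $\det(\bar\pa_{\widehat A})$ is determined by $o_\C$ together with the fixed orientation $v^\krn(b), v^\cokrn(b), v^\krn(a_1),\ldots,v^\krn(a_{m-1})$ of the capping operators from Tables~\ref{tb:braidextr} and~\ref{tb:braidsaddle}. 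The task is to compare this canonical orientation to the orientation on $\det(\bar\pa_A)$ inherited from $\krn(\bar\pa_A)\oplus V_\con$ once we identify these explicitly.

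First I would identify the kernel of $\bar\pa_A$ up to the conformal parameters. Since $u_\eta$ is a small perturbation of the rigid flow tree $\Gamma$ and the boundary conditions along the edges of $\Gamma$ are (after the adjustments of Remark~\ref{rmk:Lneartrees}) products of a real direction with a fiber direction, the proof of Theorem~\ref{t:basicdisktree} and the analysis in Sections~\ref{sssec:outflowtree}--\ref{sssec:qtreeconv} imply that elements of $\krn(\bar\pa_A)$ are constant-like on the strip regions associated to edges of $\Gamma$, with values perpendicular to the edge, and match at trivalent vertices according to the balance equation $v_0=v_1+v_2$ from Section~\ref{ssec:vectorsplit}. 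Thus, up to the contributions from conformal variations, $\krn(\bar\pa_A)$ is one-dimensional, parametrized by a single normal vector $n$ at the positive puncture $b$, while each conformal parameter $t_j$ (corresponding, via Lemma~\ref{l:boundminandmovpct}, to a boundary minimum shift at $\tau_j$) supplies one additional direction at the trivalent vertex $t_j$.

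Second I would run the gluing exact sequence. At the positive puncture, the capping operator $\bar\pa_{b^+}$ has one-dimensional cokernel spanned by $v^\cokrn(b)$ and one-dimensional kernel spanned by $v^\krn(b)$. The gluing identifies the image of the evaluation $\krn(\bar\pa_A)\to T_b\Lambda/\la v^\krn(b)\ra$ against $v^\cokrn(b)$; because the constant-like kernel on the edge into $b$ is oriented by the flow orientation $v^\flow(\Gamma)$ (which represents the natural boundary orientation of the disk at the puncture under the source automorphism identification), this contributes the sign $\sigma_{\rm pos}(\Gamma)=\sign(\la v^\flow(\Gamma),v^\krn(b)\ra)$. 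At each negative puncture $a_i$ of type $\mathbf{S}_0$, the cokernel of $\bar\pa_{a_i^-}$ is trivial, so the gluing pairs the transported normal $w(a_i)$ against the chosen $v^\krn(a_i)$, producing $\sigma_{n,\Gamma}(a_i)=\sign(\la w(a_i),v^\krn(a_i)\ra)$.

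The remaining contribution comes from the trivalent vertices. At each $Y_0$-vertex $t_j$, the vector splitting $n(t_j)=w_1(t_j)+w_2(t_j)$ together with the conformal variation $t_j$ (identified with $v^\con(t_j)$ via Lemma~\ref{l:boundminandmovpct}) span a two-dimensional subspace of $\krn(\bar\pa_A)\oplus V_\con$ that must be compared with the ordered orienting basis $(w_1(t_j),w_2(t_j))$ of $T_{\pi(t_j)}S$. A direct linear-algebra check shows that this change-of-basis determinant is $\sigma_{n,\Gamma}(t_j)=\sign(\la w_2(t_j)-w_1(t_j),v^\con(t_j)\ra)$, after fixing the basic complex orientation $o_\C$ to absorb the overall sign coming from the quotient $T\conf_m=\la t_1,\ldots,t_{m-2}\ra/\la\sum t_k\ra$. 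Multiplying these local contributions yields $\epsilon(\Gamma)=\sigma_{\rm pos}(\Gamma)\,\sigma(n,\Gamma)$.

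The main obstacle is the second step: rigorously showing that the matching of approximately constant kernel elements across trivalent vertices is compatible with vector splitting, and that the conformal variations $t_j$ enter the orientation computation with the sign prescribed by $v^\con(t_j)$. This requires combining the Picard-type gluing from Section~\ref{ssec:qtreetodisk} with the quotient description of $T\conf_m$ from Lemma~\ref{l:boundminandmovpct}, and careful bookkeeping of the conventions in \cite[Section~3.3.6]{EkholmEtnyreSullivan05c} for the capping operators at chords of types $\mathbf{S}_0$ and $\mathbf{S}_1$.
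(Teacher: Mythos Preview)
Your overall strategy---compute the sign via the gluing sequence \cite[Equation~(3.17)]{EkholmEtnyreSullivan05c}, deform the boundary condition to nearly constant $\R^2$, and identify kernel/cokernel elements with approximately constant sections governed by vector splitting---is exactly the paper's. However, several concrete claims in your sketch are wrong and would need to be corrected before the argument goes through.

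First, you misidentify $\krn(\bar\pa_A)$. For $m\ge 3$ the closing-rotation angles in Tables~\ref{tb:braidextr} and~\ref{tb:braidsaddle} give $\ix(\bar\pa_A)=-(m-3)$; since $u_\eta$ is rigid, $\krn(\bar\pa_A)=0$ and $\dim\cokrn(\bar\pa_A)=m-3$. There is no ``one-dimensional kernel parametrized by a normal vector at $b$.'' The two-dimensional space of approximately constant sections lives in $\krn(\bar\pa_{\widehat A})$, the capped-off operator on the closed disk, and the paper actually computes the sign by stabilizing to $\bar\pa_{\widehat A,\con}$ and comparing its $(m-1)$-dimensional kernel to $\krn(\bar\pa_-)$. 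Second, you have the capping operators at $b$ reversed: by Table~\ref{tb:braidextr} the \emph{positive} capping operator $\bar\pa_{b^+}$ is an isomorphism in both real directions; it is $\bar\pa_{b^-}$ that carries $v^\krn(b)$ and $v^\cokrn(b)$. These vectors enter the $(\mathrm{QT}_\varnothing)$ sign only through the choice of $o_\C$, which the paper fixes in the $m=2$ case (a flow line) that you do not treat separately. Third, your per-vertex ``direct linear-algebra check'' cannot work uniformly: there are $m-2$ trivalent vertices but only $m-3$ independent conformal variations, so one vertex $t_j$ is distinguished. The paper handles this by building explicit \emph{edge solutions} (cut-off constant vector fields on strip regions), constructing a map $\psi\colon V_\con\to\krn(\bar\pa_-)$, and decomposing the sign as $s_1s_2s_3$ with the contribution of $t_j$ absorbed into $s_3$ via the canonical orientation of $\krn(\bar\pa_{\widehat A})$. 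That explicit basis $\kappa_0^1,\kappa_0^2,\kappa_i$ and the resulting wedge-product computations are the substance of the proof; your outline does not yet supply an equivalent mechanism.
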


\begin{pf}
Since $\eta$ is small, $u_\eta$ lies close to $\Gamma.$
Using the trivialization of $T(T^{\ast}S^{2})$ in a neighborhood of $\Lambda$ induced by the trivialization of $T(T^{\ast}\Lambda)$,
the boundary condition $A$ is very close to constant $\R^{2}$ boundary conditions (for $\C^{2}$-valued vector fields) on $\Delta_m$.
For such disks with $m$ punctures, using closing rotation angles from Tables~\ref{tb:braidextr} and~\ref{tb:braidsaddle}, we compute
\[
\ix(\bar\pa_A)= \mu(A) + 2 = (m-1)\times (-1) + 1 \times 0 + 2 = -(m-3).
\]

Adding capping operators at the punctures to $A$, see Table~\ref{tb:braidextr} for the positive puncture and Table~\ref{tb:braidsaddle} for the negative punctures, we get a boundary condition $\widehat A$ on the closed disk which also has boundary conditions very close to constant. It follows that $\krn(\bar\pa_{\widehat A})$ is $2$-dimensional with kernel spanned by almost constant sections, and that $\cokrn(\bar\pa_{\widehat A})$ is $0$-dimensional. By definition of the canonical orientation, see Section \ref{ssec:basicori}, the positive orientation of the determinant of $\bar\pa_{\widehat A}$ is represented by a basis of its kernel which converges to constant solutions that form a positively oriented basis of $T\Lambda$ (and a positive sign on its $0$-dimensional cokernel).

Consider first the case $m=2$. In this case the tree is simply a flow line, the operator $\bar\pa_{A}$ has index $1$ and $1$-dimensional kernel spanned by an almost constant solution that converges to $v^\flow(\Gamma)$ as $\eta\to 0$, see Section \ref{sssec:signrules} for notation. The exact gluing sequence that determines the orientation is then, see \cite[Equation~(3.17)]{EkholmEtnyreSullivan05c},
\[
\begin{CD}
0 @>>> \krn(\bar\pa_{\widehat A}) @>>> \krn(\bar\pa_{a_1-})\oplus\krn(\bar\pa_{A}) @>>> 0,
\end{CD}
\]
Here $\krn(\bar\pa_{a_1-})$ is spanned by an approximately constant section which converges to $v^{\krn}(a_1)$, which is perpendicular to $\Gamma$, see Table~\ref{tb:braidsaddle}. It follows that the sign of the disk agrees with the orientation sign of the basis $(v^{\krn}(a_1),v^\flow(\Gamma))$ of $T\Lambda$, where $v^\flow(\Gamma)$ is the vector field induced by the automorphism of the strip, times the sign of the determinant of the capping operator of a positive puncture at $b$, which is an isomorphism.

Gluing the positive and negative capping operators  $\bar\pa_{b+}$ and $\bar\pa_{b-}$ at $b$ gives an operator $\bar\pa_{b0}$ of index $0$ with $\dim(\krn\bar\pa_{b0})=\dim(\cokrn(\bar\pa_{b0}))=1$. Here the kernel is spanned by the constant solution $v^{\krn}(b)$ and the cokernel by the constant solution $v^{\cokrn}(b)$ of the dual problem. Note that the canonical orientation of $\det(\bar\pa_{b0})$ changes with $o_\C$.  Choose $o_\C$ so that $v^{\krn}(b)\wedge v^{\cokrn}(b)$ represents the positive orientation of $\det(\bar\pa_{b0})$ if $(v^{\cokrn}(b),v^{\krn}(b))$ is a positively oriented basis of $T\Lambda$. Then the sign of the disk can be expressed as
\[
\sign\left(\la v^\flow(\Gamma),v^{\krn}(b)\ra\cdot\la v^{\krn}(a),v^{\cokrn}(b)\ra\right)=
\sigma_{\rm pos}(\Gamma)\sigma(n, \Gamma)
\]
as claimed.

Consider the case $m\ge 3$. Here the exact gluing sequence that gives the orientation is
\begin{equation}\label{eq:gluseqtree}
\begin{CD}
0 @>>> \krn(\bar\pa_{\widehat A}) @>{\alpha}>> \krn(\bar\pa_-) @>{\beta}>> \cokrn(\bar\pa_A) @>>> 0,
\end{CD}
\end{equation}
where we write
\[
\bar\pa_-=\oplus_{j=1}^{m-1}\bar\pa_{a_j-}.
\]
Since $u_\eta$ is a rigid disk and $\ix(\bar\pa_A)=-(m-3)$, we get that $\dim(\cokrn(\bar\pa_A))=m-3$, and that
$\cokrn(\bar\pa_{A})$ is spanned by linearized conformal variations which we represent as motion of the boundary minima in the domain, see Lemma~\ref{l:boundminandmovpct}. Here $\krn(\bar\pa_{\widehat A})$ is endowed with the canonical orientation and $\krn(\bar\pa_-)$ with the orientation from capping operators. The sequence then induces an orientation on $\cokrn(\bar\pa_A)$ which gives a sign when compared to the orientation induced from the space of conformal variations, as indicated in \cite[Remark~3.18]{EkholmEtnyreSullivan05c}.

Applying Lemma~3.1 and Remark~3.3 of \cite{EkholmEtnyreSullivan05c},
the map $\alpha$ is defined as follows. An element in $\krn(\bar\pa_{a_j-})$ is a solution of the $\bar\pa$-equation with boundary condition given by the negative capping operator at $a_j$. This solution is cut off and thereby defines an element in the space of sections over the closed disk. In this way, we identify $\krn(\bar\pa_-)$ with an $(m-1)$-dimensional subspace of the domain of the operator $\bar\pa_{\widehat A}$. The map $\alpha$ is then given by $L^{2}$-projection of $\krn(\bar\pa_{\widehat A})$ to this subspace. Likewise, we identify $\cokrn(\bar\pa_A)$ with a subspace of the target space of $\bar\pa_{\widehat A}$ by cutting off solutions of the dual problem and define the map $\beta$ as $\bar\pa_{\widehat A}$ followed by $L^{2}$ projection.

The boundary condition of $u_\eta$ is very close to constant $\R^{2}$ boundary conditions and the complex (K\"ahler) angles at the punctures are close to either $0$ or $\pi$. Thus there is a deformation of $\bar\pa_A$ which takes the boundary conditions to constant $\R^{2}$ boundary conditions and which introduces a small negative exponential weight where the complex angle is close to $0$ and a small positive exponential weight where it is close to $\pi$, which is sufficiently small so that the kernel and cokernel of $\bar\pa_A$ undergoes a continuous deformation (in particular dimensions of kernel and cokernel do not change). The capping operators can be deformed accordingly. We will use these deformed operators to determine the sign in the gluing sequence above. For simplicity, we will keep the notation $\bar\pa_A$ and $\bar\pa_{\widehat A}$ for the deformed operators.

We next introduce notation for parts of the tree $\Gamma$ as well as for corresponding parts of the domain $\Delta_m$. We write $E_{ij}$ for the edge connecting the $i^{\rm th}$ trivalent vertex $t_i$ to the $j^{\rm th}$, $t_j$, and $R_{ij}$ for the (finite) strip region in $\Delta_m$ corresponding to $E_{ij}$, where we think of the boundary of $R_{ij}$ as vertical line segments located below the $i^{\rm th}$ boundary minimum $\tau_i$ and above the $j^{\rm th}$, $\tau_j$. We write $E_0$ for the edge ending at the positive puncture and we take $R_0$ to be the half strip with a slit in $\Delta_m$ with boundary given by the two vertical segments bounding $R_{ji}$ and $R_{ji'}$, where $\tau_j$ is the minimal boundary minimum. We write $E_l$, $l=1,\dots,m-1$ for edges that end at negative punctures and $R_l$ for the the corresponding half infinite strip which is a neighborhood of the $l^{\rm th}$ negative puncture in $\Delta_m$.

Then
\[
\Delta_m-\left(\bigcup_{E_i\subset\Gamma}R_i \;\;\cup\;\bigcup_{E_{ik}\subset\Gamma,\;i<k} R_{ik}\right)
\]
is a disjoint union
\[
\bigcup_{1\le i\le m-2,\,i\ne j} V_i,
\]
where $V_i$ is a neighborhood of $\tau_i$. Consider the vertical segment $l$ through $\tau_i$. The boundary $\pa l$ of $l$ lies in the boundary of $\Delta_m$, if the boundary segment containing the lower endpoint of $l$ lies in the lower boundary on $\Delta_m$ we define $i_-=0$ and if it lies on a boundary segment containing $\tau_k$ we define $i_-=k$. Likewise, if the upper endpoint lies in the upper boundary segment of $\Delta_m$ then we define $i_+=m-1$ and if it lies in a boundary segment containing $\tau_{k'}$ we define $i_+=k'$. Note that $i_-<i<i_+$.

In order to deal with  $\cokrn(\bar\pa_A),$ we introduce below the space $V_\con$ of conformal variations of $\Delta_m$. Write $\widetilde v_i^{\con}$ for the conformal variation supported in $V_i$. Then $\bar\pa_A(\widetilde v_i^{\con})=du_\eta(\bar\pa(\widetilde\pa_\tau))$, where $\widetilde\pa_\tau$ is a cut-off of the constant vector field $\pa_\tau$ supported in $V_i$. See Section~\ref{ssec:conformal}.
Thus, as $\eta \to 0$, $\bar\pa_A(\widetilde v_i^{\con})$ is supported in three rectangular regions $R^{i}_s$, $s=0,1,2$, containing the vertical segments in the boundary $\pa V_i$ and lying in the strip regions corresponding to the incoming edge $E_0^{i}$ and the outgoing edges $E_1^{i}$ and $E_2^{i}$, respectively, at $t_i$. Then in $R^{i}_s$, $s=0,1,2$, $\bar\pa_A(\widetilde v_i^{\con})$ approaches $\bar\pa T^{i}_s$, where $\bar\pa T^{i}_s$ is a cut-off constant vector field tangent to $E^{i}_s$, $s=0,1,2$, directed towards the positive puncture. For the purpose of calculating signs we thus replace $\widetilde v_i^{\con}$ with $v_i^{\con}$ where $\bar\pa_A v_i^{\con}=\bar\pa(T^{i}_0+T^{i}_1+T^{i}_{2})$ and think of $V_\con$ as the vector space spanned by the $m-3$ conformal variations $v_i^{\con}$, $i\ne j$.

Let $\sblv_{A}$ and $\sblv_{A}'$ (respectively $\sblv_{\widehat A}$ and $\sblv_{\widehat A}'$) denote the spaces of vector fields on the closed disk $D$ (respectively the punctured disk $\Delta_m$) which are the domain and target, respectively, of $\bar\pa_A$ (respectively of $\bar\pa_{\widehat A}$). Recall that
\[
\bar\pa_A\colon \sblv_{A}\oplus V_\con\to\sblv_{A}'
\]
is an isomorphism because $\krn(\bar\pa_A)$ is trivial and $\cokrn(\bar\pa_A)$ is mapped onto by $\bar\pa_A(V_\con).$ Viewing $\Delta_m$ as a subset of $D$, we define $\bar\pa_{\widehat A}(v_i^{\con})=\bar\pa_{A}(v_i^{\con})$ for $v_i^{\con}\in V_\con$ and write
\[
\bar\pa_{\widehat A,\,\con}\colon \sblv_{\widehat A}\oplus V_\con\to\sblv_{\widehat A}'
\]
for the operator with extended domain. Then $\bar\pa_{\widehat A,\,\con}$
is an operator of index $2+(m-3)$ which has a $(m-1)$-dimensional kernel.

We will define a map $\psi\colon V_\con\to\krn(\bar\pa_-)$ such that $\beta\circ\psi\colon V_\con\to\cokrn(\bar\pa_A)$, see Equation~\eqref{eq:gluseqtree}, is an isomorphism which induces the same orientation on $\cokrn(\bar\pa_A)$ as the isomorphism $\bar\pa_A\colon V_\con\to\cokrn(\bar\pa_A)$. It then follows that the sign of the disk $u_\eta$ equals the sign of the  determinant of the isomorphism
\[
\begin{CD}
\krn(\bar\pa_{\widehat A,\,\con})\approx \krn(\bar\pa_{\widehat A})\oplus V_\con @>{\alpha+\psi}>> \krn(\bar\pa_-),
\end{CD}
\]
between oriented vector spaces, where $\alpha$ is as in Equation~\eqref{eq:gluseqtree}. To finish the proof we must thus first define $\psi$ and then compute $\alpha$ and the resulting determinant.

We introduce certain vector fields on $\Delta_m$ which are supported in neighborhoods of the strip regions of the form $R_{l}$ or $R_{lk}$ in $\Delta_m$ associated to edges in $\Gamma$ as explained above. We will call these vector fields \emph{edge solutions}.  More precisely, we take $n_{0}^{1}$ and $n_0^{2}$ to be constant sections supported in $R_0,$ cut off in a neighborhood of its boundary where $n_0^{1}$ is tangent to the second outgoing edge at $t_j$ and $n_0^{2}$ tangent to the first, see Figure~\ref{fig:n_0}.

\begin{figure}[htb]
\labellist
\small\hair 2pt
\pinlabel $\tau_j$ [Br] at 73 158
\pinlabel {$\text{supp}(n_0^k), k=1,2$} [Br] at 110 116
\pinlabel $t_j$ [Br] at 76 48
\pinlabel $n_0^2$ [Br] at 37 56
\pinlabel $n_0^1$ [Br] at 37 22
\pinlabel $\Gamma_2$ [Br] at 129 85
\pinlabel $\Gamma_1$ [Br] at 129 -8
\pinlabel $\Gamma$ [Br] at 1 39
\endlabellist
\centering
\includegraphics{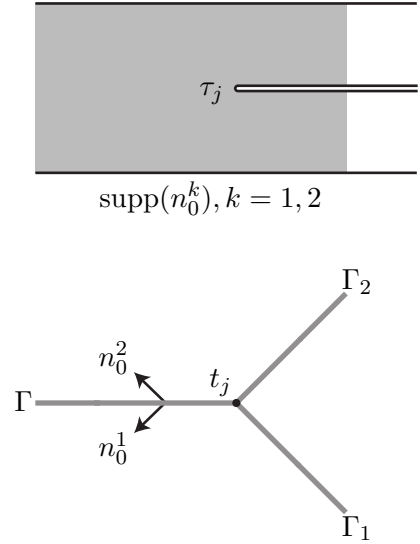}
\caption{Vector fields supported in $R_0$.}
\label{fig:n_0}
\end{figure}

Along edges $E_{ik}$ (respectively $E_i$) we define two cut-off constant vector fields: $\nu_{ik}$ (respectively $w_i$) perpendicular to the edge and $\zeta_{ik}$ (respectively $\zeta_i$) tangent to the edge. Here $\zeta_{ik}$ (resp.~$\zeta_i$) has support in a neighborhood of $R_{ik}$ (respectively $R_i$), whereas $\nu_{ik}$ (respectively $w_i$) has support in a neighborhood of $R_i\cup V_i$, see Figure~\ref{fig:suppedgesol}. We call $\nu_{ik}$, $\zeta_{ik}$, and $\zeta_{i}$ \emph{interior edge solutions} and $w_i$ \emph{exterior edge solutions}.

\begin{figure}[htb]
\labellist
\small\hair 2pt
\pinlabel $\text{supp}(\zeta_{ik})$ [Br] at 45 92
\pinlabel $\text{supp}(w_i)$ [Br] at 160 78
\pinlabel $\tau_i$ [Br] at 39 159
\pinlabel $\tau_k$ [Br] at 131 126
\pinlabel $a_i$ [Br] at 255 24
\endlabellist
\centering
\includegraphics{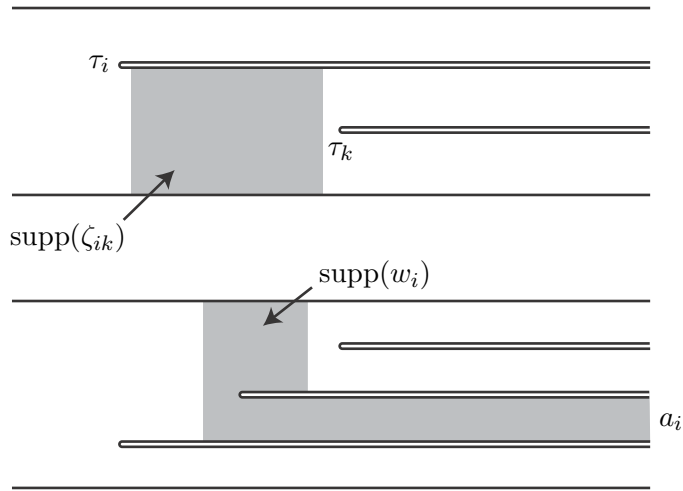}
\caption{Supports of edge solutions.}
\label{fig:suppedgesol}
\end{figure}

Furthermore, we assume that the elements $v_{i}^{\con}\in V_{\con}$ are chosen in such a way that the following holds: in any component $C$ of the support of $\bar\pa_A v_i^{\con}$ lying in an edge region $R_{lk}$ or $R_{l}$ the corresponding edge solution $\zeta_{lk}$ (or $\zeta_l$) satisfies the matching condition $\bar\pa_A\zeta_{lk}=\bar\pa_A v_i^{\con}$ (or $\bar\pa_A\zeta_{l}=\bar\pa_A v_i^{\con}$).

We can now say that the $(m+1)$-dimensional kernel of  $\bar\pa_{\widehat A,\,\con}$ is spanned by $m+1$ linear
independent sections
\begin{align*}
\kappa_0^{1} &=n_{0}^{1} + (w_1 +\dots + w_{j}) + \mathbf{E}_{j}^{1},\\
\kappa_0^{2} &=n_0^{2} + (w_{j+1}+\dots+w_m) + \mathbf{E}_{j}^{2},\\
\kappa_i &= (-(w_{i_-+1}+\dots+w_i)+(w_{i+1}+\dots+w_{i_+}))\\
&- \sigma_{n_0^{1},\Gamma^{1}}(t_i)v_i^{\con}+ \mathbf{E}_{i},\quad 1\le i\le j-1,\\
\kappa_i &= (-(w_{i_-+1}+\dots+w_i)+(w_{i+1}+\dots+w_{i_+}))\\
&- \sigma_{n_0^{2},\Gamma^{2}}(t_i)v_i^{\con}+ \mathbf{E}_{i},\quad j+1\le i\le m-1,
\end{align*}
where $\Gamma^{1}$ and $\Gamma^{2}$ are the partial flow trees obtained by cutting $\Gamma$ at the first and second outgoing edge at $t_j$, respectively, and where $\sigma_{n,\Gamma}$ is as in Equation~\eqref{eq:trivialsign}.
Here $\mathbf{E}^{i}$, $i\ne j$ (resp.~$\mathbf{E}_j^{\alpha}$, $\alpha=1,2$) are some linear combinations of interior edge solutions and conformal variations, respectively, that are supported in the component of $\Delta_{m}-s$, where $s$ the vertical segment through $\tau_i$ (resp.~$\tau_j$), that contains punctures at $+\infty$, see Figure~\ref{fig:addedgesol}.
(The matching conditions for edge solutions imply that linear combinations $\mathbf{E}_{i}$ and $\mathbf{E}_j^{\alpha}$, $\alpha=1,2,$ exists so that the sections indeed lie in $\krn(\bar\pa_{\widehat A,\,\con}.)$

\begin{figure}[htb]
\labellist
\small\hair 2pt
\pinlabel $t_k$ [Br] at 88 60
\pinlabel $\text{supp}(\mathbf{E}_{k})$ [Br] at 214 202
\pinlabel $\tau_k$ [Br] at 84 158
\pinlabel {$\mathbf{E}_k$ supported here} [Br] at 175 -10
\endlabellist
\centering
\includegraphics{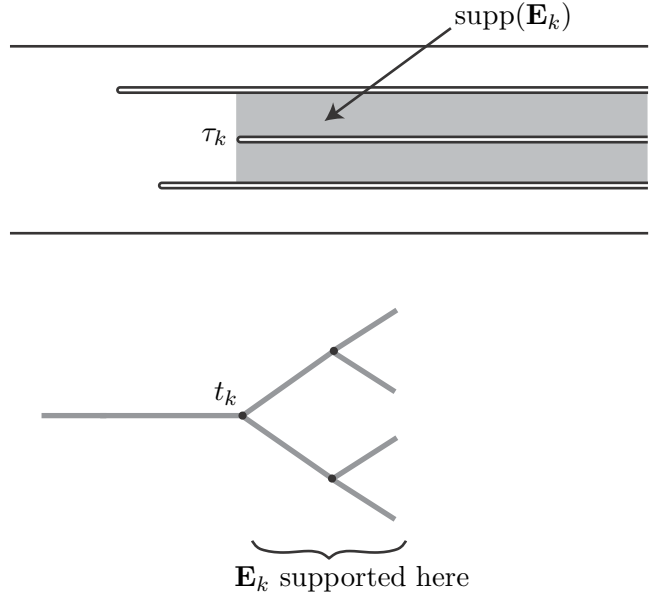}
\caption{Support of additional interior edge solutions.}
\label{fig:addedgesol}
\end{figure}

Using these equations we define the map $\psi$ as follows:
\[
\psi(v_i)=\sigma_{n_0^{s},\Gamma^{s}}(t_i)(-(w_{i_{-}+1}+\dots+w_i)+(w_{i+1}+\dots+w_{i_{+}})+ \mathbf{E}_{i}),
\]
where $s=1$ if $i<j$ and $s=2$ if $i>j$.
Since $\bar\pa \kappa_i=0$ we have, by construction of $\kappa_i$, $\bar\pa\psi(v_i)=\bar\pa v_i$. By definition, the orientation on $\cokrn(\bar\pa_A)$ induced by conformal variations is given by $L^{2}$-projection of $\bar\pa_A(V_\con)$ (with orientation on $V_{\con}$ as in Section~\ref{ssec:conformal}). Thus $\beta\circ\psi$ induces the correct orientation on $\cokrn(\bar\pa_A)$.

In order to compute the sign we first note that the projection of the subspace spanned by $\kappa_1,\dots\kappa_{j-1},\kappa_{j+1},\dots,\kappa_{m-3}$ to $V_\con$ is an isomorphism and that the map from its complement spanned by $\kappa_{0}^{1},\kappa_{0}^{2}$ given by evaluation at the positive puncture gives an isomorphism to $\krn(\bar\pa_{\widehat A})$ which consists of constant solutions. It follows that the sign of the rigid disk $u_\eta$ is given by
\[
\epsilon(u_\eta)=s_1s_2s_3,
\]
where $s_k$, $k=1,2,3$ are as follows. First, $s_1$ equals the sign of the orientation given by
\[
\xi_{0}^{1}\wedge\xi_{0}^{2}\wedge\xi_{1}\wedge\dots\wedge\xi_{j-1}\wedge\xi_{j+1}\wedge\xi_{m-2}
\]
 on $\krn(\bar\pa_-)$ where
\begin{align*}
\xi_{0}^{1}&=w_1 +\dots + w_{j},\\
\xi_{0}^{2}&=w_{j+1}+\dots+w_{m-1},\\
\xi_1&=-w_{1}+(w_{2}+\dots+w_{1_{+}}),\\
\xi_2&=-(w_{2_{-}+1}+w_2)+(w_{3}+\dots+w_{2_{+}}),\\
&\qquad\vdots\\
\xi_{j-1}&=-(w_{(j-1)_{-}+1}+\dots+w_{j-1})+w_{j},\\
\xi_{j+1}&=-(w_{j+1})+(w_{j+2}+\dots+w_{(j+1)_{+}}),\\
&\qquad\vdots\\
\xi_{m-1}&=-(w_{(m-2)_{-}+1}+\dots+w_{m-2})+w_{m-1}.
\end{align*}
Second, $s_2$ equals the sign of the orientation on $V_{\con}$ given by
\begin{align*}
&(\sigma_{n_0^{1},\Gamma^{1}}(t_1)v_1^{\con})\wedge\dots\wedge (\sigma_{n_0^{1},\Gamma^{1}}(t_{j-1})v_{j-1}^{\con})\wedge\\
&(\sigma_{n_0^{2},\Gamma^{2}}(t_{j+1})v_{j+1}^{\con})\wedge
\dots\wedge(\sigma_{n_0^{2},\Gamma^{2}}(t_{m-2})v_{m-2}^{\con}).
\end{align*}
Third, $s_3$ equals the sign of the orientation on $\krn(\pa_{\widehat A})$ given by
\[
\bar n_{0}^{1}\wedge \bar n_{0}^{2},
\]
where $\bar n_0^{s}$ is a constant solution agreeing with $n^{s}_{0}$ in the region where $n^{s}_{0}$ is constant, $s=1,2$.

We have
\begin{align*}
&\xi_{0}^{1}\wedge\xi_{0}^{2}\wedge\xi_{1}\wedge\dots\wedge\xi_{j-1}\wedge\xi_{j+1}\wedge\xi_{m-2}\\
&=
(-1)^{j-1}
\xi_{0}^{1}\wedge\xi_{1}\wedge\dots\wedge\xi_{j-1}\wedge\xi_{0}^{2}\wedge\xi_{j+1}\wedge\xi_{m-2}\\
&=(-1)^{j-1}w_1\wedge\dots\wedge w_{m-1}\\
&=(-1)^{j-1}\;\Pi_{s=1}^{m-1}\la w_s,v^{\krn}(a_s)\ra\;v^{\krn}(a_1)\wedge\dots\wedge v^{\krn}(a_{m-1}),
\end{align*}
and thus
\[
s_1=(-1)^{j-1}\;\Pi_{s=1}^{m-1}\la w_s,v^{\krn}(a_s)\ra.
\]
Next,
\[
v_1\wedge\dots\wedge v_{j-1}\wedge v_{j+1}\wedge\dots\wedge v_{m-2}=
(-1)^{j-1} \pa_{q_3}\wedge\dots\wedge\pa_{q_{m}},
\]
by Lemma \ref{l:boundminandmovpct}, where $\pa_{q_3}\wedge\dots\wedge\pa_{q_{m}}$ is the standard orientation of $\conf_m$ corresponding to moving the last $m-3$ punctures counter clockwise along the boundary of the disk, and thus
\[
s_2=(-1)^{j-1}\;\Pi_{s=1}^{j-1}\sigma_{n_0^{1},\Gamma^{1}}(t_s)\;\Pi_{s=j+1}^{m-1}\sigma_{n_{0}^{2},\Gamma^{2}}(t_s).
\]
Finally, recall that $o_\C$ was chosen so that $v^{\cokrn}(b)\wedge v^{\krn}(b)$ represents the positive orientation on $T\Lambda$. Since $v^{\krn}(b)$ is parallel to the vector $v^{\con}(t_j)$ related to the conformal variation at $\tau_j$, we find that
\begin{align*}
\bar n^{1}_{0}\wedge \bar n^{2}_0&=(n_0^{1}+n_0^{2})\wedge(-n_{0}^{1}+n_{0}^{2})\\
&=\la(n_0^{1}+n_0^{2}),v^{\cokrn}(b)\ra\la(-n_0^{1}+n_0^{2}),v^{\krn}(b)\ra\; v^{\cokrn}(b)\wedge v^{\krn}(b)\\
&=\la(n_0^{1}+n_0^{2}),v^{\cokrn}(b)\ra\sigma(t_j)\la v^\flow_\Gamma, v^{\krn}(b)\ra\;v^{\cokrn}(b)\wedge v^{\krn}(b).
\end{align*}
Thus, if $n_0^{1}$ and $n_0^{2}$ are the vector splittings of $v^{\cokrn}(b)$ then $s_3=\sigma(t_j)$, and
\[
s_1s_2s_3=\sigma_{\rm pos}(\Gamma)\sigma(n, \Gamma).
\]
\end{pf}

\subsubsection{The sign of a quantum flow tree of type $(\mathrm{QT}_0')$}\label{sssec:stripwtrees}
Let $\Xi$ be a quantum flow tree whose big disk part is a rigid strip $\Theta$. Assume that $\Xi$ has $m$ punctures. Let $e$ denote its positive puncture (a Reeb chord of type $\mathbf{L}_2$), let $c$ denote its negative puncture (of type $\mathbf{L}_1$) and denote remaining punctures by $a_1,\dots,a_{m-2}$ (all of type $\mathbf{S}_0$). It then follows that $\Theta$ is a strip with positive puncture at a Reeb chord of $\Lambda$ close to $e$ and negative puncture at a Reeb chord of $\Lambda$ close to $c$. Let $t_1,\dots,t_{m-1}$ denote the junction points ({\em i.e.}, the points on the boundary of $\Theta$ where trees are attached) and the trivalent vertices in the trees attached. Then each $t_j$ corresponds to a unique boundary minimum $\tau_j$ in the domain $\Delta_m$ of a holomorphic disk $u_\eta$ corresponding to $\eta$ and we number the points $t_j$ according to the vertical coordinate of the boundary minima $\tau_j$ in $\Delta_m$. We write $\II$ for the set of junction points of $\Xi$ and for $t_j\in\II$ we write $\Gamma_j$ for the tree attached at $t_j$ and $n_j$ for the vector at $t_j$ which is tangent to the boundary of $\Theta$ and points toward the positive puncture $e$.

\begin{lma}\label{l:signstrip}
The sign of the rigid disk $u_\eta$ corresponding to $\Xi$ is given by
\[
\epsilon(u_\eta)=\epsilon(\Xi)=
\epsilon(\Theta)\Pi_{t_j\in\II}\,\sigma(n_j,\Gamma_j),
\]
see Sections~\ref{sssec:signrules} and~\ref{ssec:signs} for notation.
\end{lma}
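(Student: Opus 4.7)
The plan is to follow the same overall scheme used in Lemma~\ref{l:signbraidtree}, but to localize the analysis separately on the strip part $\Theta$ and on each attached partial flow tree $\Gamma_j$, then verify that these contributions enter the exact gluing sequence multiplicatively. For $\eta>0$ small the disk $u_\eta$ has boundary condition $A$ which is $C^1$-close to the boundary lift of $\Theta$ together with the $1$-jet lifts of the $\Gamma_j$'s; in particular, in the trivialization inherited from the pullback trivialization on a neighborhood of $\Lambda$, $A$ is close to constant $\R^2$ along all of $\Delta_m$ except in rectangular regions lying near edge points of the $\Gamma_j$'s and a long finite-strip region corresponding to $\Theta$. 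Gluing the appropriate capping operators at $e$, $c$, and the $a_1,\dots,a_{m-2}$ yields an operator $\bar\pa_{\widehat A}$ on the closed disk of index $2$, and the tree/junction-point capping directions, together with the $(m-3)$-dimensional $\cokrn(\bar\pa_A)$ represented by conformal variations (as in Lemma~\ref{l:boundminandmovpct}), fit into the gluing sequence \cite[Eq.~(3.17)]{EkholmEtnyreSullivan05c}.

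First I would identify a preferred basis of $\krn(\bar\pa_{\widehat A,\,\con})$ analogous to the sections $\kappa_i,\kappa_0^s$ of the proof of Lemma~\ref{l:signbraidtree}. For each junction point $t_j\in\II$ one obtains, exactly as in Lemma~\ref{l:signbraidtree}, a collection of kernel elements built from the exterior edge solutions $w_\ell$ along the negative punctures of $\Gamma_j$ together with the interior edge solutions and conformal variations supported inside $\Gamma_j$; the resulting sub-determinant evaluates to $\sigma(n_j,\Gamma_j)$ provided the ``outgoing'' normal vector pushed into $\Gamma_j$ at the junction is $n_j$, i.e.\ the vector tangent to $\pa\Theta$ pointing toward the positive puncture. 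This last claim follows from the same computation (using the matching condition $\bar\pa v^{\mathrm{con}}_i=\bar\pa\zeta$ for edge solutions) that produced the factor $\sigma_{n_0^s,\Gamma^s}(t_i)$ in the previous lemma, since $n_j$ plays the role there played by the cut-off constant sections $n_0^s$ supported near the positive puncture.

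The remaining factor $\epsilon(\Theta)$ should appear from the ``strip part'' of the same kernel/cokernel decomposition. The key point is that if we project away the subspaces accounted for by the $\Gamma_j$'s, we are left with an operator that deforms, as $\eta\to 0$, to the linearized operator of the strip $\Theta$ with its own capping operators at $e$ and $c$; by definition, the sign of this residual determinant (with the previously chosen orientations on $v^{\cokrn}(e)$ and $v^{\krn}(c),v^{\cokrn}(c)$) is precisely $\epsilon(\Theta)$. Because $\Theta$ is a strip its linearized problem has a $1$-dimensional kernel spanned by the translation in the strip direction, and all ``long'' directions in the kernel of $\bar\pa_{\widehat A,\,\con}$ other than those feeding into the $\Gamma_j$'s project to this one-dimensional factor plus the constant solutions supplied by the capping operators at $e$ and $c$. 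Because the $\Gamma_j$-contributions and the $\Theta$-contribution are supported in essentially disjoint regions of $\Delta_m$, the corresponding block matrix is block-triangular up to terms that vanish as $\eta\to 0$, and the total determinant factors as a product.

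The main obstacle I anticipate is verifying the block-triangular structure carefully enough to conclude that the signs multiply without any extra global sign correction: one must track how the positive capping operator at $e$ and the (paired) negative/positive capping operators at $c$ interact with the junction-point edge solutions that lie along $\pa\Theta$. Concretely, the ``trunk'' along $\pa\Theta$ carries exterior edge solutions $w_\ell$ for the junctions as well as the constant sections produced by $v^{\krn}(c),v^{\cokrn}(c)$; one must check that, once one accounts for the matching conditions at every $t_j\in\II$, the sign of the sub-determinant in $\krn(\bar\pa_-)$ associated to $\Theta$ is exactly the sign of the strip viewed as a stand-alone rigid disk. This is formally the same linear algebra as in Lemma~\ref{l:signbraidtree} but carried out with two kernel/cokernel directions at the ends (the $e$ and $c$ capping operators) rather than a single positive puncture. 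Once this factorization is established the formula $\epsilon(\Xi)=\epsilon(\Theta)\prod_{t_j\in\II}\sigma(n_j,\Gamma_j)$ is immediate.
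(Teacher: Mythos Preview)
Your overall strategy---separate the strip contribution from the tree contributions and show they enter multiplicatively---is exactly the paper's approach, and your treatment of the $\Gamma_j$ factors via edge solutions and vector splitting (giving $\sigma(n_j,\Gamma_j)$) is correct and parallels Lemma~\ref{l:signbraidtree}. However, there are two concrete problems with the strip part of your argument.

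First, your claim that $\bar\pa_{\widehat A}$ has index $2$ is wrong. Once you glue the capping operators from Tables~\ref{tb:unknotsaddle} and~\ref{tb:unknotextr} (and Table~\ref{tb:braidsaddle} at the $a_j$), the closed-up operator is an \emph{isomorphism}: in the Real directions $\bar\pa_{e+}$ contributes index $-1$, $\bar\pa_{c-}$ contributes index $0$, each $\bar\pa_{a_j-}$ contributes index $1$, and $\bar\pa_A$ itself has index $-(m-3)$, summing to $0$. So there is no $2$-dimensional kernel of almost-constant sections here, in contrast to the $(\mathrm{QT}_\varnothing)$ case. Relatedly, the capping operator at $e$ supplies a \emph{cokernel} direction, not a kernel direction, so your description of ``constant solutions supplied by the capping operators at $e$ and $c$'' is not accurate.

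Second, and more importantly, you have not identified the mechanism that actually isolates $\epsilon(\Theta)$. ``Projecting away the $\Gamma_j$ subspaces and being left with the strip operator'' is the right picture, but to make it precise the paper introduces an additional one-dimensional stabilization $V_{\sol}$, spanned by a section $v^{\sol}=v^{\flow}+v_r$ consisting of the (cut-off) translation field $v^{\flow}$ along $\Theta$ together with the conformal variation at the one boundary minimum $\tau_r$ omitted from $V_{\con}$. This $v^{\sol}$ is what carries the translation mode of $\Theta$; its pairing with $v^{\cokrn}(c-)$ is exactly $\la v^{\flow},v^{\cokrn}(c)\ra$, and together with the pairing $\la v^{\krn}(c),v^{\cokrn}(e)\ra$ coming from the remaining short exact sequence this produces $\epsilon(\Theta)$. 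Without introducing $V_{\sol}$ explicitly you cannot cleanly separate the strip sign from the conformal-variation bookkeeping, and the block-triangularity you invoke does not hold on the nose (the translation mode mixes with all the junction-point conformal variations). Once you add this stabilization step, your sketch becomes the paper's proof.
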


\begin{pf}
Consider first the case $m=2$, {\em i.e.}, when there are no flow trees attached to the disk. In this case the linearized operator $\bar\pa_{A}$ as well as the capping operators are small deformations of the linearized operator and capping operators of the corresponding disk with boundary on $\Lambda$. It follows that the signs of the two disks agree and hence $\epsilon(u_\eta)=\epsilon(\Theta)$ as claimed.

In order to prepare for the case $m>2$ we write down the gluing sequence for $m=2$, see \cite[Equation~(3.17)]{EkholmEtnyreSullivan05c}, that gives the sign explicitly. Using Tables~\ref{tb:braidsaddle}, \ref{tb:unknotsaddle} (as a negative puncture) and~\ref{tb:unknotextr} we have
\[
\begin{CD}
0 @>>> \krn(\bar\pa_{c-})\oplus\krn(\bar\pa_{A}) @>{\beta}>> \cokrn(\bar\pa_{e+})\oplus\cokrn(\bar\pa_{c-}) @>>> 0,
\end{CD}
\]
where we use the fact that the glued operator $\bar\pa_{\widehat A}$ is an isomorphism. (To see this note that $\pa_{\widehat A}$ splits into a direct sum of two $1$-dimensional operators both of index $0$.)

Noting that the boundary conditions of $\bar\pa_A$ are close to $\R^{2}$ conditions we deform them to constant $\R^{2}$ boundary conditions inserting weights as determined by the complex angles, exactly as in the proof of Lemma~\ref{l:signbraidtree}. The solutions in the above sequence can then be thought of as cut-off constant solutions (which may be extended on a sufficiently large domain so that the supports of $v^{\krn}(c-)$ and $v^{\cokrn}(e+)$ overlap in order for the sequence to be exact) and the sign is given by
\[
\epsilon(u_\eta)=\epsilon(\Gamma)=\la v^{\krn}(c),v^{\cokrn}(e)\ra\,\la v^\flow(\Gamma),v^{\cokrn}(c)\ra.
\]

Consider next the case $m>2$. As above we use the associated weighted problem corresponding to $A$ with constant boundary conditions and exponential weights. Again $\bar\pa_{\widehat A}$ is an isomorphism and the gluing sequence which determines the sign is
\[
\begin{CD}
0 \rightarrow \krn(\bar\pa_-)\oplus\krn(\bar\pa_{c-})
\rightarrow \oplus\cokrn(\bar\pa_{e+})\oplus\cokrn(\bar\pa_{c-})\oplus\cokrn(\bar\pa_A) \rightarrow 0,
\end{CD}
\]
where $\krn(\bar\pa_-)=\bigoplus_{j=1}^{m-2}\krn(\bar\pa_{a_j-})$ is spanned by cut off (constant) solutions of the capping operators at the negative punctures $a_1,\dots,a_{m-2}$, where $\krn(\bar\pa_{c-})$ and $\cokrn(\bar\pa_{e+})$ are as above, where the orientation of $\krn(\bar\pa_-)\oplus\krn(\bar\pa_{c-})$ is induced by the order of the punctures, and where $\cokrn(\bar\pa_{A})$ is equipped with the orientation induced by the space of conformal variations of $\Delta_m$.

As in the proof of Lemma~\ref{l:signbraidtree} we stabilize the operator $\bar\pa_{\widehat A}$ by adding the finite dimensional space $V_{\con}$ spanned by conformal variations supported near all boundary minima except $\tau_r$, where we take $\tau_r$ to be the boundary minimum corresponding to the junction point $t_r$ immediately following the negative puncture $c$, or if there is no junction point after $c$, the junction point $t_r$ immediately preceding $c$. Here the conformal variation $v^{\con}_j$ near a boundary minimum $\tau_j$ such that $t_j\notin \II$ is defined exactly as the elements of $V_{\con}$ in the proof of Lemma~\ref{l:signbraidtree}. Conformal variations supported near boundary minima $\tau_i$ with $t_i\in\II$ have the form $\bar\pa_A( \widetilde v_i^{\con})=du_\eta(\bar\pa(\widetilde\pa_\tau))$, where $\widetilde\pa_\tau$ is a cut-off of the vector field $\pa\tau$ in the domain of the holomorphic disk part of $\Gamma$ which is tangent to the boundary and directed towards the positive puncture and continued constantly into the domain corresponding to the tree attached, see Figure~\ref{fig:vconjun}.
\begin{figure}[htb]
\labellist
\small\hair 2pt
\pinlabel $\tau_j$ [Br] at 41 47
\pinlabel $t_j$ [Br] at 71 158
\pinlabel $\Theta$ [Br] at 142 122
\pinlabel $\text{supp}(v_j^\con)$ [Br] at 170 82
\pinlabel $\Gamma_j$ [Br] at 100 171
\endlabellist
\centering
\includegraphics{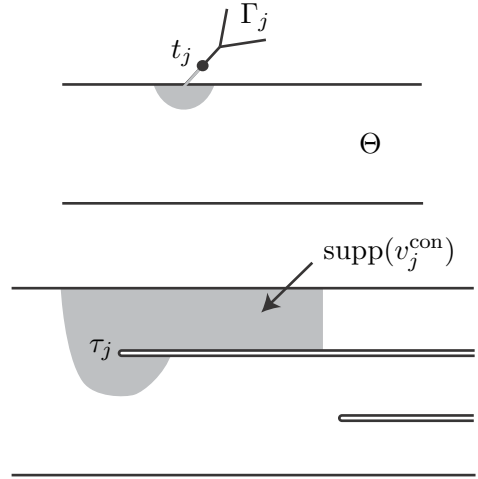}
\caption{Support of conformal variation at junction point.}
\label{fig:vconjun}
\end{figure}
As $\eta\to 0$, $\bar\pa_A\widetilde v_j^{\con}$ converges to a $\bar\pa \widetilde v^{\flow}$ in the part of the domain corresponding to the holomorphic disk part of $\Gamma$, where $\widetilde v^{\flow}$ is a cut-off of the constant vector field $v^{\flow}$ pointing toward the positive puncture, and to $\widetilde n_i$ in the part of the domain corresponding to the tree (near the junction point), where $\widetilde n_i$ is a cut-off of the constant vector field $n_i.$ In analogy with $v_j^{\con}$ for $t_j\notin \II$, we define $v_i^{\con}$ for the deformed boundary conditions so that $\bar\pa_A v_i^{\con}$ agrees with the operator acting on these cut off constant vector fields in the two components of its support. This gives the stabilized operator
\[
\bar\pa_{\widehat A,\;\con}\colon \sblv_{\widehat A}\oplus V_{\con}\to \sblv_{\widehat A}'
\]
and the new sequence
\[
\begin{CD}
0 @>>> \krn(\bar\pa_{\widehat A,\,\con}) @>>> \krn(\bar\pa_-)\oplus\krn(\bar\pa_{c-}) \\
@>>> \cokrn(\bar\pa_{e+})\oplus\cokrn(\bar\pa_{c-}) @>>> 0,
\end{CD}
\]
which by an argument similar to that used for the stabilized sequence in Lemma~\ref{l:signbraidtree} gives the correct sign.

In addition to conformal variations we also introduce \emph{internal} and \emph{external edge solutions} corresponding to edges of the flow trees $\Gamma_j$, $t_j\in\II$ attached exactly as in Lemma~\ref{l:signbraidtree}, where we start the construction for the tree $\Gamma_j$ with the vector $n_j$ at $t_j\in\II$. We also introduce an extra cut-off solution, which compensates for the lack of conformal variation at  $\tau_r$: let $v^{\sol}=v^\flow+v_r$ where $v^\flow$ is the constant vector field along $\Theta$ cut off near each junction point and near $c$, and where $v_r$ is a conformal variation of the usual type supported near $\tau_r$, see Figure~\ref{fig:vsol}.
\begin{figure}[htb]
\labellist
\small\hair 2pt
\pinlabel $\Theta$ [Br] at 143 53
\pinlabel $\text{supp}(v^{\sol})$ [Br] at 82 53
\pinlabel $\Gamma_1$ [Br] at 39 4
\pinlabel $\Gamma_2$ [Br] at 98 8
\pinlabel $\Gamma_3$ [Br] at 126 101
\pinlabel $\Gamma_4$ [Br] at 63 101
\pinlabel $c$ [Br] at 176 53
\pinlabel $e$ [Br] at -4 53
\endlabellist
\centering
\includegraphics{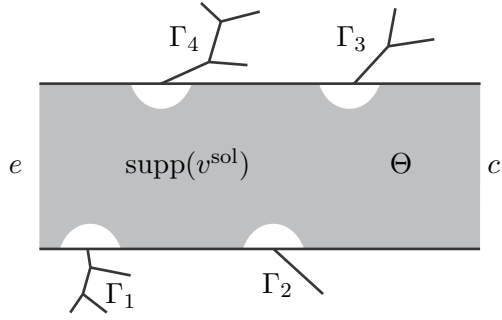}
\caption{The support of $v^{\sol}.$ Here $\Gamma_3$ is the first partial tree to follow the puncture $c$ in the positive direction of the boundary
of the domain.}
\label{fig:vsol}
\end{figure}

As in the proof of Lemma~\ref{l:signbraidtree} we assume that cut-off tangential edge solutions and $v^{\sol}$ agree with conformal variations on intersections of supports of their derivatives.

Define $V_{\sol}$ as the $1$-dimensional space spanned by $v^{\sol}$ and stabilize the operator one more time:
\[
\bar\pa_{\widehat A,\,\con,\,\sol}\colon \sblv_{\widehat A}\oplus V_{\con}\oplus V_{\sol}\to\sblv'_{\widehat A}.
\]
Noting that the projection of $\bar\pa v^{\sol}$ to $\cokrn(\bar\pa_{c-})$ equals the corresponding projection of $\bar\pa v^\flow$ to $\cokrn(\bar\pa_{c-})$ for the disk $\Theta$ we can determine the sign in the exact sequence for the stabilized operator $\bar\pa_{\widehat A,\,\con,\,\sol}$, which reads
\[
\begin{CD}
0 @>>> \krn(\bar\pa_{\widehat A,\,\con,\sol}) @>>> \krn(\bar\pa_-)\oplus\krn(\bar\pa_{c-}) @. \\
@. @>>>\cokrn(\bar\pa_{e+}) @>>> 0,
\end{CD}
\]
in combination with the sign of $\la v^{\sol},v^{\cokrn}(c-)\ra$.

In analogy with the sign calculation in Lemma~\ref{l:signbraidtree}, we find a basis in the $(m-2)$-dimensional kernel of $\bar\pa_{\widehat A,\,\con,\,\sol}$ given by $\kappa_1,\dots,\kappa_{m-2}$, where
\begin{align*}
\kappa_i &=-(w_{i-+1}+\dots+w_i)+(w_{i+1}+\dots w_{i+})
- \sigma_{n_{s(i)},\Gamma_{s(i)}}(t_i)v_i^{\con} + \mathbf{E}_i,\\
&\text{ if $t_i\notin \II$,}\\
\kappa_j &= (w_{j_-+1}+\dots+w_{j}) - v_j^{\con} + \mathbf{E}_j,\\
&\text{ if $t_j\in\II$ and $j\ne r$},\\
\kappa_r&= (w_{r_-+1}+\dots+w_{r}) - v^{\sol} -\sum_{j\ne r} v_j^{\con}+ \mathbf{E}_j.
\end{align*}
where $\Gamma_{s(i)}$ is the attached flow tree in which $t_i\notin\II$ lies, and where $\mathbf{E}_j$ denotes linear combinations of interior edge solutions of edges below $t_j$, exactly as in the proof of Lemma~\ref{l:signbraidtree}.

It follows that
$\epsilon(u_\eta)=s_1s_2s_3,$
where the signs $s_j$, $j=1,2,3$ are as follows. First, $s_1$ equals the orientation on $\krn(\bar\pa_-)$ given by
\[
(-1)^{r-1}\;\left(\Pi_{t_i\notin\II}\sigma_{n_{s(i)},\Gamma_{s(i)}}(t_i)\right)\;\xi_1\wedge\xi_{r-1}\wedge\xi_{r+1}\wedge\dots\wedge\xi_{m-1},
\]
where
\begin{align*}
\xi_i&=
-(w_{i-+1}+\dots+w_j)+(w_{i+1}+\dots w_{i+}),\text{ if $t_i\notin\II$,}\\
\xi_j&=(w_{j_-+1}+\dots+w_{j}),\text{ if $t_j\in\II$}.
\end{align*}
Second, up to a positive factor,
\[
s_2=\la v^\sol, v^{\cokrn}(c-)\ra = \la v_r + v^\flow, v^{\cokrn}(c-)\ra = \la v^\flow, v^{\cokrn}(c-)\ra.
\]
Third, $s_3=(-1)^{r-1}\la v^{\krn}(c),v^{\cokrn}(e)\ra$, by the orientation conventions for sequences, see \cite[Section~3.2.1]{EkholmEtnyreSullivan05c}. We conclude that, with notation as in Theorem~\ref{thm:combsign},
\[
\epsilon(u_\eta)=\epsilon(\Xi)=\epsilon(\Theta)\Pi_{j=1}^{l}\sigma(n_j,\Gamma_j).
\]	
\end{pf}

\subsubsection{The sign of a quantum flow tree of type $(\mathrm{QT}_0)$}
Let $\Xi$ be a quantum flow tree with holomorphic component $\Theta$, a rigid disk with one puncture. Assume that $\Xi$ has $m$ punctures. Let $c$ denote its positive puncture (a Reeb chord of type $\mathbf{L}_1$) and  $a_1,\dots,a_{m-1}$ denote its negative punctures (all Reeb chords of type $\mathbf{S}_0$). We use notation as in Section~\ref{sssec:stripwtrees} for junction points and trivalent vertices of $\Xi$.

We mark two points $q_0'$ and $q_0$ on the boundary of the rigid disk $\Theta$ right after the positive puncture using two parallel oriented hypersurfaces near the positive puncture. Let $\widetilde v^\flow$ denote the holomorphic vector field along the disk which vanishes at the positive puncture and at the second
marked point and which is directed toward the positive puncture along the boundary. At each junction point $t_j\in\II$, let $n_j$ denote the value of $\widetilde v^{flow}$ at $t_j$.

\begin{lma}\label{l:signrigdisk}
The sign of the rigid disk $u_\eta$ corresponding to $\Xi$ is given by
\[
\epsilon(u_\eta)=\epsilon(\Xi)=
\epsilon(\Theta)\Pi_{j=1}^{l}\sigma(n_j,\Gamma_j),
\]
see Sections~\ref{sssec:signrules} and~\ref{ssec:signs} for notation.
\end{lma}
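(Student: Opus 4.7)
The plan is to adapt the stabilization argument of Lemma~\ref{l:signstrip} to the case where the big disk $\Theta$ carries only a single (positive) puncture at $c$. I would begin with the base case $m=1$, where no partial trees are attached: the linearized operator $\bar\pa_A$ and the positive capping operator $\bar\pa_{c+}$ are continuous deformations of the corresponding operators for $\Theta$ itself (with boundary close to the trivialized $\R^2$ boundary of $\Theta$ up to the insertion of small exponential weights determined by the complex angles at the punctures of $\Lambda_K$), so the signs agree verbatim and $\epsilon(u_\eta) = \epsilon(\Theta)$.

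For $m>1$, after deforming $A$ to constant $\R^2$ boundary conditions with small exponential weights, the glued operator $\bar\pa_{\widehat A}$ has index $0$ and is an isomorphism, so the exact sequence of \cite[Equation~(3.17)]{EkholmEtnyreSullivan05c} collapses to
\[
0 \to \krn(\bar\pa_-) \to \cokrn(\bar\pa_{c+}) \oplus \cokrn(\bar\pa_A) \to 0,
\]
where $\krn(\bar\pa_-) = \bigoplus_{j=1}^{m-1} \krn(\bar\pa_{a_j-})$ is oriented by the order of the negative punctures and $\cokrn(\bar\pa_A)$ is oriented by $V_\con$. I would stabilize exactly as in Lemma~\ref{l:signstrip}: introduce $V_\con$ spanned by conformal variations $v_i^\con$ supported near all boundary minima $\tau_i$ except one distinguished minimum $\tau_r$, where $\tau_r$ is chosen to lie in the part of the domain corresponding to $\Theta$. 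Add a one-dimensional space $V_\sol$ spanned by a cut-off $v^\sol = \widetilde v^\flow + v_r$, where $v_r$ is a conformal variation at $\tau_r$. Because $\widetilde v^\flow$ vanishes at the positive puncture, $v^\sol$ is a genuine element of the (weighted) Sobolev space, and its $L^2$-pairing with the cokernel generator of $\bar\pa_{c+}$ (a constant section close to $v^\cokrn(c)$) is controlled by the marked points $q_0,q_0'$ and, by the choice of hypersurfaces near $c$, has sign $+1$. The stabilized sequence then reduces to
\[
0 \to \krn(\bar\pa_{\widehat A,\,\con,\,\sol}) \to \krn(\bar\pa_-) \to 0,
\]
and the sign of $u_\eta$ factors as (base-case sign) $\times$ (a determinant computed on $\krn(\bar\pa_-)$).

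To compute this determinant, I would construct an explicit basis $\kappa_1,\dots,\kappa_{m-1}$ of $\krn(\bar\pa_{\widehat A,\,\con,\,\sol})$ by the same recipe as in Lemma~\ref{l:signbraidtree}/Lemma~\ref{l:signstrip}, propagating vector splittings through the attached trees starting from the vectors $n_j$ at each junction point $t_j \in \II$ (these $n_j$ are precisely the values of $\widetilde v^\flow$). For $t_i \notin \II$ the generator $\kappa_i$ carries the sign $\sigma_{n_{s(i)},\Gamma_{s(i)}}(t_i)$ produced by the vector splitting at the internal trivalent vertex; for $t_j \in \II$ the generator $\kappa_j$ carries no such sign, so the total sign assembled from the junction vertices is $\Pi_{t_j\in\II} \sigma(n_j,\Gamma_j)$. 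The exterior edge solutions $w_l$ supported near the negative punctures $a_l$ reassemble, as in Lemma~\ref{l:signbraidtree}, into $\xi_1\wedge\cdots\wedge\xi_{m-1}$; by the same wedge manipulation as there, this equals $v^\krn(a_1)\wedge\cdots\wedge v^\krn(a_{m-1})$ up to the product $\Pi_l \la w_l, v^\krn(a_l)\ra$, which is absorbed into the tree signs.

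The main obstacle is the auxiliary role of the marked points $q_0, q_0'$ and the associated holomorphic vector field $\widetilde v^\flow$: I must check that their contribution to the base case ($m=1$) exactly reproduces $\epsilon(\Theta)$ as computed via capping operators alone, and that the orientation of $V_\sol$ thereby inherited is compatible with the stabilization used for $m>1$. This is the analogue of verifying $s_2 = \la v^\flow, v^\cokrn(c-)\ra$ in Lemma~\ref{l:signstrip}; here the analogous quantity degenerates because $\widetilde v^\flow$ vanishes at $c$, but the nondegeneracy of $\bar\pa v^\sol$ against $v^\cokrn(c)$ can be recovered from the contribution of $v_r$ and the holomorphicity of $\widetilde v^\flow$ near the marked points, with the sign fixed once and for all by the choice of parallel hypersurfaces. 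Once this is verified, combining all three factors gives $\epsilon(u_\eta) = \epsilon(\Theta)\,\Pi_{t_j\in\II} \sigma(n_j,\Gamma_j)$, as claimed.
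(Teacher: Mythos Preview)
Your index computation for the glued operator is wrong, and this breaks the rest of the argument. From Table~\ref{tb:unknotsaddle}, the positive capping operator $\bar\pa_{c+}$ at a chord of type $\mathbf{L}_1$ is an isomorphism in both real directions (index $0$, trivial kernel and cokernel). Combining this with $\ix(\bar\pa_A)=-(m-3)$ for a rigid $m$-punctured disk and $\ix(\bar\pa_{a_j-})=1$ for each of the $m-1$ negative punctures of type $\mathbf{S}_0$, one gets $\ix(\bar\pa_{\widehat A}) = -(m-3)+0+(m-1) = 2$, not $0$. The $2$-dimensional kernel of $\bar\pa_{\widehat A}$ reflects the $2$-dimensional automorphism group of the one-punctured big disk $\Theta$; this is precisely the structural difference from Lemma~\ref{l:signstrip}, where the strip has a $1$-dimensional automorphism group and the positive capping at $e$ contributes an extra $-1$ to the index, bringing the glued problem down to an isomorphism.

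Consequently your exact sequence is dimensionally inconsistent: $\krn(\bar\pa_-)$ is $(m-1)$-dimensional while $\cokrn(\bar\pa_{c+})\oplus\cokrn(\bar\pa_A)$ is only $(m-3)$-dimensional, since $\cokrn(\bar\pa_{c+})=0$. There is no generator ``close to $v^{\cokrn}(c)$'' to pair $v^{\sol}$ against; the vector $v^{\cokrn}(c)$ lives in the cokernel of the \emph{negative} capping operator at $c$, which does not appear in this case. The paper handles the extra two dimensions differently: it imposes a vanishing condition at the marked point $q_0$, passing to a codimension-one domain $\sblv_{\widehat A^\ast}\subset\sblv_{\widehat A}$. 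The resulting operator $\bar\pa_{\widehat A^\ast}$ has index $1$ and $1$-dimensional kernel, oriented by $\widetilde v^{\flow}$ via the second hypersurface at $q_0'$; after stabilizing with $V_{\con}$ alone the orienting map becomes $0\to\krn(\bar\pa_{\widehat A^\ast,\,\con})\to\krn(\bar\pa_-)\to 0$, and the sign computation then follows the pattern of Lemma~\ref{l:signbraidtree}. No $V_{\sol}$ is introduced.
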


\begin{pf}
Let $\bar\pa_A$ denote the linearized operator corresponding to the rigid disk $\Theta$ with domain thought of as a strip with punctures at the positive puncture and at $q_0$. Then the linearized operator
\[
\bar\pa_A\colon\sblv_{A}\to\sblv_{A}'
\]
with $m-1 = 0$ negative punctures has index $1.$ The $1$-dimensional kernel for an appropriate choice of orientation of the hypersurface makes $\widetilde v^\flow$ give the positive orientation to the moduli space. The proof then follows from an argument similar to the proof of Lemma~\ref{l:signstrip} so we just sketch the details.

Assume that $u_\eta$ has $m$ punctures. Adding capping operators we get an operator $\bar\pa_{\widehat A}$ on the closed disk of index $2$ with two dimensional kernel corresponding to linearized conformal automorphism. Adding the vanishing condition at the marked point $q_0$ for the vector fields in $\sblv_{\widehat A}$ we get a new operator $\bar\pa_{\widehat A^{\ast}}$ with domain a codimension one subspace $\sblv_{\widehat A^{\ast}}\subset\sblv_{\widehat A}$. The restriction of the operator gives an operator
\[
\bar\pa_{\widehat A^{\ast}}\colon\sblv_{\widehat A^{\ast}}\to\sblv_{\widehat A}'
\]
of index $1$ with $1$ dimensional kernel. We then consider the stabilized problem
\[
\bar\pa_{\widehat A^{\ast},\,\con}\colon\sblv_{\widehat A^{\ast}}\oplus V_{\con}\to\sblv_{\widehat A}'
\]
which has index $(m-1)$ and find that the sign of $u_\eta$ is given by the sign of the map
\[
\begin{CD}
0 @>>> \krn(\bar\pa_{\widehat A^{\ast},\,\con}) @>>> \krn(\pa_-) @>>> 0,
\end{CD}
\]
where $\krn(\pa_-)=\bigoplus_{j=1}^{m-1}\krn(\bar\pa_{a_j-})$ is the sum of cut off kernel functions of the capping operators at all the negative punctures of $u_\eta$. The equation $\epsilon(u_\eta)=\epsilon(\Theta)\Pi_{j=1}^{l}\sigma(n_j,\Gamma_j)$ then follows from a slightly simpler version of the analogous calculation in the proof of Lemma~\ref{l:signrigdisk}.
\end{pf}

\subsubsection{The sign of quantum flow tree of type $(\mathrm{QT}_1)$}
Consider a quantum flow tree $\Xi$ with $m$ punctures and with big disk part a once punctured
constrained rigid disk $\Theta$ which is constrained to pass through $\Pi(b)$ where $b$ is a Reeb chord of type $\mathbf{S}_1$. Let the positive puncture of $\Xi$ be $e$ (a Reeb chord of type $\mathbf{L}_2$), let the negative
punctures be $b$ (a Reeb chord of type $\mathbf{S}_1$), and $a_1,\dots,a_{m-2}$ (Reeb chords of type $\mathbf{S}_0$). We use notation for trivalent vertices and junction points of $\Xi$ as in Section~\ref{sssec:stripwtrees}. Let $\widetilde v^\flow$ denote the holomorphic vector field
along $\Theta$ which vanishes at $e$ and $b$ and which is directed toward the positive puncture
$e$ along the boundary. Let $n_j$ be the value of $\widetilde v^{\flow}$ at $t_j\in\II$.

\begin{lma}\label{l:sign1dimdisk}
The sign of the rigid disk $u_\eta$ corresponding to $\Xi$ is given by
\[
\epsilon(u_\eta)=\epsilon(\Xi)=
\epsilon(\Theta)\Pi_{j=1}^{l}\sigma(n_j,\Gamma_j).
\]
see Sections~\ref{sssec:signrules} and~\ref{ssec:signs} for notation.
\end{lma}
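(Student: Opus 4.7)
The plan is to adapt the argument of Lemma~\ref{l:signstrip} to the present setting. The two cases are structurally parallel: the holomorphic disk $u_\eta$ has a positive puncture at $e$ of type $\mathbf{L}_2$, a negative puncture on the opposite side, and negative punctures at $a_1,\dots,a_{m-2}$ of type $\mathbf{S}_0$. The only difference is that in Lemma~\ref{l:signstrip} the puncture opposite $e$ is at a long chord $c$ of type $\mathbf{L}_1$, while here it is at the short chord $b$ of type $\mathbf{S}_1$. Because the capping operator $\bar\pa_{b-}$ has the same qualitative structure as $\bar\pa_{c-}$, namely a one-dimensional kernel spanned by $v^{\krn}(b)$ and a one-dimensional cokernel spanned by $v^{\cokrn}(b)$ (see Section~\ref{sssec:oriS_1} and Table~\ref{tb:braidextr}), the linearized analysis proceeds by essentially the same formal steps. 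The geometric content that distinguishes the two cases, namely that $\Theta$ here is constrained rigid rather than rigid, surfaces only at the final identification step.

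First I would mimic the setup of Lemma~\ref{l:signstrip} almost verbatim, replacing $c$ by $b$ throughout. Deform the boundary conditions of $u_\eta$ to constant $\R^2$ conditions with small exponential weights determined by the complex angles of Tables~\ref{tb:braidextr}--\ref{tb:unknotextr}, glue in the capping operators at $e$, $b$, and the $a_j$ to form $\bar\pa_{\widehat A}$, and stabilize by the conformal variation space $V_{\con}$ at all boundary minima except a distinguished $\tau_r$ (chosen as the minimum adjacent to the puncture at $b$), together with the one-dimensional space $V_{\sol}$ spanned by $v^{\sol}=\widetilde v^{\flow}+v_r$. The resulting operator $\bar\pa_{\widehat A,\,\con,\,\sol}$ fits into an exact gluing sequence whose only nontrivial terms are its kernel and $\krn(\bar\pa_-)\oplus\krn(\bar\pa_{b-})$ on one side, and $\cokrn(\bar\pa_{e+})$ on the other.

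A basis $\kappa_1,\dots,\kappa_{m-2}$ of $\krn(\bar\pa_{\widehat A,\,\con,\,\sol})$ is then built as in Lemma~\ref{l:signstrip}: starting from the normal vector $n_j=\widetilde v^{\flow}(t_j)$ at each junction point $t_j\in\II$, propagate edge solutions through the partial tree $\Gamma_j$ using vector splitting at each trivalent vertex. By the argument of Lemma~\ref{l:signbraidtree} applied inside each $\Gamma_j$, this contributes the factor $\sigma(n_j,\Gamma_j)$. The ordering signs from $V_{\con}$ cancel those from the ordering of the $a_j$ in $\krn(\bar\pa_-)$ via Lemma~\ref{l:boundminandmovpct}, exactly as in the $s_1s_2$ computation in the proof of Lemma~\ref{l:signstrip}. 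The residual algebraic sign coming from the gluing sequence is $\la v^{\sol},v^{\cokrn}(b)\ra\cdot\la v^{\krn}(b),v^{\cokrn}(e)\ra$, matching the $m=2$ reduction from that proof.

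The main obstacle will be identifying this residual product with $\epsilon(\Theta)$, which by Section~\ref{ssec:signs} for type $\mathrm{MT}_1$ equals $\sign\bigl(\la\nu,v^{\krn}(b)\ra\la v^{\flow}(\Theta),v^{\cokrn}(b)\ra\bigr)$, where $\nu$ is the orientation of the one-parameter moduli of unconstrained $\Lambda$-disks with positive puncture at $e$ induced by the capping orientation at $e$ from Lemma~\ref{l:unkotdiff}. I would verify this identification by the same model computation that identifies $\epsilon(\Theta)$ with the residual gluing sign in the $m=2$ case of Lemma~\ref{l:signstrip}, now specialized to a constrained rigid $\Theta$. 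Concretely, since $v^{\sol}$ reduces to $v^{\flow}(\Theta)$ along $\pa\Theta$ after the stabilization at $\tau_r$, the pairing $\la v^{\sol},v^{\cokrn}(b)\ra$ reads off as $\la v^{\flow}(\Theta),v^{\cokrn}(b)\ra$; and the capping orientation at $e$, when transported across the one-dimensional unconstrained moduli by the identification $\nu\leftrightarrow v^{\krn}(b)$ arising in the reduced gluing sequence, yields $\la v^{\krn}(b),v^{\cokrn}(e)\ra=\la\nu,v^{\krn}(b)\ra$ up to the sign absorbed into the definition of the capping orientation. Combining these with the vector-splitting factors gives $\epsilon(u_\eta)=\epsilon(\Theta)\prod_{t_j\in\II}\sigma(n_j,\Gamma_j)$, as claimed.
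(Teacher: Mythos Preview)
Your proposal is correct and follows essentially the same approach as the paper: both reduce to the template of Lemma~\ref{l:signstrip} with $c$ replaced by $b$, stabilize by $V_{\con}\oplus V_{\sol}$ for $m>2$, and identify the residual gluing sign with $\epsilon(\Theta)$ via the $m=2$ case. The paper makes one point slightly more explicit than you do: before treating $m=2$ it writes down the orientation sequence for the unconstrained $1$-dimensional moduli space containing $\Theta$ (with operator $\bar\pa_E$ of $3$-dimensional kernel, capped to $\bar\pa_{\widehat E}$ of $2$-dimensional kernel), which directly identifies $\cokrn(\bar\pa_{e+})$ with the orienting vector $\nu$; this is the content behind your phrase ``the identification $\nu\leftrightarrow v^{\krn}(b)$ arising in the reduced gluing sequence,'' and making it explicit would tighten your final paragraph.
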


\begin{pf}
The proof follows along the lines of previous lemmas in the section and detailed calculations will be omitted. We use notation as before.
Start with the case of no negative punctures.
Consider first the orientation sequence for the moduli space containing the $1$-dimensional disk. We write $\bar\pa_{E}$ for the corresponding operator. Then $\bar\pa_E$ has $3$-dimensional kernel whereas the capped off operator $\bar\pa_{\widehat E}$ has $2$-dimensional kernel corresponding to linearized automorphism of the disk with one puncture and we get the gluing sequence
\[
\begin{CD}
0 @>>> \krn(\bar\pa_{\widehat E}) @>>> \krn(\bar\pa_{E}) @>>>\cokrn(\bar\pa_{e+}) @>>> 0,
\end{CD}
\]
thus $\cokrn(\bar\pa_{e+})$ together with conformal automorphisms orient the moduli space and we can identify $\cokrn(\pa_{e+})$ with the vector field $\nu$ which is the positively oriented tangent vector of the $1$-dimensional moduli space, see Section~\ref{ssec:signs}.

Consider first the case $m=2$. The operator $\bar\pa_{\widehat A}$ is an isomorphism and the gluing sequence is
\[
\begin{CD}
0 @>>> \krn(\bar\pa_{b-})\oplus\krn(\bar\pa_{A}) @>>> \cokrn(\bar\pa_{e+})\oplus\cokrn(\bar\pa_{b-}) @>>> 0
\end{CD}
\]
and we find that
\[
\epsilon(u_\eta)=\la \nu,v^{\krn}(b)\ra\la v^\flow(\Gamma),v^{\cokrn}(b)\ra=\epsilon(\Theta)
\]
as claimed. In the case $m>2$ the gluing sequence is
\[
\begin{CD}
0 @>>> \krn(\bar\pa_-)\oplus\krn(\bar\pa_{b-})\\
@>>> \cokrn(\bar\pa_{b-})\oplus\cokrn(\bar\pa_{e+})\oplus\cokrn(\bar\pa_A) @>>> 0,
\end{CD}
\]
as in Lemma~\ref{l:signstrip} we stabilize to an operator
\[
\bar\pa_{\widehat A,\,\con,\,\sol}\colon \sblv_{\widehat A}\oplus V_{\con}\oplus V_{\sol}\to\sblv_{\widehat A}'
\]
of index $m-2$, where $V_\con$ is spanned by conformal variations and where $V_{\sol}$ is a $1$-dimensional space spanned by $v^{\sol}$, which is a sum of an extra conformal variation at the fixed boundary minimum and a cut-off of $v^{\flow}$ of $\widetilde v^{\flow}$ and which maps non-trivially to $v^{\cokrn}(b)$. The resulting map which determines the sign is then
\[
\begin{CD}
0 @>>>  \krn(\bar\pa_{\widehat A,\,\con,\,\sol}) @>>> \krn(\bar\pa_-)\oplus\krn(\bar\pa_{b-})\\
 @>>> \cokrn(\bar\pa_{e+})@>>> 0,
\end{CD}
\]
and the lemma follows from computation similar to those in the proof of Lemma \ref{l:signbraidtree}.
\end{pf}

\end{document}